\definecolor{dred}{rgb}{.8,0,0}
\definecolor{ddmagenta}{rgb}{0.7,0,0.9}
\definecolor{ddcyan}{rgb}{0,0.2,1.0}
\definecolor{dblue}{rgb}{0,0,0.7}
\numberwithin{equation}{section}
\def\trait #1 #2 #3 {\vrule width #1pt height #2pt depth #3pt}
\def\fin{
    \trait .3 5 0
    \trait 5 .3 0
    \kern-5pt
    \trait 5 5 -4.7
    \trait 0.3 5 0
\medskip}
\newcommand{\QED}{\mbox{}\hfill\rule{5pt}{5pt}\medskip\par}
\let\@fnsymbol\@arabic
\definecolor{ddmagenta}{rgb}{0.7,0,1.0}
\definecolor{ddcyan}{rgb}{0,0.1,1.0}
\definecolor{dred}{rgb}{.8,0,0}
\definecolor{ddgreen}{rgb}{0,0.4,0.4}
\newcommand{\Gammac}{{\ele {\Gmc}}}
\newcommand{\Gdir}{\Gamma_{\mbox{\tiny\rm D}}}
\newcommand{\Gnew}{\Gamma_{\mbox{\tiny\rm N}}}
\newcommand{\Gmc}{\Gamma_{\mbox{\tiny\rm C}}}
\newcommand{\BV}{\mathrm{BV}}
\newcommand{\bele}{\begin{lemm}}
\newcommand{\enle}{\end{lemm}}
\newcommand{\bedef}{\begin{defi}}
\newcommand{\bete}{\begin{teor}}
\newcommand{\eddef}{\end{defi}}
\newcommand{\ente}{\end{teor}}
\newcommand{\beos}{\begin{remark}}
\newcommand{\eddos}{\end{remark}}
\newcommand{\bepr}{\begin{prop}}
\newcommand{\empr}{\end{prop}}
\newcommand{\bepro}{\begin{prob}}
\newcommand{\empro}{\end{prob}}
\newcommand{\bede}{\begin{defin}}
\newcommand{\edde}{\end{defin}}
\newcommand{\beco}{\begin{coro}}
\newcommand{\enco}{\end{coro}}
\newcommand{\beeq}[1]{\begin{equation}
%  \fbox{{\tiny\bf #1}}\dspace
 \label{#1}}
\newcommand{\eddeq}{\end{equation}}
\newcommand{\beeqa}[1]{\begin{eqnarray}
%  \fbox{{\tiny\bf #1}}
  \label{#1}}
\newcommand{\eddeqa}{\end{eqnarray}}
\newcommand{\beal}[1]{\begin{align}
%  \fbox{{\tiny\bf #1}}\dspace
 \label{#1}}
\newcommand{\eddal}{\end{align}}
\newcommand{\bespl}[1]{\begin{split}
%  \fbox{{\tiny\bf #1}}\dspace
 \label{#1}}
\newcommand{\edspl}{\end{split}}
\newcommand{\bega}[1]{\begin{gather}
%  \fbox{{\tiny\bf #1}}\dspace
 \label{#1}}
\newcommand{\edga}{\end{gather}}
\newcommand{\beeqax}{\begin{eqnarray*}}
\newcommand{\eddeqax}{\end{eqnarray*}}
\newcommand{\no}{\nonumber}
\newcommand{\tensore}{\varepsilon({\bf u})}
\newcommand{\tensoret}{\varepsilon(\partial_t \uu)}
\numberwithin{equation}{section}
\newcommand{\teta}{\vartheta}
\newcommand{\dt}{\partial_t}
\newcommand{\uu}{\mathbf{u}}
\newcommand{\ww}{\mathbf{w}}
\newcommand{\vv}{\mathbf{v}}
\newcommand{\eeta}{{\mbox{\boldmath$\eta$}}}
\newcommand{\mmu}{{\mbox{\boldmath$\mu$}}}
\newcommand{\WW}{{\bf W}}
\newcommand{\epsi}{\varepsilon}
\newcommand{\weak}{\rightharpoonup}
\newcommand{\weakstar}{\mathop{\rightharpoonup}^{*}}
 \DeclareMathOperator{\dive}{div}
\DeclareMathOperator{\Id}{Id}
\let\TeXchi\chi
\def\chi{{\setbox0 \hbox{\mathsurround0pt
$\TeXchi$}\hbox{\raise\dp0 \copy0 }}}
\newtheorem{theorem}{Theorem}[section]
\newtheorem{lemma}{Lemma}[section]
\newtheorem{proposition}[lemma]{Proposition}
\newtheorem{definition}[lemma]{Definition}%
\newtheorem{remark}[lemma]{Remark}%
\newtheorem{problem}[lemma]{Problem}
\newtheorem{notation}[lemma]{Notation}%
\newtheorem{hypothesis}[theorem]{Hypothesis}
\newtheorem{example}[lemma]{Example}
\begin{document}

\newcommand{\Qt}{\Omega \times (0,t)}
\newcommand{\Qtau}{\Omega \times (0,\tau)}
\newcommand{\til}{\widetilde}
\renewcommand{\part}{\partial_t}
\renewcommand{\teta}{\vartheta}
\newcommand{\accaunoz}{H^1_0(\Omega)}
\newcommand{\accadue}{H^2(\Omega)}
\newcommand{\nnu}{\nonumber}
\newcommand{\irre}{\rho}
\newcommand{\vinc}{\beta}
\newcommand{\weaksto}{{\rightharpoonup^*}}
\newcommand{\weakto}{\rightharpoonup}
\newcommand{\debole}{\,\weak\,}
\newcommand{\debolestar}{\,\weakstar\,}
\newcommand{\pairing}[4]{ \sideset{_{#1 }}{_{ #2}}  {\mathop{\left\langle #3 , #4  \right\rangle}}}
\newcommand{\comments}[1]{\marginpar{\tiny\textit{#1}}}
\newcommand{\nns}{{\ele \nn_s}}
\newcommand{\tetatm}{\teta_{\tau_m}}
\newcommand{\tetatem}{\teta_{\varepsilon_m}^{\tau_m}}
\newcommand{\tetatemj}{\teta_{\varepsilon_{m_j^1}}^{\tau_{m_j^1}}}
\newcommand{\tetastm}{{\teta}_{s,\tau_m}}
\newcommand{\tetastem}{{\teta}_{s,\varepsilon_m}^{\tau_m}}
\newcommand{\tetastemj}{{\teta}_{s,\varepsilon_{m_j^1}}^{\tau_{m_j^1}}}
\newcommand{\uutm}{\uu_{\tau_m}}
\newcommand{\widetildeuutm}{\widetilde{\uu}_{\tau_m}}
\newcommand{\uutem}{\uu_{\varepsilon_m}^{\tau_m}}
\newcommand{\chitm}{\chi_{\tau_m}}
\newcommand{\widetildechitm}{\widetilde{\chi}_{\tau_m}}
\newcommand{\chitem}{\chi_{\varepsilon_m}^{\tau_m}}
\newcommand{\tetat}{\teta_{\tau}}
\newcommand{\tetast}{{\teta}_{s,\tau}}
\newcommand{\widetildetetas}{\widetilde{\teta}_{s}}
\newcommand{\widetildetetast}{\widetilde{\teta}_{s,\tau}}
\newcommand{\uutt}{\uu_{\tau}}
\newcommand{\widetildechitt}{\widetilde{\chi}_{\tau}}
\newcommand{\widetildeuutt}{\widetilde{\uu}_{\tau}}
\newcommand{\chitt}{\chi_{\tau}}
\newcommand{\ele}{\color{black}}

%%%%%%%%%%%%%%%%%   QED %%%%%%%%%%
 \def\fin{\hfill
         \trait .3 5 0
         \trait 5 .3 0
         \kern-5pt
         \trait 5 5 -4.7
         \trait 0.3 5 0
 \medskip}
 \def\trait #1 #2 #3 {\vrule width #1pt height #2pt depth #3pt}
%%%%%%%%%%%%%%%%%%%%%%%%%%%%%%%%%%%%%%%%%%
%%%%%%%%%%%%%%%%%%%%%%%%%%%%%%%%%%%%%%%%%%%%%%
\newcommand{\forae}{\text{for a.a.}}
\newcommand{\aein}{\text{a.e.\ in}}

\newcommand{\down}{\downarrow}
\newcommand{\up}{\to}

%%%%%%%%%%%% SIMBOLI MATEMATICI BLACKBOARD (INSIEMI) R -> \R
%%\newcommand{\R}{\Bbb{R}}
%\newcommand{\N}{\Bbb{N}}
%\newcommand{\Z}{\Bbb{Z}}
%\newcommand{\C}{\Bbb{C}}
%\newcommand{\M}{\Bbb{M}}
%\newcommand{\F}{\Bbb{F}}

% Blackboardbold
\def\bbA{{\mathbb A}} \def\bbB{{\mathbb B}} \def\bbC{{\mathbb C}}
\def\bbD{{\mathbb D}} \def\bbE{{\mathbb E}} \def\bbF{{\mathbb F}}
\def\bbG{{\mathbb G}} \def\bbH{{\mathbb H}} \def\bbI{{\mathbb I}}
\def\bbJ{{\mathbb J}} \def\bbK{{\mathbb K}} \def\bbL{{\mathbb L}}
\def\bbM{{\mathbb M}} \def\bbN{{\mathbb N}} \def\bbO{{\mathbb O}}
\def\bbP{{\mathbb P}} \def\bbQ{{\mathbb Q}} \def\bbR{{\mathbb R}}
\def\bbS{{\mathbb S}} \def\bbT{{\mathbb T}} \def\bbU{{\mathbb U}}
\def\bbV{{\mathbb V}} \def\bbW{{\mathbb W}} \def\bbX{{\mathbb X}}
\def\bbY{{\mathbb Y}} \def\bbZ{{\mathbb Z}}

\newcommand{\R}{\bbR}
\newcommand{\N}{\bbN}
% Kalligraphische
\def\calA{{\mathcal A}} \def\calB{{\mathcal B}} \def\calC{{\mathcal C}}
\def\calD{{\mathcal D}} \def\calE{{\mathcal E}} \def\calF{{\mathcal F}}
\def\calG{{\mathcal G}} \def\calH{{\mathcal H}} \def\calI{{\mathcal I}}
\def\calJ{{\mathcal J}} \def\calK{{\mathcal K}} \def\calL{{\mathcal L}}
\def\calM{{\mathcal M}} \def\calN{{\mathcal N}} \def\calO{{\mathcal O}}
\def\calP{{\mathcal P}} \def\calQ{{\mathcal Q}} \def\calR{{\mathcal R}}
\def\calS{{\mathcal S}} \def\calT{{\mathcal T}} \def\calU{{\mathcal U}}
\def\calV{{\mathcal V}} \def\calW{{\mathcal W}} \def\calX{{\mathcal X}}
\def\calY{{\mathcal Y}} \def\calZ{{\mathcal Z}}
% mathromam
\def\rma{{\mathrm a}} \def\rmb{{\mathrm b}} \def\rmc{{\mathrm c}}
\def\rmd{{\mathrm d}} \def\rme{{\mathrm e}} \def\rmf{{\mathrm f}}
\def\rmg{{\mathrm g}} \def\rmh{{\mathrm h}} \def\rmi{{\mathrm i}}
\def\rmj{{\mathrm j}} \def\rmk{{\mathrm k}} \def\rml{{\mathrm l}}
\def\rmm{{\mathrm m}} \def\rmn{{\mathrm n}} \def\rmo{{\mathrm o}}
\def\rmp{{\mathrm p}} \def\rmq{{\mathrm q}} \def\rmr{{\mathrm r}}
\def\rms{{\mathrm s}} \def\rmt{{\mathrm t}} \def\rmu{{\mathrm u}}
\def\rmv{{\mathrm v}} \def\rmw{{\mathrm w}} \def\rmx{{\mathrm x}}
\def\rmy{{\mathrm y}} \def\rmz{{\mathrm z}}
% \def\rmA{{\mathrm A}} \def\rmB{{\mathrm B}} \def\rmC{{\mathrm C}}
% \def\rmD{{\mathrm D}} \def\rmE{{\mathrm E}} \def\rmF{{\mathrm F}}
% \def\rmG{{\mathrm G}} \def\rmH{{\mathrm H}} \def\rmI{{\mathrm I}}
% \def\rmJ{{\mathrm J}} \def\rmK{{\mathrm K}} \def\rmL{{\mathrm L}}
% \def\rmM{{\mathrm M}} \def\rmN{{\mathrm N}} \def\rmO{{\mathrm O}}
% \def\rmP{{\mathrm P}} \def\rmQ{{\mathrm Q}} \def\rmR{{\mathrm R}}
% \def\rmS{{\mathrm S}} \def\rmT{{\mathrm T}} \def\rmU{{\mathrm U}}
% \def\rmV{{\mathrm V}} \def\rmW{{\mathrm W}} \def\rmX{{\mathrm X}}
% \def\rmY{{\mathrm Y}} \def\rmZ{{\mathrm Z}}
% fette math
\def\FG{\mathbf}
\def\mdot{\FG{:}}  \def\vdot{\FG{\cdot}}
\def\bfa{{\FG a}} \def\bfb{{\FG b}} \def\bfc{{\FG c}}
\def\bfd{{\FG d}} \def\bfe{{\FG e}} \def\bff{{\FG f}}
\def\bfg{{\FG g}} \def\bfh{{\FG h}} \def\bfi{{\FG i}}
\def\bfj{{\FG j}} \def\bfk{{\FG k}} \def\bfl{{\FG l}}
\def\bfm{{\FG m}} \def\bfn{{\FG n}} \def\bfo{{\FG o}}
\def\bfp{{\FG p}} \def\bfq{{\FG q}} \def\bfr{{\FG r}}
\def\bfs{{\FG s}} \def\bft{{\FG t}} \def\bfu{{\FG u}}
\def\bfv{{\FG v}} \def\bfw{{\FG w}} \def\bfx{{\FG x}}
\def\bfy{{\FG y}} \def\bfz{{\FG z}}
\def\bfA{{\FG A}} \def\bfB{{\FG B}} \def\bfC{{\FG C}}
\def\bfD{{\FG D}} \def\bfE{{\FG E}} \def\bfF{{\FG F}}
\def\bfG{{\FG G}} \def\bfH{{\FG H}} \def\bfI{{\FG I}}
\def\bfJ{{\FG J}} \def\bfK{{\FG K}} \def\bfL{{\FG L}}
\def\bfM{{\FG M}} \def\bfN{{\FG N}} \def\bfO{{\FG O}}
\def\bfP{{\FG P}} \def\bfQ{{\FG Q}} \def\bfR{{\FG R}}
\def\bfS{{\FG S}} \def\bfT{{\FG T}} \def\bfU{{\FG U}}
\def\bfV{{\FG V}} \def\bfW{{\FG W}} \def\bfX{{\FG X}}
\def\bfY{{\FG Y}} \def\bfZ{{\FG Z}}
\def\BS{\boldsymbol} % griechisch
\def\bfalpha{{\BS\alpha}}      \def\bfbeta{{\BS\beta}}
\def\bfgamma{{\BS\gamma}}      \def\bfdelta{{\BS\delta}}
\def\bfepsilon{{\BS\varepsilon}} \def\bfzeta{{\BS\zeta}}
\def\bfeta{{\BS\eta}}          \def\bftheta{{\BS\theta}}
\def\bfiota{{\BS\iota}}        \def\bfkappa{{\BS\kappa}}
\def\bflambda{{\BS\lambda}}    \def\bfmu{{\BS\mu}}
\def\bfnu{{\BS\nu}}            \def\bfxi{{\BS\xi}}
\def\bfpi{{\BS\pi}}            \def\bfrho{{\BS\rho}}
\def\bfsigma{{\BS\sigma}}      \def\bftau{{\BS\tau}}
\def\bfphi{{\BS\phi}}          \def\bfchi{{\BS\chi}}
\def\bfpsi{{\BS\psi}}          \def\bfomega{{\BS\omega}}
\def\bfvarphi{{\BS\varphi}}
\def\bfGamma{{\BS\Gamma}}      \def\bfDelta{{\BS\Delta}}
\def\bfTheta{{\BS\Theta}}      \def\bfLambda{{\BS\Lambda}}
\def\bfXi{{\BS\Xi}}            \def\bfPi{{\BS\Pi}}
\def\bfSigma{{\BS\Sigma}}      \def\bfPhi{{\BS\Phi}}
\def\bfPsi{{\BS\Psi}}          \def\bfOmega{{\BS\Omega}}

\newcommand{\dd}{\, \mathrm{d}}
\newcommand{\eps}{\varepsilon}

\newcommand{\rmC}{\mathrm{C}}
\newcommand{\rmD}{\mathrm{D}}

%%%%%%%%%%%%%%%%%%% MACRO PER LA DSCRETIZZAZIONE  IN TEMPO %%%%%%%%%%%%%%%%
%%%%%%%%%%%%%%%%%%%%%%%%%%%%%%%%%%%%%
%%%%%%%%%%%%%%%%%
%%%%%%%%%%%%%%% Interpolanti %%%%%%%%%%%%%%%%%%
%%
\newcommand{\piecewiseConstant}[2]{\overline{#1}_{\kern-1pt#2}^\eps}
\newcommand{\pwC}{\piecewiseConstant}
\newcommand{\underlinepiecewiseConstant}[2]{\underline{#1}_{\kern-1pt#2}^\eps}
\newcommand{\upwC}{\underlinepiecewiseConstant}

\newcommand{\piecewiseLinear}[2]{{#1}_{\kern-1pt#2}^\eps}
\newcommand{\pwL}{\piecewiseLinear}
\newcommand{\pwM}[2]{\widetilde{#1}_{\kern-1pt#2}}
 \def\trait #1 #2 #3 {\vrule width #1pt height #2pt depth #3pt}
\newcommand{\pwN}[2]{#1_{\kern-1pt#2}}
 \def\trait #1 #2 #3 {\vrule width #1pt height #2pt depth #3pt}

%%%%%%%%%%%%%%% SOLUZIONI DISCRETE APPROSSIMATE %%%%%%%%%%%
\newcommand{\uk}{\pwN {\uu^k}{\tau}}
\newcommand{\un}{\pwN {\uu^n}{\tau}}
\newcommand{\Fn}{\pwN {J^n}{\tau}}
\newcommand{\uku}{\pwN {\uu^{k-1}}{\tau}}
\newcommand{\unu}{\pwN {\uu^{n-1}}{\tau}}
\newcommand{\chin}{\pwN {\chi^n}{\tau}}
\newcommand{\Fun}{\pwN {\mathbf{F}^n}{\tau}}
\newcommand{\gun}{\pwN {\mathbf{g}^n}{\tau}}
\newcommand{\chinuno}{\pwN {\chi^{n+1}}{\tau}}
\newcommand{\dom}{\text{dom}}

 %%%%%%%%%%%%%%%%%%%%% SOLUZIONE LOCALE %%%%%%%%
\newcommand{\uuh}{\widehat{\uu}}
\newcommand{\chih}{\widehat{\chi}}
\newcommand{\eetah}{\widehat{\eeta}}
\newcommand{\xih}{\widehat{\xi}}
\newcommand{\zetah}{\widehat{\zeta}}

%\newcommand{\FG}[1]{\mathbf{#1}}
% Banach spaces
\def\bsA{{\FG A}} \def\bsB{{\FG B}} \def\bsC{{\FG C}}
\def\bsD{{\FG D}} \def\bsE{{\FG E}} \def\bsF{{\FG F}}
\def\bsG{{\FG G}} \def\bsH{{\FG H}} \def\bsI{{\FG I}}
\def\bsJ{{\FG J}} \def\bsK{{\FG K}} \def\bsL{{\FG L}}
\def\bsM{{\FG M}} \def\bsN{{\FG N}} \def\bsO{{\FG O}}
\def\bsP{{\FG P}} \def\bsQ{{\FG Q}} \def\bsR{{\FG R}}
\def\bsS{{\FG S}} \def\bsT{{\FG T}} \def\bsU{{\FG U}}
\def\bsV{{\FG V}} \def\bsW{{\FG W}} \def\bsX{{\FG X}}
\def\bsY{{\FG Y}} \def\bsZ{{\FG Z}}

%%%%%%%%%%%%%%%%%%
%%%%%%%%%%% MACRO SPECIFICHE PER ATTRITO

\newcommand{\uun}{u_{\mathrm{N}}}
\newcommand{\whuun}{\widehat{u}_{\mathrm{N}}}
\newcommand{\uuni}[1]{(u_{#1})_{\mathrm{N}}}
\newcommand{\uut}{\uu_{\mathrm{T}}}
\newcommand{\wwt}{\ww_{\mathrm{T}}}
\newcommand{\dotu}{\partial_t{\uu}}
\newcommand{\dotui}[1]{\partial_t{\uu}_{#1}}
\newcommand{\dotut}{\partial_t{\uu}_{\mathrm{T}}}
\newcommand{\vvn}{v_{\mathrm{N}}}
\newcommand{\vvt}{\vv_{\mathrm{T}}}
\newcommand{\nn}{\mathbf{n}}
\newcommand{\zz}{\mathbf{z}}
\newcommand{\dotv}{\partial_t{\vv}}
\newcommand{\dotvt}{\partial_t{\vv}_{\mathrm{T}}}
\newcommand{\ssigma}{{\mbox{\boldmath$\sigma$}}}
\newcommand{\sigman}{\sigma_{\mathrm{N}}}
\newcommand{\sigmat}{\ssigma_{\mathrm{T}}}
\newcommand{\gc}{\Gammac}
\newcommand{\cn}{c_{\mathrm{N}}}
\newcommand{\ct}{c_{\mathrm{T}}}
\newcommand{\Cweak}{\mathrm{C}^0_{\mathrm{weak}}}
\newcommand{\Reg}{\mathcal{R}}
\newcommand{\foraa}{\text{for a.a.}}
\newcommand{\funeta}[1]{\mathcal{J}_{#1}}
\newcommand{\plfuneta}[1]{\partial\mathcal{J}_{#1}}

\newcommand{\uue}{\uu_{\eps}}
\newcommand{\ud}[1]{\uu_{#1}^{\eps}}
\newcommand{\chid}[1]{\chi_{#1}^{\eps}}
\newcommand{\etad}[1]{\eeta_{#1}^{\eps}}
\newcommand{\zud}[1]{\zz_{#1}^{\eps}}
\newcommand{\mud}[1]{\mmu_{#1}^{\eps}}
\newcommand{\xid}[1]{\xi_{#1}^{\eps}}
\newcommand{\udno}[1]{(\uu_{#1}^{\eps})_{\mathrm{N}}}
\newcommand{\uuen}{(u_{\eps})_{\mathrm{N}}}
\newcommand{\uune}{(u_{\eps})_{\mathrm{N}}}
\newcommand{\uute}{(\uue)_{\mathrm{T}}}
\newcommand{\dotue}{\partial_t{\uu}_{\eps}}
\newcommand{\dotute}{(\partial_t{\uue})_{\mathrm{T}}}
\newcommand{\chie}{\chi_{\eps}}
\newcommand{\zetae}{\zeta_\eps}
\newcommand{\zze}{\mathbf{z}_\eps}
\newcommand{\He}{\mathcal{H}}
\newcommand{\matrid}{\mathbf{1}}
\newcommand{\fc}{\mathfrak{c}}
\newcommand{\tetas}{\teta_s}
\newcommand{\tetase}{\teta_{s,\eps}}

%%%%%%%%%%%%%%%%%%%%%

%%%%%   NEW NEWCOMMANDS

\newcommand{\Hc}{H_{\Gammac}}
\newcommand{\V}{V}
\newcommand{\Vc}{V_{\Gammac}}
\newcommand{\Yc}{Y_{\Gammac}}
\newcommand{\bvarphi}{{\mbox{\boldmath$\varphi$}}}
\newcommand{\hunoc}{H^1 (\Gammac)}
\newcommand{\FF}{\mathbf{F}}
\newcommand{\sig}{\sigma}
\newcommand{\HH}{\mathrm{H}}
\newcommand{\applog}{{\cal L}_\varepsilon}
\newcommand{\applogteta}{\widetilde{L}_\varepsilon}
\newcommand{\applogtetas}{\widetilde{\ell}_\varepsilon}
\newcommand{\hc}{\Hc}
\newcommand{\lteta}{L}
\newcommand{\ltetas}{\ell}
\newcommand{\resteta}{R_\eps}
\newcommand{\restetas}{r_\eps}
\newcommand{\fteta}{g}
\newcommand{\ftetas}{f}
\newcommand{\pseupot}{\widehat{\rho}}
%%%%%%%%%%%%%%%%%%%%%% PER EVIDENZIARE I CAMBIAMENTI

\newenvironment{rcomm}{\color{black}}{\color{black}}
\newenvironment{newr}{\color{black}}{\color{black}}
\newcommand{\ber}{\begin{rcomm}}
\newcommand{\edr}{\end{rcomm}}
\newcommand{\berc}{\begin{newr}}
\newcommand{\eerc}{\end{newr}}
\newcommand{\beroc}{\begin{rcomm}}
\newcommand{\eroc}{\end{rcomm}}
\newenvironment{controlla}{\color{black}}{\color{black}}
\newcommand{\bec}{\begin{controlla}}
\newcommand{\eec}{\end{controlla}}
\newenvironment{new}{\color{black}}{\color{black}}
\newcommand{\bee}{\begin{new}}
\newcommand{\ede}{\end{new}}

%%%%%%%%%%%%%%%%%%%%%%%%%%%%%%%%%%%%%%%%%%%%%%%%%%%%%%%%%%%%%%%%%%%%%%%%%%
%%%%%%%%%%%%%%%%%%%%%%%%%%%%%%%%%%%%%%%%%%%%%%%%%%%%%%%%%%%%%%%%%%%%%%%%%%

%%%%%%%
%%%%%%%%

                                %TITOLO
                                %
\title{Analysis of a model coupling  volume and surface processes in thermoviscoelasticity}

\author{Elena Bonetti}
\address{Elena Bonetti\\ Dipartimento di Matematica ``F.\
Casorati'' \\ Universit\`a di Pavia\\ Via Ferrata 1 \\ I-27100 Pavia\\
Italy} \email{elena.bonetti\,@\,unipv.it}

\author{Giovanna Bonfanti}
\address{Giovanna Bonfanti\\  DICATAM - Sezione di Matematica \\ Universit\`{a} di Brescia\\ Via Valotti 9\\ I-25133 Brescia\\ Italy}
\email{bonfanti@ing.unibs.it}

\author{Riccarda Rossi}
\address{Riccarda Rossi\\  DICATAM - Sezione di Matematica \\ Universit\`{a} di Brescia\\ Via Valotti 9\\ I-25133 Brescia\\ Italy}
\email{riccarda.rossi@ing.unibs.it}

\date {January 31st, 2014}
 \maketitle

\begin{abstract}
We focus
on a highly nonlinear evolutionary abstract PDE system describing volume processes coupled with
surfaces processes in thermoviscoelasticity, featuring the
quasi-static momentum balance, the equation for the unidirectional
evolution of an internal variable on the surface, and the equations
for the temperature in the bulk \ber domain \edr and the temperature on the
surface. A significant example of \bec our  system \eec occurs in the
 modeling  for  the
unidirectional evolution of the adhesion between a body and a rigid
support, subject to thermal fluctuations and   in contact with
friction.

We investigate the related initial-boundary value problem, and in particular the
issue of  existence of \emph{global-in-time} solutions, on an abstract level. This allows us
to highlight the analytical features of the problem and, at the same time, to exploit the
tight coupling between the various equations in order to deduce suitable estimates on (an approximation) of the problem.

Our existence result is proved by passing to the limit in a carefully tailored approximate problem,
and by extending the obtained local-in-time solution by means of a refined prolongation argument.
\end{abstract}
\section{\bf Introduction}
\noindent
This paper tackles the analysis of a PDE system describing a class of models where volume and surface processes are coupled. Our  main example, and the motivation for our study,
stems from a specific PDE system modeling   % the phenomenon of
 adhesive contact, with frictional effects, in thermoviscoelasticity.
%%%
%{\ele NON SO SE NELL'INTRO VANNO I PARAGRAFI}

\paragraph{\bf A  PDE system for  contact with adhesion, friction, and thermal effects.}
    Contact and delamination  arise in many \bec fields \eec  in solid mechanics:  among others,
     we may mention here the  {\ele application to (structural) adhesive materials  in civil engineering,}
      the investigation  of earthquakes, and   the study  of layered composite structures within machine design and manufacturing. Indeed,  the degradation of the adhesive substance between the various laminates leads to material failure.  That is why, there is a rich literature on this kind of problems, both from the engineering and from the mathematical community: we refer to the monographs \cite{eck05} and \cite{sofonea-han-shillor}, and to the references in \cite{bbr1}--\cite{bbr6},  for some survey.

     In this paper, following up on the recent \cite{bbr6}, we  focus on %(the generalization of)
      a PDE system for frictional adhesion %, with friction,
 between a viscoelastic body, subject to  thermal fluctuations, in contact with a   \emph{rigid support}. In \cite{bbr6} this system was  derived, according to the laws of Thermomechanics,
on the basis of  the modeling approach proposed by M.\ Fr\'emond (cf.\ \cite{fremond-nedjar,fre,fre2012}).  Such approach has already  been applied in the previous \cite{bbr1, bbr2, bbr3, bbr4} to the investigation of adhesive contact, both for an isothermal and for a temperature-dependent system, as well as in \cite{bbr5}, for isothermal adhesive contact with
frictional effects.    The  model we consider here, encompassing friction,
adhesion, and thermal effects, was  analyzed in \cite{bbr6} in the case where no \emph{irreversibility}
of  the degradation of the adhesive  substance  is enforced. % by  the model.
In the present contribution, we broaden our investigation %of the latter model
 by
 %adding a further feature, in addition to  the coupling between frictional and thermal effects: namely, we
  encompassing in the model this \emph{unidirectionality} feature. % i.e.\ the constraint that be \emph{irreversible}.
   As we are going to demonstrate in what follows, this extension of the model from \cite{bbr6} brings about substantial analytical difficulties.

 Let us now get a closer look at the PDE system under investigation.
  In accord with Fr\'emond's modeling ansatz, adhesion is described in terms of an internal variable $\chi$
which can be interpreted as a  \emph{surface damage paramater}:  it
accounts for the state of the bonds between the body and the support, on which some adhesive substance is present. The other state variables are {\ele the strain tensor
(in a small-strain framework), related to the
displacement vector $\uu$},  and the possibly different (absolute) temperatures
 $\teta$ and $\teta_s$
 in the body  and on the contact surface.
 \bec We allow for \eec
 {\ele  two different temperatures in the bulk domain and on the contact surface \bec because \eec we are modeling a physical situation in which some adhesive substance may be present on the contact surface, with different thermomechanical properties \bec in comparison \eec to the material \bec in \eec the domain.}
The evolution of $(\teta,\teta_s,\uu,\chi)$ is described by a  rather complex PDE system, consisting of  the quasi-static momentum balance (where the  inertial contributions are neglected), {\ele a} parabolic-type evolution equations for the temperature $\teta$
and $\teta_s$, and a doubly nonlinear differential inclusion for $\chi$.
Denoting by $\Omega
\subset \R^3$ the (sufficiently regular) domain occupied by the body and  by $\Gmc$ the part of the boundary $\partial\Omega$
on which the body \ber may be \edr  in contact with the support,  the \ber system reads \edr as follows
%%%
\begin{subequations}
\label{system-concrete}
\begin{align}
\label{e1-concrete} &\partial_t \ln(\vartheta) - \dive
(\partial_t\mathbf{u}) -\bec \mathrm{div}(\nabla g(\teta)) \eec = h \qquad \text{in $\Omega
\times (0,T)$,}
\\
\label{condteta-concrete-1} & \bec \nabla(g(\teta)) \cdot \nn \eec  =0  \qquad \text{in $(\partial \Omega \setminus \Gmc)
\times (0,T)$,}
\\
\label{condteta-concrete-2} & \partial_{\nn } \teta  \in - k(\chi)(\vartheta-\vartheta_s)-\fc'(\teta-\teta_s) {\partial
I_{(-\infty,0]}( \uun )} | \dotut|  \qquad  \text{in $ \Gmc \times
(0,T)$,}
\\
\label{eqtetas-concrete} &\partial_t \ln(\vartheta_s) -
\partial_t \lambda(\chi)
 -\mathrm{div}(\nabla f(\teta_s))  \in  k(\chi)
(\vartheta-\vartheta_s)+ \fc'(\teta-\teta_s)  {\partial
I_{(-\infty,0]}( \uun )} | \dotut|  \quad  \text{ in $\Gmc \times
(0,T)$,}
\\
\label{condtetas-concrete} & \nabla ( f(\tetas)) \cdot  \nn_s =0
\qquad \text{in $\partial \Gmc \times (0,T)$,}
\\
&-  \hbox{div}\ssigma={\bf f}  \ \text{ with } \ \ssigma=K_e\tensore+ K_v\tensoret+\vartheta \matrid \quad \hbox{in }\Omega\times (0,T),\label{eqI-concrete}\\
&{\bf u}={\bf 0}\quad\hbox{in }\Gdir\times (0,T),
\quad  \ssigma{\bf n}={\bf g} \quad \hbox{in }\Gnew\times (0,T),\label{condIi-concrete}\\
&\sigman\in -\chi \uun - {\partial I_{(-\infty,0]}(\uun)}
\quad\hbox{in }{\Gmc} \times (0,T),\label{condIii-concrete}\\
& \sigmat\in -\chi \uut -\fc(\teta-\teta_s){\partial
I_{(-\infty,0]}( \uun )} {\bf d} (\dotu)
\quad\hbox{in }{\Gmc} \times (0,T),\label{condIiii-concrete}\\
&\partial_t{\chi} + \partial I_{(-\infty, 0]}(\partial_t \chi)
-\Delta\chi+\partial I_{[0,1]}(\chi) +{\ele \gamma}'(\chi)  \ni
-\lambda'(\chi)(\vartheta_s-\vartheta_{\mathrm{eq}})- \frac 1 2
|\uu|^2\quad\hbox{in }{\Gmc} \times (0,T),\label{eqII-concrete}
\\
&\partial_{\nn_s} \chi=0 \quad \text{in $\partial {\Gmc} \times
(0,T)$.} \label{bord-chi-concrete}
\end{align}
\end{subequations}
%%%%
%and an internal variable $\chi$, which......With respect to the previous contributions, the degradation of the adhesive substance is supposed to be unidirectional.
%Since thermal effects  are also encompassed  in the model,  the two other remaining variables are the absolute temperature $\teta$ of the body, and the temperature $\teta_s$ of....

%Before entering into the details of  the above equations,
%let us fix some notation:
In  system \eqref{system-concrete}, we have used the following notation:
 $\mathbf n$ is the outward unit normal to $\partial\Omega$, for which we suppose
 $\partial\Omega = \overline{\Gdir} \cup  \overline{\Gnew} \cup \overline{\Gmc}$,
with $\Gdir$ ($\Gnew$, resp.) the Dirichlet (the Neumann) part of the boundary where zero displacement (a fixed traction) is prescribed.
 As for $\Gmc$, we require that it is also sufficiently smooth (and denote by $\nns$ the outward unit normal to $\partial \Gmc$) and \emph{flat} (cf.\ condition \eqref{assumpt-domain} below); for simplicity
  we shall
  write $v$, in place of $v_{|\Gammac}$, for the trace
on $\Gammac$ of a function $v$ defined in $\Omega$.
  We also
adopt
 the following  convention:
 given a vector   $\vv\in \R^3$,  we denote by $\vvn$ and
$\vvt$ its normal component and its tangential part, i.e.\
 %defined  on the
 $\vvn:= \vv \cdot \mathbf{n} $ and $ \vvt:=
\vv - \vvn \mathbf{n} $.
%\end{equation}
% where $\mathbf{n}$ denotes the outward unit normal vector to the boundary.
Analogously, the normal component and the tangential part of the {\ele Cauchy}
stress tensor $\ssigma$ (while $\tensore$ is the small-strain tensor), are denoted by $\sigman$ and $\sigmat$, with
$\sigman:= \ssigma \mathbf{n} \cdot
\mathbf{n}$ and  $\sigmat:=  \ssigma \mathbf{n}  - \sigman
\mathbf{n}$. Finally, the
multivalued
 operator $\partial I_C:
\R \rightrightarrows \R$
(with $C$ the interval $[0,1]$ or the half-line $(-\infty,0])$ is the subdifferential (in the sense of convex analysis) of the indicator function of the  convex set $C$.
%\end{equation}
%Further, we suppose that $\partial\Omega = \overline{\Gdir} \cup  \overline{\Gnew} \cup \overline{\Gmc}$,
%with $\Gdir$ ($\Gnew$, resp.) the Dirichlet (the Neumann) part of the boundary where zero displacement (a fixed traction) is prescribed.

While referring the reader to \cite[Sec.\ 2]{bbr6} for the rigorous derivation of system \eqref{system-concrete}, let
us now briefly comment on the features, and the meaning, of the single equations.
The momentum balance \eqref{eqI-concrete}, where the elasticity and viscosity tensors $K_e$ and $K_v$ are positive-definite
and satisfy suitable symmetry conditions and $\mathbf{f}$ is a given volume force,  is supplemented by  the boundary conditions \eqref{condIi-concrete} on $\Gdir$
and $\Gnew$ (with $\mathbf{g}$ given), and by \eqref{condIii-concrete}--\eqref{condIiii-concrete} on the contact surface $\Gmc$.
Observe that \eqref{condIii-concrete} can be recast  in complementarity form as
\begin{align}
&\uun\leq 0,\quad \sigman+  \chi \uun \leq 0,\quad \uun (\sigman+
\chi \uun)=0 \quad\hbox{in }{\Gmc} \times (0,T).\label{Signorini}
\end{align}
For $\chi=0$ (i.e.\ no adhesion), these conditions reduce to the classical Signorini
conditions for unilateral contact. Instead, for $0 < \chi \leq 1$ \eqref{condIii-concrete} allows for  $\sigman$
positive, namely the  action  of the adhesive substance on $\gc$  prevents
separation when a tension is applied.
In \eqref{condIiii-concrete}, $\mathbf{d}: \R^3 \rightrightarrows \R^3$ is the subdifferential of
the functional $\Psi: \R^3 \to [0,+\infty)$ given by
$\Psi(\vv) = |\vvt|$, viz.\
%\begin{equation}
% \label{formu-d-concrete}
\[
\mathbf{d}(\vv) = \begin{cases}
\frac{\vv_{\mathrm{T}}}{|\vv_{\mathrm{T}} |} & \text{if
$\vv_{\mathrm{T}} \neq \mathbf{0}$}
\\
\{ \ww_{\mathrm{T}}\, : \ \ww \in \overline{B}_1 \} & \text{if
$\vv_{\mathrm{T}} = \mathbf{0}$,}
\end{cases}
%\end{equation}
\]
 with $\overline{B}_1$
 the closed unit ball in $\R^3$. Therefore,
in view of   \eqref{condIii-concrete},
% and taking into account that $\mathbf{d}: \R^3
%\rightrightarrows \R^3$  from \eqref{formu-d} is the subdifferential
% of the function   $j: \R^3 \to [0,+\infty)$ defined by
% $j(\vv)= |\vv_{\mathrm{T}} |$ (cf.\ \eqref{def-j} later on), one
% sees that
 condition  \eqref{condIiii-concrete} rephrases as
%be expressed by
\begin{equation}
\label{Coulomb}
\begin{aligned}
&|\sigmat +\chi \uut|\leq  \fc(\teta-\teta_s)|\sigman + \chi \uun | && \text{in } \Gmc \times (0,T),\\
&|\sigmat +\chi \uut|< \fc(\teta-\teta_s)|\sigman + \chi \uun |\Longrightarrow \dotut = \mathbf{0} && \text{in } \Gmc \times (0,T),\\
&|\sigmat +\chi \uut|= \fc(\teta-\teta_s) |\sigman + \chi \uun |\Longrightarrow
\exists\,  \nu \geq 0: \ \dotut=-\nu (\sigmat +\chi \uut) && \text{in } \Gmc \times (0,T),
\end{aligned}
\end{equation}
which generalize the {\it dry friction} Coulomb law, to the case
when adhesion effects are taken into account. Note  that the \emph{positive} function $\fc$ in \eqref{Coulomb}  \ber has the meaning of a \edr
 \emph{friction coefficient}.

The temperature equations \eqref{e1-concrete} and \eqref{eqtetas-concrete}
(where
\bec $g$ is related to the heat flux in the bulk, $h$ is a given heat source, and \eec
$k$ and $\lambda$ are suitably smooth functions),
 feature the singular terms $ \ln(\teta)$
 and $\ln(\tetas)$,  which originate from deriving  \eqref{e1-concrete} and \eqref{eqtetas-concrete}
 from  the \emph{entropy-balance}, in place of the  internal energy balance, equations in the bulk  domain
 and on the contact surface.  These
 terms ensure the strict positivity of $\teta$ and $\tetas$, which is  a {\ele necessary} property in view of the
 thermodynamical consistency of the model.
 Equation  \eqref{e1-concrete} for $\teta$ is coupled to the
 quasi-static momentum balance
 though the term  $- \dive
(\partial_t\mathbf{u})$, {\ele related} to the {\ele presence of a } thermal expansion contribution $\vartheta \matrid
$ {\ele in } the stress tensor $\ssigma$ \eqref{eqI-concrete}. The coupling to the equations for $\tetas$ and $\chi$ occurs through the Robin type boundary condition \eqref{condteta-concrete-2}, where the frictional contribution
$\fc'(\teta-\teta_s)  {\partial I_{(-\infty,0]}( \uun )} |
\dotut|$  features as  a source of heat on the contact surface $\gc$.
% \ber toglierei based  on the ansatz that friction generates heat, perche' se $c$ non dipende da $(\teta-\teta_s)$ il contributo dell'attrito non si vede piu'\edr.
Accordingly, this term also appears on the right-hand side of the equation for $\tetas$, where the function
$\ftetas$, {\ele in terms of which the heat flux on the contact surface is defined,}  will be chosen in a suitable way, cf.\ \eqref{ftetas_intro} below.

 The  evolution equation \eqref{eqII-concrete} for the  internal
variable $\chi$ has a   doubly nonlinear character, in that it
features the subdifferential operators $\partial I_{(-\infty, 0]}, \ \partial I_{[0,1]}
: \R \rightrightarrows \R$. The former acts on $\partial_t \chi$, thereby encompassing in the model
the constraint $\partial_t \chi \leq 0$, i.e.\ that the degradation process of the
adhesive on $\Gmc$ is \emph{irreversible}. The latter subdifferential operator enforces
 %the (physical) constraint $\chi \in [0,1]$.
 the constraint that $\chi$ takes values
 in the (physically admissible) interval $[0,1]$.
 The function ${\ele \gamma}'$ is a smooth {\ele (possibly)} non-monotone perturbation of the monotone term
 $\partial I_{[0,1]}$;  {\ele \bec it derives from \eec  some (possibly) non-convex contribution to the free energy functional}.  \bec Finally, \eec
$ \vartheta_{\mathrm{eq}} $ is a critical temperature.

\paragraph{\bf Analytical difficulties.}
The analysis of system
\eqref{system-concrete} presents   difficulties of various type:
\begin{compactenum}
\item[1)]
The coupling of \emph{bulk}
and \emph{(contact) surface} equations  requires sufficient
 regularity of the bulk variables $\teta$ and $\uu$ for their
traces on $\gc$ to make sense.
\item[2)] On the other hand,
 the highly nonlinear character of the
 equations (due to the presence of several singular and  multivalued operators),
 as well as the \emph{mixed} boundary conditions for the bulk  variables $\uu$
 and $\teta$, do not allow for elliptic regularity estimates which could enhance the spatial regularity of $\uu$, $\partial_t \uu$, and $\teta$. In particular, we are not in the position to get    $H^2$-regularity for the
 bulk variables.
 \item[3)] A further obstacle is the \emph{singular} character of
  the temperature  equations \eqref{e1} and \eqref{eqtetas}, due to the terms
%need to be carefully handled because of the singular character of
%the terms
$\partial_t \ln(\teta)$ and $\partial_t \ln(\teta_s)$.
%therein occurring. In particular, they do not allow but for
Because of these terms, the basic energy estimates on system
\eqref{system-concrete} leads to a very weak
time-regularity of $\teta$ and $\teta_s$. As we will see, it is possible to improve such regularity,
for the sole $\tetas$,
only through a series of enhanced estimates, which in turn rely on a precise form for the {\ele function}
$\ftetas$, cf.\  \eqref{ftetas_intro} below.
\item[4)] The \emph{doubly nonlinear} character of equation \eqref{eqII-concrete} for $\chi$, with the two \emph{unbounded}
subdifferential terms $\partial I_{(\infty,0]}(\partial_t \chi)$ and $\partial I_{[0,1]}(\chi)$.
\item[5)] A major analytical problem is brought about by the coupling of unilateral contact and the
dry friction Coulomb law in the model. This leads to
 the presence of the  multivalued operator
 $ \partial I_{(-\infty,0]} $
  in the coupling terms between the equations for $\teta$,
$\teta_s$, and~$\uu$, and to the product of two subdifferential terms in   \eqref{condIiii-concrete}.
\end{compactenum}

Let us stress that contact problems with friction involve severe, and unresolved, difficulties
even in
 the
case without adhesion. That is why, as  done in many other works (cf.\ the references in
   \cite{bbr5}), starting from the pioneering paper \cite{duvaut} by  \textsc{Duvaut},
 we  have to regularize \eqref{condIiii-concrete}   by resorting to  a {\it nonlocal version} of the Coulomb law.
 More precisely, we
 shall replace the nonlinearity in \eqref{condIiii-concrete} involving friction  by
the  term
\begin{equation}
\label{replacement} \text{
 $ \fc(\teta-\teta_s)|\Reg( \partial I_{(-\infty,0]}(\uun))| \mathbf{d}(\partial_t\uu)$,}
\end{equation}
and, correspondingly, the term
$\fc'(\teta-\teta_s)  \partial
I_{(-\infty,0]}( \uun ) | \dotut| $ in \ber \eqref{condteta-concrete-2} and \eqref{eqtetas-concrete} \edr by
\begin{equation}
\label{replacement2}
\fc'(\teta-\teta_s)   | \Reg(\partial
I_{(-\infty,0]}( \uun )) |  | \dotut|.
\end{equation}
  In \eqref{replacement} and \eqref{replacement2}, $\Reg$ is a
 regularization operator
 with suitable properties, cf.\ Hypothesis \ref{hyp:4}.  The regularized friction law resulting from
 the replacement \eqref{replacement} in \eqref{condIiii-concrete}  can be interpreted as
  taking into account
 nonlocal interactions on the contact surface.

 We  now dwell on the difficulties attached to the doubly nonlinear character of \eqref{eqII-concrete},
  which is   due to the inclusion of \emph{unidirectionality} in the model.
 In order to
 prove the existence of a solution $\chi$ to a reasonably strong formulation of \eqref{eqII-concrete}, featuring
 two selections $\zeta \in \partial I_{(\infty,0]}(\partial_t \chi)$ and $\xi \in \partial I_{[0,1]}(\chi)$,
  %handle the two
  % and obtain
    it is essential to
   estimate the terms
   $\partial I_{(\infty,0]}(\partial_t \chi)$ and $\partial I_{[0,1]}(\chi)$ (formally written as single-valued)
     \emph{separately} in  some suitable function space, in fact $L^2(0,T; L^2(\gc))$. Starting from the paper
      \cite{bfl} on the analysis of a  model  for irreversible  phase transition, it has been well known that such
      an estimate can be achieved by testing \eqref{eqII-concrete} by the (formally written) term
       $\partial_t (-\Delta \chi + \partial I_{[0,1]}(\chi)) $ and employing monotonicity arguments.
       The related calculations also involve an integration by parts in time on the right-hand side of
\eqref{eqII-concrete}, which in turn requires suitable time-regularity for $\tetas$.

That is why, in order to carry out the crucial
estimate for handling of the terms
   $\partial I_{(\infty,0]}(\partial_t \chi)$ and $\partial I_{[0,1]}(\chi)$ in \eqref{eqII-concrete}, we will need to enhance the time-regularity of $\tetas$. This can be done through a series of estimates, which  partially
  rely on choosing in \ber \eqref{eqtetas-concrete} \edr a {\ele function} $f$ tailored to the logarithmic nonlinearity therein.

  It seems to us that the structure of these estimates, and the technical reasons underlying our hypotheses on the various nonlinearities of the system, can be highlighted upon developing our analysis on an \emph{abstract} version of system
  \eqref{system-concrete}. % where the logarithms in equations \eqref{}
   \paragraph{\bf A generalization of system \eqref{system-concrete}.} Hereafter, we shall address the following
   PDE system coupling  a volume process in a domain $\Omega \subset \R^3$, with a   surface process
    occurring on a portion $\Gammac$ of the boundary of $\Omega$, which fulfills $\partial\Omega = \overline{\Gdir} \cup  \overline{\Gnew} \cup \overline{\Gammac} $. The surface process is described by  a suitable internal variable
     $\chi$, and thermal effects, in the bulk and on the surface, are accounted for   through the temperature variables $\teta$ and $\tetas$.  Accordingly, we have
    \begin{subequations}
 \label{abstract-system}
\begin{align}
 \label{e1} &\partial_t (\lteta(\vartheta)) -
\dive (\partial_t\mathbf{u}) -\mathrm{div}(\nabla \fteta(\teta)) = h
\qquad \text{in $\Omega \times (0,T)$,}
\\
\label{condteta} & \nabla \fteta(\teta)  \cdot \mathrm{n}
\begin{cases} =  0 & \text{in $(\partial \Omega \setminus \Gammac)
\times (0,T)$,}
\\
\in -k(\chi) (\vartheta-\vartheta_s)-\fc'(\teta-\teta_s) \Psi(\dotu)
|\Reg(\partial \Phi (\uu))| & \text{in $ \Gammac \times (0,T)$,}
\end{cases}
\\
\label{eqtetas} &\partial_t (\ltetas(\vartheta_s)) - \partial_t
(\lambda(\chi))  -\mathrm{div} (\nabla \ftetas( \vartheta_s)) \in
k(\chi) (\vartheta-\vartheta_s)+ \fc'(\teta-\teta_s) \Psi(\dotu)
|\Reg(\partial \Phi (\uu))|
 \quad  \text{ in $\Gammac
\times (0,T)$,}
\\
\label{condtetas} & \nabla \ftetas( \vartheta_s) \cdot  \nn_s =0
\qquad \text{in $\partial \Gammac \times (0,T)$,}
\\
&-  \hbox{div}\ssigma={\bf f}  \ \text{ with } \
\ssigma=K_e\tensore+
 K_v\tensoret+\vartheta \matrid \quad \hbox{in }\Omega\times (0,T),
 \label{eqI}
 \\
&{\bf u}={\bf 0}\quad\hbox{in }\Gdir\times (0,T), \quad
\ssigma{\bf n}={\bf g} \quad \hbox{in }\Gnew\times (0,T),
\label{condIi}
\\
& \label{new-abstract-sigma} \ssigma  {\bf n} +\chi \uu + \partial
\Phi(\uu) + \fc(\teta-\teta_s) \partial \Psi (\dotu)
  | \Reg( \partial \Phi (\uu))|\ber \ni {\bf 0} \edr \quad\hbox{in }{\Gammac} \times (0,T),
 \\
& \partial_t \chi + \partial \pseupot(\partial_t \chi)
-\Delta\chi+\partial \widehat{\beta}(\chi) + {\ele \gamma}'(\chi) \ni
-\lambda'(\chi) (\vartheta_s-\vartheta_{\mathrm{eq}})- \frac 1 2
|\uu|^2\quad\hbox{in }{\Gammac} \times (0,T),\label{eqII}
\\
&\partial_{\nn_s} \chi=0 \quad \text{in $\partial {\Gammac} \times
(0,T)$.} \label{bord-chi}
\end{align}
\end{subequations}
Observe that the temperature equations \eqref{e1} and \eqref{eqtetas} are a generalization of the ``concrete" equations
\eqref{e1-concrete} and \eqref{eqtetas-concrete}.
The logarithms
therein have been replaced by two (possibly different)
maximal monotone (single-valued) operators
\ber $\lteta$ and $\ltetas$ fulfilling suitable
conditions.   The function  $\fteta$  is  a strictly increasing, bi-Lipschitz, and otherwise general. Instead,
$\ftetas$  depends on the choice of $\ltetas$, as it is defined by
\edr
% $\lteta : \mathrm{D}(\lteta) \subset \R  \to \R $ and
% $\ltetas : \mathrm{D}(\ltetas) \subset \R  \to \R $, fulfilling suitable
%  conditions.   The heat flux  $\fteta : \R \to \R  $  is  a strictly increasing, bi-Lipschitz, and otherwise general function. Instead,
%  $\ftetas: \R \to \R$  depends on the choice of $\ltetas$, as it is defined by
  \begin{equation}
\label{ftetas_intro} \ftetas (\teta_s) := \int_0^{\teta_s}
\frac1{\ltetas'(r)} \dd r\,.
\end{equation}
%\ber
%Qui non specificherei i domini che vengono precisati dopo. $f$ non e' definita in $\R$
%\edr
Admissible choices are, for instance
\[
(\ltetas(\tetas) = \ln(\tetas), \ftetas(\tetas) = \tetas^2), \qquad (\ltetas(\tetas) = \tetas, \ftetas(\tetas) = \tetas).
\]
Nonetheless,  in the second case the strict positivity of the temperature is no longer
directly ensured by the form of $\ltetas$ like in the first case.

\ber Note moreover that the \edr
terms $\partial I_{(-\infty,0]}(\uun)$ and $\mathbf{d}(\partial_t \uu)$ in \ber system \eqref{system-concrete} have been replaced
by the subdifferentials $\partial \Phi (\uu)$ and $\partial \Psi(\partial_t \uu)$ in  \eqref{new-abstract-sigma},
and accordingly in
\eqref{condteta} and \eqref{eqtetas} \edr (with $\Reg$ the regularization operator used in the analysis of frictional problems).
 Here, $\Phi$ and $\Psi$ are (possibly nonsmooth) positive, lower semicontinuous, and   convex functionals, and in addition $\Psi$ is positively homogeneous of degree $1$, i.e.\ $\Psi(l\vv) = l \Psi(\vv) $ for all $l \geq 0$ and $\vv \in \R^3$.  It turns out that the crucial requirement for tackling the simultaneous presence of the two terms  $\partial \Phi (\uu)$ and $\partial \Psi(\partial_t \uu)$
in \ber \eqref{new-abstract-sigma} \edr is that
\[
\partial \Phi (\uu) \text{ and } \partial \Psi(\vv) \text{ are orthogonal for all $\uu, \, \vv \in \R^3$,}
\]
cf.\ Hypothesis \ref{hyp:3},
%\footnote{\ber Promemoria: la \eqref{ass:orthogonal} puo' essere scritta in modo piu' semplice, il dominio di $\Psi$ e' tutto $\R^3$...\edr}
which is obviously fulfilled in the case
of system \eqref{system-concrete}.

Finally, in \eqref{eqII} $\widehat {\rho}$ and $\widehat \beta$ are two convex and lower semicontinuous functionals,
such that $\mathrm{dom} (\widehat \beta) \subset [0,+\infty)$
in such a way as to ensure the positivity of the internal variable $\chi$, which is also crucial for the analysis of system \eqref{abstract-system} {\ele and guarantees the physical consistency of the phase variable}.
\paragraph{\bf An existence result for system \eqref{abstract-system}.} The main result of this paper, Theorem \ref{thm:main}, states the existence of solutions to the (Cauchy problem for a) variational formulation of
 \eqref{abstract-system}, in an appropriate functional framework which reflects the  energy estimates for
 this system. This variational formulation is given in \underline{Section \ref{s:2}}, where all the hypotheses on the various nonlinearities of the system and on the problem data are collected.

 The proof  follows by passing to the limit in a carefully devised approximate system, where
 some of the multivalued subdifferential terms featured in
 \eqref{abstract-system} are replaced by their Yosida regularizations.
 In particular, the possibly singular functions $\lteta$ and $\ltetas$ are regularized, and in addition the viscous terms $\eps \partial_t\teta$ and $\eps
 \partial_t \tetas$ are \ber included into \edr the left-hand sides of \eqref{e1} and
 \eqref{eqtetas}, thereby enhancing the time-regularity of the $\teta$- and $\tetas$-components of the approximate solutions.
% \ber Toglierei la frase
\bec That is why, \eec  this procedure \bec requires \eec a technically delicate  construction of approximate initial data for $\teta$ and $\tetas$.
%perche' sembra che serva solo a questo\edr

In
\underline{Section \ref{s:approximation}} we develop  the set-up of the approximate problem,
 \ber we state its variational formulation and prove a \emph{local-in-time} existence result for the approximate system via a
 Schauder
  fixed point argument. Then, we derive in \underline{Section \ref{s:4}} a series of a priori estimates on the approximate solutions. \bec Relying on them, \eec
   in
\underline{Section \ref{s:5}} we perform the passage to the limit with the approximate solutions
via refined compactness  and lower semicontinuity arguments. \edr
We then obtain in Theorem \ref{thm:exist-local} the (still local-in-time)  existence
  of solutions to  \eqref{abstract-system}.
  In this passage to the limit
  we have to deal with a significant
difficulty stemming from the
 coupling between thermal and frictional effects in the model. This is  the
 dependence of the friction coefficient $\fc$ on the thermal gap $\teta-\teta_s$.
 To tackle the passage to the limit in the approximation of the terms
$\fc'(\teta-\teta_s)  |\Reg( \partial\Phi(\uu))|\Psi(\partial_t\uu) $  in \eqref{condteta} and  \eqref{eqtetas},
and
$\fc(\teta-\teta_s)|\Reg( \partial\Phi(\uu))|\partial\Psi(\partial_t\uu)$ in \eqref{new-abstract-sigma},
it is essential to prove strong
 compactness for (the sequences approximating) $\teta$ and $\teta_s$ in suitable spaces.
 The key step   for $\teta$ is to  derive
  an estimate in
 $\BV (0,T;W^{1,3+\epsilon}(\Omega)')$
  for  all $\epsilon>0$, which enables us to apply a suitable version of the Lions-Aubin compactness
 theorem generalized to $\BV$ spaces.
  As for $\tetas$, exploiting condition \eqref{ftetas_intro}, we are indeed able to
  obtain an a priori bound for $\tetas$ in $H^1 (0,T;L^2(\Gammac){\ele )}$. As previously mentioned,
  this enhanced time-regularity estimate for $\tetas$
   plays a crucial role
  in the derivation of estimates for the terms
  $\partial\widehat{\rho}(\partial_t \chi)$ and
   $\partial \widehat{\beta}(\chi)$ in \eqref{eqII}.
   \begin{remark}
   \upshape
   \label{rmk:aside}
   This mismatch in the time-regularity properties of $\teta$ and $\tetas$, as well as the fact that {\ele (as we are dealing with possibly different thermal properties of the substances in the
   domain and on the contact surface)} we can allow for
   different choices of
    the functions $\lteta$ and $\ltetas$,
    highlights %the fact
    that the temperature
   equations \eqref{e1} and \eqref{eqtetas} have a substantially different character. In fact,
   in \eqref{e1} the {\ele function} $\fteta$ can be fairly general, whereas
   in \eqref{eqtetas} the  function $\ftetas$ needs to be chosen of the form \eqref{ftetas_intro}, in order to allow for the enhanced time-regularity estimate for $\tetas$. Such an estimate
   could not be carried out on equation \eqref{e1}, due to the mixed boundary conditions \eqref{condteta}, which are also responsible for the low spatial regularity of $\teta$.
\end{remark}

The last step in our existence proof consists in the extension of the local-in-time solution to the Cauchy problem for system \eqref{system-concrete}.
This prolongation  procedure follows the lines of an extension argument from \cite{bbr1}.
 %and is in fact rather  complex.
  Indeed, for technical reasons that shall be expounded at the beginning of \underline{Section \ref{s:6}},
  %\footnote{\ber qui non entrerei in dettagli ma invece spiegherei per bene le motivazioni tecniche alla base della nostra procedura di estensione all'inizio della Sez.\ 6...  \edr}
   it is necessary to extend the local-in-time solution, whose existence is guaranteed by Theorem \ref{thm:exist-local}, along with its approximability properties.
  This makes the prolongation argument rather complex, that is why we have devoted to it the whole Sec.\ \ref{s:6}.
   %Therefore
\ber Finally, in \bec the Appendix \eec
%Section \  \ref{appendice}
we collect a series of
auxiliary results, among which some lemmas addressing the approximation of the initial data for $\teta$ and $\tetas$ and the properties of the functions regularizing the nonlinearities of the system.  \edr

\section{\bf Main result}
\label{s:2}
 %%%%%%%%%%%%%%%
{\ele Before stating the analytical problem we are solving and the corresponding  existence result, we first \bec set up the  notation and  the assumptions. \eec}

{\ele \subsection{Setup}}
\label{ss:2.1} \noindent Throughout the paper we shall
 assume that
 \begin{equation}
 \label{assumpt-domain}
 \begin{gathered}
 \text{
 $\Omega$ is a
bounded   Lipschitz  domain in $\R^3$, with }
\\
\partial\Omega= \ber \overline \Gdir \cup \overline \Gnew \cup \overline {\Gammac}, \ \text{ $\Gdir$, $\Gnew$, $\Gammac$,
 open disjoint subsets in the relative topology of $\partial\Omega$,
such that  } \edr
\\
\mathcal{H}^{2}(\Gdir), \,  \mathcal{H}^{2}(\Gammac)>0, \qquad
\text{and ${\Gammac} \subset \R^2$ a  sufficiently smooth
\emph{flat} surface.}
\end{gathered}
\end{equation}
 More precisely,
by \emph{flat} we mean that $\Gammac$ is a subset of a hyperplane of
$\R^3$ and $\mathcal{H}^2(\Gammac)=\mathcal{L}^2(\Gammac)$,
$\mathcal{L}^d$ and $\mathcal{H}^d$ denoting the $d$-dimensional
Lebesgue and  Hausdorff measures{\ele, respectively}. As for smoothness, we require that
$\Gammac$ has a $\mathrm{C}^2$-boundary. % thanks to, e.g.,
%\cite[Thm.\ 8.12]{gilbarg-trudinger},
% this will justify
%the elliptic regularity estimates we will perform on the solution
%component $\chi$.
%%%%%
 %%%%%%%%%%%%
 %%%%%%%%%%%%%%%%%%%%%%%%%%%
\begin{notation}
\label{notation-1} \upshape
 Given a Banach space $X$, we denote by
 $\pairing{}{X}{\cdot}{\cdot}$ the duality pairing
between its dual space  $X'$ and $X$ itself, and by
$\Vert\cdot\Vert_X$
 the norm in $X$. In particular, we shall
use the following   short-hand notation for  function spaces
\[
\begin{gathered}
H:= L^2(\Omega), \quad V:=H^1(\Omega),\quad
 \bsH:= L^2 (\Omega;\R^3), \quad \bsV :=H^1 (\Omega;\R^3),
 \\  \Hc: = L^2
({\Gammac}), \quad \Vc: = H^1 ({\Gammac}), \quad \Yc:=
H^{1/2}_{00,\Gdir}({\Gammac}),
 \\
   \bsW:=\{{\bf v}\in \bsV\, : \ {\bf v}={\bf 0}\hbox{ a.e.\ on
}\Gdir\},\quad \bsH_{{\Gammac}} := L^2 ({\Gammac};\R^3), \quad
\bsY_{{\Gammac}}
:= H^{1/2}_{00,\Gdir}(\Gammac;\R^3),  %\quad Y'
%=H^{-1/2}({\Gammac}),
\\
%\bsH_{{\Gammac}} := L^2 ({\Gammac};\R^3), \quad \bsY_{{\Gammac}} = H^{1/2}({\Gammac};\R^3),
%\quad \bsY_{{\Gammac}}' = H^{-1/2}({\Gammac};\R^3),
%\\
\end{gathered}
\]
where we recall that
\[
 H^{1/2}_{00,\Gdir}(\Gammac)=
\Big\{ w\in H^{1/2}(\Gammac)\, :  \ \exists\, \tilde{w}\in
H^{1/2}(\Gamma) \text{ with } \tilde{w}=w  \text{ in $\Gammac$,} \
\tilde{w}=0 \text{ in $\Gdir$} \Big\}
\]
 and $H^{1/2}_{00,\Gdir}(\Gammac;\R^3)$ is analogously defined. We will also use the space
 $H^{1/2}_{00,\Gdir}(\Gnew;\R^3).$
The space $\bsW$  is endowed with the natural norm induced by
$\bsV$. We will make use of the operator
\begin{equation}
A:\Vc \to \Vc' \qquad \pairing{}{\Vc}{A\chi}{w}:= \int_\gc \nabla
\chi {\ele \cdot}\nabla w \dd x \  \text{ for all }\chi, \, w \in \Vc
\end{equation}
and of the notation %$\mathcal{L}^{d}$ ($\mathcal{H}^{d}$, respectively)
 %for the $d$-dimensional Lebesgue (Hausdorff, resp.) measure. We set
\begin{equation}
\label{mean-value} m(w):= \frac1{\mathcal{L}^d(A)} \int_A w \dd x
\quad \text{for } w \in L^1(A).
\end{equation}
%and of   the standard Riesz operator
%%\begin{equation}
%\label{e:riesz}
% \mathrm{R}: V \to V' \ \ \text{given by}  \ \
% \pairing{}{V}{\mathrm{R}(u)}{v}:= \int_{\Omega} uv + \int_{\Omega} \nabla u \nabla
% v \quad \text{for all $u,\, v \in V$.}
%\end{equation}
%  We will denote by   $\mathrm{R}_{\Gammac}$ the analogously defined  Riesz operator mapping $\Vc$
%   into $\Vc'$.
\end{notation}
%%%%%%%%%%
%\paragraph{Useful inequalities.}
% We  are going to exploit the following
% continuous embeddings (which hold in the sense of traces):
%\begin{equation}
%\label{cont-embe}
% V \subset L^4(\gc), \qquad  \bsW \subset
%L^{4}(\gc;\R^3), \qquad   \Vc \subset L^{q}(\gc) \text{ for all
%$1\leq
%q<\infty$} %  \  W^{1,p}(\gc) \subset \mathrm{C}^0(\gc) \text{ for $p>2$}
%\end{equation}
%and  the fact that, by Poincar\'e's inequality, for every Lipschitz
%domain $A \subset \R^d$
%\begin{equation}
%\label{equiv-norm} \exists\, C>0 \ \forall  v \in  W^{1,2}(A) \, :
%\qquad \| v\|_{ W^{1,2}(A)} \leq C(\| v\|_{L^1(A)}+\| \nabla
%v\|_{L^2(A)}),
%\end{equation}
%where $\| v\|_{L^1(A)}$ can be replaced by $|m(v)|$.
%% also recall
%the Young inequality for convolutions, namely
%\begin{equation}
%\label{young}
%\begin{gathered}
%\forall\, p, \, q, \, r \in [1, \infty]
% \ \text{s.t} \ \  \frac1r=\frac1p+\frac1q-1
%\quad \forall\, a \in L^p (0,T) \ b \in L^q (0,T;X) \quad \text{we
%have}
%\\
%a * b \in L^r (0,T;X) \quad \text{and} \ \ \| a * b \|_{L^r (0,T;X)}
%\leq \| a \|_{L^p (0,T)} \| b \|_{L^q (0,T;X)}\,,
%\end{gathered}
%\end{equation}
% and the
%Young inequality
%\begin{equation}
%\label{young-p-q} \forall\, \delta >0 \ \ \exists \, C_\delta>0 \, :
%\quad  \forall\, p, \, q \in (1,\infty) \ \text{with} \
%\frac1p+\frac1q=1 \ \ \  ab \leq \delta a^p + C_\delta b^q \qquad
%\text{for all $\, a, \, b \in \R$}\,.
%\end{equation}
%%%%%%%%%%%%%%%%%%%%%%%%%%%%
\paragraph{\bf Linear viscoelasticity.}
%%%%%%%%%%%%%%%%%
{\ele We are in the framework of linear viscoelasticity theory (see e.g.\ \cite{bbr6} for some more details). In particular, we prescribe that
the fourth-order tensors $K_e$ and
$K_v$ (denoting the elasticity and the viscosity tensor,
respectively) are symmetric and positive
 definite.  Moreover, we require that they are uniformly bounded,
in such a way that  the following bilinear symmetric forms $a, b : \bsW \times \bsW
\to \R$,   defined~by
$$
\begin{aligned}
a({\bf u},{\bf v}):=\int_{\Omega} \varepsilon({\bf u})K_e\varepsilon(\vv) \dd x
%\quad  \text{for all } \uu, \vv \in
%\bsW,
\qquad \qquad
b({\bf u},{\bf v}):=\int_{\Omega} \varepsilon({\bf u}) K_v
\varepsilon({\bf v}) \dd x  \quad  \text{for all }  \uu, \vv
\in \bsW,
\end{aligned}
$$}
are continuous. In particular, we have
\begin{equation}
\label{continuity} \exists \, \bar{C} >0: \ |a(\uu, \vv)| + |b(\uu,
\vv)| \leq  \bar{C}\| \uu\|_{\bsW} \| \vv\|_{\bsW} \quad \text{for
all } \uu, \vv \in \bsW.
\end{equation}
%Moreover, note that \COMMENT{Se non usiamo mai questa formula, forse
%potremmo anche toglierla}
%$$
%b(\uu,\uu)=\|\varepsilon(\uu)\|_{H}^2\qquad \forall\, \uu \in
%\bfw\,.
%$$
Moreover, since $\Gdir$ has positive measure,
 by Korn's inequality we deduce that $a(\cdot,\cdot)$ and
$b(\cdot,\cdot)$ are $\bsW$-elliptic, i.e., there exist $C_{a},
C_{b}>0 $ such that
\begin{equation}
\label{korn_a}
 a({\bf u},{\bf u})\geq C_a\Vert{\bf u}\Vert^2_{\bsW}\,,
 \qquad \qquad
 % \qquad
%\text{for all }\uu\in \bsW, \,
%\\
%&
 b({\bf u},{\bf u})\geq C_b\Vert{\bf u}\Vert^2_{\bsW} \qquad
\text{for all }\uu\in \bsW.
%\label{korn_b}
\end{equation}

\subsection{Assumptions}
\label{ss:2.2} In order to tackle the analysis of the PDE system
\eqref{abstract-system}, we require the following.
\begin{hypothesis}\label{hyp:1}
{\ele For the functions $\lteta$ and $\ltetas$  in \eqref{e1} and \eqref{eqtetas}}
%\upshape
 we assume that
\begin{subequations}
 \label{cond-L}
\begin{align}
 & \lteta: \mathrm{D}(\lteta) \ber \subset \R \edr \to \R \text{ maximal monotone, with
 } \mathrm{D}(\lteta)  \text{ a (possibly unbounded) open interval}
  \label{cond-L1}
 \\
&
 \label{cond-L2}
 \lteta \in \mathrm{C}^1(\mathrm{D}(\lteta))  \text{ and } \frac{1}{L'}\in \mathrm{C}^{0,1}(\overline{\mathrm{D}(\lteta)}).
\end{align}
%\ber di fatto potremmo supporre sin da subito che $\mathrm{D}(\lteta) $ sia un intervallo aperto (possibilmente illimitato).. \edr
Moreover,  denoting by $J$ a primitive of $\lteta$, we impose that
the Fenchel-Moreau convex conjugate $J^*$  of $J$ (recall that its
derivative coincides with the inverse function $\lteta^{-1}$),
fulfills the following {\em coercivity} condition
\begin{equation}
\label{cond-L3}
 \exists \, C_1, \ C_2   >0  \qquad \forall\, \teta \in
\mathrm{D}(\lteta) \,
  : \quad
  J^*(\lteta(\teta)) \geq C_1 |\teta| -C_2\,.
\end{equation}
\end{subequations}
%Towartds physical consistency we also require that
%\begin{equation}
%\text{dom }j\subseteq(0,+\infty).
%\end{equation}
% Let. It is
%well-known that  the. We have
{\ele Analogously, we assume for $\ltetas$}
\begin{subequations}
\label{cond-ell}
\begin{align}
 \label{cond-ell1}
& \ltetas: \mathrm{D}(\ltetas) \ber \subset \R \to \R \text{ maximal monotone, with
 } \mathrm{D}(\ltetas)  \text{ a (possibly unbounded) open interval} \edr
\\
& \label{cond-ell2}
 \ltetas \in \mathrm{C}^{\ele 1}(\mathrm{D}(\ltetas)) \text{ and } \frac{1}{\ltetas'}\in \mathrm{C}^{0,1}(\overline{\mathrm{D}(\ltetas)}),
 %\\
%&
%\label{cond-ell3}
% \ltetas'(x)>0\ \forall\, x \in \mathrm{D}(\ltetas),
\end{align}
%{\ele QUI LA GIO HA VERIFICATO, MI DICEVA CHE HA CORREZIONE LEMMA PER APPENDICE? Si', fatto.}
%\ber mi chiedo: \`e proprio necessario imporre che $\ltetas'(x)>0$
%per ogni $x$? Questo non segue forse dal fatto che $\ltetas$ \`e
%maximal monotone (quindi non decrescente) e dal fatto che
%$\frac1{\ltetas'}$ \`e Lipschitz?? Forse la \eqref{cond-ell3} si
%pu\`o eliminare... \edr
 as well as, again, the {\em coercivity} condition
\begin{equation}
\label{cond-ell4}
 \exists \, c_1, \ c_2   >0  \qquad \forall\, \tetas \in \mathrm{D}(\ltetas) \, : \
  j^*(\ltetas(\tetas)) \geq c_1 |\tetas| -c_2\,,
\end{equation}
where $j^*$ is
  the Fenchel-Moreau convex conjugate
of $j$, $j$ denoting a primitive of $\ltetas$.
\end{subequations}
\end{hypothesis}
A straightforward consequence of \eqref{cond-L} and of
\eqref{cond-ell} is that
\[
\left\{
\begin{array}{lll}
\lteta'(x) >0  & \text{for all } x \in  \mathrm{D}(\lteta),
\\
\ltetas'(x) >0  & \text{for all } x \in  \mathrm{D}(\ltetas),
\end{array}
\right.
 \ \
\text{ hence } \lteta \text{ and } \ltetas \text{ are invertible.}
\]
Furthermore,
%\footnote{\ber la \eqref{not-restrictive} e la \eqref{not-restrictive-prima} ci semplificano
%un sacco di formule \edr}
 it is not restrictive  to
suppose that
\begin{equation}
\label{not-restrictive-prima} 0 \in {\mathrm{D}(J)} \quad \text{with } J(0)=0
\end{equation}
(the latter relation is trivially obtained with a translation argument), and the same for
$j$. Since $\overline{\mathrm{D}(\ltetas)} = \overline{\mathrm{D}(j)} $, this in particular
implies that
\begin{equation}
\label{not-restrictive} 0 \in \overline{\mathrm{D}(\ltetas)}\,,
\end{equation}
which will be
convenient for the definition of $f$ and $f_\eps$ later on.
%%%%%%%%%%%%%%%%%%%%%%%%%%
\begin{example}
\upshape \label{ex-for-L} \upshape An example  for $\lteta$ in
accord with conditions \eqref{cond-L1}--\eqref{cond-L3} is
\begin{equation}
\label{loga}
 \lteta(\teta)=\ln (\teta) \qquad \forall\, \teta
\in \mathrm{D}(\lteta)=(0,+\infty)\,.
\end{equation}
In this case, \bec with, \eec  e.g.\ $J(\teta)= \teta(\ln(\teta) -1)$ for
all $\teta \in (0,+\infty)$ as primitive of $L$, we see that $J^*
(w)=e^w= \lteta^{-1}(w)$ for all $w \in \R$, and \eqref{cond-L3} is
satisfied. This choice of $\lteta$ is particularly meaningful from a
modeling viewpoint, since it enforces that $\teta>0$, in accord with
its interpretation as the absolute temperature. Clearly,
$\ltetas(\teta_s) = \ln(\teta_s) $ is also an admissible choice for
$\ltetas$.

As already mentioned in the introduction, let us point out that, in
fact, for our analysis we do not need the positivity of $\teta$ and
$\teta_s$ (namely, that $\mathrm{D}(\lteta), \, \mathrm{D}(\ltetas)
\subset (0, +\infty)$). Hence, other admissible choices  for \ber $\lteta$ and \edr $\ltetas $
are
\begin{equation}
\label{lteta-id} \lteta(\teta) =\teta, \qquad \ltetas(\tetas) =
\tetas \qquad \text{with } \mathrm{D}(\lteta)=
\mathrm{D}(\ltetas)=\R.
\end{equation}
Let us also stress that, in system \bec  \eqref{abstract-system} \eec
%\eqref{e1}--\eqref{bord-chi}
 we can in principle combine  two distinct choices
for $\lteta$ and $\ltetas$.
\end{example}
\begin{hypothesis}%\upshape
\label{hyp:2} {\ele As far as the  functions $\fteta$ and $\ftetas$
\bec are concerned, \eec
we} \bec impose \eec that
\begin{align}
 \label{cond-g}
& \fteta \in \mathrm{C}^1(\R) \text{ and } \exists\, c_3, c_4 >0\ :
\ \forall\, x \in \R\ \ c_3\leq \fteta'(x)\leq c_4.
\end{align}
\noindent As for $\ftetas$, as previously mentioned we require that
it is  is the primitive of $\frac{1}{\ltetas'}$; in view of
\eqref{not-restrictive}, we set
%\footnote{\ber attenzione! prima avevamo definito $\ftetas$ su
%$\R$, invece va definita su $\overline{\mathrm{D}(\ltetas)}$! \edr}
\begin{align}
 \label{def-f}
& \ftetas(x)=\int_0^x \frac{1}{\ltetas'(s)}\, \dd s\,,\ \forall\, x
\in \overline{\mathrm{D}(\ltetas)}\,.
\end{align}
%%%
\end{hypothesis}
%\begin{remark}
%\upshape \label{rmk:traslazione} Let us mention\footnote{\ber se ci rendiamo conto che non serve $\ltetas$ in
%$C^2$, possiamo togliere questo rmk. \edr} in advance that the
%$\mathrm{C}^2$-regularity requirement for $\ltetas$ in
%\eqref{cond-ell2},  not imposed on $\lteta$,
 %$is motivated by an enhanced regularity estimate to be (formally)
 %performed on the surface equations \eqref{eqtetas} and \eqref{eqII}.
 %Namely, one (formally) multiplies \eqref{eqtetas} by $\partial_t
 %f(\teta_s)$, \eqref{eqII} by $\partial_t (A \chi +\beta(\chi))$,
 %adds the resulting relations and integrates in time.
  %This
 %ultimately allows for estimating \emph{each} of the terms
 %$\partial \widehat{\rho} (\chi_t)$ and $\partial \widehat{\beta}(\chi)$
  %in  $L^2(0,T; L^2(\Omega))$, which is crucial for the analysis of
  %\eqref{eqII}.

\begin{example}
\upshape \label{ex-for-fg} Clearly, the {\ele function} $\ftetas$
depends on the choice for $\ltetas$. For example,
\[
\left\{
\begin{array}{llll}
  \ltetas(\tetas) = \tetas  &  \Rightarrow & \ftetas(\tetas) = \tetas,
 \\
   \ltetas(\tetas) = \ln(\tetas)  &  \Rightarrow & \ftetas(\tetas) = \tetas^2,
  \end{array}
\right.
\]
%\ber mettere altri esempi?? \edr
\end{example}
\begin{hypothesis}[The subdifferential operators in the momentum \ber balance \edr
equation] %\upshape
\label{hyp:3}
 We suppose that
\begin{equation}
\label{ass-psi} \Psi : \R^3 \to [0,+\infty) \text{ is  convex,
non-degenerate, and positively $1$-homogeneous}
\end{equation}
i.e.\ $\Psi$ satisfies
\[
\Psi(\vv)>0 \text{ if } \vv \neq 0, \qquad \Psi(l \vv) = l \Psi(\vv)
\text{ for all } l \geq 0 \text{ and } \vv \in \R^3, %\\
%\Psi(\vv_1+\vv_2) \leq \Psi(\vv_1) + \Psi(\vv_2) \text{ for all }
%\vv_1, \, \vv_2 \in \R^3 \end{gathered}
\]
(in fact, under positive homogeneity of degree $1$, sublinearity is
equivalent to convexity).
As for the function $\Phi$, we assume that
\begin{equation}
\label{hyp:phi}
\begin{gathered}
 \Phi : \R^3  \rightarrow [0, + \infty] \, \text{is proper, convex and  lower semicontinuous, with $\Phi(0)=0$}
\end{gathered}
\end{equation}
and effective domain $\mathrm{dom}(\Phi)$.
We impose the following ``compatibility'' condition between the
respective subdifferential operators $\partial \Psi: \R^3
\rightrightarrows \R^3$ and $\partial \Phi: \R^3 \rightrightarrows
\R^3$:
\begin{equation}
\label{ass:orthogonal} \forall\, \uu \in \mathrm{dom} (\Phi) \text{
and } \vv \in \ber \R^3 \edr, \qquad \forall\, \eeta \in
\partial \Phi(\uu) \text{ and } \zz \in \partial \Psi(\vv)\, :
\qquad \eeta \cdot \zz =0.
\end{equation}
\end{hypothesis}
In the variational formulation of system \eqref{abstract-system}
(cf.\ \eqref{system-weak} ahead), %for technical reasons which
%will be expounded in Rmk.\ \ber ??? motivare perch\'e qui usiamo  un
%operatore astratto?? \edr
 in fact we are going to make use of the
abstract realization of $\Phi$ as a functional on
$\bsY_{{\Gammac}}$, viz.\
\begin{equation}
\label{funct-phi} \bvarphi:  \ber \bsY_{{\Gammac}} \edr \to [0,+\infty] \ \text{ defined by }
\ \bvarphi(\uu):= \left\{
\begin{array}{ll}
\int_{\Gammac} \Phi(\uu) \dd x & \text{if $\Phi(\uu) \in L^1
({\Gammac})$,}
\\
+\infty
 & \text{otherwise}
 \end{array}
 \right. \quad \text{for all $\uu  \in \bsY_{{\Gammac}}.$}
 \end{equation}
Since $\bvarphi: \bsY_{{\Gammac}} \to [0,+\infty]$ is a proper,
convex and lower semicontinuous functional on $\bsY_{{\Gammac}}$,
its subdifferential
\[
\partial \bvarphi: \bsY_{{\Gammac}}
\rightrightarrows \bsY_{{\Gammac}}' \text{ is a maximal monotone
operator.}
\]
With a slight abuse of notation, we will  \bec use \eec the symbol $\eeta$
not only for the elements of $\partial \Phi$, but also for those of
 $\partial \bvarphi$.

Instead, in formulation  \eqref{system-weak}  we are  going to stay
with the ``concrete'' subdifferential operator $ \partial \Psi : \bsH_{{\Gammac}} \rightrightarrows \bsH_{{\Gammac}}$,
%\footnote{\ber mi sembra che sia piu' appropriato
%usare $ \partial \Psi : \bsH_{{\Gammac}} \rightrightarrows \bsH_{{\Gammac}}$
%in \eqref{system-weak}...  \edr}
which with a slight abuse %of notation
\bec we \eec  denote in the same way as the operator $
\partial \Psi: \R^3 \rightrightarrows \R^3
$ inducing it. It follows from \eqref{ass-psi} (observe that $\mathrm{dom}(\Psi)
=\R^3$), that \bec $\partial \Psi: \R^3 \rightrightarrows \R^3 $ satisfies \eec
\begin{equation}
\label{bounded-operator}
%\text{ is bounded, i.e. } \
\exists\, C_{\Psi}>0 \ \forall\, \zz\in \partial \Psi(\vv) \, :
\qquad |\zz| \leq C_\Psi.
\end{equation}
\begin{example}
\upshape \label{ex:contact-friction} The prototypical example of
functionals $\Phi$ and $\Psi$ complying with Hypothesis
\eqref{hyp:3} comes from the modeling of frictional contact. In this
frame, we have % (cf.\ notation \eqref{notation-vettore})
  %Admissible choices for $\Phi$
%and $\Psi$  are
\begin{equation}
\label{choice-Signorini}
\begin{aligned}
&
 \Phi(\uu) := I_{(-\infty, 0]}(\uun),
 \\
 &
 \Psi(\vv):= |\vvt|.
 \end{aligned}
\end{equation}
Clearly, the orthogonality condition \eqref{ass:orthogonal} is
fulfilled in this case.  Nonetheless, let us highlight that
\eqref{ass:orthogonal} allows for much more generality:  for
example, $\partial \Phi(\uu)$ and $\partial \Psi(\vv)$ might be of
the form
\[
\left\{
\begin{array}{lllll}
  \eeta \in \partial \Phi(\uu)   & \text{ with }\eeta = \eta \ww_1(u)
 &   \eta \in \R, &   \ww_1(\uu) \in \R^3,
\\
 \zz \in \partial \Psi(\vv)   & \text{ with }\zz = z \ww_2(\vv) &
z\in \R, &   \ww_2(\vv) \in \R^3,
\end{array}
\right.
\]
with $\ww_1(\uu)$ and $\ww_2 (\vv)$ depending on $\uu$ and $\vv$,
respectively, and such that
\[
\ww_1(\uu) \cdot \ww_2 (\vv)=0 \quad \text{for all }\uu, \, \vv \in
\R^3.
\]
\end{example}
\begin{hypothesis}[The regularizing operator $\Reg$]
%\upshape
\label{hyp:4} Following \cite{bbr5,bbr6} we require that there
exists $\nu>0$ such that
\begin{align}
 \label{hyp-r-1}
&
\begin{aligned}
 & \Reg: L^2 (0,T;\bsY_{{\Gammac}}') \to L^\infty
(0,T;L^{2+\nu}(\gc;\R^3))  \text{  is weakly-strongly continuous,
viz.}
\\
  &
  \eeta_n \weakto \eeta  \ \text{ in
$L^2 (0,T;\bsY_{{\Gammac}}')$} \ \  \Rightarrow \ \
 \Reg(\eeta_n) \to
\Reg(\eeta) \ \text{ in $L^\infty (0,T;L^{2+\nu}(\gc;\R^3)$}
\end{aligned}
\end{align}
for all  $(\eeta_n),\,\eeta \in L^2 (0,T;\bsY_{{\Gammac}}')$.
\end{hypothesis}

Observe that \eqref{hyp-r-1} implies that $ \Reg: L^2
(0,T;\bsY_{{\Gammac}}') \to L^\infty (0,T;L^{2+\nu}(\gc;\R^3)) $ is
bounded.
 We
refer to \cite[Example 3.2]{bbr6} for the explicit construction of
an operator $\Reg$ complying with \eqref{hyp-r-1}.
%%%%%
\begin{hypothesis}[The subdifferential operators in the equation for $\chi$]
\label{hyp:5}
 We assume that $\widehat{\beta} $ in \eqref{eqII}
 fulfills
\begin{equation}
\label{hyp-beta}
 \begin{gathered}
 \widehat{\beta}: \R \rightarrow  (-\infty,+\infty] \, \text{is proper, convex and  lower semicontinuous, with
 }  \ \mathrm{dom}(\widehat \beta) \subset [ 0,+\infty).
\end{gathered}
\end{equation}
In what follows, we use the notation $\beta:=
\partial\widehat{\beta}$.

We also require that
\begin{equation}
\label{hyp-rho}
 \begin{gathered}
 \pseupot: \R \rightarrow  [0,+\infty] \, \text{is proper, convex and  lower semicontinuous, with
 }  0 \in \mathrm{dom}(\pseupot).
 %\text{ and }
% \\
%\exists\, p \in (1,+\infty)  \ \  \exists\, C_{\pseupot,1}, \, C_{\pseupot,2}>0  \ \ \forall\, x \in \R\, :  \ \ \pseupot(x) \geq  C_{\pseupot,1}|x|^p -  C_{\pseupot,2}.
\end{gathered}
\end{equation}
We use $\rho$ as a placeholder for $\partial \pseupot$.
 %\ber non si deve richiedere che
%$\mathrm{dom} (\pseupot) \subset (-\infty,0]$, giusto? \edr
\end{hypothesis}
Observe that, with a translation we can always confine ourselves to
the case in which $\pseupot(0)=0 = \min_{x\in \R}\pseupot(x)$,
therefore we may also suppose that
\begin{equation}
\label{addition-rho} 0 \in \rho(0).
\end{equation}
The simplest examples for $\widehat{\beta}$ and $\pseupot$ are
$\widehat{\beta} (\chi) = I_{[0,1]}(\chi)$ and $\pseupot(\chi_t) =
I_{(-\infty,0]}(\chi_t)$.
%%%%
%%%%%%
\begin{hypothesis}[The other nonlinearities]
\label{hyp:6} We assume that the functions $k$ in
\eqref{condteta}--\eqref{eqtetas}, $\fc$ in \eqref{condteta},
\eqref{condtetas}, and \eqref{new-abstract-sigma},
 $\lambda$ in
\eqref{eqtetas} and \eqref{eqII},
 and ${\ele \gamma}$ in \eqref{eqII}
 fulfill
 %\footnote{\ber qui ho omesso l'esempio concreto delle varie funzioni, perch\'e mi sembrava davvero poco significativo...  \edr}
\begin{align}
 &  \label{hyp-k}    k \, : \R \to [0,+\infty)\ \  \text{is
Lipschitz
continuous,}  % \exists\, c_3 >0 \ \forall\, x \in \R\, : \ k(x)%
%\geq c_3,
\\
& \label{hyp-fc}
 \fc \in \mathrm{C}^1(\R), \qquad \exists\, c_5,\, c_6
>0
 \ \forall\, x \in \R\, : \ \fc(x) \geq c_5, \ \
 |\fc'(x)| \leq c_5, \qquad
 \fc'(x) x \geq 0.
\\
&
 \label{cond-landa-enhanc}
 \lambda \in \mathrm{C}^2(\R) \text{ and } \exists\, c_{7}, c_{8} >0
\ \forall\, x \in \R\, :  \ |\lambda'(x)| \leq c_{7}, \
|\lambda{''}(x)| \leq c_{8},
\\
 &
 \label{hyp-sig} {\ele \gamma} \in \bec \mathrm{C}^{2} (\R), \eec  \ \text{with
} {\ele \gamma}': \R \to \R \text{ Lipschitz continuous.}
\end{align}
 \end{hypothesis}
%%%%%%%%%%
\paragraph{\bf Assumptions on the problem  and on the initial data.}
We  require that
\begin{subequations}
\label{hyp-data}
\begin{align}
\label{hypo-h} &  h \in L^2 (0,T;V')\cap L^1 (0,T;H) \,,
\\
&  \label{hypo-f} \mathbf{f} \in L^2 (0,T;\mathbf{W}')\,,
\\
&  \label{hypo-g}
 \mathbf{g} \in L^2 (0,T;
H_{00,\Gdir}^{1/2}(\Gnew;\R^3)').
\end{align}
\end{subequations}
For later convenience, we remark that, thanks to
\eqref{hypo-f}--\eqref{hypo-g} the function $\mathbf{F}:(0,T) \to
\bfW'$  defined by
$$
 \pairing{}{\bsW}{\mathbf{F}(t)}{\vv}:=\pairing{}{\bsW}{\mathbf{f}(t)}{\vv}
 +\pairing{}{H_{00,\Gdir}^{1/2}(\Gnew;\R^3)}{\mathbf{g}(t)}{\vv}
\quad \text{for all } {\vv} \in  \bsW  \text{ and almost all } t \in
(0,T),
$$
 satisfies
\begin{equation}
\label{effegrande} \mathbf{F} \in L^2(0,T;\bfW') \,.
\end{equation}
For the initial data we impose that
%\footnote{\ber mi sembra che sia
%inutile richiedere $\teta_0 \in L^1 (\Omega)$ e $J^*
%(\lteta(\teta_0)) \in L^1(\Omega)$: mi sembra che, nella stima di
%$\calI_\eps (\teta_0^\eps)$ intervenga solo $\|J^*
%(\lteta(\teta_0))\|_{L^1 (\Omega)}$ e non $\| \teta_0\|_{L^1
%(\Omega)}$
% e fra l'altro non dobbiamo richiederlo neppure per ``coerenza'' con
% la regolarit\`a di $\teta$, perch\'e di $\teta$ abbiamo solo $\teta \in L^\infty (0,T;
% L^1(\Omega))$ che
% non diventa $\teta \in \rmC^0_{\text{weak}}([0,T];
% L^1(\Omega)) $.... quindi toglierei la richiesta su $\teta_0$..
% \edr}
\begin{subequations}
\label{hyp-initial}
\begin{align}
& \label{cond-teta-zero}
J^* (\lteta(\teta_0)) \in L^1(\Omega)
 \ \ \ \text{and} \ \ \
\lteta(\vartheta_0) \in H\,,
\\
& \label{cond-teta-esse-zero}
%\vartheta_s^0  \in L^{1}(\Gammac),
% \
% \
\teta_s^0 \in \Hc\,, \quad \ltetas(\vartheta_s^0) \in \Hc\,,
%& \label{further-teta-esse-zero}
 \ \text{and} \ \ \
f(\teta_s^0) \in H^1 (\gc)\,,
\\
 & \label{cond-uu-zero} {\bf u}_0 \in \bsW \ \text{and} \ \uu_{0}
\in \dom (\bvarphi)\,,
\\
& \label{cond-chi-zero} \chi_0 \in H^2(\gc),     \ \partial_{\nn_s}
  \chi_0=0\text{ a.e.\ in } \partial\Gammac, \quad
\widehat{\beta}(\chi_0)\in
 L^1(\Gammac)\,.
\end{align}
\end{subequations}
Concerning the initial data $\vartheta_0$ and $\vartheta_s^0$, we observe that the first of \eqref{cond-teta-zero}
implies $\vartheta_0 \in L^{1}(\Omega)$, in view of \eqref{cond-L3}. Moreover,
the enhanced regularity \eqref{cond-teta-esse-zero} required of
$\teta_s^0$ reflects that we shall obtain a higher temporal
regularity for $\teta_s$ than for $\teta$, see Theorem
\ref{thm:main} ahead.
%%%
\subsection{Variational formulation of the problem and main result}
We are now in the position to detail the  formulation for the initial-boundary value problem associated with
system
\bec \eqref{abstract-system}. \eec
 %\eqref{e1}--\eqref{bord-chi}.
Observe that, while the temperature equations \eqref{e1} and \eqref{eqtetas} and the
momentum equation \eqref{eqI} need to be  formulated in dual spaces, the equation \eqref{eqII} for the internal variable $\chi$ can be  given a.e.\ in $\gc \times (0,T)$,
\bec due to the $H^2(\Gammac)$-regularity obtained for $\chi$. \eec
\begin{problem}
 \label{prob:irrev}
 \upshape
 Given a quadruple of %initial
data
 $(\vartheta_0, \vartheta_s^0 , \uu_0, \chi_0)$
 fulfilling \eqref{hyp-initial}, find
 %an
 %eightuple
$(\vartheta,  \vartheta_s,   \uu,\chi,\eeta,\mmu, \xi, \zeta)$, with
%{\ele a me non pare che questa regolarita' segnata serva per senso eq.}
\begin{subequations}
\begin{align}
& \label{reg-teta}
 \vartheta \in L^2 (0,T; V) \cap L^\infty (0,
T;L^1 (\Omega)), \qquad \fteta(\teta) \in L^2 (0,T; V),
\\
\label{reg-log-teta}
 & \lteta(\vartheta) \in L^\infty (0,T;H) \cap H^1
(0,T;V')\,,
 \\
& \label{reg-teta-s} \vartheta_s \in  L^2 (0,T; \Vc),
\\
 %{\ele(\cap H^1
%(0,T;\Vc')),}\\
& \label{reg-f-tetas} \ftetas(\vartheta_s) \in  L^2 (0,T; \Vc)\,,
 \\
& \label{reg-log-teta-s} \ell(\vartheta_s) \in L^\infty (0,T;\Hc)
\cap H^1 (0,T;\Vc')\,,
\\
& \uu \in H^1(0,T;\bsW) \,,\label{reguI}\\
&\chi \in L^{2}(0,T;H^2 (\Gammac))  \cap L^{\infty}(0,T;\Vc) \cap
H^1(0,T;\Hc) \,, \label{regchiI}
\\
 &
\eeta\in L^2(0,T; \bsY'_{{\Gammac}})\,, \label{etareg}
\\ & \mmu \in L^2(0,T;\bsH_{{\Gammac}})\,,\label{mureg}
\\
& \xi\in L^2(0,T; \Hc)\,, \label{xireg}
\\
& \zeta\in L^2(0,T; \Hc)\,, \label{zetareg}
\end{align}
\end{subequations}
 %satisfying   the initial
 %conditions
\begin{align}
& \label{iniw} \vartheta(0)=\vartheta_0 \quad \aein \ \Omega\,,
\\
& \label{iniz} \vartheta_s(0)=\vartheta_s^0 \quad \aein \ \Gammac\,,
\\
& \label{iniu} \uu(0)=\uu_0 \quad {\aein \ \Omega}\,,
\\
& \label{inichi} \chi(0)=\chi_0 \quad {\aein \ \Gammac}\,,
\end{align}
  and satisfying
  \begin{subequations}
  \label{system-weak}
 \begin{align}
&
 \label{teta-weak}
\begin{aligned}
\pairing{}{V}{\partial_t \lteta (\vartheta)}{v}
 &  -\int_{\Omega} \dive(\partial_t\mathbf{u}) \, v \dd x
+\int_{\Omega}    \nabla \fteta(\vartheta) \, \nabla v  \dd x +
\int_{\Gammac} k(\chi)  (\vartheta-\vartheta_s) v  \dd x
\\ & +\int_{\Gammac} \fc'(\teta-\teta_s) |\Reg (\eeta)| \Psi(\dotu) v  \dd x
 = \pairing{}{V}{h}{v} \quad
\forall\, v \in V \ \hbox{ a.e.\ in }\, (0,T)\,,
\end{aligned}
\\
& \label{teta-s-weak}
\begin{aligned}
 & \pairing{}{\Vc}{\partial_t \ltetas(\vartheta_s)}{v}
-\int_{\Gammac}
\partial_t \lambda(\chi) \, v  \dd x    +\int_{\Gammac} \nabla \ftetas(\vartheta_s) \, \nabla
v \dd x
\\ &
 = \int_{\Gammac} k(\chi) (\vartheta-\vartheta_s) v  \dd x
 +\int_{\Gammac} \fc'(\teta-\teta_s) |\Reg (\eeta)| \Psi(\dotu) v  \dd x
  \quad
\forall\, v \in \Vc  \ \hbox{ a.e.\ in }\, (0,T)\,,
\end{aligned}
\\
 &
\begin{aligned}
 &  b(\dotu,\vv)  +a(\uu,\vv)+ \int_{\Omega} \vartheta \dive (\vv)  \dd x \\
& +\int_{\Gammac} \chi \uu \cdot \vv \dd x + \sideset{}{_{
\bsY_{{\Gammac}}}} {\mathop{\langle { \mbox{\boldmath$\eta$}} , \vv
\rangle}}
   +
\int_{{{\Gammac}}}\fc(\teta-\vartheta_s)   {\mmu}\cdot {\vv} \dd x =
\pairing{}{\bsW}{\mathbf{F}}{\vv}
 \quad \text{for all } \vv\in \bsW \ \hbox{ a.e.\ in }\, (0,T)\,,
 \end{aligned}
\label{eqIa}
\\
&\eeta\in
\partial \bvarphi(\uu) \quad \text{ in } \bsY_{{\Gammac}}',
\hbox { a.e.\ in }\, (0,T), \label{incl1}
\\
& \mmu \in |\Reg(\eeta)|\partial \Psi(\dotu)\hbox { a.e.\ in }\,
{\Gammac}\times (0,T), \label{incl1-bis}
\\
&
\partial_t{\chi}+\zeta+A\chi+\xi+{\ele \gamma}'(\chi)= -\lambda'(\chi)
\vartheta_s- \frac12 |\uu|^2   \quad\hbox{a.e. in } {\Gammac}\times(0,T),  \label{eqIIa-irr}\\
& \xi\in \beta(\chi)
  \quad\hbox{a.e.\ in } {\Gammac}\times
(0,T),\label{incl-beta-vincolo}
%\\
%\end{align}
%\begin{align}
\\
& \zeta \in \rho(\partial_t\chi)
  \quad\hbox{a.e.\ in } {\Gammac}\times
(0,T).\label{incl-rho-vincolo}
\end{align}
\end{subequations}
\end{problem}
%%%%%%
%{\ele SUL FATTO CHE L'EQUAZIONE PER $\chi$ VIENE RISOLTA q.o. DOBBIAMO DIRE QUALCOSA DEL TIPO CHE $A\chi$ sta in $L^2$ se $\chi\in H^2$ ecc....RIDONDANTE?}

\begin{definition}
\label{def-energysol}
\bec We call a solution  to Problem \ref{prob:irrev} \emph{energy solution} if, in addition, it satisfies the energy inequality \eec
 %on $(0,t)$ a quadruple of functions $(\teta,\tetas,\uu,\chi)$ with the regularity prescribed by \eqref{reg-teta}--\eqref{regchiI}, \eqref{add-reg-teta}-\eqref{add-reg-chi}, such that there exists a quadruple $(\eta,\mu,\xi,\zeta)$  (fulfilling \eqref{etareg}-\eqref{mureg}, \eqref{add-reg-xi}-\eqref{add-reg-zeta}), complying with \eqref{teta-weak}-\eqref{incl-rho-vincolo}, and the energy inequality
 \begin{equation}
\label{en-ineq-local}
 \begin{aligned}
  &\int_\Omega J^* (\lteta (\teta(t))) \dd x+\int_s^t
  \int_\Omega g'(\teta)|\nabla\teta|^2 \dd x \dd r
  + \int_\Gammac j^*(\ltetas(\teta_s(t))) \dd x
   +\int_s^t\int_\Omega f'(\tetas)|\nabla\tetas|^2 \dd x \dd r  \\
  &%+\int_s^t\int_{\Gammac}\fc'(\teta-\tetas)\Psi(\partial_t\uu)|{\calR}(D\Phi_\eps)|(\teta-\tetas)
 \quad
 +
  \int_s^t\int_{\Gammac}k(\chi)(\teta-\tetas)^2 \dd x \dd r
  +\int_s^t b(\partial_t\uu,\partial_t\uu) \dd r +\frac 1 2 a(\uu(t),\uu(t))
  +\frac 1 2\int_{\gc}\chi(t)|\uu(t)|^2 \dd x
  + \int_\Gammac \Phi (\uu(t)) \dd x
  \\
  & \quad  + \int_s^t \int_{\Gammac} \fc(\teta-\tetas)  \Psi(\partial_t\uu)|{\calR}(\rmD\Phi_\eps)|  \dd x \dd r +
\int_s^t\int_{\Gammac} \fc'(\teta-\tetas)\Psi(\partial_t\uu)|{\calR}(\rmD\Phi_\eps)|(\teta-\tetas) \dd x \dd r
  \\
  & \quad
  +\int_s^t \int_\Gammac |\partial_t\chi|^2 \dd x \dd r
  +\int_s^t \int_\Gammac  \zeta \chi_t \dd x \dd r
  +\frac12 \int_\Gammac |\nabla \chi(t)|^2 \dd x
  + \int_\Gammac \widehat{\beta} (\chi(t)) \dd x
   + \int_\Gammac {\ele \gamma} (\chi(t)) \dd x
   \\
  &\leq  \int_\Omega J^* (\lteta (\teta(s))) \dd x  +  \int_\Gammac j^*(\ltetas(\teta_s(s))) \dd x
  +   \int_0^t\pairing{}{V}{h}{\teta}\dd r +\int_0^t\pairing{}{\WW}{\FF}{\partial_t\uu} \dd r
  + \frac 1 2 a(\uu(s),\uu(s))
  \\
  & \quad
  + \int_\Gammac \Phi (\uu(s)) \dd x
  +
  \frac 1 2\int_{\Gammac}\chi(s)|\uu(s)|^2\dd x +
   \frac12 \int_\Gammac |\nabla \chi(s)|^2 \dd x
  + \int_\Gammac \widehat{\beta}_\eps (\chi(s)) \dd x
   + \int_\Gammac {\ele \gamma} (\chi(s)) \dd x
 \end{aligned}
\end{equation}
for almost all
\bec $0 \leq s \leq t
\leq T$, and
 and for $s=0$. \eec
\end{definition}

With our main  result, Theorem \ref{thm:main}, we state the existence of \bec an \emph{energy solution} \eec
to Problem \ref{prob:irrev}, with  the additional
properties \eqref{additional-regularities} below.  Namely, %let us highlight that
 $ \teta $ has bounded variation, as a function of time, with values in some dual space: this
in particular ensures that $t \mapsto \teta(t)$ is continuous, with values in that space, at almost all points
$t_0 \in (0,T)$. For $\teta_s$ we gain a better regularity, cf.\ \eqref{add-reg-tetas} and \eqref{add-reg-ftetas}, as a result of a
\bec
further regularity estimate on the equation for $\tetas$. \eec
%regularity estimate mentioned in Remark \ref{rmk:traslazione}.
 Such an estimate also implies \eqref{add-reg-chi}--\eqref{add-reg-zeta}.
\begin{theorem}
\label{thm:main} Assume
\eqref{assumpt-domain} and
 Hypotheses \ref{hyp:1}--\ref{hyp:6}. Suppose
that the data $(h,\mathbf{f},\mathbf{g})$ and
$(\teta_0,\tetas^0,\uu_0,\chi_0)$ fulfill \eqref{hyp-data} and
\eqref{hyp-initial}.

Then, Problem \ref{prob:irrev} admits \bec an \emph{energy solution} \eec  $(\vartheta,
\vartheta_s,   \uu,\chi,\eeta,\mmu, \xi, \zeta)$, which in addition
satisfies
\begin{subequations}
\label{additional-regularities}
\begin{align}
& \label{add-reg-teta} \teta \in \mathrm{BV}([0,T];W^{1,q}(\Omega)')
\text{ for every $q>3$,}
\\
%& \label{add-reg-g-teta} \fteta(\teta) \in ...
%\\
& \label{add-reg-tetas}
 \tetas \in H^1 (0,T;\Hc),
\\
 & \label{add-reg-ftetas}
 \ftetas(\vartheta_s) \in   L^\infty (0,T; \Vc),
\\ &
\label{add-reg-chi} \chi \in L^{\infty}(0,T;H^2 (\Gammac))  \cap
H^{1}(0,T;\Vc) \cap W^{1,\infty} (0,T;\Hc),
\\
& \label{add-reg-xi}
 \xi\in L^\infty(0,T; \Hc)\,,
\\
& \label{add-reg-zeta}
 \zeta\in L^\infty(0,T; \Hc)\,.
\end{align}
\end{subequations}
%\ber la regolarit\`a di $\fteta(\teta)$ dipender\`a dalle ipotesi su
%$g$.. \edr
%\begin{controlla} DISUGUAGLIANZA ENERGIA?? \end{controlla}
\end{theorem}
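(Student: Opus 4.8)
The plan is to prove Theorem~\ref{thm:main} in three stages: (i)~set up a regularized approximate problem and solve it on a short time interval; (ii)~derive a hierarchy of a priori estimates, uniform in the approximation parameter, and pass to the limit to obtain a \emph{local-in-time} energy solution of Problem~\ref{prob:irrev} enjoying the additional regularity \eqref{additional-regularities}; (iii)~extend this solution to all of $[0,T]$ by a prolongation argument. For~(i) I would fix $\eps>0$ and replace the (possibly singular) maximal monotone operators $\lteta$, $\ltetas$, $\beta$, $\rho$, and the subdifferential $\partial\bvarphi$ by their Yosida approximations, while adding the viscous regularizing terms $\eps\partial_t\teta$ and $\eps\partial_t\tetas$ to \eqref{e1} and \eqref{eqtetas}; accordingly one must construct smoothed initial data $\teta_0^\eps$, $\tetas^{0,\eps}$ converging to $\teta_0$, $\tetas^0$ in the norms prescribed by \eqref{hyp-initial} --- this is the technically delicate point mentioned in the Introduction, to be handled in the Appendix. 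On the resulting (now regular) system, local-in-time existence follows from a Schauder fixed-point argument: one freezes the nonlinear couplings (the friction terms, the right-hand side of the $\chi$-equation, the coefficients $k(\chi)$, $\fc(\teta-\tetas)$), solves the decoupled monotone/linear subproblems on a sufficiently small interval, and shows the solution map is compact and maps a suitable ball into itself.

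For stage~(ii) I would first establish the basic energy estimate by testing \eqref{e1} by $\teta$ (using convexity of $J^*$ and the coercivity \eqref{cond-L3}), \eqref{eqtetas} by $\tetas$ (using \eqref{cond-ell4}), the momentum balance \eqref{eqI} by $\partial_t\uu$ (using the $\bsW$-ellipticity \eqref{korn_a} of $a$, $b$, and the orthogonality \eqref{ass:orthogonal} to discard the product $\eeta\cdot\zz$ of the two subdifferential contributions), and \eqref{eqII} by $\partial_t\chi$; summing and cancelling the coupling terms (the thermal expansion $\teta\matrid$, the boundary heat sources, the adhesion energy $\frac12\chi|\uu|^2$) yields the regularity \eqref{reg-teta}--\eqref{regchiI} and the energy inequality \eqref{en-ineq-local}. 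I would then carry out two enhanced estimates. First, exploiting that $\ftetas$ is the primitive of $1/\ltetas'$ (cf.\ \eqref{def-f}), testing \eqref{eqtetas} by $\partial_t\ftetas(\tetas)$ converts the singular term into $|\partial_t\tetas|^2$, giving $\tetas\in H^1(0,T;\Hc)$ and $\ftetas(\tetas)\in L^\infty(0,T;\Vc)$, i.e.\ \eqref{add-reg-tetas}--\eqref{add-reg-ftetas}. Second --- and this is where the improved time-regularity of $\tetas$ pays off --- I would test \eqref{eqII} by $\partial_t(A\chi+\xi)$ in the spirit of \cite{bfl}: an integration by parts in time on the right-hand side (licit precisely because $\tetas\in H^1(0,T;\Hc)$) together with monotonicity manipulations controls $\partial_t\chi$, $A\chi$ and $\xi$ \emph{separately} in $L^\infty(0,T;\Hc)$, whence \eqref{add-reg-chi}--\eqref{add-reg-zeta}. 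Finally, a comparison argument in \eqref{e1} provides a bound on $\partial_t\lteta(\teta)$, hence on $\teta$, in $\mathrm{BV}(0,T;W^{1,q}(\Omega)')$ for every $q>3$ --- the best achievable given the mixed boundary conditions, which preclude elliptic regularity for $\teta$.

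For stage~(iii) --- the passage to the limit $\eps\to0$ --- the decisive compactness tools are a version of the Lions--Aubin compactness theorem generalized to $\mathrm{BV}$-in-time spaces, applied to $\teta$ (using $\teta\in L^2(0,T;V)$ and $\teta\in\mathrm{BV}(0,T;W^{1,q}(\Omega)')$), and the Aubin--Lions lemma applied to $\tetas$ (using $\tetas\in L^2(0,T;\Vc)\cap H^1(0,T;\Hc)$); these, combined with Hypothesis~\ref{hyp:4} on $\Reg$, allow one to pass to the limit in the temperature-dependent friction terms $\fc(\teta-\tetas)\,\mmu$ and $\fc'(\teta-\tetas)|\Reg(\eeta)|\Psi(\partial_t\uu)$. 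The subdifferential inclusions \eqref{incl1}, \eqref{incl1-bis}, \eqref{incl-beta-vincolo}, \eqref{incl-rho-vincolo} are recovered by the maximal-monotonicity/Minty argument, exploiting suitable weak-strong pairings ($\partial\bvarphi$ on $\bsY_{\Gammac}$, $\partial\Psi$ on $\bsH_{\Gammac}$, $\beta$ and $\rho$ on $\Hc$), and the energy inequality is inherited by weak lower semicontinuity of the convex functionals. This produces the local-in-time energy solution of Theorem~\ref{thm:exist-local}. Its global extension is then obtained along the lines of \cite{bbr1}: since all the a priori estimates above are global in time, the local solution --- \emph{together with its approximability by the $\eps$-solutions}, which is needed to restart the construction --- can be prolonged from the endpoint of the existence interval, and a maximality/contradiction argument rules out blow-up before $T$.

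I expect the main obstacle to be the interlocking of two independent regularity upgrades that depend on one another: the enhanced time-regularity of $\tetas$ --- which rests entirely on the ad hoc choice \eqref{def-f} of $\ftetas$ --- is indispensable for the separate estimate of the subdifferential terms $\xi$, $\zeta$ in \eqref{eqII}, while the $\mathrm{BV}$-in-time estimate for $\teta$ --- delicate because of the singular operator $\lteta$ and the absence of elliptic regularity under the mixed boundary conditions \eqref{condteta} --- is indispensable for the strong compactness of $\teta$ needed to pass to the limit in the friction contributions. Keeping all these estimates uniform through the $\eps$-approximation, and then compatible with the prolongation procedure (where the local solution must be re-approximated at each restart), is where the technical heart of the argument lies.
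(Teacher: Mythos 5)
Your proposal tracks the paper's argument closely and correctly in every structural respect: Yosida/viscosity regularization (including the adapted $\ftetas_\eps$ matching $\applogtetas$), Schauder fixed point for short-time existence of the approximate problem, the hierarchy of estimates culminating in the test of the $\tetas$-equation by $\partial_t\ftetas_\eps(\tetas)$ and of the $\chi$-equation by $\partial_t(A\chi+\xi)$, the BV bound on $\teta$ obtained by testing \eqref{e1} against $v/\applogteta'(\teta)$, the compactness/Minty passage to the limit, and a prolongation built on the ``approximable solution'' device of \cite{bbr1}. One clarification: the phrase ``since all the a priori estimates above are global in time'' slightly misstates the obstacle --- only the energy estimate is global for the limit solution, while the Second--Seventh estimates (which deliver \eqref{additional-regularities}) can only be carried out on the approximate problem, where the energy estimate is merely \emph{local} because Yosida-regularizing $\beta$ destroys the sign of $\chi_\eps$ and hence the lower bound on $\int_{\Gammac}\chi_\eps|\uu_\eps|^2\dd x$; this mismatch is exactly why the extension must carry the approximability property along rather than being a routine maximality continuation.
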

\paragraph{\bf Outline of the proof.}
We set up a suitable approximation of system \eqref{system-weak} by regularizing
some of the (maximal monotone) operators featured therein;
we shall  denote the regularization parameter with the symbol $\eps$ and accordingly refer to the approximate problem as $(P_\eps)$.
In Section \ref{s:approximation} we prove the existence of
local-in-time
solutions to Problem  $(P_\eps)$ (cf.\ Proposition \ref{prop:loc-exist-eps} ahead).
Then, we
 show that the approximate solutions fulfill  an energy identity, which serves as the basis for deriving a series of
 a priori estimates, \emph{uniform} w.r.t.\ $\eps$. Relying on them,  in Section\ \ref{s:5} we prove that, along a suitable
 subsequence, the approximate solutions converge to a \emph{local-in-time} solution to Problem \ref{prob:irrev}.
 Its extension to a \emph{global-in-time} solution by means of a careful  prolongation argument is the
 focus of Section \ref{s:6}. {\ele Some useful technical lemma we \bec will \eec use in the proofs are stated and proved in  \bec the Appendix. \eec}
%  Section \ref{appendice}.}

% {\ele Before proceeding let us introduce the notion of  ``energy solution''  to our {\ele Problem \ref{prob:irrev}}
%{\ele COMMENTO: Aggiungiamo un teorema che dice che alla fine la disuguaglianza dell'energia la troviamo in effetti globalmente e quindi abbiamo esistenza di soluzioni di energia?}

%%%%%
%%%%%%%
\section{\bf Approximation}
\label{s:approximation}
%This section is  focused on the
%approximation of the PDE system  \eqref{abstract-system}, via
%regularization of
% of
First,    in Sec.\ \ref{ss:3.1} we introduce and %thoroughly
%motivate
\bec explain \eec  our approximation of system \eqref{system-weak}, leading to
 Problem  $(P_\eps)$; in the end, we state
its variational formulation.
The existence of a local-in-time solution to $(P_\eps)$
   is proved in Sec.\ \ref{ss:3.3} via
the Schauder fixed point theorem.
% As previously mentioned, in Sec.\
%\ref{s:4} we are then going to   show that, as $\eps \down 0$,
%(subsequences of) approximate solutions converge to a local-in-time
%solution of Problem \ref{prob:irrev}. For this, in Sec.\
%\ref{s:4} we are going to derive suitable a priori estimates
%on the approximate solutions, independent of the parameter $\eps$.
 Most of the calculations for the (uniform w.r.t.\ $\eps$) a priori estimates on the approximate solutions
  which we shall derive in Sec.\ \ref{s:5}
 hinge on  a series of technical results on the functions approximating the nonlinearities of the problem,
   which we have collected {\ele in the Appendix.} % (Sec.\ \ref{appendice})}.

\subsection{The approximate problem}
\label{ss:3.1}
% Before giving the precise statement of the approximate
%Problem  $(P_\eps)$, we
\bec To
motivate  \eec  the regularization procedures
 we are going to
adopt, \bec we discuss \eec  in advance  some of the a priori estimates we
shall perform  on system \eqref{system-weak} in Sec.\
\ref{s:5}. As we will see, the related calculations cannot be
performed rigorously on system \eqref{system-weak}, and indeed
necessitate of the Yosida-type regularizations by which we are going
to replace some of the maximal monotone nonlinearities in system
\eqref{system-weak}.

\paragraph{\bf Outlook to the approximate problem.} The
basic \emph{energy estimate} for system \eqref{system-weak}   (cf.\
the \emph{First a priori estimate} in Sec.\ \ref{s:4})
follows by testing \eqref{teta-weak} by $\vartheta$,
\eqref{teta-s-weak} by $\vartheta_s$, \eqref{eqIa} by
$\partial_t\uu$,  \eqref{eqIIa-irr} by $\partial_t\chi$, adding the
resulting relations, and integrating in time. The \emph{formal}
calculations
\begin{equation}
\label{e:formal1}
\begin{aligned}
 \int_0^t \pairing{}{V}{\partial_t \lteta(\vartheta)}{ \vartheta } \dd
r  &  = \int_0^t \pairing{}{V}{\partial_t w }{ \lteta^{-1}(w)} \dd r
\\ &
 =
 \| J^*(\lteta(\teta(t))) \|_{L^1 (\Omega)} - \| J^*(\lteta(\vartheta_0)) \|_{L^1
(\Omega)} \geq  C_1 \| \teta(t) \|_{L^1 (\Omega)} -C
\end{aligned}
\end{equation}
where we have  used the auxiliary variable $w:= \lteta(\teta)$, the
\emph{formal identity}
\[
\pairing{}{V}{\partial_t w }{ \lteta^{-1}(w)}  =  \frac{\dd}{\dd t }
\left(  \int_\Omega J^* (w ) \dd x \right) \quad \aein \, (0,T),
\]
and, finally, the coercivity condition \eqref{cond-L3}, lead to a
bound for $\teta$ in $L^\infty (0,T;L^1(\Omega))$. The corresponding
calculations on the level of \eqref{teta-s-weak} yield an estimate
for
  $\teta_s$ in $L^\infty (0,T;L^1(\Gammac))$.

In order to  make \eqref{e:formal1} rigorous, following \cite{bcfg1,
bbr4,bbr6}
\begin{enumerate}
\item we replace $\lteta$ and $\ltetas$  in the equations
\eqref{teta-weak} and \eqref{teta-s-weak} by the following
approximating functions
\begin{align}
&
\label{def-applogteta} \applogteta(r):=\varepsilon
r+\lteta_\varepsilon (r),
\\
&\label{def-applogteta-s} \applogtetas(r):=\varepsilon
r+\ltetas_\varepsilon( r),
\end{align}
where for $\eps>0$  $\lteta_\varepsilon$ and $\ltetas_\varepsilon$
denote the Yosida regularizations of $\lteta$ and $\ltetas$,
respectively, cf.\ \eqref{e:max-mon-2} below.
  \end{enumerate}
Therefore,
 $\applogteta$ ($\applogtetas$, respectively) is differentiable, strictly increasing  and Lipschitz continuous, see also
the upcoming
  Lemma \ref{l:new-lemma2}.
  Nonetheless, this procedure only partially serves to the purpose of rigorously justifying \eqref{e:formal1}, as expounded in Remark
  \ref{rmk:rigour} at the end of Section \ref{s:4}.

In accord with
 \eqref{def-f} and \eqref{def-applogteta-s},
 \begin{enumerate}
 \setcounter{enumi}{1}
\item we thus replace the function $\ftetas$ in \eqref{teta-s-weak} by
\begin{equation}
\label{def-appf} \ftetas_\eps (x)=\int_0^{x}
\frac{1}{\applogtetas'(s)} \dd s\,,\quad \forall x\in \R,
\end{equation}
whose definition reflects the fact that $f(x) = \int_0^x \frac{1}{\ltetas'(s)} \dd s$.
\end{enumerate}

%{\ele QUI METTIAMO COMMENTO SU REGOLARITA'?}
Combining the  aforementioned \emph{energy estimate} and  a
comparison argument in  the momentum equation \eqref{eqIa} leads to
the following estimate
\begin{equation}
\label{comparison-1} \| \fc(\teta-\teta_s)\mmu + \eeta \|_{L^2
(0,T;\bsY_{{\Gammac}}')} \leq C
\end{equation}
with $\eeta\in
\partial \bvarphi(\uu) $ in $\bsY_{{\Gammac}}'$
 a.e.\ in
$(0,T)$ (cf.\  \eqref{incl1}), and $\mmu \in |\Reg(\eeta)|\partial
\Psi(\dotu) $ a.e.\ in ${\Gammac}\times (0,T)$ (cf.\
\eqref{incl1-bis}). From \eqref{comparison-1}, it is crucial to
conclude  the \emph{separate} estimates
\begin{equation}
\label{second-aprio}
 \| \fc(\teta-\teta_s)\mmu\|_{L^2
(0,T;\bsY_{{\Gammac}}')} + \| \eeta \|_{L^2 (0,T;\bsY_{{\Gammac}}')}
\leq C.
\end{equation}
This  follows from the orthogonality condition
\eqref{ass:orthogonal} only on a \emph{formal} level, since
\eqref{ass:orthogonal}  is not, in general, inherited by the
abstract operator
 $\partial \bvarphi: \bsY_{{\Gammac}} \rightrightarrows \bsY_{{\Gammac}}' $ .
In order to justify this argument,
 we need to
 suitably approximate
  $\partial\bvarphi: \bsY_{{\Gammac}} \rightrightarrows
  \bsY_{{\Gammac}}'$
  in such a way as to replace $\eeta \in
  \bsY_{{\Gammac}}'$ in \eqref{eqIa} with a term $\eeta_\eps$
orthogonal to $\partial \Psi(\dotu) $. Along the lines of \cite{bbr5,bbr6}, we
\begin{enumerate}
 \setcounter{enumi}{2}
\item   approximate  the function $\Phi$ \ref{hyp:phi}, which defines the functional $\bvarphi$ through
\eqref{funct-phi},
 by its Yosida
approximation $\Phi_\eps: \R^3 \to [0,+\infty)$.
 \end{enumerate}
 We recall that $\Phi_\eps $ is convex, differentiable, and  such that $\mathrm{D}\Phi_\eps$  is
 the Yosida regularization of the subdifferential $\partial\Phi: \R^3 \rightrightarrows \R^3$.
 As such, it fulfills (cf.\ \eqref{e:max-mon-3-bis} below)
 \begin{equation}
 \label{to-be-cited-orthog}
 \mathrm{D}\Phi_\eps(\uu) \in \partial \Phi( \mathsf{r}_\eps (\uu)),
 \end{equation}
 where $\mathsf{r}_\eps$ denotes the resolvent of the operator $\partial \Phi$.
 Therefore, in view of  \eqref{ass:orthogonal}, any approximate solution $\uue$ satisfies
 \begin{equation}
 \label{crucial-orthogonality}
  \mathrm{D}\Phi_\eps(\uue)  \cdot \zz =0 \qquad \text{for all }
  \zz \in \partial \Psi\ber (\partial_t\uue)\edr,
   \end{equation}
 which will be crucial in order to deduce \eqref{second-aprio}, cf.\ the \emph{Third a priori estimate} in Sec.\ \ref{s:4}.

Finally, \bec along on the lines of \cite{bfl} \eec we will  also perform on the \emph{doubly nonlinear}
equation \eqref{eqIIa-irr} the test by  (the formal quantity)
$\partial_t (A\chi +\beta(\chi))$. Let us
 mention that such  an estimate is by now classical for {\ele this kind of doubly nonlinear  diffusive } evolutionary differential inclusions.
 %of the type \eqref{eqIIa-irr}:
  It allows one to
 estimate the terms $A\chi$
 and $\xi \in \beta(\chi)$,  \emph{separately},  in $L^\infty (0,T; L^2(\gc))$. {\ele \bec Let us stress that \eec
  this estimate requires ad hoc  time-regularity for the
  \bec right-hand side terms. Once the computations have been carried out,  \eec} an estimate for $\zeta \in \rho(\partial_t
 \chi)$ in  $L^2  (0,T; L^2(\gc))$ \bec then \eec  follows from a comparison in \eqref{eqIIa-irr}. In order to perform all the calculations in a rigorous way (cf.\ the \emph{Seventh a priori estimate} in Sec.\
 \ref{s:4}), it is necessary to
 \begin{enumerate}
 \setcounter{enumi}{3}
\item
 replace $\rho$ and $\beta$ by their Yosida
approximations $\rho_\eps$ and $\beta_\eps$.
 \end{enumerate}

%{\ele DOBBIAMO TENERE $S_i$? LE USIAMO ANCHE PER PUNTO FISSO}\ber ok, le chiamiamo $\bar S_i$ \edr
%\noindent

%\ber Qui dobbiamo introdurre $\mathcal{I}_\eps$, $i_\eps$, $H_\eps$.

Furthermore, it will be convenient to \bec use \eec the functions
$\mathcal{I}_\eps, {\it i}_\eps: \R \to \R$
 \begin{align}
 &
\label{mathcal-i-eps} \mathcal{I}_\eps(x):= \int_0^x s\,
\applogteta'(s) \dd s
\\
&
\label{ipiccolo-eps} {\it i}_\eps(x):= \int_0^x s\, \applogtetas'(s) \dd s
\end{align}
%which shall play a crucial
%role in deriving  a priori estimates for Problem $(P_\eps)$.
%(where $\applogteta$ and $\applogtetas$ are as in
%\eqref{def-applogteta} and \eqref{def-applogteta-s}, respectively)
(cf.\ in particular the derivation of the approximate energy identity \eqref{enid0} later on),
%\noindent
and  the function
$H_\eps: \R \to \R$
\begin{equation}
\label{def-accaapp} H_\eps(x):=\int_0^x \applogtetas'(s)\,
\ftetas_\eps(s) \dd s \quad\forall x\in \R
\end{equation}
(cf. \ the derivation of \emph{Sixth and Seventh a priori estimate} in Sec.\ \ref{s:4}).

\ber Finally, we will supplement our approximate Problem $(P_\eps)$ by \edr the initial data $(\vartheta_0^\eps,\vartheta_s^{0,\eps},
\uu_0,\chi_0)$, where the family
\begin{equation}
\label{datitetaapp}
 (\vartheta_0^\eps, \vartheta_s^{0,\eps})_\eps\in H\times H_{\Gammac}
\end{equation}
 approximates the data
 $(\teta_0,\teta_s^0)$  from \eqref{cond-teta-zero}--\eqref{cond-teta-esse-zero}
in the sense that
\begin{align}
&\label{convdatiteta}
(\vartheta_0^\eps, \vartheta_s^{0,\eps}) \to (\teta_0,\teta_s^0) \text{ in } L^1(\Omega)\times H_{\Gammac}
\text{ as $\eps\down0$\,,}
\\
&\label{bounddatiteta-bis}
\ber \exists\, {\bar S_0}>0 \ \text {depending on } \teta_0  \text { and } \teta_s^0 \,:\ \
 \|\lteta_\eps(\vartheta_0^\eps)\|_H + \|\ltetas_\eps(\vartheta_s^{0,\eps})\|_{\Hc}\leq {\bar S_0}\edr \quad \text{for all} \, \eps>0\,,
 \\
 &
  \label{bounddatiteta-1lemma}
\berc \bec \int_\Omega \mathcal{I}_\eps(\teta_0^\eps)\dd x \to \int_\Omega J^*(L(\teta_0)) \dd x \qquad \text{as } \eps \downarrow 0\,, \eec \eerc
%\exists\, \bar{S_1}>0\,: \ \ \forall\, \eps \in (0,1) \quad
%\int_\Omega \mathcal{I}_\eps(\teta_0^\eps)\dd x \leq \bar{S_1}(1+\|L(\teta_0)\|_H)+\|J^*(L(\teta_0))\|_{L^1(\Omega)}\,,
\\
&
\label{bounddatiteta-2lemma}
\berc \bec \int_{\Gammac}   i_\eps(\teta_s^{0,\eps})\dd x \to   \int_{\Gammac} j^*(\ltetas(\teta_s^0)) \dd x  \qquad \text{as } \eps \downarrow 0\,.  \eec \eerc
\end{align}
\berc Observe that  \eqref{bounddatiteta-1lemma} and \eqref{bounddatiteta-2lemma} guarantee that
 \begin{align}
&
\label{bounddatiteta-1}
\exists\, {\bar S_1}>0 \ \text {depending on } \teta_0\,: \ \ \forall\, \eps \in (0,1) \quad
\int_\Omega \mathcal{I}_\eps(\teta_0^\eps)\dd x \leq {\bar S_1}\,,
\\
&
\label{bounddatiteta-2}
\exists\, {\bar S_2}>0 \ \text {depending on } \teta_s^0\,: \ \ \forall\, \eps \in (0,1) \quad
\quad \int_\gc i_\eps(\teta_s^{0,\eps})\dd x \leq {\bar S_2}\,.
\end{align}
Finally, we also require that \eerc the family $(\vartheta_s^{0,\eps})_\eps$ fulfills
\begin{align}
&
\label{bounddatiteta-3}
\exists\, {\bar S_3}>0\ \text {depending on } \teta_s^0\,: \ \ \forall\, \eps \in (0,1) \quad
\|H_\eps(\vartheta_s^{0,\eps})\|_{L^1(\Gamma_c)} \leq {\bar S_3}\,,
\\
&
\label{bounddatiteta-4}
\exists\, {\bar S_4}>0\ \text {depending on } \teta_s^0\,: \ \ \forall\, \eps \in (0,1) \quad
\quad \| \ftetas_\eps(\vartheta_s^{0,\eps})\|_{\Vc} \leq {\bar S_4}.
\end{align}
\bec Observe that, since $f_\eps$ is bi-Lipschitz (cf.\ Lemma  \ref{l:new-lemma3} later on), \eqref{bounddatiteta-4} in fact
implies that $\vartheta_s^{0,\eps}$ is also in $\Vc$. \eec
{\ele In the Appendix %\ref{appendice}
we state \bec a series of Lemmas, \eec  in which } we will  construct sequences of initial data
$(\vartheta_0^\eps, \vartheta_s^{0,\eps})_\eps$ complying with the properties \eqref{datitetaapp}--\eqref{bounddatiteta-4} and
 detail
how the constants $\bar S_i\,, \ i=1,\cdots, 4$ \ber  may \edr depend on the data $\teta_0$ and $\teta_s^0$.

\smallskip
\noindent
All in all, the variational formulation of the approximate problem
reads: %%%%%%%%%
%{\ele COMMENTO: QUI NELLA DEFINIZIONE DI SOLUZIONE DEL PROBLEMA APPROSSIMATO DIAMO REGOLARITA' IN PIU' DI QUELLA NECESSARIA PER DARE SENSO EQ. QUINDI NON E' STESSO STILE DEL PROBLEMA INIZIALE}
\begin{problem}[$P_\eps$]
 \label{prob:irrev-app}
 \upshape
Let {\ele a quadruple} of initial data
 $(\vartheta_0^\eps, \vartheta_s^{0,\eps} , \uu_0, \chi_0)$
   {\ele satisfy} \eqref{cond-uu-zero}--\eqref{cond-chi-zero} and \eqref{datitetaapp}--\eqref{bounddatiteta-4}.
Find
 a
 quintuple
$(\vartheta,  \vartheta_s,   \uu,\chi,\mmu)$ fulfilling
%\ber controllare se a livello approssimato non ci sono delle regolarit\`a migliori per $\teta$ e
%$\tetas$, visto che tutte le approssimate in gioco
%sono bi-Lipschitz... \edr
\begin{align}
& \label{reg-teta-app} \teta \in  L^2 (0,T;\V) \cap \mathrm{C}^0
([0,T];H),
\\
& \label{reg-log-teta-app} \applogteta(\teta) \in L^2 (0,T;\V) \cap
\mathrm{C}^0 ([0,T];H) \cap H^1 (0,T;\V'),
\\
%& \fteta(\teta) \in ....
%\\
& \label{reg-teta-s-app} \teta_s \in  L^\infty (0,T;\Vc) \cap H^1
(0,T;\Hc),
\\
&
 f_\eps(\teta_s) \in L^\infty (0,T;\Vc), % \cap H^1 (0,T;\Hc)
\\
& \label{reg-log-teta-s-app} \applogtetas(\teta_s) \in L^\infty
(0,T;\Vc) \cap H^1 (0,T;\Hc),
\end{align}
and such that $(\uu,\chi,\mmu)$ comply with \eqref{reguI},
%\eqref{regchiI},
\bec \eqref{add-reg-chi}, \eec
  and \eqref{mureg}, satisfying  the initial
 conditions
 \begin{align}
& \label{iniw-better} \vartheta(0)=\vartheta_0^\eps \quad \aein \
\Omega\,,
\\
& \label{iniz-better} \vartheta_s(0)=\vartheta_s^{0,\eps} \quad
\aein \ \Gammac\,,
\end{align}
 as well as  \eqref{iniu}--\eqref{inichi},
  and the equations
  \begin{subequations}
  \label{var-formu-abstract}
 \begin{align}
&
 \label{teta-weak-app}
\begin{aligned}
 &  \pairing{}{\V}{\partial_t\applogteta(\teta)}{ v}
   -\int_{\Omega} \dive(\partial_t\mathbf{u}) \, v  \dd x
+\int_{\Omega}   \nabla g(\vartheta) \, \nabla v  \dd x
\\ & + \int_{\Gammac}
k(\chi)  (\vartheta-\vartheta_s) v   \dd x+\int_{\Gammac}
\fc'(\teta-\teta_s)  \Psi(\dotu) |\Reg (\mathrm{D}\Phi_\eps(\uu))| v
\dd x
 = \pairing{}{V}{h}{v} \quad
\forall\, v \in V \ \hbox{ a.e.\ in }\, (0,T)\,,
\end{aligned}
\\
& \label{teta-s-weak-app}
\begin{aligned}
 &  \pairing{}{\Vc}{\partial_t\applogtetas(\teta_s)}{ v}  -\int_{\Gammac}
\partial_t \lambda(\chi) \, v   \dd x     +\int_{\Gammac} \nabla  f_\eps(\vartheta_s)  \, \nabla
v \dd x
\\ &
 = \int_{\Gammac} k(\chi) (\vartheta-\vartheta_s) v\dd x
 +\int_{\Gammac} \fc'(\teta-\teta_s)   \Psi(\dotu) |\Reg (\mathrm{D}\Phi_\eps(\uu)) | v \dd x
  \quad
\forall\, v \in \Vc  \ \hbox{ a.e.\ in }\, (0,T)\,,
\end{aligned}
\\
 &
 \label{eqIa-app}
\begin{aligned}
 &  b(\dotu,\vv)  +a(\uu,\vv)+ \int_{\Omega} \vartheta \dive (\vv)\dd x
 +\int_{\Gammac} \chi \uu \cdot \vv \dd x\\
& + \int_\gc  \mathrm{D}\Phi_\eps(\uu) \cdot  \vv\dd x
   +
\int_{{{\Gammac}}}\fc(\teta-\vartheta_s)   {\mmu}\cdot {\vv}\dd x =
\pairing{}{\bsW}{\mathbf{F}}{\vv}
 \quad \text{for all } \vv\in \bsW \ \hbox{ a.e.\ in }\, (0,T)\,,
  \end{aligned}
 \\
 &
 \label{to-be-quoted-also}
 \mmu = |\Reg (\mathrm{D}\Phi_\eps(\uu)) |  \zz  \quad \text{with} \quad \zz\in \partial \Psi(\dotu)\hbox { a.e.\ in }\,
{\Gammac}\times (0,T),
\\
&
\partial_t{\chi}+\rho_\eps(\partial_t \chi )+A\chi+\beta_\eps (\chi)+{\ele \gamma}'(\chi)= -\lambda'(\chi)
\vartheta_s- \frac12 |\uu|^2   \quad\hbox{a.e.\ in }
{\Gammac}\times(0,T)\,.  \label{eqIIa-irr-app}
%\\
%& \zeta \in  \rho(\partial_t\chi)
  %\quad\hbox{a.e. in } {\Gammac}\times
%(0,T).\label{incl-rho-vincolo-app}
\end{align}
\end{subequations}
\end{problem}
%%%%%%%%%%%%%%%%
Let us only briefly comment on the enhanced regularity
properties
\eqref{reg-teta-app}--\eqref{reg-log-teta-s-app}. Since, for $\eps>0$ fixed,
$\applogteta$ is bi-Lipschitz (cf.\ Lemma \ref{l:new-lemma2}),
$\teta \in L^2 (0,T; V)$ implies  that
$\applogteta(\teta)$ is in the same space. Therefore, by interpolation with
$H^1 (0,T; V')$ we conclude that
$\applogteta (\teta) \in \rmC^0 ([0,T]; H)$, whence $\teta \in \rmC^0 ([0,T]; H)$.
Analogous arguments \bec apply to \eec
\eqref{reg-teta-s-app}--\eqref{reg-log-teta-s-app}. Observe that $\teta_s$
\bec also \eec
inherits the regularity
of $\ftetas_\eps (\teta_s)$, since  $f_\eps$ is  bi-Lipschitz as well, cf.\ Lemma \ref{l:new-lemma3}.
%for the functions $$
%\begin{remark}
%\label{rmk:partially-rigourous} \?? \edr
%\end{remark}
%%%%%%%

\subsection{Local existence of approximate solutions}
\label{ss:3.3}

%In this Section, we consider a suitable approximated version of our (reversible) problem and state the corresponding existence result by use of a fixed point argument. Thus,
%let us fix $\varepsilon>0$ and refer to the approximated Problem $P_\varepsilon$ we are going to construct and  for which we prove existence of a solution using the Schauder fixed point theorem.
%%%%%
 The main result of this section is the forthcoming
Proposition \ref{prop:loc-exist-eps}, stating the
 existence of a local-in-time solution to Problem $(P_\eps)$.
The latter features  a  structure   very similar to the \bec
approximate problem for the \eec
%one of the {\ele approximation of
{\ele  PDE system  analyzed} in \cite{bbr6}, cf.\ Problem 4.4
therein. Indeed, {\ele some of } the arguments from \cite{bbr6} may
be easily adapted to the present setting. Therefore,
 we only sketch the fixed point procedure  yielding
%on which relies our argument for
 local existence. In particular, we
only hint to the most relevant steps in the construction of the
fixed point operator and in the proof of its continuity and
compactness, referring to   \cite[Sect.
4.2]{bbr6} for all details.
% where the whole procedure is carefully detailed and the
%intermediate well-posedness results for the single equations are
%proved.

%%%%%%%%%
\paragraph{\bf Fixed point setup.}
In view of hypothesis \eqref{hyp-r-1} on the regularizing operator
$\Reg$,
%for notational convenience,
we may choose $\delta \in (0,1)$ such that
\begin{equation}\label{hyp-r-nuova}
\Reg:L^2(0,T;\bsY'_{\Gammac})\rightarrow L^\infty(0,T;L^{\frac
2{1-\delta}}(\Gammac;\R^3)) \text{ is weakly-strongly continuous}
\end{equation}
(and therefore bounded). For a fixed $\tau>0$ and a fixed constant
$M>0$, we consider the set
\begin{equation}
\label{fixed-point-set}
\begin{aligned}
\! \!\!\!\! \!\!
 \!{\calY}_\tau=\{ & (\teta,
\teta_s,\uu,\chi)\in L^2(0,\tau;H^{1-\delta}(\Omega))\times
L^2(0,\tau;H^{1-\delta}(\Gammac)) \times
L^2(0,\tau;H^{1-\delta}(\Omega;\R^3))
\times L^2(0,\tau;\Hc)\,: \\
&
\|\teta\|_{L^2(0,\tau;H^{1-\delta}(\Omega))}+\|\teta_s\|_{L^2(0,\tau;H^{1-\delta}(\Gammac))}
+ \|\uu\|_{L^2(0,\tau;H^{1-\delta}(\Omega;\R^3))}+
\|\chi\|_{L^2(0,\tau;\Hc)} \leq M\},
\end{aligned}
\end{equation}
 %equipped
 with the topology induced by $L^2(0,\tau;H^{1-\delta}(\Omega))\times L^2(0,\tau;H^{1-\delta}(\Gammac))
\times L^2(0,\tau;H^{1-\delta}(\Omega;\R^3)) \times
L^2(0,\tau;\Hc)$. We are going to construct an operator
$\mathcal{T}$ mapping ${\calY}_{\widehat{T}}$ into itself for a
suitable time $0\leq \widehat{T} \leq T$, depending on $M$,  in such a way that any
fixed point of $\mathcal{T}$ yields a solution to Problem $(P_\eps)$
on the interval $(0,\widehat{T})$.
\begin{notation}
\label{not-4.2} \upshape
 In the following lines,
we will denote by $S_i$, $i=1,...,5$, {\ele positive constants} depending
on the problem data and on
%$\bar{S_1}$ and $\bar{S_2}$  in \eqref{bounddatiteta-1}-\eqref{bounddatiteta-2}, on
$M>0$ in \eqref{fixed-point-set}, but \emph{independent} of
$\eps>0$,
%\ber eventualmente qui altre costanti che compariranno
%nella costruzione dei dati approssimati \edr
 and by $S_6(\eps)$ a
constant depending on the above quantities and on $\eps>0$ as well.
Furthermore,  with the symbols $\pi_i (A)$, $\pi_{i,j}(A), \ldots,$
we will denote the projection of a set $A$ on its $i$-, or
$(i,j)$-component.
\end{notation}

%%%%%%%%%%
\paragraph{\bf Step $1$:}
As a first step in the construction of $\mathcal{T}$, we fix
$(\widehat\teta,\widehat{\teta}_s,\widehat{\uu},\widehat\chi) \in
\mathcal{Y}_\tau$ and consider
 (the Cauchy problem for)  the
  system
(\ref{eqIa-app}--\ref{to-be-quoted-also}), with
$(\widehat\teta,\widehat\teta_s,\widehat{\chi})$ in place of
$(\teta,\teta_s,\chi)$, and \bec $
\fc(\widehat{\teta}-\widehat{\vartheta}_s)
|\Reg(\mathrm{D}\Phi_\eps(\widehat{\uu}))|$   replacing  $
\fc(\teta-\vartheta_s) |\Reg(\mathrm{D}\Phi_\eps(\uu))$, \eec that
is
\begin{equation}
\label{eqS1}
\begin{aligned}
 &  b(\dotu,\vv)  +a(\uu,\vv)+ \int_{\Omega} \widehat\teta \dive (\vv)\dd x
 +\int_{\Gammac} \widehat\chi \uu \cdot \vv \dd x\\
& + \int_\gc \mathrm{D}\Phi_\eps(\uu)\cdot  \vv\dd x
   +
\int_{{{\Gammac}}} \fc(\widehat{\teta}-\widehat{\vartheta}_s)  \mmu
\cdot {\vv}\dd x = \pairing{}{\bsW}{\mathbf{F}}{\vv}
 \quad \text{for all } \vv\in \bsW \ \hbox{ a.e.\ in }\, (0,T)\,,
 \\
 &  \mmu = |\Reg(\mathrm{D}\Phi_\eps(\widehat{\uu}))|
  {\zz}  \ \text{ with } \ \zz\in \partial \Psi(\bec \partial_t \uu \eec) \hbox { a.e.\ in }\,
{\Gammac}\times (0,T)\,.
 \end{aligned}
 \end{equation}
A well-posedness result for such a problem can be obtained easily
adapting the arguments of the proof of \cite[Lemma\ 4.6]{bbr6},
observing that \eqref{eqS1} has the very same structure of the
corresponding momentum equation tackled in \cite {bbr6} (cf.\
Hypothesis 2.3 on the subdifferential operators). Then, there exists
a constant $S_1>0$ and a unique  pair $(\uu,\mmu) \in
H^1(0,\tau;{\bf W}) \times  L^\infty (0,\tau;L^{2+\nu}(\gc;\R^3))$
fulfilling the initial condition \eqref{iniu},  equation
\eqref{eqS1} and the estimate
\begin{equation}\label{boundS1}
\|\uu\|_{H^1(0,\tau;{\bf W})}+ \| \zz \|_{L^\infty (\gc \times
(0,\tau))} + \| \mmu \|_{L^\infty (0,\tau;L^{2+\nu}(\gc;\R^3)) }
\leq S_1.
\end{equation}

For later convenience, let us detail the proof of the estimate
for $\| \uu \|_{H^1(0,\tau;\WW)}$; the estimate for $\zz$ simply derives from
\eqref{bounded-operator}, while the bound for $\mmu$ follows from the calculations
for the forthcoming \emph{Third a priori estimate}, cf.\ Sec.\ \ref{s:4}.
 %that the norm of $\uu$ is bounded in
%$$, while the other two bounds will be detailed in
%the next section.
We choose $\vv=\partial_t\uu$ in \eqref{eqS1} and
integrate in time over $(0,t)$. In particular, we exploit the ellipticity properties
\eqref{korn_a} and integrate by parts. It follows from
the H\"older inequality
%(here $c$ depends in particular on the
%initial data)
\begin{align}\label{eqstimaS1-1}
 &  C_b\int_0^t\|\partial_t\uu\|^2_\WW \dd s+\frac {C_a}2\|\uu(t)\|^2_\WW+\int_\gc\Phi_\eps(\uu(t)) \dd x \\
 \no
 & 
 \begin{aligned}
  \leq
  c (\| \uu_0 \|_{\WW}^2 + \bvarphi(\uu_0))
 +
  \int_0^t\|\widehat\teta\|_H\|\hbox{div }\partial_t\uu\|_H \dd s &  +\int_0^t\|\widehat\chi\|_{\Hc}\|\uu\|_{L^4(\gc)}\|\partial_t\uu\|_{L^4(\gc)} \dd s
  \\ & +
  \int_0^t\|\FF\|_{\WW'}\|\partial_t\uu\|_\WW \dd s+c.
  \end{aligned}
\end{align}
Note that here we have exploited the fact that
\begin{equation}
\int_{{{\Gammac}}} \fc(\widehat{\teta}-\widehat{\vartheta}_s)  \mmu
\cdot {\partial_t\uu}\dd x\geq0
\end{equation}
in view of \eqref{hyp-fc}.
% by definition of
%subdifferential (see \eqref{ass-psi}), the third line of
%\eqref{eqS1}, and it results
% by definition of
%subdifferential (see \eqref{ass-psi}), the third line of
%\eqref{eqS1}, and \eqref{hyp-fc} it results
Hence, the right-hand side of \eqref{eqstimaS1-1} can be handled by
Young's inequality combined with trace theorems and Sobolev
embeddings, \bec which give \eec
\begin{align}\label{eqstimaS1-2}
&\int_0^t\|\widehat\teta\|_H\|\hbox{div
}\partial_t\uu\|_H \dd s
+\int_0^t\|\widehat\chi\|_{\Hc}\|\uu\|_{L^4(\gc)}\|\partial_t\uu\|_{L^4(\gc)} \dd s+\int_0^t\|\FF\|_{\WW'}\|\partial_t\uu\|_\WW \dd s+c\\\no
&\leq
\frac{C_b}2\int_0^t\|\partial_t\uu\|^2_\WW \dd s+c\left(\|\widehat\theta\|^2_{L^2(0,\tau;H)}+\|\FF\|^2_{L^2(0,\tau;\WW')}
+\int_0^t\|\widehat\chi\|^2_{\Hc}\|\uu\|^2_\WW \dd s \right).
\end{align}
Exploiting  the Gronwall lemma, recalling \eqref{effegrande} \bec
for $\mathbf{F} $ \eec  and \eqref{fixed-point-set} (in particular
\bec that \eec $\|\widehat\chi\|^2_{\Hc}$ belongs to $L^1(0,\tau)$),
we infer that $\uu$ is bounded in $H^1(0,\tau;\WW)$ by some constant
$S_1$
 depending on the data of the
problem (and on $M$) but not on $\eps$. As a consequence, we may
define an operator
 \begin{equation}
 \label{ope-1}
 \mathcal{T}_1: \mathcal{Y}_\tau \to \mathcal{U}_\tau:= \{\uu \in H^1 (0,\tau;\bsW)\, : \ \|\uu\|_{H^1(0,\tau;{\bf W})}\leq S_1 \}
 \end{equation}
 which maps every quadruple $(\widehat\teta,\widehat{\teta}_s,\widehat{\uu},\widehat\chi) \in \mathcal{Y}_\tau$
 into the unique solution $\uu$
of the Cauchy problem  for \eqref{eqS1} \bec (with associated
  %({\ele and a corresponding}
   $\mmu\in |\Reg(\mathrm{D}\Phi_\eps(\widehat{\uu}))| \partial \Psi(\dotut)
   $). \eec

\paragraph{\bf Step $2$:}
As a second step, we consider (the Cauchy problem for)
 \eqref{eqIIa-irr-app}, with   $\widehat\teta_s \in \pi_2(\mathcal{Y}_\tau) $ and $\uu=\mathcal{T}_1(\widehat\teta,\widehat{\teta}_s,\widehat{\uu},\widehat\chi) $ in  $ \mathcal{U}_\tau$
on the right-hand side, that is
\begin{equation}
\label{eqS1/2}
\partial_t{\chi}+\rho_\eps(\partial_t \chi )+A\chi+\beta_\eps (\chi)+{\ele \gamma}'(\chi)=-\lambda'(\chi)
\widehat{\vartheta}_s- \frac12 |\uu|^2 \quad\hbox{a.e.\ in }
{\Gammac}\times
(0,T)\,.\\
 \end{equation}
Standard results in the theory of parabolic equations (recall the
Lipschitz continuity of $\beta_\epsi$ and $\rho_\epsi$, as $\epsi$
is fixed) ensure that  there exists a constant $S_2>0$ (depending on
$M$ via $S_1$),
 and a unique
function $\chi \in {\ele L^2} (0,\tau; H^2(\gc)) \cap L^\infty
(0,\tau; \Vc) \cap  H^1(0,\tau;\hc)$, fulfilling the initial
condition \eqref{inichi},  equation \eqref{eqS1/2} and
\begin{equation}\label{boundS1/2}
\|\chi\|_{L^2 (0,\tau; H^2(\gc)) \cap L^\infty (0,\tau; \Vc) \cap
H^1(0,\tau;\hc)}  \leq S_2.
\end{equation}
\noindent It follows that we may define an operator
 \begin{equation}
 \label{ope-2}
 \begin{aligned}
 \mathcal{T}_2: \pi_2(\mathcal{Y}_\tau) \times \mathcal{U}_\tau  \to \mathcal{X}_\tau:=
 \{ & \chi \in
    L^2 (0,\tau; H^2(\gc)) \cap L^\infty (0,\tau; H^1(\gc)) \cap  H^1(0,\tau;\hc)\, :
    \\
    &
    \ \|\chi\|_{L^2 (0,\tau; H^2(\gc))\cap L^\infty (0,\tau; H^1(\gc)) \cap  H^1(0,\tau;\hc)} \leq S_2
   \}
    \end{aligned}
 \end{equation}
 mapping  $(\widehat{\teta}_s,\uu) \in \pi_2(\mathcal{Y}_\tau) \times \mathcal{U}_\tau$
 into the unique solution $\chi$ of the Cauchy problem  for \eqref{eqS1/2}.

\paragraph{\bf Step $3$:}
Finally, we consider the Cauchy problem for the system
(\ref{teta-weak-app}, \ref{teta-s-weak-app}) with fixed
  $(\widehat
\teta,\widehat\teta_s) \in \pi_{1,2}(\mathcal{Y}_\tau)$ and
 $\uu=\mathcal{T}_1(\widehat\teta,\widehat{\teta}_s,\widehat{\uu},\widehat\chi), $
 $\chi=\mathcal{T}_2(\widehat{\teta}_s,\mathcal{T}_1(\widehat\teta,\widehat{\teta}_s,\widehat{\uu},\widehat\chi))$ from the previous steps. In particular,
 we set
 %\footnote{\ber  attenzione, mi sembra che in \eqref{not-mathcalF}  ci sia  $\Psi$ e non $ \partial \Psi $ \edr}
 \begin{equation}
 \label{not-mathcalF}
 \widehat{\mathcal{F}}:= k(\chi)  (\widehat\teta-\widehat\teta_s)+\fc'(\widehat\teta-\widehat\teta_s) |\Reg (\mathrm{D}\Phi_\eps(\uu))| \Psi(\uu_t)
 \end{equation}
and plug it into the boundary integral on the left-hand side of
 \eqref{teta-weak-app} and on the right-hand side of \eqref{teta-s-weak-app}.
 Observe that, due to \eqref{fixed-point-set}, \eqref{boundS1}, \eqref{boundS1/2},
  to the Lipschitz continuity of $\fc$ and $k$, {\ele  as well as \eqref{hyp-fc} and Sobolev embeddings, and trace theorems, }
 there holds
 \begin{equation}
 \label{est-math-cal-F}
 \widehat{\mathcal{F}} \in L^2 (0,\tau;L^{4/3+s}(\gc)) \quad \text{for some } s=s(\delta)>0.
 \end{equation}
%%%%%%%%%%
%We mention in advance that, relying the very fact that the boundary term
%$\widehat{\mathcal{F}}$
% in \eqref{eqapptheta1} below does not depend on the unknown $\teta$, we will be able to prove
% uniqueness of solutions for (the Cauchy problem for) \eqref{eqapptheta1}.
Now
% arguing in very similar way as in \cite[Lemma\ 4.8]{bbr6},
we consider the Cauchy problem for the system
\begin{align}
& \label{eqapptheta1}
\begin{aligned}
 &  \pairing{}{\V}{\partial_t\applogteta(\teta)}{ v}
   -\int_{\Omega} \dive(\partial_t\mathbf{u}) \, v  \dd x
+\int_{\Omega}   \nabla g(\vartheta) \, \nabla v  \dd x +
\int_{\Gammac} \widehat{\mathcal{F}}  v  \dd x
 = \pairing{}{V}{h}{v} \quad
\forall\, v \in V\,,
\end{aligned}
\\
& \label{eqappthetas1}
\begin{aligned}
   \pairing{}{\Vc}{\partial_t \applogtetas(\teta_s)}{ v }  -\int_{\Gammac}
\partial_t \lambda(\chi) \, v   \dd x     +\int_{\Gammac} \nabla f_\eps(\vartheta_s)  \, \nabla
v \dd x
 = \int_{\Gammac} \widehat{\mathcal{F}} v \dd x
  \quad
\forall\, v \in \Vc\,,
\end{aligned}
\end{align}
a.e.\ in $(0,T)$. Observe that system (\ref{eqapptheta1},
\ref{eqappthetas1}) is decoupled, hence we will handle equations
\eqref{eqapptheta1} and \eqref{eqappthetas1} separately.

 The
well-posedness for  the Cauchy problem for the doubly nonlinear
equation \eqref{eqapptheta1} follows from standard results (cf.\ e.g.\ \cite[Chap.\ 3, Thm.\ 4,1]{visintin}), taking into account that both
$\applogteta$ and $g$ are bi-Lipschitz.
%: it admits a unique solution $\teta\in {H^1(0,\tau;V') \cap C^0(0,\tau;H) \cap L^2(0,\tau;V)}$
%\footnote{\ber Chi citiamo qui? Ancora \cite[Thm.\ 1]{dibenetto-showalter}?
%\edr}
 We only sketch here the uniqueness proof for
\eqref{eqapptheta1}. We subtract the equation for $\teta_2$ from the
equation for $\teta_1$ and integrate on  $(0,t)$, with $0 \leq t
\leq\tau$. Hence, we choose $v=g(\teta_1)-g(\teta_2)$ as test
function and integrate again in time. Using that
$\applogteta(r) = \eps r + \lteta_\eps (r)$ and
  that the
functions $\lteta_\epsi$ and $g$ are strictly increasing (cf.\ also
\eqref{cond-L1}--\eqref{cond-L2}, \eqref{cond-g}), we obtain
\begin{align}
\label{u:1} & c_3 \, \eps\|\teta_1-\teta_2\|^2_{L^2(0,\tau;\hc)}
\leq \eps \int_0^t\!\!\int_\Omega (\teta_1-\teta_2)(g(\teta_1)-g(\teta_2)) \dd x \dd s\nonumber\\
& + \int_0^t\!\!\int_\Omega
(\lteta_\epsi(\teta_1)-\lteta_\epsi(\teta_2))(g(\teta_1)-g(\teta_2)) \dd x \dd s
 +\frac12 \int_\Omega  |(1 * \nabla(g(\teta_1)-g(\teta_2))(t)|^2 \dd x  \leq 0,
\end{align}
whence the desired uniqueness. Next, we test \eqref{eqapptheta1}
by $\teta$ and integrate on $(0,t)$, with $0 \leq t \leq\tau$.
Arguing in a very similar way as in the derivation of the subsequent
{\it First a priori estimate} in Sec.\ \ref{s:4}, we deduce that there exists a
positive constant $S_3$ such that
\begin{align}
& \label{stimaS3}
 \|\teta\|_{L^2(0,\tau;V) \cap L^\infty(0,\tau;L^1(\Omega))}\leq S_3,
 \\
 &
 \label{stimaS3bis}
 \eps^{1/2}\|\teta\|_{L^\infty(0,\tau;H)}\leq S_3
 \end{align}
 {\ele so that also $\fteta(\teta)\in L^2(0,\tau;V)$. Thus,  recalling  \eqref{ope-1} as well, }
by comparison in \eqref{eqapptheta1}, {\ele we get}
\begin{align}
&
 \label{stimaS3ter}
 \|\partial_t \applogteta(\teta)\|_{L^2(0,\tau;\V')}\leq S_3.
 \end{align}

Analogously, we handle equation \eqref{eqappthetas1} taking into
account the monotonicity, the bi-Lipschitz continuity of
$\applogtetas$ and $f_\eps$ and the coercivity of $j^*_\eps$ (cf.\
Lemma \ref{l:new-lemma1} and Lemma \ref{l:new-lemma2}). In
particular, in order to conclude suitable estimates for $\teta_s$,
we test \eqref{eqappthetas1} by $\teta_s$  and argue as in the
derivation of the forthcoming {\it First a priori estimate}. We
obtain
%\footnote{\ber in \eqref{stimaS4bis} ho tolto
%la stima $\eps^{1/2}(\|\teta_s\|_{L^2(0,\tau;V_{\Gammac})} \leq c$ perch\'e viene migliorata dalla \eqref{stimaS5}..  \edr}
% {\ele PERCHE' SU DUE FORMULE???}
 \begin{align}
 \label{stimaS4}
 \|\teta_s\|_{L^\infty(0,\tau;L^1({\Gammac}))} +  \|\teta_s\|_{L^\infty(0,\tau;\hc)}  \leq S_4
 %\\
 %&
 %\label{stimaS4bis}
 %\eps^{1/2}(\|\teta_s\|_{L^2(0,\tau;V_{\Gammac})}+
 % \|\teta_s\|_{L^\infty(0,\tau;\hc)}) \leq S_4,
 \end{align}
 for some constant $S_4>0$.
 Moreover, we test \eqref{eqappthetas1} by $\applogtetas(\teta_s)$. Proceeding as in the upcoming {\it Second a priori estimate},
 we deduce
  \begin{align}
& \label{stimaS5}
 \|\teta_s\|_{L^2(0,\tau;V_{\Gammac})} \leq S_5,
 \end{align}
 for some constant $S_5>0$. Observe that the latter estimate holds uniformly w.r.t\ $\epsi>0$
 and this property will be crucial to deduce that the local-existence time does
  not depend on
  %(see the next lines where the existence of
 %a time
 $\widehat T>0$.
 % such that \eqref{itself} holds is proved).
 Finally, by comparison in \eqref{eqappthetas1}, taking into
account that $f'_\eps$  is  bounded by  a constant depending on
$\eps$ (cf.\ \eqref{def-appf} and \eqref{bi-Lip-tetas}), we find
\begin{align}
 &
 \label{stimaS5bis}
  \|\partial_t  \applogtetas(\teta_s)\|_{L^2(0,\tau;\Vc')}\leq S_6(\eps),
  \end{align}
for some constant $S_6(\eps)>0$  depending on $\eps>0$ as well.

\medskip
\noindent
 Therefore we may  define an operator
 \begin{equation}
 \label{ope-3}
 \begin{aligned}
  & \mathcal{T}_3: \pi_{1,2}(\mathcal{Y}_\tau) \times \mathcal{U}_\tau  \times \mathcal{X}_\tau \to
  \\ &
  \begin{aligned}
 \mathcal{W}_\tau:=
 \{  (\teta,\teta_s) \in
     & (L^2 (0,\tau; V) \cap L^\infty (0,\tau;H)) \times (L^2 (0,\tau; \Vc) \cap L^\infty (0,\tau;\hc)) :
    \\
    &
    \ \|\teta\|_{L^2 (0,\tau; V) \cap L^\infty (0,\tau;L^1(\Omega))} + \eps^{1/2}\|\teta\|_{L^\infty (0,\tau;H)} \leq S_3,
    \\
    & \ \|\teta_s\|_{L^2 (0,\tau; \Vc) \cap L^\infty (0,\tau;L^1(\gc))}  + \eps^{1/2}\|\teta_s\|_{L^\infty (0,\tau; \hc)} \leq S_4
   \}
   \end{aligned}
    \end{aligned}
 \end{equation}
 mapping $(\widehat\teta,\widehat{\teta}_s,\uu,\chi) \in \pi_{1,2}(\mathcal{Y}_\tau) \times \mathcal{U}_\tau  \times  \mathcal{X}_\tau$
 into the unique solution $(\teta,\teta_s)$ of the Cauchy problem  for system \eqref{eqapptheta1}--\eqref{eqappthetas1}).
We are now in the position to prove the existence of local-in-time
solutions to Problem $(P_\eps)$, defined on some interval
$[0,\widehat T]$ with $0<\widehat T \leq T$. We stress that
$\widehat T$ in fact will not depend on the parameter $\eps>0$, and
such a property will be crucial in the forthcoming passage to the
limit procedure.
%%%%%%%
\begin{proposition}[Local existence for Problem $(P_\eps)$]
\label{prop:loc-exist-eps} Assume  \eqref{assumpt-domain},
 Hypotheses \ref{hyp:1}--\ref{hyp:6}, and conditions
\eqref{hyp-data} on the data $h$, $\mathbf{f}$,
$\mathbf{g}$, \eqref{cond-uu-zero}--\eqref{cond-chi-zero} on
$\uu_0,\,\chi_0$, and \eqref{datitetaapp}--\eqref{bounddatiteta-4} on
 $\teta_0^\eps,$
$\teta_s^{0,\eps}$.

Then,  there exists $\widehat{T}\in (0,T]$
 such that for every $\eps>0$  Problem   $(P_\eps)$
admits a solution $(\teta,\teta_s,\uu,\chi,\mmu)$ on the interval
$(0,\widehat{T})$,
fulfilling %\footnote{\ber  l'ho anticipata nell'enunciato di questa proposizione per darle piu' risalto.. \edr}
 the \emph{(approximate) energy identity}
 %\footnote{\ber qui mancavano un sacco di termini.. li ho aggiunti, spero,
%ma e' da ricontrollare bene.. \edr}
\begin{equation}
\label{enid0}
 \begin{aligned}
  &\int_\Omega \calI_\eps (\teta(t)) \dd x+\int_s^t
  \int_\Omega g'(\teta)|\nabla\teta|^2 \dd x \dd r
  + \int_\Gammac i_\eps (\teta_s(t)) \dd x
   +\int_s^t\int_\Omega\nabla f_\eps(\tetas)\nabla\tetas \dd x \dd r  \\
  &%+\int_s^t\int_{\Gammac}\fc'(\teta-\tetas)\Psi(\partial_t\uu)|{\calR}(D\Phi_\eps)|(\teta-\tetas)
 \quad
 +
  \int_s^t\int_{\Gammac}k(\chi)(\teta-\tetas)^2 \dd x \dd r
  +\int_s^t b(\partial_t\uu,\partial_t\uu) \dd r +\frac 1 2 a(\uu(t),\uu(t))
  +\frac 1 2\int_{\gc}\chi(t)|\uu(t)|^2 \dd x
  + \int_\Gammac \Phi_\eps (\uu(t)) \dd x
  \\
  & \quad  + \int_s^t \int_{\Gammac} \fc(\teta-\tetas)  \Psi(\partial_t\uu)|{\calR}(\rmD\Phi_\eps(\uu))|  \dd x \dd r +
\int_s^t\int_{\Gammac} \fc'(\teta-\tetas)\Psi(\partial_t\uu)|{\calR}(\rmD\Phi_\eps(\uu))|(\teta-\tetas) \dd x \dd r
  \\
  & \quad
  +\int_s^t \int_\Gammac |\partial_t\chi|^2 \dd x \dd r
  +\int_s^t \int_\Gammac  \rho_\eps(\chi_t) \chi_t \dd x \dd r
  +\frac12 \int_\Gammac |\nabla \chi(t)|^2 \dd x
  + \int_\Gammac \widehat{\beta}_\eps (\chi(t)) \dd x
   + \int_\Gammac {\ele \gamma} (\chi(t)) \dd x
   \\
  &= \int_\Omega \calI_\eps (\teta(s)) \dd x
  +  \int_\Gammac i_\eps (\teta_s(s)) \dd x
  +   \bec \int_s^t\pairing{}{V}{h}{\teta}\dd r \eec + \bec \int_s^t\pairing{}{\WW}{\FF}{\partial_t\uu} \dd
  r \eec
  + \frac 1 2 a(\uu(s),\uu(s))
  \\
  & \quad
  + \int_\Gammac \Phi_\eps (\uu(s)) \dd x
  +
  \frac 1 2\int_{\Gammac}\chi(s)|\uu(s)|^2\dd x +
   \frac12 \int_\Gammac |\nabla \chi(s)|^2 \dd x
  + \int_\Gammac \widehat{\beta}_\eps (\chi(s)) \dd x
   + \int_\Gammac {\ele \gamma} (\chi(s)) \dd x
 \end{aligned}
\end{equation}
for all $0 \leq s \leq t \leq \widehat T$.
\end{proposition}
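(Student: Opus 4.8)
The plan is to argue in two stages. First, I obtain a solution to $(P_\eps)$ on a small interval $(0,\widehat T)$ via the Schauder fixed point theorem, with $\widehat T$ \emph{independent of} $\eps$; then I derive the approximate energy identity \eqref{enid0} by a test-and-add procedure on the equations of $(P_\eps)$, which becomes rigorous thanks to the enhanced regularity \eqref{reg-teta-app}--\eqref{reg-log-teta-s-app} available for fixed $\eps$. The fixed point operator is the composition of the three maps built in Steps~1--3: given $(\widehat\teta,\widehat\teta_s,\widehat\uu,\widehat\chi)\in\calY_\tau$, I set $\uu:=\mathcal{T}_1(\widehat\teta,\widehat\teta_s,\widehat\uu,\widehat\chi)$ (with the accompanying $\mmu$ from \eqref{eqS1}), $\chi:=\mathcal{T}_2(\widehat\teta_s,\uu)$, $(\teta,\teta_s):=\mathcal{T}_3(\widehat\teta,\widehat\teta_s,\uu,\chi)$, and $\mathcal{T}(\widehat\teta,\widehat\teta_s,\widehat\uu,\widehat\chi):=(\teta,\teta_s,\uu,\chi)$. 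Any fixed point of $\mathcal{T}$, together with its $\mmu$, solves $(P_\eps)$ on $(0,\tau)$: with the hats removed, \eqref{eqS1} coincides with \eqref{eqIa-app}--\eqref{to-be-quoted-also}, \eqref{eqS1/2} with \eqref{eqIIa-irr-app}, and \eqref{eqapptheta1}--\eqref{eqappthetas1} with \eqref{teta-weak-app}--\eqref{teta-s-weak-app}. The regularity \eqref{reg-teta-app}--\eqref{reg-log-teta-s-app} then follows by interpolating the $L^2(0,\tau;V)$- and $L^2(0,\tau;\Vc)$-bounds \eqref{stimaS3}, \eqref{stimaS5} with the $H^1(0,\tau;V')$- and $H^1(0,\tau;\Vc')$-bounds \eqref{stimaS3ter}, \eqref{stimaS5bis} on $\applogteta(\teta)$, $\applogtetas(\teta_s)$, using the bi-Lipschitz property of $\applogteta$, $\applogtetas$, $f_\eps$ (Lemmas~\ref{l:new-lemma2}--\ref{l:new-lemma3}).

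The core of the argument is to calibrate $M>0$ (in terms of the data only) and then $\widehat T\in(0,T]$ small, so that $\mathcal{T}$ maps $\calY_{\widehat T}$ into itself, and to check that $\mathcal{T}\colon\calY_{\widehat T}\to\calY_{\widehat T}$ is continuous and compact. Compactness follows from the a priori bounds \eqref{boundS1}, \eqref{boundS1/2}, \eqref{stimaS3}--\eqref{stimaS5bis}, which place the image components in spaces carrying $L^2$-in-time control of one spatial derivative together with $H^1$- (or $W^{1,1}$-) in-time control with values in a larger space, hence compactly embedded --- via Aubin--Lions-type lemmas --- into $L^2(0,\widehat T;H^{1-\delta}(\Omega;\R^3))$, $L^2(0,\widehat T;\Hc)$, $L^2(0,\widehat T;H^{1-\delta}(\Omega))$ and $L^2(0,\widehat T;H^{1-\delta}(\gc))$, the spaces defining the topology of $\calY_{\widehat T}$. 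Continuity of $\mathcal{T}$ rests on the \emph{uniqueness} of the solution produced at each of Steps~1--3, combined with continuous-dependence estimates obtained exactly as in the respective well-posedness arguments. Since $(P_\eps)$ has essentially the same structure as the approximate problem analyzed in \cite{bbr6}, I would only indicate the needed modifications and otherwise refer to \cite[Sect.~4.2]{bbr6}, where in particular the interplay between the choice of $M$ and the smallness of $\widehat T$ ensuring the self-map property is carried out. Schauder's theorem then produces a fixed point. \textbf{The main obstacle is precisely this calibration}, and the decisive feature is that $S_1,\dots,S_5$ --- unlike $S_6(\eps)$ in \eqref{stimaS5bis} --- do not depend on $\eps$, so $\widehat T$ can be chosen uniformly in $\eps$, as needed for the passage to the limit in Section~\ref{s:5}.

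It remains to establish \eqref{enid0} for the solution just constructed. For $\eps>0$ fixed, the regularity \eqref{reg-teta-app}--\eqref{reg-log-teta-s-app}, \eqref{reguI}, \eqref{add-reg-chi} makes the following computation legitimate: I test \eqref{teta-weak-app} by $v=\teta$, \eqref{teta-s-weak-app} by $v=\teta_s$, \eqref{eqIa-app} by $\vv=\partial_t\uu$, and \eqref{eqIIa-irr-app} by $\partial_t\chi$, integrate over $(s,t)$, and add. By \eqref{mathcal-i-eps}--\eqref{ipiccolo-eps} and the bi-Lipschitz regularity of $\applogteta$, $\applogtetas$, the chain rule yields $\pairing{}{V}{\partial_t\applogteta(\teta)}{\teta}=\frac{\dd}{\dd t}\int_\Omega\mathcal{I}_\eps(\teta)\dd x$ and $\pairing{}{\Vc}{\partial_t\applogtetas(\teta_s)}{\teta_s}=\frac{\dd}{\dd t}\int_{\gc}i_\eps(\teta_s)\dd x$; moreover $\int_\Omega\nabla g(\teta)\cdot\nabla\teta\,\dd x=\int_\Omega g'(\teta)|\nabla\teta|^2\dd x$, $\int_{\gc}\chi\,\uu\cdot\partial_t\uu\,\dd x=\tfrac12\frac{\dd}{\dd t}\int_{\gc}\chi|\uu|^2\dd x-\tfrac12\int_{\gc}\partial_t\chi\,|\uu|^2\dd x$, and $\int_{\gc}\mathrm{D}\Phi_\eps(\uu)\cdot\partial_t\uu\,\dd x=\frac{\dd}{\dd t}\int_{\gc}\Phi_\eps(\uu)\dd x$. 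The coupling terms cancel in pairs: $-\int_\Omega\dive(\partial_t\uu)\,\teta\,\dd x$ against $\int_\Omega\teta\,\dive(\partial_t\uu)\dd x$; the two boundary integrals of $k(\chi)(\teta-\teta_s)$ against $\teta$ and $\teta_s$ combine into $\int_{\gc}k(\chi)(\teta-\teta_s)^2\dd x$; the two frictional-heat integrals combine into $\int_{\gc}\fc'(\teta-\teta_s)\Psi(\partial_t\uu)\,|\Reg(\mathrm{D}\Phi_\eps(\uu))|(\teta-\teta_s)\dd x$; and $-\int_{\gc}\partial_t\lambda(\chi)\,\teta_s\,\dd x$ cancels $+\int_{\gc}\lambda'(\chi)\teta_s\,\partial_t\chi\,\dd x$, while $-\tfrac12\int_{\gc}\partial_t\chi\,|\uu|^2\dd x$ cancels $+\tfrac12\int_{\gc}|\uu|^2\partial_t\chi\,\dd x$. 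Finally, using \eqref{to-be-quoted-also}, i.e.\ $\mmu=|\Reg(\mathrm{D}\Phi_\eps(\uu))|\,\zz$ with $\zz\in\partial\Psi(\partial_t\uu)$, and the Euler identity $\zz\cdot\partial_t\uu=\Psi(\partial_t\uu)$ for the positively $1$-homogeneous $\Psi$ \eqref{ass-psi}, the remaining friction contribution equals $\int_{\gc}\fc(\teta-\teta_s)\Psi(\partial_t\uu)|\Reg(\mathrm{D}\Phi_\eps(\uu))|\dd x$.

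Collecting all the contributions reproduces exactly \eqref{enid0}. Since for $\eps$ fixed every solution component is continuous in time (by the interpolation argument recalled above), the resulting relation holds as a genuine \emph{identity} for \emph{all} $0\le s\le t\le\widehat T$, not merely for a.a.\ such pairs --- this is the reason why the approximate level delivers an energy \emph{identity}, to be relaxed to the energy \emph{inequality} \eqref{en-ineq-local} only after the passage to the limit.
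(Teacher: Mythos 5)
Your proposal reproduces the paper's proof almost verbatim: the same operators $\mathcal{T}_1,\mathcal{T}_2,\mathcal{T}_3$ and their composition $\mathcal{T}$, the same calibration of $M$ and a uniform-in-$\eps$ $\widehat T$ via the Brezis--Mironescu interpolation and time-integration argument \eqref{interpolation-brezis}--\eqref{inte-teta}, the same appeal to \cite[Sect.~4.2]{bbr6} for continuity and compactness, Schauder's theorem, and the same test-and-add derivation of \eqref{enid0} with the $1$-homogeneity Euler identity for $\Psi$ and the pairwise cancellations.

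The one place you overstate the rigor: you claim the chain rule $\pairing{}{V}{\partial_t\applogteta(\teta)}{\teta}=\tfrac{\dd}{\dd t}\int_\Omega\mathcal{I}_\eps(\teta)\dd x$ is \emph{legitimate} for fixed $\eps$ because of \eqref{reg-teta-app}--\eqref{reg-log-teta-s-app}. The paper does not make this claim — see \eqref{enidI}, which is labelled a \emph{formal} chain rule, and Remark~\ref{rmk:rigour}. The available regularity is $\teta\in L^2(0,\widehat T;V)\cap\rmC^0([0,\widehat T];H)$ and $\applogteta(\teta)\in L^2(0,\widehat T;V)\cap H^1(0,\widehat T;V')$; this places the duality pairing in $L^1(0,\widehat T)$ but does \emph{not} by itself license the Lions–Magenes/Di~Benedetto--Showalter-type chain rule, since $\teta$ itself has no time derivative and $\mathcal{I}_\eps$ is the primitive of $s\mapsto s\,\applogteta'(s)$ rather than of $\applogteta$. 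As Remark~\ref{rmk:rigour} explains, a fully rigorous justification would require an extra viscous term $\nu\,\partial_t\teta$ in \eqref{teta-weak-app} and a double limit $\nu\downarrow0$, $\eps\downarrow0$ (cf.\ \cite{bbr3}); this is the one step the paper deliberately leaves at the formal level. The corresponding identity for $\teta_s$, by contrast, is rigorous precisely because $\teta_s\in H^1(0,\widehat T;\Hc)$. Your argument would be airtight if you either added that extra regularization, or simply phrased the $\teta$-chain rule as formal and referenced Remark~\ref{rmk:rigour}, as the paper does.
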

\begin{proof}
Let the operator $\mathcal{T}: \mathcal{Y}_\tau \to \mathcal{W}_\tau
\times \mathcal{U}_\tau \times \mathcal{X}_\tau$
 be defined by
\begin{equation}
\label{def-T-ope} \mathcal{T}(\widehat{\teta}, \widehat{\teta}_s,
\widehat{\uu},\widehat{\chi} ) := (\teta, \teta_s,\uu,\chi) \ \
\text{with } \ \
\begin{cases}
\uu:= \mathcal{T}_1(\widehat{\teta}, \widehat{\teta}_s,
\widehat{\uu},\widehat{\chi} ),
\\
\chi:= \mathcal{T}_2 (\widehat{\teta}_s,\uu),
\\
(\teta,\teta_s):= \mathcal{T}_3 ( \widehat\teta,\widehat{\teta}_s,
\uu,\chi).
\end{cases}
\end{equation}
Our aim is now to show that
 there exists $\widehat{T} \in (0,T]$ such that for every $\eps >0$
\begin{gather}
\label{itself} \text{
 $\mathcal{T}$ maps
 $\mathcal{Y}_{\widehat{T}}$ into itself,}
 \\
 \label{compact-conti}
 \begin{gathered}
\mathcal{T} : \mathcal{Y}_{\widehat{T}} \to
\mathcal{Y}_{\widehat{T}}  \ \ \text{ is compact and
 continuous w.r.t. the topology of } \\ \text{ $L^2(0,\tau;H^{1-\delta}(\Omega))\times L^2(0,\tau;H^{1-\delta}(\Gammac))
\times L^2(0,\tau;H^{1-\delta}(\Omega;\R^3)) \times
L^2(0,\tau;\Hc)$.}
\end{gathered}
\end{gather}
We will prove \eqref{itself} and  \eqref{compact-conti} following the
lines of the proof of \cite[Proposition 4.9]{bbr6}.

 We shall not repeat
the arguments leading to \eqref{compact-conti}, as they are
completely analogous to those in \cite{bbr6}. We only observe that,
in the proof of the continuity of the operator $\mathcal{T}_2$ \bec
from \eec \eqref{ope-2}, providing the solution of the Cauchy
problem for
 \eqref{eqS1/2}, the limit passage in
the term $\rho_\epsi (\dt\chi)$ can be easily handled  in the very
same way as in the forthcoming Sec.\ \ref{s:5}, cf.\
\eqref{limsup-rho} \bec later on. \eec
%semicontinuity-comparison tool developed in the passage to the limit
%procedure of  Section 4.2.
 Arguing as in
\cite[Proposition 4.9]{bbr6}, compactness and continuity of the
operator $\mathcal{T}$ \eqref{def-T-ope} in the
$(\teta,\teta_s)$-component can be proved by first
deriving compactness for $(\applogteta(\teta),\applogtetas(\teta_s))$
 (exploiting also
estimates \eqref{stimaS3ter} and \eqref{stimaS5bis}).
Since, for $\eps>0$ fixed,  $\applogteta$ and
$\applogtetas$ are bi-Lipschitz
 (cf.\ Lemma \ref{l:new-lemma2}),
 we can then infer compactness in $(\teta,\teta_s)$.
%
% and then in
%$(\teta,\teta_s)$, relying on  the fact that

In what follows we  will detail the proof of \eqref{itself}, highlighting
that the final time $\widehat T$ for which \eqref{itself} holds is
independent of $\epsi$.
%\noindent
%In order to show \eqref{itself},
Let $(\widehat\teta,\widehat{\teta}_s,\widehat\uu, \widehat\chi)\in
\mathcal{Y}_\tau$ be fixed, and let
$(\teta,\teta_s,\uu,\chi):=\mathcal{T}(\widehat\teta,\widehat{\teta}_s,\widehat\uu\,\widehat\chi)$.
 We use the interpolation inequality
 \begin{equation}
\label{interpolation-brezis}
 \|\teta(t)\|_{H^{1-\delta}(\Omega)}\leq c \|\teta(t)\|^{1-\delta}_{H^1(\Omega)}\|\teta(t)\|^\delta_{L^2(\Omega)} \qquad \foraa\, t \in (0,\tau)
\end{equation}
(cf.\ e.g.\ \cite[Cor. 3.2]{brezis-mironescu}). Now, a further
interpolation between the spaces $L^2 (0,\tau;V)$ and
$L^\infty(0,\tau;L^1(\Omega))$ and estimate \eqref{stimaS3} also
yield the bound $\| \teta\|_{L^{10/3}(0,\tau;L^2(\Omega))} \leq
\bar{C} S_3$ for some interpolation constant $\bar{C}$. Integrating
\eqref{interpolation-brezis} in time and using H\"older's inequality
we therefore have
\begin{equation}
\label{inte-teta}
\begin{aligned}
\|\teta\|_{L^2(0,t;H^{1-\delta}(\Omega))}^2 & \leq c
 \int_0^t  \|\teta(s)\|^{2(1-\delta)}_{H^1 (\Omega)} \|\teta(s) \|^{2\delta}_{L^2 (\Omega)}\dd s
 \\ & \leq c  \| \teta \|_{L^{2}(0,\tau;H^1(\Omega))}^{2(1-\delta)} t^{(2 \delta)/5}
   \| \teta \|_{L^{10/3}(0,\tau;L^2(\Omega))}^{2\delta}
\leq  C   S_3^{2} t^{(2 \delta)/5}\,.
\end{aligned}
\end{equation}
 We use \bec the analogues of \eec \eqref{interpolation-brezis}
for $\teta_s$ and $\uu$ to estimate the norms $\|
\tetas\|_{L^2(0,t;H^1(\Gammac))}$ and $\| \uu \|_{L^2(0,t;
H^{1-\delta}(\Omega;\R^3))}$, in % analogous way
 \bec the same way as in \eec
\eqref{inte-teta}.
 For $\chi$ we trivially have
$\|\chi\|_{L^2(0,t;\hc)}^2 \leq t \|\chi\|_{L^\infty(0,t;\hc)}^2
\leq t S_2^2$. Combining all of these estimates,   which hold
uniformly w.r.t.\ $\epsi$, we \bec infer \eec that there exists a
sufficiently small $\widehat{T}>0$ for which \eqref{itself} holds.

 Thus we conclude that the operator $\mathcal{T}$ admits
a fixed point in ${\calY}_{\widehat T}$. \bec Hence, \eec  there
exists a solution $(\teta,\teta_s,\uu,\chi,\mmu)$ to the Cauchy
problem for system \eqref{teta-weak-app}--\eqref{eqIIa-irr-app} on
the interval $(0,\widehat T)$, with the regularity
\eqref{reg-teta-app}, \eqref{reg-log-teta-app}, \eqref{reguI},
\eqref{mureg} and, in addition,
\begin{align}
& \label{reg-teta-s-app-fix} \teta_s \in  L^2 (0,\widehat T;\Vc) \cap
L^\infty (0,\widehat T;\Hc),
\\
& \label{reg-log-teta-s-app-fix} \applogtetas(\teta_s) \in L^2
(0,\widehat T;\Vc)\cap \rmC^0 ([0,\widehat T];\Hc) \cap H^1(0,\widehat T;\Vc'),
\\
& \label{regchi-fix} \chi \in L^{2}(0,\widehat T;H^2 (\Gammac))  \cap
L^{\infty}(0,\widehat T;\Vc) \cap H^1(0,\widehat T;\Hc).
\end{align}
Moreover, to obtain the enhanced regularity properties
\eqref{reg-teta-s-app}--\eqref{reg-log-teta-s-app} and
\eqref{add-reg-chi}, we have to perform a further priori estimate
%\footnote{\ber nei conti che seguono ho ripristinato
%il $\widehat T$ perch\'e mi sembrava che confondesse le idee scrivere $T$ a questo punto.... \edr}
%(independent of $\widehat T$)
 on the $(\teta_s,\chi)$-component of
the solution. First, we readily deduce
\begin{align}
& \label{reg-fteta-s-app} \ftetas_\eps(\teta_s) \in  L^2 (0,\widehat T;\Vc)
\cap L^\infty (0,\widehat T;L^1(\gc)),
\end{align}
by \eqref{reg-teta-s-app-fix} and the bi-Lipschitz continuity of
$\ftetas_\eps$.
% (recall definition \eqref{def-f}
%and the bi-Lipschitz continuity of $\applogtetas$, Lemma \ref{l:new-lemma2}).
Moreover, we test \eqref{teta-s-weak-app} by $\partial_t
\ftetas_\eps(\teta_s)$, \eqref{eqIIa-irr-app} by $\partial_t
(A\chi+\beta_\eps(\chi))$, add the resulting relations, and
integrate in $(0,t)$, $t\in (0,\widehat T)$. Arguing as in the
derivation of the upcoming {\it Seventh a priori estimate}, we
obtain
\begin{align}
& \label{reg-teta-s-app-ult} \dt\teta_s \in  L^2 (0,\widehat T;\hc),
\\
& \label{reg-fteta-s-app-ult} \ftetas_\eps(\teta_s) \in  L^\infty
(0,\widehat T;\Vc),
\\
& \label{reg-chi-app-ult} \dt\chi \in  L^2 (0,\widehat T;\Vc) {\text { and }}
A(\chi) \in L^\infty (0,\widehat T;\hc),
\end{align}
and, by comparison in \eqref{eqIIa-irr-app},
\begin{align}
& \label{reg-chi-app-ult-bis} \dt\chi \in  L^\infty (0,\widehat T;\hc).
 \end{align}
Then, also  taking into account the bi-Lipschitz continuity of $\ftetas_\eps$
and $\applogtetas$, we
conclude
%deduce the regularity specified in
\eqref{reg-teta-s-app}--\eqref{reg-log-teta-s-app} and
\eqref{add-reg-chi}.
\paragraph{\bf Proof of  the energy identity  \eqref{enid0}.}  We  test \eqref{teta-weak-app} by
$\vartheta$, \eqref{teta-s-weak-app} by $\vartheta_s$,
\eqref{eqIa-app} by $\partial_t \uu$, and \eqref{eqIIa-irr-app} by
$\partial_t\chi$, add the resulting relations, and integrate on
$(s,t)$, $t \in (0,T]$.
%\footnote{\ber visto che a questo livello la disuguaglianza dell'energia vale per ogni $t $ e $s$, suggerisco di darla nella forma piu' forte  \edr}
The thermal expansion term in \eqref{eqIa-app}
cancels out with the one from \eqref{teta-weak-app}, and so does
$-\int_s^t \int_\Gammac \partial_t \lambda (\chi) \tetas \dd x \dd r$
with the corresponding term from \eqref{eqIIa-irr-app}.
 Integrating by parts in time
 $\int_s^t \int_\Gammac \chi \uu \cdot \uu_t \dd x \dd r$ we also have a cancellation with
 the term $-\frac12 \int_s^t \int_\Gammac \chi_t |\uu|^2 \dd x \dd r $ from \eqref{eqIIa-irr-app}.
We also use the \emph{formal} chain rule (cf.\ Remark
\ref{rmk:rigour})
\begin{equation}
\label{enidI} \pairing{}{V}{\partial_t \applogteta(\teta)}{\teta}  =
\int_\Omega\applogteta'(\teta)\partial_t\teta\teta \dd x
  =\frac{\dd}{\dd t}\int_\Omega{\calI}_\eps(\teta)  \dd x
  \end{equation}
  yielding
  \[
  \int_s^t   \pairing{}{V}{\partial_t \applogteta(\teta)}{\teta}  \dd r =
\int_\Omega {\calI}_\eps(\teta(t))  \dd x-
\int_\Omega{\calI}_\eps(\teta(s)) \dd x
  \]
and the same for the term $ \int_s^t   \pairing{}{\hunoc}{\partial_t \applogtetas(\tetas)}{\tetas}  \dd r$,
giving
$ \int_\Gammac {i}_\eps(\tetas(t))  \dd x-
\int_\Gammac{i}_\eps(\tetas(s)) \dd x.
$
Instead, the chain rule identity
\[
\int_s^t \int_\Gammac \mathrm{D}\Phi_\eps (\uu) \cdot \uu_t \dd x \dd r = \int_\Gammac \Phi_\eps(\uu(t)) \dd x -  \int_\Gammac \Phi_\eps(\uu(s)) \dd x
\]
holds rigorously.
Furthermore, we exploit that $\zz \cdot \partial_t \uu = \Psi (\partial_t \uu)$ a.e.\ in $\Gammac \times (0,\widehat T)$ by the
$1$-homogeneity of the functional $\Psi$. This gives rise to the  tenth term on the left-hand side of \eqref{enid0}.
Then, we conclude \eqref{enid0}.
%Formally  proceeding, we handle the first term on the left hand side
%by use of the following chain rule
%\end{equation}
%In particular,  we have used the (formal) identity (see
%\eqref{mathcal-i-eps})
%\begin{equation}\label{enidII}
 % \pairing{}{V}{\partial_t \applogteta(\teta)}{\teta}=.
%\end{equation}
%The reader may refer to Remark 4.12 in \cite{bbr6} to justify
%rigorously the above identity. Analogously, there holds (see
%\eqref{ipiccolo-eps})
%\begin{equation}\label{enidIII}
%\int_0^t \pairing{}{\Vc}{\partial_t \applogtetas(\tetas)}{\tetas}
%\dd s=  \int_\Omega {\it i}_\eps(\tetas(t)) - \int_\Omega{\it
%i}_\eps(\teta_{s0}).
%\end{equation}
\end{proof}

\section{\bf Uniform w.r.t. $\eps$ a priori estimates}
\label{s:4}

In this section,
% we first prove that for the approximate problem the
%so-called energy identity holds. Hence,
we perform  a series of priori estimates on the solutions to Problem
\ref{prob:irrev-app} \bec (i.e.\ Problem $(P_\eps)$). \eec  From
them,
 we derive bounds on suitable norms of the local solutions, which hold uniformly w.r.t.\ the parameter
 \ber $\eps\in(0,1)$. \edr Exploiting them, we  will pass to the limit with $\eps$ in Problem \ref{prob:irrev-app}
 {\ele $(P_\varepsilon)$} and conclude \bec in \eec Theorem
 \ref{thm:exist-local} the existence of a local-in-time solution to Problem \ref{prob:irrev}.

 Let us mention in advance that the forthcoming a priori estimates are not, however, global in time.
 %\footnote{\ber
% ho anticipato qui una spiegazione del perch\'e le stime non sono globali in tempo, in modo da potermi riferire a questo punto
% nella Sez.\ \ref{s:6}..
% \edr}
  This \emph{local character}
 manifests itself already with the \emph{First a priori estimate} (i.e.\ the \emph{energy estimate}), which derives from the approximate
 energy identity \eqref{enid0}. More precisely, the problem is to estimate the left-hand side term
 \begin{equation}
 \label{trouble-maker}
 \frac 1 2\int_{\gc}\chi(t)|\uu(t)|^2 \dd x
 \end{equation}
 therein. {\ele Indeed, since}  in Problem \bec  $(P_\eps)$ \eec
 %\ref{prob:irrev-app}
\bec the maximal monotone operator \eec  $\beta$ has been replaced
by its Yosida regularization $\beta_\eps
 $, the approximate solution $\chi$ is no longer guaranteed to be positive a.e.\ in $\Gammac \times (0,\widehat T)$. Therefore
 the term in \eqref{trouble-maker} is not, a priori, estimated from below by a constant. On the other hand, it cannot be moved to the right-hand side
 of  \eqref{enid0} and absorbed into the left-hand side by H\"older and Young inequalities, {\ele
 or by use of  the Gronwall lemma, mainly due to a lack of regularity of the terms in the left-hand side}.

 That is why, in order to estimate \eqref{trouble-maker} we will resort to the following argument.
It follows from the  fixed
point procedure in Sec.\ \ref{ss:3.3} that the local solution
$(\teta,\tetas,\uu,\chi)$
 whose existence we have proved in Prop.\ \ref{prop:loc-exist-eps}
 belongs to ${\calY}_{\widehat
T}$ \bec from \eqref{fixed-point-set}, \eec
% (here denoted by $T$),
whence
%\footnote{\ber  qui ho scritto le stime con $\widehat T$, per sottolineare che sono locali in tempo.. \edr}
\begin{equation}\label{dalpuntofisso}
 \|\teta\|_{L^2(0,\widehat T;H^{1-\delta}(\Omega))}+\|\teta_s\|_{L^2(0,\widehat T;H^{1-\delta}(\Gammac))}
+ \|\uu\|_{L^2(0,\widehat T;H^{1-\delta}(\Omega;\R^3))}+ \|\chi\|_{L^2(0,\widehat T;\Hc)}
\leq M,
\end{equation}
where $M$ does not depend on $\eps$ (see \eqref{fixed-point-set}).
In addition, as  estimates \eqref{boundS1} \bec and \eec
\eqref{boundS1/2} do not depend on $\eps$, we infer
\begin{equation}\label{dalpuntofissodue}
\|\uu\|_{H^1(0,\widehat T;\WW)}+\|\chi\|_{H^1(0,\widehat T;\Hc)}\leq c.
\end{equation}
Exploiting \eqref{dalpuntofissodue} as well as well-known
embedding and trace results, we then have
\begin{equation}\label{stimadiffiI}
\frac 1 2\int_{\gc}\chi(t)|\uu(t)|^2 \dd x
\leq
c\|\chi\|_{L^\infty(0,{\ele\widehat T};\Hc)}\|\uu\|^2_{L^\infty(0,{\ele\widehat T};L^4(\Gammac))}
\leq c\|\chi\|_{L^\infty(0,{\ele\widehat T};\Hc)}\|\uu\|^2_{L^\infty(0,{\ele\widehat T};\WW)}
{\ele\leq c},
\end{equation}
where $c$ depends here on $S_1,S_2$, {\ele on $\widehat T$, } but
not on $\eps>0$.

Clearly, since the \emph{First a priori estimate} is not global in
time, neither of the subsequent estimates is. Nonetheless, for
notational simplicity in the following calculations we shall write
$T$ in place of the local-existence time $\widehat T$. Moreover, we
shall omit to indicate the dependence on $\eps$ for the approximate
solutions. We also mention that with the symbols $c$ and $C$ we will
denote possibly different positive constants, depending on the data
of the problem, \bec but not \eec on $\eps$.

%i we shall derive all the estimates
%\eqref{teta-weak-app}-
%\eqref{eqIIa-irr-app}, providing some bounds for the energy and
%dissipation. Let us point out that,  we now refer to the time
%interval $(0,\widehat T)$ in which there exists a solution of the
%approximated problem by virtue of  Proposition
%\ref{prop:loc-exist-eps}, the proof of which has been detailed in
%the previous section (as well as the construction of such $\widehat
%T$). Towards the aim of simplifying notation, during this section,
%we do not write dependence on $\eps$ of the solutions to our system
%and refer directly to $T$ in place of $\widehat T$. Moreover,  by
%the same notation Then, we will perform a priori estimates on the
%solutions, still resulting to be independent of the approximating
%parameter $\eps$, allowing us to pass to the limit as
%$\eps\searrow0$.
%\paragraph{\bf The energy identity.} It is obtained as following.
%%%%%%%%
%%%%%%%%
\paragraph{\bf First a priori estimate.}
We consider the approximate energy identity \eqref{enid0} on the
interval $(0,t)$, with $t \in (0,T)$ {\ele (i.e.\ in $(0, \widehat
T)$)}. Thanks to Lemma \ref{l:new-lemma1} (cf.\
\eqref{ci-serve-anticipata-1} and \eqref{ci-serve-anticipata-2}) we
have
$$
\begin{aligned}
&
\int_\Omega {\calI}_\eps(\teta(t))  \dd x \geq  \frac{\eps}{2} \| \teta(t)
\|_{H}^2  + c \| \teta(t) \|_{L^1(\Omega)}  - C,
%\right),
\\
&
\int_\Gammac {\it i}_\eps(\tetas(t)) \dd x
 \geq  \frac{\eps}{2} \| \tetas(t) \|_{\Hc}^2 +
c \| \tetas(t) \|_{L^1(\Gammac)} -C,
\end{aligned}
$$
while \eqref{bounddatiteta-1} and \eqref{bounddatiteta-2}
 \bec ensure \eec  that
\[
\int_\Omega{\calI}_\eps(\bec \teta_0^\eps \eec) \dd x \leq C\,,
\qquad \int_\Omega{\it i}_\eps(\bec {\tetas}^{0,\eps} \eec) \dd x
\leq C\,.
\]

 Since $g'$  is bounded from below by a strictly positive constant {\ele (see \eqref{cond-g})}, and so is $\ftetas_\eps'$ (cf.\ Lemma \ber \ref{l:new-lemma2})\edr,
 we also have %\footnote{\ber  mi sembra che sia $\frac {1}{\applogtetas'(\teta_s)} \geq \eps/3$..\edr OK!}
\[
\begin{aligned}
&
\int_0^t\int_\Omega g'(\teta)|\nabla\teta|^2 \dd x \dd s \geq {\ele c_3}\int_0^t\int_\Omega |\nabla\teta|^2 \dd x \dd s,
\\
&
 \int_0^t \int_\gc \nabla f_\eps(\teta_s) \nabla\teta_s \dd x \dd r=
 \int_0^t \int_\gc \frac {1}{\applogtetas'(\teta_s)} |\nabla\teta_s|^2 \dd x \dd r
\geq\frac{\eps}3\int_0^t \int_\gc |\nabla \teta_s|^2 \dd x \dd r.
\end{aligned}
\]
We observe that the term $\int_0^t \int_\Gammac k(\chi) (\teta-\tetas)^2 \dd x \dd s $ is positive thanks to
\eqref{hyp-k}, and  so are the ninth, the tenth and the eleventh terms on the left-hand side of
\eqref{enid0} by
\eqref{hyp:phi} and
\eqref{hyp-fc}, respectively
{\ele, i.e.
\begin{align}
&\int_\Gammac \Phi_\eps (\uu(t)) \dd x + \bec \int_0^t \eec
\int_{\Gammac} \fc(\teta-\tetas)
\Psi(\partial_t\uu)|{\calR}(\rmD\Phi_\eps(\uu))| \dd x \dd r\\\no & +
\bec \int_0^t \eec \int_{\Gammac}
\fc'(\teta-\tetas)\Psi(\partial_t\uu)|{\calR}(\rmD\Phi_\eps(\uu))|(\teta-\tetas)
\dd x \dd r\geq 0.
\end{align}
}
Since $\rho_\eps (0) =0$, we also infer that
$$
\int_0^t \int_\Gammac \rho_\eps(\chi_t) \chi_t \dd x \dd s \geq 0
$$
by monotonicity of $\rho_\eps$.
 Furthermore,
 we estimate (cf.\ \eqref{e:max-mon-5}, here $\mathfrak{r}_\eps$ denotes the resolvent of
$\beta$)
\[
\int_\Gammac \widehat{\beta}_\eps (\chi(t)) \dd x \geq \int_\Gammac \widehat{\beta} (\mathfrak{r}_\eps (\chi(t)))  \dd x
\geq -c \int_\Gammac |\mathfrak{r}_\eps (\chi(t))| \dd x - C
\geq - c \int_\Gammac |\chi(t)| \dd x  -C
\]
since $\mathfrak{r}_\eps$ is a contraction, yielding
\[
 |\mathfrak{r}_\eps (\chi(t))| \leq    |\mathfrak{r}_\eps (\chi(t))- \mathfrak{r}_\eps(x_0)| + | \mathfrak{r}_\eps(x_0)| \leq |\chi(t)| +    | \mathfrak{r}_\eps(x_0)|+ |x_0|
 \leq  |\chi(t)| +c
\]
(where $x_0$ is a  point in $\mathrm{dom} (\widehat \beta)$).
Moreover, since ${\ele \gamma}'$ is Lipschitz by  \eqref{hyp-sig},
${\ele \gamma}$ has at most quadratic growth, therefore
\[
\int_\Gammac {\ele \gamma}(\chi(t)) \dd x \geq - c \int_\Gammac |\chi(t)|^2 \dd x - C.
\]
Finally,  we estimate
\[
\begin{aligned}
&
\int_0^t \pairing{}{\WW}{\mathbf{F}}{\uu_t} \dd s \leq \int_0^t \| \mathbf{F} \|_{\WW'} \|\uu_t \|_{\WW} \dd s,
\\
&
 \int_0^t\pairing{}{V}{h}{\teta} \dd s  \leq {\ele c}\int_0^t \| h \|_{V'} (\| \teta \|_{L^1(\Omega)} + \| \nabla \teta \|_{H}) \dd s.
\end{aligned}
\]

All in all, taking into account \eqref{korn_a} %and \eqref{korn_b},
 we conclude
\begin{equation}
\label{Istima}
\begin{aligned}
  & \frac{\eps}{2} \ber \| \teta (t)\|_{H}^2  \edr
    {\ele+ c}\| \teta(t) \|_{L^1(\Omega)} +
    \ber {c_3}\edr\int_0^t \int_\Omega |\nabla \teta|^2 \dd x \dd s
    +  \ber \frac{\eps}{2} \| \tetas (t) \|_{\Hc}^2 \edr  +  {\ele c}\| \tetas(t) \|_{L^1(\Gammac)}
    +{\ele\frac \eps 3}\int_0^t \int_\gc |\nabla \teta_s|^2 \dd x \dd s
    \\
    %+\int_0^t\int_{\Gammac}k(\chi)(\teta-\teta_s)^2\\
  %+\int_0^t\int_{\Gammac}\fc'(\teta-\teta_s)\Psi(\partial_t\uu)|{\calR}(D\Phi_\eps))|(\teta-\teta_s)
  & \quad
  +\frac {C_a} 2\|\uu(t)\|^2_\WW+C_b\int_0^t\|\partial_t\uu\|^2_\WW \dd  s
  +\int_0^t\|\partial_t\chi\|^2_{\Hc}   \dd s \ber + \frac 12\|\nabla\chi (t)\|^2_{\Gammac}\edr
  \\
  & \leq
     C_0  + c \left(\int_0^t  \|\mathbf{F}\|^2_{\WW'} \dd s + \int_0^t  \| h \|^2_{V'} \dd s \right)
     +  \frac{C_b}4 \int_0^t \| \uu_t \|_{\WW}^2 \dd s
      + \frac{c_3}{4}\int_0^t   \int_\Omega |\nabla \teta|^2 \dd x \dd s
      \\
      & \quad
      + \bec c \eec \int_0^t  \| h \|_{V'} \| \teta\|_{L^1(\Omega)} \dd s
  + C' \| \chi(t) \|_{\Hc}^2 + C\,.
   \end{aligned}
\end{equation}
Here, the constant
$C_0$ depends on the initial data, in view of \ber \eqref{bounddatiteta-1}--\eqref{bounddatiteta-2}\edr
 and of conditions  \eqref{cond-uu-zero}--\eqref{cond-chi-zero} on $\uu_0$ and $\chi_0$, and we have also
used \eqref{stimadiffiI} to estimate the term
in~\eqref{trouble-maker}.

Hence, applying the Gronwall lemma
we conclude {\ele (in addition to \eqref{dalpuntofissodue})}
\begin{align}
\label{first-aprio-teta} & \bec \eps^{1/2} \| \teta \|_{L^\infty
(0,T;H)} + \eec \|\teta \|_{L^2(0,T;V)\cap
L^\infty(0,T;L^1(\Omega))} \leq c,\\
\label{first-aprio-teta-s} &\|\teta_s \|_{L^\infty(0,T;L^1(\gc))}
\leq c,
\\
\label{first-aprio-u-chi}
&
 \|\chi\|_{L^\infty(0,T;\Vc)} \leq c.
\end{align}
%as well as (see also \eqref{boundS1/2})
%\begin{equation}
%
%\end{equation}
%%%%%%%%%%%
\paragraph{\bf Second a priori estimate.}
We test \eqref{teta-weak-app} by $\applogteta(\teta)$ and
\eqref{teta-s-weak-app} by $\applogtetas(\teta_s)$, add the
resulting relations and integrate in time. In particular, we use
that \bec the term $\int_0^t \int_\gc \nabla f_\eps(\teta_s) \,
\nabla \applogtetas(\teta_s) \dd x \dd r$ on the left-hand side of
the resulting inequality fulfills \eec
\begin{equation}
\label{costruita-cosi} \int_0^t \int_\gc \nabla f_\eps(\teta_s) \,
\nabla \applogtetas(\teta_s) \dd x \dd r=
 \int_0^t \int_\gc \frac {1}{\applogtetas'(\teta_s)} \nabla\teta_s\, \applogtetas'(\teta_s) \, \nabla\teta_s\dd x \dd r
=\int_0^t \int_\gc |\nabla \teta_s|^2 \dd x \dd r,
\end{equation}
\bec thanks to  \eqref{def-appf}. \eec Then, we observe that (see
\eqref{bi-Lip} and \eqref{cond-g})
\begin{equation}
\label{costruita-cosi-2} \int_0^t \int_\Omega \nabla g(\teta) \,
\nabla \applogteta(\teta) \dd x \dd r\geq \eps c_3\int_0^t
\int_\Omega|\nabla\teta|^2 \dd x \dd r\geq 0.
\end{equation}
Hence, integrating by parts in time and exploiting \bec
\eqref{dalpuntofissodue},
\eqref{first-aprio-teta}--\eqref{first-aprio-u-chi}, \eec
%\eqref{first-aprio-teta-s},
 using the Young inequality and the
Gronwall lemma it is a standard matter to get
\begin{equation} \label{second-aprio-ln-0} \|
\applogteta(\teta)\|_{L^\infty (0,T;H)}+ \|
\applogtetas(\teta_s)\|_{L^\infty (0,T;\Hc)} \leq c,
\end{equation}
%and thus, by definition,
\bec whence \eec
\begin{equation} \label{second-aprio-ln} \|
\lteta_\eps(\teta)\|_{L^\infty (0,T;H)}+ \|
\ltetas_\eps(\teta_s)\|_{L^\infty (0,T;\Hc)} \leq c.
\end{equation}
Moreover, \bec \eqref{first-aprio-teta-s}  and \eec  \eqref{costruita-cosi} %{\ele (which is written on the
%left-hand side of the resulting inequality)}
yield
\begin{equation}
\label{second-aprio-teta-s}
 \|\teta_s \|_{L^2(0,T;\Vc)}\leq c.
\end{equation}
%%%%%%%%%%%%%%%%%%%
\paragraph{\bf Third a priori estimate.}
We are now in the position to estimate  $\mmu$ and
$ \mathrm{D}\Phi_\eps(\uu) $ in \eqref{eqIa-app}. Indeed, by a comparison in the
equation, and the previous a priori estimates,  there holds that
%\footnote{\ber  ATTENZIONE: mi sembra che  si debba scrivere
%$\|\mathrm{D}\Phi_\eps(\uu)+\fc(\teta-\tetas)\mmu\|_{L^2(0,T;\bsY_{{\Gammac}}')}$ e non $\|\mathrm{D}\Phi_\eps(\uu)+\fc(\teta-\tetas)\mmu\|_{L^2(0,T;\Vc')}$, giusto?
%\edr}
$$
\|\mathrm{D}\Phi_\eps(\uu)+\fc(\teta-\tetas)\mmu\|_{L^2(0,T; \bsY_{{\Gammac}}')}\leq c.
$$
Then, after observing that the two addenda are orthogonal
thanks to assumption
\eqref{ass:orthogonal} combined with \eqref{to-be-cited-orthog},
it is
straightforward to deduce that  each of  them  is bounded in
$L^2(0,T;\bsY_{{\Gammac}}')$ arguing in the same way as in \cite[Sec.\ 4]{bbr5}
%\footnote{\ber  qui ho inserito un riferimento a BBR5 dove abbiamo fatto il conto in dettaglio.. \edr}
 In particular, we get
\begin{equation}
\label{added-3d}
\|\mathrm{D} \Phi_\eps(\uu)\|_{L^2(0,T;\bsY_{{\Gammac}}')}\leq c.
\end{equation}
Now, taking into account that
$\mmu = |\Reg (\mathrm{D} \Phi_\eps(\uu))| \zz $, {\ele and the fact }
 that $\| \zz \|_{L^\infty (\Gammac \times (0,T))} \leq C$ by \eqref{bounded-operator},
and exploiting \bec that $\Reg: L^2(0,T;\bsY_{{\Gammac}}') \to
L^\infty (0,T;L^{2+\nu}(\gc;\R^3))$ is bounded by \eqref{hyp-r-1},
\eec  we ultimately conclude
\begin{equation}
\label{third-aprio}
 \|\mmu\|_{L^\infty
(0,T;L^{2+\nu}(\gc;\R^3))}
%+ \| \eeta \|_{L^2 (0,T;\Vc')}
\leq c.
\end{equation}

%%%%%%%%%%%%%%%%%%%%%%%%%%%%%%%
\paragraph{\bf Fourth a priori estimate.}
We perform a comparison argument in \eqref{teta-weak-app}.%\footnote{\ber  qui ho aggiunto un po' di dettagli perche' non si capiva...\edr}
 We take into account the previously proved
estimates
 \eqref{dalpuntofissodue}, \eqref{first-aprio-teta}, \eqref{second-aprio-teta-s},
\eqref{added-3d}, and combine them with \bec \eqref{ass-psi} and
\eqref{hyp-k}--\eqref{hyp-fc}. \eec In particular, we infer (cf.\
\eqref{not-mathcalF}--\eqref{est-math-cal-F}) that \bec $
 k(\chi)  {\ele (\teta-\teta_s)}+\fc'{\ele (\teta-\teta_s)} |\Reg (\mathrm{D}\Phi_\eps(\uu))| \Psi(\uu_t)
$ is bounded in $L^2 (0,T;L^{4/3+s}(\gc))$ \eec
 for some $s>0$.
Then, also taking into account
\eqref{hypo-h}, we conclude that
 \begin{equation}
\label{fourth-aprio} \|\partial_t\applogteta(\teta)  \|_{L^2
(0,T;V')} \leq c.
\end{equation}
%%%%%%%%%%%%%%%%%%%%%%%%%%%%%%%
%%%%%%%%%%%%%%%%%%%%%%%%%%%%%%%
\paragraph{\bf Fifth a priori estimate - BV estimate for $\teta$.}
Now, we formally test \eqref{teta-weak-app} by
$\frac{1}{\applogteta'(\teta)} v$, with $v\in W^{1,q}(\Omega)$, $q>3$.
Observe that this guarantees  $v \in L^\infty (\Omega) $ and that its trace is in $L^\infty (\gc)$.
%\footnote{\ber  mi sembra che le funzioni test vadano prese in
%$v\in W^{1,q}(\Omega)$, $q>3$, e non in $W^{1,q}(\gc)$ con $q>2$.. inoltre non di deve integrare in tempo, qui
%\edr}
%and we integrate in time.
The formal identity
 \begin{equation}
\label{e:formal1-5} \pairing{}{\V}{\partial_t
\applogteta(\vartheta)}{\frac{1}{\applogteta'(\teta)} v}=
\int_{\Omega} \bec \partial_t\teta \eec  {\ele v} \dd x
\end{equation}
holds. Next, we  exploit the Lipschitz continuity of
$\frac{1}{\applogteta'}$ (cf.\ Lemma \ref{l:new-lemma2}) to deduce
that %\footnote{\ber in questa formula vanno aggiunte delle costanti, mi sembra \edr}
\begin{equation}
\label{crucial-log} \left
\|\frac{1}{\applogteta'(\teta)}\right\|_H\leq c \|\teta\|_H+C,
\quad\left \|\nabla\frac{1}{\applogteta'(\teta)} \right\|_H\leq
c\|\nabla\teta\|_H, \quad \text{whence }
\left\|\frac{1}{\applogteta'(\teta)}\right\|_{\ele V} \leq c \|
\teta\|_V + C\,.
\end{equation}
\bec Thus, \eec
%so that
 by \eqref{first-aprio-teta}
$$
\left\|\frac{1}{\applogteta'(\teta)}\right\|_{L^2(0,T;V)}\leq c.
$$
Therefore,
also taking into account \eqref{cond-g},
 we estimate
\begin{align}
&
\label{bv-1}
 \left|
 \int_{\Omega} \dive(\uu_t)  \frac{1}{\applogteta'(\teta)} v      \dd x
 \right| \leq  {\ele c}\| \uu_t \|_{\WW} \left\|\frac{1}{\applogteta'(\teta)}\right\|_{V} \| v\|_{L^3(\Omega)}
 \leq  {\ele c}\| \uu_t \|_{\WW}  {\ele ( \|
\teta\|_V + 1
)}\| v\|_{L^3(\Omega)}  \doteq f_1 \in L^1 (0,T),
 \\
 &
 \label{bv-2}
 \begin{aligned}
 &
 \left|\int_{\Omega}   \nabla g(\vartheta) \, \nabla \left( \frac{1}{\applogteta'(\teta)} v \right)  \dd x \right|
  \\
  &
   \leq  c \| \nabla \teta\|_{H}  \left\|\nabla\frac{1}{\applogteta'(\teta)}\right\|_H \| v \|_{L^\infty (\Omega)}
   +   {\ele c}\| \nabla \teta\|_{H}\left \| \frac{1}{\applogteta'(\teta)}\right\|_{L^6 (\Omega)}  \| \nabla v \|_{L^3(\Omega)}
   \doteq f_2 \in L^1 (0,T),
   \end{aligned}
\\ &
\label{bv-3}
 \left| \int_{\Gammac}
k(\chi)  (\vartheta-\vartheta_s)   \frac{1}{\applogteta'(\teta)} v \dd x \right|
\leq \| k(\chi)  (\vartheta-\vartheta_s)   \|_{\hc} \left\|  \frac{1}{\applogteta'(\teta)} \right\|_{L^4 (\gc)} \| v \|_{L^4 (\gc)}
\doteq f_3 \in L^1 (0,T),
   \\
   &
   \label{bv-4}
   \begin{aligned}
   &
 \left|   \int_{\Gammac}
\fc'(\teta-\teta_s)  \Psi(\dotu) |\Reg (\mathrm{D}\Phi_\eps(\uu))|   \frac{1}{\applogteta'(\teta)}  v \dd x
\right|
\\
&
\leq \|\fc'(\teta-\teta_s)  \Psi(\dotu) |\Reg (\mathrm{D}\Phi_\eps(\uu))| \|_{L^{4/3}(\gc)}  \left\|  \frac{1}{\applogteta'(\teta)} \right\|_{L^4 (\gc)}
\| v \|_{L^\infty (\gc)} \doteq f_4 \in L^1 (0,T),
\end{aligned}
\\
&
\label{bv-5}
 \left| \pairing{}{V}{h}{\frac{1}{\applogteta'(\teta)} v}  \right|
 \leq  {\ele c}\| h \|_{V'} \left( \left\|  \frac1{\applogteta'(\teta)} \right \|_{V} \| v \|_{W^{1,q}(\Omega)}\right)  \doteq f_5 \in L^1 (0,T),
\end{align}
where \eqref{bv-1}--\eqref{bv-5} follow from the previously proved estimates. Thus we conclude that
\[
\exists\, {\ele {\mathcal F}} \in L^1 (0,T)\, \quad \text{for a.a. }
t \in (0,T) \ \forall\, v \in W^{1,q}(\Omega)\, : \quad \left|
\int_\Omega \partial_t\teta v \dd x \right| \leq{\ele {\mathcal
F}}(t) \|v \|_{ W^{1,q}(\Omega)}
\]
 and this implies
 \begin{equation}
\label{stimaBV} \| \partial_t \teta\|_{L^1 (0,T;W^{1,q}(\Omega)')}
\leq c.
\end{equation}
%%%%%%%%%%%
\paragraph{\bf Sixth a priori estimate.}
We test \eqref{teta-s-weak-app} by $\ftetas_\eps(\teta_s)$ and we
integrate in time. Taking into account \bec the definition
\eqref{def-accaapp} of $H_\eps$, we have  the formal identity \eec
 \begin{equation}
\label{e:formal2}
 \int_0^t  \pairing{}{\Vc}{\partial_t \ltetas_\eps(\vartheta_s)}{
\ftetas_\eps(\vartheta_s)} \dd r=  \int_\gc
H_\eps(\vartheta_s(t))\dd x - \int_\gc
H_\eps(\bec\vartheta_s^{0,\eps}\eec) \dd x.
\end{equation}
\bec Thus,  estimate \eqref{accaapp} for $H_\eps$ (see Lemma
\ref{l:new-lemma3}) as well as estimate \eqref{bounddatiteta-3} for
$\int_\gc H_\eps(\bec \vartheta_s^{0,\eps} \eec) \dd x$
 yield \eec
\begin{equation}
\| \ftetas_\eps(\vartheta_s)(t) \|_{L^1(\gc)}+ \|\nabla
\ftetas_\eps(\vartheta_s) \|^2_{L^2 (0,t;\Hc)}\leq C +
I_1+I_2+I_3\,,
\end{equation}
where {\ele (here we use H\"older's inequality and, in particular, \eqref{dalpuntofissodue}, \eqref{first-aprio-teta}, \eqref{second-aprio-teta-s})}
\begin{align}
&
\label{est-7-1}
\begin{aligned}
 I_{1}  = \int_0^t\int_\gc \lambda'(\chi) \partial_t\chi \ftetas_\eps(\vartheta_s)
\dd x \dd r  &  \leq c \int_0^t \| \partial_t \chi  \|_{\Hc}
\|\ftetas_\eps(\vartheta_s)  \|_{\Vc}
 \dd r \\ &
 \leq c \int_0^t \| \partial_t \chi \|_{\Hc} \left(\|\ftetas_\eps(\vartheta_s)-m(\ftetas_\eps(\vartheta_s))\|_{\Vc}+\|m(\ftetas_\eps(\vartheta_s))\|_{\Vc}\right)
\dd r \\&
 \leq c \int_0^t \|
\partial_t \chi \|_{\Hc}\left(\|\nabla
\ftetas_\eps(\vartheta_s)\|_{\Hc}+\|\ftetas_\eps(\vartheta_s)\|_{L^1(\gc)}\right)
\dd r
\\& \leq \delta \|\nabla \ftetas_\eps(\vartheta_s) \|^2_{\Hc}+ c \int_0^t \|
\partial_t \chi \|_{\Hc}\, \|\ftetas_\eps(\vartheta_s)\|_{L^1(\gc)} \dd r
+c\,,
\end{aligned}
\\
&
\label{est-7-2}
\begin{aligned}
 I_{2}  = \int_0^t\int_\gc k(\chi) (\teta-\teta_s) \ftetas_\eps(\vartheta_s)
\dd x \dd r  &  \leq c \int_0^t (\|\chi\|_{\Vc}+1)
\|\teta-\teta_s\|_{\Hc} \|\ftetas_\eps(\vartheta_s)  \|_{\Vc}
 \dd r \\ &
 \leq c \int_0^t \left( \| \teta\|_{\Hc}+ \| \teta_s\|_{\Hc} \right) \|\ftetas_\eps(\vartheta_s) \|_{\Vc}
\dd r \\&\leq \delta \|\nabla \ftetas_\eps(\vartheta_s) \|^2_{\Hc}+
c \int_0^t \| \left( \| \teta\|_{\Hc}+ \| \teta_s\|_{\Hc} \right)
\|\ftetas_\eps(\vartheta_s)\|_{L^1(\gc)} \dd r +c\,,
\end{aligned}
 \\
  &
  \label{est-7-3}
  \begin{aligned}
  I_{3}  = \int_0^t\int_\gc \fc'(\teta-\teta_s) |\Reg (\bec \mathrm{D}\Phi_\eps(\uu) \eec )| |\Psi (\uu_t)|
\ftetas_\eps(\vartheta_s)  \dd x \dd r & \leq c \int_0^t \|\Reg
(\bec \mathrm{D}\Phi_\eps(\uu) \eec)\|_{\Hc} \| \uu_t \|_{L^4(\gc)}
\|\ftetas_\eps(\vartheta_s)\|_{\Vc}\dd r
\\
& \leq c \int_0^t \| \uu_t \|_{L^4(\gc)}
\|\ftetas_\eps(\vartheta_s)\|_{\Vc}\dd r
\\ & \leq
\delta \|\nabla \ftetas_\eps(\vartheta_s) \|^2_{\Hc}+ c \int_0^t
\| \uu_t \|_{\bsW} \|\ftetas_\eps(\vartheta_s)\|_{L^1(\gc)} \dd r
+c\,,
 \end{aligned}
 \end{align}
for some sufficiently small $\delta>0$; note that  in \eqref{est-7-1} and \eqref{est-7-2} we have used that $\lambda $  and
$k$ are
 Lipschitz, while \eqref{est-7-3} follows from the fact that $|\Psi(\uu_t)|\leq C |\uu_t|$ thanks to \eqref{bounded-operator}.
 We apply the Gronwall lemma and we
conclude
\begin{equation}
\label{fifth-aprio-teta-s} \|\ftetas(\vartheta_s)\|_{L^\infty
(0,T;L^1(\gc))}+ \| \ftetas(\vartheta_s)\|_{L^2(0,T;\Vc)} \leq c.
\end{equation}
%\edr
%%%%%%%%%%%%%%%%%%%%
\paragraph{\bf Seventh estimate.}
We test \eqref{teta-s-weak-app} by $\partial_t
\ftetas_\eps(\teta_s)$ \bec (observe that, since $\ftetas_\eps$ is
bi-Lipschitz, $\teta_s \in H^1 (0,T; \Hc)$ implies $\ftetas_\eps(\teta_s)
\in H^1 (0,T; \Hc)$), \eec \eqref{eqIIa-irr-app} by $\partial_t
(A\chi+\beta_\eps(\chi))$, add the resulting relations, and
integrate in time. In particular, we (formally) have
$$
\int_0^t \int_\gc
\partial_t(\applogtetas(\teta_s))\, \partial_t (\ftetas_\eps(\teta_s))
\dd x \dd r=
 \int_0^t \int_\gc \applogtetas'(\teta_s) \partial_t\teta_s\, \frac {1}{\applogtetas'(\teta_s)} \partial_t\teta_s \dd x \dd r
=\int_0^t \int_\gc |\partial_t \teta_s|^2 \dd x \dd r\,.$$
%Performing formal calculations we have
Thus, \bec taking into account the monotonicity of $\beta_\eps$ and $\rho_\eps$, \eec we obtain
%\footnote{\ber  qui ho tolto il riferimento alle formal calculations perch\'e,  con l'eccezione dell'identitt\`a  per
%$\int_0^t \int_\gc
%\partial_t(\applogtetas(\teta_s))\, \partial_t (\ftetas_\eps(\teta_s))
%\dd x \dd r$, gli altri conti non dovrebbero essere formali.
%\edr}
\begin{equation}
\label{v-formal} \|\partial_t
\vartheta_s\|^2_{L^2(0,t;\Hc)}+\frac12\|\nabla
\ftetas_\eps(\teta_s  (t) )\|^2_{\Hc}+\|\partial_t\chi\|^2_{L^2(0,t;\Vc)}
+\frac12\|A(\chi(t))+\xi(t)\|^2_{\Hc}\leq C +
I_4+I_5+I_6+I_7+I_8,
\end{equation}
\bec where we have
 used the place-holder $\xi:= \beta_\eps(\chi)$. Now, \eec 
systematically using estimate \eqref{fprimoapp}, we have  (cf.\ also
\eqref{hyp-k}--\eqref{hyp-sig}), 
 %\eqref{cond-landa-enhanc})}
\[
\begin{aligned}
&
\begin{aligned}
 I_{4}  = \int_0^t\int_\gc \lambda'(\chi)\partial_t \chi  \partial_t
\ftetas_\eps(\teta_s)  \dd x \dd r   &  \leq \delta \int_0^t \|
\partial_t\teta_s \|^2_{\Hc} \dd r + c \int_0^t\int_\gc |\partial_t \chi|^2
|\ftetas_\eps'(\teta_s)|^2 \dd x \dd r
\\ &  \leq \delta \int_0^t
\| \partial_t\teta_s \|^2_{\Hc} \dd r + c \int_0^t \|\partial_t
\chi\|_{\Hc} \|\partial_t \chi\|_{L^4(\Gammac)}
(\| \ftetas_\eps(\teta_s) \|_{\Vc} +c) \dd r
\\ &
\begin{aligned}
  \leq \delta \int_0^t
\| \partial_t\teta_s \|^2_{\Hc} \dd r &  + \delta \int_0^t \|\partial_t
\chi\|^2_{\Vc} \dd r \\ & + c \int_0^t \|\partial_t
\chi\|^2_{\Hc} \left( \|\ftetas_\eps(\teta_s)  \|^2_{\Vc}  +1\right) \dd r\,,
\end{aligned}
\end{aligned}
\\
&
\begin{aligned}
 I_{5}  = \int_0^t\int_\gc k(\chi) (\teta-\teta_s)  \partial_t \ftetas_\eps(\teta_s)
\dd x \dd r   &  \leq \delta \int_0^t \| \partial_t\teta_s
\|^2_{\Hc} \dd r + c \int_0^t\int_\gc (\chi ^2+1)
(\teta^2+\teta_s^2) |f_\eps'(\tetas)|^2 \dd x \dd r
\\ &
\begin{aligned}
\leq  &  \ \delta \int_0^t \| \partial_t\teta_s \|^2_{\Hc} \dd r \\ & + c
\int_0^t
\left(\|\teta\|^2_{L^4(\Gammac)}+\|\teta_s\|^2_{L^4(\Gammac)}\right)
\left( \|f_\eps(\tetas) \|_{\Vc} +1\right)
 \dd r\,,
 \end{aligned}
\end{aligned}
 \\
  &
  \begin{aligned}
  I_{6} & = \int_0^t\int_\gc \fc'(\teta-\teta_s) |\Reg (\eeta)| \Psi(\uu_t)
 \partial_t f_\eps(\teta_s)  \dd x \dd r
\\ & \leq
\delta \int_0^t \| \partial_t\teta_s \|^2_{\Hc} \dd r + c \int_0^t
\|\Reg (\eeta)\|^2_{L^{2+\nu}(\gc;\R^3)} \| \uu_t \|^2_{L^4(\Gammac{\ele ;\R^3})}\left(
\|f_\eps(\teta_s)\|_{\Vc} +1 \right) \dd r
\\
& \leq \delta \int_0^t \| \partial_t\teta_s \|^2_{\Hc} \dd r + c
\int_0^t
 \| \uu_t
\|^2_{\bsW} \left(
\|f_\eps(\teta_s)\|_{\Vc} +1 \right)  \dd r \,,
 \end{aligned}
%  \end{aligned}
% \]
%\[
%\begin{aligned}
\\
&
\begin{aligned}
 I_{7} &=-\int_0^t\int_\gc \left( \gamma'(\chi) + \lambda'(\chi) \teta_s \right) \partial_t
  (A\chi+\xi) \dd x \dd r
 \\
&  = \bec \int_0^t\int_\gc  (\gamma{''}(\chi)+\lambda{''}(\chi)  \teta_s ) \partial_t \chi
 (A\chi+\xi) \dd x \dd r
  +\int_0^t\int_\gc \lambda'(\chi)\partial_t\teta_s (A\chi+\xi) \dd x \dd r \eec 
  \\ & \qquad \bec  - \int_\gc (\gamma'(\chi(t)) +  \lambda'(\chi(t))\teta_s(t) )  (A\chi(t)+\xi(t)) \dd x
  +  \int_\gc (\gamma'(\chi_0) +\lambda'(\chi_0)\teta_s^0 ) (A\chi_0+\xi(0)) \dd x \eec 
\\
& \bec \leq c \int_0^t \| \partial_t \chi \|_{\Vc} (  \| \teta_s\|_{\Vc} +1)  \|
A\chi+\xi \|_{\Hc} \dd r+ c\int_0^t \|
\partial_t\teta_s\|_{\Hc}
 \| A\chi+\xi \|_{\Hc} \dd r \eec 
 \\ &
 \bec + c(\|\chi(t) \|_{\Hc} + \|\teta_s(t)\|_{\Hc} +1)  \|A\chi(t)+\xi(t)\|_{\Hc} + c \eec 
\\
& \leq \frac14 \|A\chi(t)+\xi(t)\|_{\Hc}^2 + \delta \int_0^t \|
\partial_t\teta_s \|^2_{\Hc} \dd r + \delta \int_0^t \| \partial_t
\chi \|^2_{\Vc} \dd r + c \int_0^t \left( \|\teta_s\|^2_{\Vc}+1
\right)  \|A\chi+\xi \|^2_{\Hc} \dd r
\\
& \bec +c \|\teta_s\|^2_{L^\infty(0,T;\Hc)} +c \|\chi\|^2_{L^\infty(0,T;\Hc)}  + c\,, \eec 
\end{aligned}
 \end{aligned}
 \]
for some small $\delta>0$, and
\[
\begin{aligned}
&
\begin{aligned}
  I_{8} &=-\frac12 \int_0^t\int_\gc |\uu|^2\partial_t
  (A\chi+\xi) \dd x \dd r\\ & = \int_0^t\int_\gc \uu \uu_t (A\chi+\xi) \dd x \dd
  r- \frac12 \int_\gc |\uu(t)|^2
  (A\chi(t)+\xi(t)) \dd x+ \frac12 \int_\gc |\uu_0|^2
  (A\chi_0+\xi(0)) \dd x
  \\
  & \leq {\ele c} \| \uu\|_{L^\infty(0,T;\bsW)}^2 \int_0^t \| \uu_t
  \|_{\bsW}^2 \dd r +\frac12\int_0^t \|A\chi+\xi \|^2_{\Hc} \dd r + \frac18 \|A\chi(t)+\xi(t)\|_{\Hc}^2
+ c \|\uu\|_{L^\infty(0,T;\bsW)}^4+ c.
\end{aligned}
 \end{aligned}
 \]
We plug the above estimates for $I_{4}, \ldots, I_{8}$ into
\eqref{v-formal}, exploit the previously obtained bounds  and apply
the Gronwall Lemma. In this way, we conclude that
\begin{equation}
\label{seventh-esti} \|\teta_s\|_{H^1(0,T;\Hc)}+ \| f_\eps (\teta_s)
\|_{L^\infty(0,T;\Vc)}+ \| \chi \|_{H^1(0,T;\Vc)}+ \| A\chi+\xi
\|_{L^\infty(0,T;\Hc)} \leq c.
\end{equation}
We deduce moreover
\begin{equation}
\label{seventh-esti-bis} \| \chi\|_{L^\infty(0,T;H^2(\gc))} +\|\xi
\|_{L^\infty(0,T;\Hc)} \leq c,
\end{equation}
yielding (by a comparison in \eqref{eqIIa-irr-app})
\begin{equation}
\label{seventh-esti-tris}
\|\rho_\eps(\partial_t\chi)\|_{L^\infty(0,T;\Hc)}\leq c.
\end{equation}
\begin{remark}[A fully rigorous derivation of the a priori estimates]
\label{rmk:rigour} \upshape As already mentioned, the \emph{First}
and \emph{Fifth estimates} are not yet  rigorously justified in the
framework of the approximate Problem \ref{prob:irrev-app}. This is
due to a lack of regularity for the term
 $\partial_t \applogteta(\teta_\eps)$, which is only in $L^2 (0,\widehat
 T;\V')$.

 In order fully justify them, we should add a further
 viscosity contribution to the equation for $\teta$, modulated by a
 second parameter $\nu>0$, hence perform a double passage to the
 limit, first as $\nu \down 0$ and secondly as $\eps \down 0$.

  In the present paper we have chosen not to explore this, to avoid
  overburdening the analysis. We refer the reader to \cite{bbr3},
 where this
 procedure has been carried out in  detail.
\end{remark}

%%%%%%%%%%%%%%%%%%%%%%%%%%%%%%%%%%%%%%%%%%%%%%%%%%%%%%%%%%%%%%%%%%%%%%%%%%%%%%%%%%%%%%%%%%

\section{\bf Local existence for Problem \ref{prob:irrev}}\label{s:5}
\label{ss:local-2}
\noindent
 In this section, we pass to the limit as $\eps
\down 0$ in Problem \ref{prob:irrev-app} and deduce
from the local existence result in Proposition \ref{prop:loc-exist-eps}
 the following
\begin{theorem}
\label{thm:exist-local}
Assume
\eqref{assumpt-domain} and
 Hypotheses \ref{hyp:1}--\ref{hyp:6}. Suppose
that the data $(h,\mathbf{f},\mathbf{g})$ and
$(\teta_0,\tetas^0,\uu_0,\chi_0)$ fulfill \eqref{hyp-data} and
\eqref{hyp-initial}.

Then, there exists $\widehat T>0$ such that  Problem
\ref{prob:irrev} admits an \bec \emph{energy solution}  $(\vartheta,
\vartheta_s, \uu,\chi,\eeta,\mmu, \xi, \zeta)$ on $(0,\widehat T)$
(in the sense of Definition \ref{def-energysol}), \eec  having in
addition the regularity properties \eqref{additional-regularities}
on $(0,\widehat T)$.
%{\ele which is an energy solution, i.e. in
%particular } complying with the energy inequality {\ele
%\eqref{en-ineq-local}} for almost all $s,t \in (0,\widehat T)$ with
%$s\leq t$, and for $s=0$.
\end{theorem}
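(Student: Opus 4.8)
The plan is to obtain the local-in-time energy solution by passing to the limit as $\eps \downarrow 0$ in the family of approximate solutions $(\teta_\eps, \teta_{s,\eps}, \uu_\eps, \chi_\eps, \mmu_\eps)$ to Problem $(P_\eps)$, whose existence on a common interval $(0,\widehat T)$ -- with $\widehat T$ \emph{independent} of $\eps$ -- is guaranteed by Proposition \ref{prop:loc-exist-eps}, together with the approximate energy identity \eqref{enid0}. First I would collect the a priori bounds of Section \ref{s:4}, uniform with respect to $\eps \in (0,1)$, namely \eqref{first-aprio-teta}--\eqref{first-aprio-u-chi}, \eqref{second-aprio-ln}--\eqref{second-aprio-teta-s}, \eqref{added-3d}--\eqref{third-aprio}, \eqref{fourth-aprio}, \eqref{stimaBV}, \eqref{fifth-aprio-teta-s}, and \eqref{seventh-esti}--\eqref{seventh-esti-tris}. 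By weak and weak-$*$ compactness these yield, along a non-relabelled subsequence, limits $(\teta, \teta_s, \uu, \chi, \eeta, \mmu, \xi, \zeta)$ enjoying the regularity \eqref{reg-teta}--\eqref{zetareg} and \eqref{additional-regularities}, with $\eeta$ the weak limit of $\mathrm{D}\Phi_\eps(\uu_\eps)$ in $L^2(0,\widehat T;\bsY_{{\Gammac}}')$ and $\xi$, $\zeta$ the weak limits of $\beta_\eps(\chi_\eps)$, $\rho_\eps(\partial_t\chi_\eps)$ in $L^2(0,\widehat T;\Hc)$; the viscosity contributions $\eps\teta_\eps$ and $\eps\teta_{s,\eps}$ disappear in the limit because of the $\eps^{1/2}$-weighted bounds.

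Next I would upgrade these to the strong convergences needed for the nonlinearities. The Aubin--Lions lemma, applied through \eqref{reguI}, \eqref{regchiI}, \eqref{seventh-esti}, gives $\uu_\eps \to \uu$ and $\chi_\eps \to \chi$ strongly in $\mathrm{C}^0([0,\widehat T];\Hc)$ and in $L^2(0,\widehat T;\Vc)$, hence for their traces on $\gc$; this disposes of the source term $\tfrac12|\uu_\eps|^2$ and of the products $k(\chi_\eps)$, $\lambda(\chi_\eps)$, $\lambda'(\chi_\eps)$, $\gamma'(\chi_\eps)$. Since $\ftetas_\eps$ is bi-Lipschitz uniformly in $\eps$ (Lemma \ref{l:new-lemma3}), the bound \eqref{seventh-esti} gives $\teta_{s,\eps}\to\teta_s$ strongly in $\mathrm{C}^0([0,\widehat T];\Hc)$ as well. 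For $\teta_\eps$, only the weak information \eqref{stimaBV} in $L^1(0,\widehat T;W^{1,q}(\Omega)')$ ($q>3$) and \eqref{first-aprio-teta} in $L^2(0,\widehat T;V)$ is available, so I would invoke the $\BV$-version of the Lions--Aubin compactness theorem to get $\teta_\eps\to\teta$ strongly in $L^2(0,\widehat T;H)$ and a.e.\ in $\Omega\times(0,\widehat T)$. Combined with the continuity of $\fc, \fc'$ this handles the friction coefficients $\fc(\teta_\eps-\teta_{s,\eps})$, $\fc'(\teta_\eps-\teta_{s,\eps})$ in \eqref{teta-weak-app}--\eqref{teta-s-weak-app} and \eqref{eqIa-app}; and the properties of the Yosida approximants (Lemmas \ref{l:new-lemma1}--\ref{l:new-lemma2}), together with the a.e.\ convergence of $\teta_\eps$, $\teta_{s,\eps}$, let me identify $\lteta_\eps(\teta_\eps)\to\lteta(\teta)$, $\ltetas_\eps(\teta_{s,\eps})\to\ltetas(\teta_s)$, $\fteta(\teta_\eps)\to\fteta(\teta)$, $\ftetas_\eps(\teta_{s,\eps})\to\ftetas(\teta_s)$ in the relevant topologies and pass the time-derivative terms to the limit. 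Because $\mathrm{dom}(\widehat\beta)\subset[0,+\infty)$ and $\beta_\eps(\chi_\eps)\weakto\xi$ with $\chi_\eps\to\chi$, a strong--weak closure argument yields simultaneously $\xi\in\beta(\chi)$ \emph{and} $\chi\geq 0$ a.e.; similarly $\zeta\in\rho(\partial_t\chi)$, the key point being the inequality $\limsup_\eps\int_0^{\widehat T}\!\!\int_\gc \rho_\eps(\partial_t\chi_\eps)\,\partial_t\chi_\eps \leq \int_0^{\widehat T}\!\!\int_\gc \zeta\,\partial_t\chi$, obtained by testing \eqref{eqIIa-irr-app} by $\partial_t\chi_\eps$ and using the strong convergence of the remaining terms.

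In the momentum equation I would first pass to the limit in $\mathrm{D}\Phi_\eps(\uu_\eps)$: by \eqref{to-be-cited-orthog} one has $\mathrm{D}\Phi_\eps(\uu_\eps)\in\partial\Phi(\mathsf{r}_\eps(\uu_\eps))$ pointwise, and since $\mathsf{r}_\eps(\uu_\eps)\to\uu$, the strong--weak closedness of the maximal monotone operator $\partial\bvarphi:\bsY_{{\Gammac}}\rightrightarrows\bsY_{{\Gammac}}'$ gives \eqref{incl1}, arguing as in \cite{bbr5,bbr6}. By Hypothesis \ref{hyp:4} the weak convergence of $\mathrm{D}\Phi_\eps(\uu_\eps)$ upgrades to $\Reg(\mathrm{D}\Phi_\eps(\uu_\eps))\to\Reg(\eeta)$ strongly in $L^\infty(0,\widehat T;L^{2+\nu}(\gc;\R^3))$, whence $|\Reg(\mathrm{D}\Phi_\eps(\uu_\eps))|\to|\Reg(\eeta)|$; together with the uniform bound \eqref{third-aprio} on $\mmu_\eps=|\Reg(\mathrm{D}\Phi_\eps(\uu_\eps))|\,\zz_\eps$ and the $1$-homogeneity identity $\zz_\eps\cdot\partial_t\uu_\eps=\Psi(\partial_t\uu_\eps)$, a monotonicity argument (using $\partial_t\uu_\eps\weakto\partial_t\uu$ in $L^2(0,\widehat T;\bsW)$) identifies $\mmu$ as an element of $|\Reg(\eeta)|\,\partial\Psi(\partial_t\uu)$, i.e.\ \eqref{incl1-bis}, and passes the friction term in \eqref{eqIa-app} to \eqref{eqIa}. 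With this, each equation in \eqref{var-formu-abstract} converges to the corresponding one in \eqref{system-weak}; the initial conditions \eqref{iniw}--\eqref{inichi} follow from \eqref{convdatiteta}, the time-continuity of the limits, and the strong convergences. Finally, for the energy inequality \eqref{en-ineq-local} I would take the $\limsup$ as $\eps\downarrow 0$ in \eqref{enid0}: the right-hand side converges thanks to \eqref{bounddatiteta-1lemma}--\eqref{bounddatiteta-2lemma} for the initial energies and to the strong convergences for the other terms, whereas each term on the left-hand side is lower semicontinuous -- the convex integral functionals built on $J^*$, $j^*$, $\Phi$, $\widehat\beta$, $\gamma$, the bilinear forms $a$ and $b$, and the gradient norms -- using in addition the inequalities comparing $\calI_\eps$, $i_\eps$, $H_\eps$, $\Phi_\eps$, $\widehat\beta_\eps$ with their limits established in the Appendix.

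The step I expect to be the main obstacle is securing the strong compactness of the bulk temperature $\teta_\eps$: because of the mixed boundary conditions \eqref{condteta} we cannot enhance its spatial regularity, and the singular term $\partial_t\lteta_\eps(\teta_\eps)$ only provides a bound for $\partial_t\teta_\eps$ in $L^1(0,\widehat T;W^{1,q}(\Omega)')$, so reconciling this with the mere $L^2(0,\widehat T;V)$-bound forces a delicate use of a Lions--Aubin-type theorem in $\BV$-spaces. A closely related difficulty is the simultaneous presence of $\mathrm{D}\Phi_\eps(\uu_\eps)$ and of $\fc(\teta_\eps-\teta_{s,\eps})\mmu_\eps$ in \eqref{eqIa-app}: disentangling their limits hinges on the orthogonality \eqref{crucial-orthogonality} being preserved in the limit, which is precisely why the approximation by $\Phi_\eps$, rather than by $\partial\bvarphi$ directly, was built into Problem $(P_\eps)$.
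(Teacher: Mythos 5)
Your proposal follows the paper's strategy essentially step for step: collect the $\eps$-uniform a priori bounds of Section~\ref{s:4} on the solutions of $(P_\eps)$, extract weak/weak-$*$ limits, upgrade to the strong convergences via Aubin--Lions and its $\BV$-variant, identify the nonlinear terms, and finally take a $\limsup$ in the approximate energy identity \eqref{enid0}. Most of what you outline — the identification of $\omega=\lteta(\teta)$ and $\omega_s=\ltetas(\teta_s)$ via weak-strong arguments, the $\limsup$ test for $\zeta\in\rho(\partial_t\chi)$, the use of the $1$-homogeneity $\zz_\eps\cdot\partial_t\uu_\eps=\Psi(\partial_t\uu_\eps)$ together with the weakly-strongly continuous operator $\Reg$, and the lower semicontinuity/Ioffe arguments for \eqref{en-ineq-local} — matches the paper's proof.

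There is, however, one step that does not close as you have written it, namely the identification $\eeta\in\partial\bvarphi(\uu)$. The pointwise inclusion $\mathrm{D}\Phi_\eps(\uu_\eps)\in\partial\Phi(\mathsf{r}_\eps(\uu_\eps))$ does translate to $\mathrm{D}\Phi_\eps(\uu_\eps)\in\partial\bvarphi(\mathsf{r}_\eps(\uu_\eps))$ in $\bsY_{{\Gammac}}'$, but to invoke strong--weak (demi-)closedness of the maximal monotone operator $\partial\bvarphi$ you would need either $\mathsf{r}_\eps(\uu_\eps)\to\uu$ strongly in $L^2(0,\widehat T;\bsY_{{\Gammac}})$ or the inequality $\limsup_\eps\int_0^{t}\langle\mathrm{D}\Phi_\eps(\uu_\eps),\mathsf{r}_\eps(\uu_\eps)\rangle\le\int_0^{t}\langle\eeta,\uu\rangle$, and neither is at hand: the only uniform bound on $\mathrm{D}\Phi_\eps(\uu_\eps)$ is \eqref{added-3d} in $L^2(0,\widehat T;\bsY_{{\Gammac}}')$, so the increment $\mathsf{r}_\eps(\uu_\eps)-\uu_\eps=-\eps\,\mathrm{D}\Phi_\eps(\uu_\eps)$ is not under control in $L^2(0,\widehat T;\bsY_{{\Gammac}})$. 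The paper bypasses this by testing \eqref{eqIa-app} with $\uu_\eps$ to get the $\limsup$ inequality \eqref{lsc} directly for the pair $(\eeta_\eps,\uu_\eps)$, and then exploiting the \emph{Mosco convergence} of $\Phi_\eps$ to $\Phi$ in the chain $\int_0^t\langle\eeta,\vv-\uu\rangle\le\liminf_\eps\int_0^t\int_\gc(\Phi_\eps(\vv)-\Phi_\eps(\uu_\eps))\le\int_0^t(\bvarphi(\vv)-\bvarphi(\uu))$. Replace your resolvent argument with that one. A minor further inaccuracy: $\ftetas_\eps$ is \emph{not} bi-Lipschitz uniformly in $\eps$ (its Lipschitz constant is of order $1/\eps$, cf.\ \eqref{bi-Lip-tetas}), so the $L^\infty(0,\widehat T;\Vc)$ bound on $\ftetas_\eps(\teta_{s,\eps})$ does not transfer to $\teta_{s,\eps}$ and you do not obtain $\mathrm{C}^0([0,\widehat T];\Hc)$-compactness that way; the paper only derives $\teta_{s,\eps}\to\teta_s$ in $L^2(0,\widehat T;L^\delta(\gc))$ from \eqref{second-aprio-teta-s} and \eqref{seventh-esti}, which nonetheless suffices for all subsequent identifications.
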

\begin{proof}
Let %\footnote{\ber ricordiamo chi sono le soluzioni approssimate \edr}
 $(\teta_\eps,\tetase, \uu_\eps,\chi_\eps,\mmu_\eps)_\eps$
 be a family of solutions to Problem $(P_\eps)$ with $\zz_\eps$ as in \eqref{to-be-quoted-also}; in what follows, we shall use the place-holders
 $\eeta_\eps:= \mathrm{D} \Phi_\eps (\uu_\eps)$ and $\xi_\eps = \beta_\eps (\chi_\eps)$.
  Relying on the (uniform w.r.t.\ $\eps$) a priori estimates
\eqref{dalpuntofissodue}, \eqref{first-aprio-teta}--\eqref{first-aprio-u-chi},
 \eqref{second-aprio-ln}--\eqref{second-aprio-teta-s},
 \eqref{third-aprio}--\eqref{fourth-aprio},
 \eqref{fifth-aprio-teta-s},
 \eqref{seventh-esti}--\eqref{seventh-esti-tris}, by weak and weak$^*$ compactness arguments we deduce that,
 along a suitable subsequence (which we do not relabel) \bec the
 following weak and weak$^*$ convergences hold
 \eec as $\eps\searrow0$
\begin{align}\label{convI}
&\uu_\eps\weakto \uu\quad\hbox{ in }H^1(0,\widehat
T;\WW),\\\label{convII} &\chi_\eps \weaksto\chi\quad\hbox{ in }
L^\infty(0,\widehat T;H^2(\gc)) \cap H^1(0,\widehat
T;\Vc),\\\label{convIII} &\teta_\eps\weakto \teta\quad\hbox{ in
}L^2(0,\widehat T;V),\\\label{convIV}
&\teta_{s,\eps}\weakto\tetas\quad\hbox{ in }L^2(0,\widehat
T;\Vc),\\\label{convV} &\xi_\eps\weaksto \xi\quad\hbox{ in
}L^\infty(0,\widehat T;\Hc),\\\label{convVI} &\eeta_\eps\weakto
\eeta\quad\hbox{ in }L^2(0,\widehat
T;\bsY_{{\Gammac}}'),\\\label{convVII} &\mmu_\eps\weaksto
\mmu\quad\hbox{ in }L^\infty(0,\widehat
T;L^{2+\nu}(\gc;\R^3)),\\\label{convVIII} &\zz_\eps\weaksto
\zz\quad\hbox{ in }L^\infty(\gc\times(0,\widehat
T)),\\\label{convIX}
&\applogteta(\teta_\eps)\weaksto\omega\quad\hbox{ in }
L^\infty(0,\widehat T;H) \cap H^1(0,\widehat T;V'),\\\label{convX}
&\applogtetas(\teta_{s,\eps})\weaksto \omega_s\quad\hbox{ in }
L^\infty(0,\widehat T;\Hc) \cap H^1(0,\widehat
T;\Vc'),\\\label{convXbis} &f_\eps(\teta_{s,\eps})\weakto
\alpha\quad\hbox{ in }L^2(0,\widehat T;\Vc),\\\label{convXtris}
&\bec \rho_\eps(\partial_t\chi)\weaksto \zeta\quad\hbox{ in
}L^\infty(0,\widehat T;\Hc). \eec 
\end{align}
The
 compactness results from \cite{simon} (cf.\ Thm.\ 4 and Cor.\ 5)  yield in addition
 the
following strong convergences
%\footnote{\ber ho spostato qui la convergenza forte di $\tetase$ in quanto per dedurla si usa Simon, e non
%il risultato di compattezza BV \edr}
\begin{align}\label{convXI}
&\uu_\eps\rightarrow\uu\quad\hbox{in
}\rmC^0([0,\widehat T];H^{1-\delta}(\Omega;\R^3)) \quad \text{for all }\delta \in (0,1),\\\label{convXII}
&\chi_\eps\rightarrow\chi\quad\hbox{ in
}\rmC^0([0,\widehat T];H^{2-\delta}(\gc)) \quad \text{for all }\delta \in (0,2),
\\
&\teta_{s,\eps}\rightarrow\tetas\quad\hbox{in
}L^2(0,\widehat T;L^\delta(\gc)) \text{ for all } \delta\in[1,+\infty),
\label{convXVII}
\\\label{convXIII}
&\applogteta(\teta_\eps)\rightarrow\omega\quad\hbox{in
}\rmC^0([0,\widehat T];V'),\\\label{convXIV}
&\applogtetas(\teta_{s,\eps})\rightarrow \omega_s\quad\hbox{in
}\rmC^0([0,\widehat T];\Vc').
\end{align}
Note that by virtue of \eqref{convVI} and Hypothesis \ref{hyp:4} on
${\calR}$, it follows that
\begin{equation}\label{identificoR}
{\calR}(\eeta_\eps)\rightarrow\Reg(\eeta)\quad\hbox{ in }L^\infty(0,
\widehat T;L^{2+\nu}(\gc;\R^3)).
\end{equation}
Hence, combining \eqref{convVII} and \eqref{convVIII} {\ele (with \eqref{identificoR})} we end up with
\begin{equation}\label{identificoR-2}
\mmu=|{\calR}(\eeta)|\zz.
\end{equation}
Finally, we  combine estimate \eqref{first-aprio-teta} with \eqref{stimaBV}
and resort to the $\mathrm{BV}$-version of the Aubin-Lions compactnes theorem (cf.\ again \cite[Cor.\ 4]{simon}),
to deduce that
%\footnote{\ber  non credo che valga
%$\teta_\eps\rightarrow\teta\quad\hbox{in
%}L^2(0,\widehat T;L^{\delta}(\gc))$ PER ogni $\delta>0$... \edr}
\begin{align}\label{convXV}
&\teta_\eps\rightarrow\teta\quad\hbox{in
}L^2(0,\widehat T;H^{1-\delta}(\Omega)) \quad \text{for all } \delta  \in (0,1), \\
\label{convXVI}
&\teta_\eps\rightarrow\teta\quad\hbox{in
}L^2(0,\widehat T;L^{4-\delta}(\gc)) \quad \text{for all }\delta \in (0,4)
\end{align}
the latter convergence ensuing from \eqref{convXV} via trace and
embedding theorems. {\ele Let us also point out that the  strong
convergences \eqref{convXI}, \eqref{convXII}, \eqref{convXVII}, and
\eqref{convXV} imply  (for suitable subsequences) a.e.\
convergence.} Now, by \eqref{convXVII} and  and \eqref{hyp-fc} we
can identify the limit of $\fc(\teta_\eps-\teta_{s,\eps})$ and
$\fc'(\teta_\eps-\teta_{s,\eps})$ in $L^2(0, \widehat
T;L^\delta(\gc))$ for any $\delta\in[1,4)$ and \bec $L^q(\gc \times
(0, \widehat T))$ \eec for all $q\in[1,+\infty)$, respectively \bec
(the latter convergence is guaranteed by   a generalization of the
Lebesgue theorem and by  \eqref{cond-g}). \eec
\paragraph{\bf Passage to the limit in the momentum equation.}
We can pass to the limit in \eqref{eqIa-app} as $\eps \to 0$ by virtue of the
previous convergences
and conclude that  the sextuple $(\teta,\tetas,\uu,\chi,\mmu,\eeta)$ satisfies \eqref{eqIa} on $(0,\widehat T)$,
with $\mmu= |\Reg(\eeta)| \zz$ (cf.\ \eqref{identificoR-2}).
It remains to identify $\zz$
as an element in $\partial \Psi (\uu_t)$ (thus obtaining  \eqref{incl1-bis} for $\mmu$),
and $\eeta$ (cf.\
\eqref{convVI})
as an element in $\partial \bvarphi (\uu)$.

For the latter task,
we proceed as in \cite{bbr6} and test
\eqref{eqIa-app} by $\uu_\eps$. By the previous convergences,
lower semicontinuity arguments,
and the fact that the limiting sextuple  $(\teta,\tetas,\uu,\chi,\mmu,\eeta)$ fulfills \eqref{eqIa},
 it is straightforward to check that
 %\footnote{\ber  Mi sembra che in \eqref{lsc} non si possa scrivere $\int_0^t\int_{\gc}\eeta\cdot\uu \dd x  \dd r$
 %in quanto $\eeta $ \`e una distribuzione \edr}
 \begin{equation}
 \label{lsc}
\limsup_{\eps\searrow0}\int_0^t\int_\gc\eeta_\eps\cdot\uu_\eps \dd x \dd r  \leq\int_0^t \pairing{}{\bsY_{{\Gammac}}}{\eeta}{\uu}   \dd r
\end{equation}
for all $t \in (0,\widehat T)$.
We
%\footnote{\ber  qui ho aggiunto dettagli, senno' non si capisce perch\`e dovrebbe seguire la \eqref{conseq}, visto che non stiamo lavorando  con
% la regolarizzata di Yosida di $\partial \bvarphi$.. fra l'altro  piu' in basso si cita la Mosco convergenza di $\Phi_\eps$ \edr}
use \eqref{lsc} to deduce that for all $v \in \bsY_{{\Gammac}}$ there holds
\[
\begin{aligned}
\int_0^t \pairing{}{\bsY_{{\Gammac}}}{\eeta}{\vv -\uu}   \dd r\leq
\liminf_{\eps \to 0}  \int_0^t\int_\gc\eeta_\eps\cdot(\vv -\uu_\eps) \dd x \dd r
 & \leq  \liminf_{\eps \to 0}  \int_0^t\int_\gc\left( \Phi_\eps (\vv) - \Phi_\eps(\uu_\eps) \right)
 \dd x \dd r
 \\ & \leq  \int_0^t\int_\gc (\Phi(\vv) - \Phi(\uu))  \dd x \dd r  
 \\
  & = \int_0^t (\bvarphi(\vv) - \bvarphi(\uu)) \dd r
 \end{aligned}
\]
where the  third inequality is due to the fact that $\Phi_\eps$
Mosco converges to $\Phi$. We have thus shown that
\begin{equation}
\label{conseq}
\eeta\in\bvarphi(\uu) \quad \text{ in $\bsY_{{\Gammac}}'$ a.e.\ in $(0,\widehat T)$.}
\end{equation}

Instead of directly proving that
$\zz \in \partial\Psi (\uu_t)$ a.e.\ in $\gc \times (0,\widehat T)$, we will show that
%\footnote{\ber
%nei conti che seguono \`e stato cambiato sistematicamente $|(\partial_t \uu)_{\mathrm{T}} |$
%in $\Psi(\uu_t)$, controlliamo che ora vada tutto bene.. \edr}
\begin{equation}
\label{prove} \funeta{(\teta,\teta_s,\eeta)}(\ww) -
\funeta{(\teta,\teta_s,\eeta)}(\uu_t) \geq \int_0^{\widehat T}
\int_{\gc} \fc(\teta-\vartheta_s)|\Reg(\eeta) | \zz \cdot(\ww
-\uu_t) \dd x \dd t
\end{equation}
for all $\ww \in L^2 (0,{\widehat T};L^4(\gc;\R^3))$, where the functional
$\funeta{(\teta,\teta_s,\eeta)}: L^2 (0,{\widehat T};L^4(\gc;\R^3)) \to [0,+\infty)$
is defined for all $\vv \in L^2 (0,{\widehat T};L^4(\gc;\R^3))$ by
\begin{equation}
 \funeta{(\teta,\teta_s,\eeta)}(\vv): = \int_0^{\widehat T}
\int_{{\Gammac}} \fc(\teta(x,t)-\vartheta_s(x,t))|\Reg(\eeta)(x,t)|
\Psi(\vv (x,t)) \dd x \dd t.
\end{equation}
Clearly, $\funeta{(\teta,\tetas,\eeta)}$ is a convex and lower
semicontinuous functional on $L^2 (0,{\widehat T};L^4(\gc;\R^3))$.
 It can be easily verified
 that the subdifferential $\partial\funeta{(\teta,\teta_s,\eeta)} : L^2 (0,{\widehat T};L^4(\gc;\R^3)) \rightrightarrows L^2 (0,{\widehat T};L^{4/3}(\gc;\R^3))
 $ of $\funeta{(\teta,\tetas,\eeta)}$ is given  at every $\vv \in L^2
 (0,{\widehat T};L^4(\gc;\R^3))$ by
\begin{equation}
\label{repre-funeta} {\bf h} \in
\partial\funeta{(\teta,\teta_s,\eeta)}(\vv) \ \Leftrightarrow \quad
\begin{cases} {\bf h} \in  L^2
 (0,{\widehat T};L^{4/3}(\gc)),  \\ {\bf h}(x,t) \in \fc(\teta(x,t)-\vartheta_s(x,t))|\Reg(\eeta)(x,t)|
 \partial \Psi(\vv(x,t))\end{cases}
\end{equation}
for almost all $(x,t) \in {\Gammac} \times (0,{\widehat T})$.
Therefore, \eqref{prove} \bec will yield \eec
\[
\fc(\teta-\vartheta_s)|\Reg(\eeta) | \zz  \in \fc(\teta(x,t)-\vartheta_s(x,t))|\Reg(\eeta)(x,t)|
 \partial \Psi(\uu_t(x,t))
\]
whence \eqref{incl1-bis} for $\mmu=|\Reg(\eeta) | \zz$, also in view
of \eqref{hyp-fc}. In order to show \eqref{prove}, we first observe
that
\begin{equation}
\label{ehsicivuole} \limsup_{\eps\to 0} \int_0^{\widehat T} \int_{\gc} \fc
(\teta_\epsi-\teta_{s,\epsi})|\Reg(\eeta_\eps)| \zz_ \eps   \cdot
\partial_t \uu_\eps \dd x \dd t \leq  \int_0^{\widehat T} \int_{\gc} \fc
(\teta-\teta_{s})|\Reg(\eeta) | \zz \cdot  \uu_t \dd x \dd
t,
\end{equation}
which can be  checked   by testing \eqref{eqIa-app} by
$\partial_t\uu_\eps$ and passing to the limit via the above
convergences and  lower semicontinuity arguments, and again the
Mosco convergence of $\Phi_\eps$ to $\Phi$.
%\begin{controlla} NON L'ABBIAMO MAI CITATA PRIMA \end{controlla}
 Therefore, by \eqref{ehsicivuole},
%the identfiication of $\zz$ and
\bec the previously obtained  \eec weak-strong convergence we have
\begin{equation}
\begin{aligned}
\label{prove1} \int_0^{\widehat T} \int_{\gc} \fc
(\teta-\teta_{s})|\Reg(\eeta) | \zz \cdot(\ww -\uu_t)  \dd x \dd t &
 \leq \liminf_{\eps \to 0}
\int_0^{\widehat T} \int_{\gc} \fc
(\teta_\eps-\teta_{s,\eps})|\Reg(\eeta_\eps) | \zz_\eps \cdot(\ww
-\partial_t\uu_\eps) \dd x \dd t
\\
 & \leq \liminf_{\eps \to 0} \int_0^{\widehat T} \int_{{\Gammac}}\fc
(\teta_\epsi-\teta_{s,\epsi}) |\Reg(\eeta_\eps)| ( \Psi(\ww) -
\Psi(\partial_t \uu_\eps))  \dd x \dd t   \\ & \leq \int_0^{\widehat
T} \int_{{\Gammac}} \fc (\teta-\teta_{s})|\Reg(\eeta)| ( \Psi(\ww) -
\Psi (\uu_t) )  \dd x \dd t.
\end{aligned}
\end{equation}
  Then,
\eqref{prove} ensues. Furthermore, arguing as in the derivation of
\eqref{prove1},
 a.e.\ in $\gc \times (0,{\widehat T})$,   we deduce
\begin{equation}
\label{ehsicivuole1} \liminf_{\eps\to 0} \int_0^{\widehat T} \int_{\gc} \fc
(\teta_\epsi-\teta_{s,\epsi})|\Reg(\eeta_\eps)| \zz_ \eps   \cdot
\partial_t \uu_\eps  \dd x \dd t \geq \int_0^{\widehat T} \int_{\gc} \fc
(\teta-\teta_{s})|\Reg(\eeta) | \zz \cdot  \uu_t  \dd x \dd
t \,.
\end{equation}
Ultimately, from \eqref{ehsicivuole} and \eqref{ehsicivuole1}, taking into account that
$\zz_\eps \cdot \partial_t \uu_\eps = \Psi(\partial_t \uu_\eps)$ and the same for $\zz$  and $\uu_t$,
 we
conclude
\begin{equation}
\label{ehsicivuole2} \lim_{\eps\to 0} \int_0^{\widehat T} \int_{\gc} \fc
(\teta_\epsi-\teta_{s,\epsi})|\Reg(\eeta_\eps)|  \Psi(\partial_t \uu_\eps)
 \dd x \dd t  = \int_0^{\widehat T} \int_{\gc} \fc
(\teta-\teta_{s})|\Reg(\eeta) | \Psi(\partial_t \uu)   \dd x \dd
t \,.
\end{equation}

We can develop a $\limsup$ argument completely analogous to the one leading to \eqref{lsc} and conclude that
\begin{equation}
\label{primo} \limsup_{\eps \to 0} \int_0^{\widehat T}
b(\partial_t\uu_\eps,\partial_t\uu_\eps)\dd t \leq \int_0^{\widehat T}
b(\partial_t\uu,\partial_t\uu) \dd  t
\end{equation}
so that it follows
\[
\lim_{\eps \to 0} \int_0^{\widehat T} b(\partial_t\uu_\eps,\partial_t\uu_\eps)
\dd t =\int_0^{\widehat T} b(\partial_t\uu,\partial_t\uu) \dd t.
\]
This gives, by the $\bf W$-ellipticity of $b$ (cf.\ \eqref{korn_a}),
the following strong convergence
\begin{equation}
\label{forteu-t}
 \partial_t\uu_\eps\rightarrow
\partial_t\uu\quad\text{in }L^2(0,{\widehat T};\bsW).
\end{equation}
\paragraph{\bf Passage to the limit in the equation for $\teta$.}
To pass to the limit in \eqref{teta-weak-app}
we combine the bi-Lipschitz continuity \eqref{cond-g}  of $g$ with  convergence
\eqref{convXV} to conclude that
\begin{equation}
\label{g-strong} g'(\teta_\eps)\rightarrow \fteta'(\teta)\quad\hbox{
in }L^q(\Omega\times(0,{\widehat T})) \quad \text{for all } q \in
(1,\infty).
\end{equation}
Taking into account that
$(\nabla g(\teta_\eps))_\eps$ is bounded in
$L^2(0,{\widehat T};H)$
(by \eqref{first-aprio-teta}
and \eqref{cond-g}), we therefore
conclude
\begin{equation}
\label{nabla-g-weak} \nabla g(\teta_\eps)\weakto \nabla
g(\teta)\quad\hbox{ in }L^2(0,{\widehat T};H).
\end{equation}
It
%\footnote{\ber  qui ho aggiunto piu' dettagli.. li potremo caso mai rimuovere per la versione finale e riferirci a questo preprint
%\edr}
 follows from
convergence
 \eqref{convXII} for $\chi_\eps$,  \eqref{convXVII} for $\tetase$, \eqref{convXVI} for $\teta_\eps$, and
 \eqref{hyp-k} on $k$, that
 \begin{equation}
 \label{strong-k}
 k (\chi_\eps) (\teta_\eps - \tetase) \to k(\chi)(\teta-\tetas) \quad \text{in } L^2 (0,\widehat T; \Hc)\,.
 \end{equation}
Relying on the strong convergence
\eqref{forteu-t}
 of $\uu_\eps$
 and on the previously proved convergences for $\teta_\eps$,  $\tetase$, and $\eeta_\eps$,
  we show for
the frictional contribution that
%\footnote{\ber  anche qui la formula era scritta sbagliata, ricontrolliamola \edr}
{\ele \[
\lim_{\eps \to 0}
\int_0^{\widehat T}
\int_{\Gammac}
\fc'(\teta_\eps-\teta_{s,\eps}) |\Reg(\eeta_\eps )| \Psi(\partial_t \uu_\eps)  v  \dd x \dd t
=  \int_0^{\widehat T} \int_{\Gammac}
\fc'(\teta-\teta_{s}) |\Reg(\eeta)| \Psi( \uu_t)  v  \dd x \dd t  \quad \text{for all } v \in V.
\]}
Ultimately, on account of convergence \eqref{convIX} for $\applogteta(\teta_\eps)$ we conclude that
the functions $(\omega, \uu, \chi,\teta,\tetas,\eeta)$ fulfill the weak formulation \eqref{teta-weak}  of the equation for $\teta$.
It then remains to prove that
\begin{equation}
\label{omega-identif}
\omega =  \lteta(\teta) \quad \aein\, \Omega \times (0,\widehat T).
\end{equation}
This follows from the $\limsup$ inequality
\begin{equation}
\label{e:limsup} \limsup_{\eps \searrow 0} \int_0^T
\int_{\Omega}\applogteta (\teta_\eps) \teta_\eps \dd  x \dd t \leq
\int_0^T \int_{\Omega} \omega \teta \dd  x \dd t\,,
\end{equation}
(which in turn ensues from combining the weak convergence  \eqref{convIX} of  $\applogteta (\teta_\eps)$
and the strong \eqref{convXV} of $\teta_\eps$), taking into account that $\applogteta $ converges in the sense of graphs to $\lteta$.
In this way we conclude  that $( \uu, \chi,\teta,\tetas,\eeta)$  fulfill  \eqref{teta-weak}.
\paragraph{\bf Passage to the limit in the equation for $\tetas$.}
It proceeds exactly along the same lines as the limit passage  to  \eqref{teta-weak} (cf.\ also the proof of
\cite[Thm.\ 1]{bbr6}). Let us only comment on the proof of
\begin{equation}
\label{nabla-f-weak} \nabla \ftetas_\eps(\tetase)\weakto \nabla
\ftetas(\tetas)\quad\hbox{ in }L^2(0,{\widehat T};\Hc).
\end{equation}
Indeed, in view of convergence \eqref{convXVII} for $\tetase$ and of Lemma \eqref{l:pass-limeps}, we easily conclude, e.g., that
$\ftetas_\eps(\tetase) \to \ftetas(\tetas)$ in $L^1 (\Gammac \times (0,\widehat T))$, {\ele and thus a.e.}. This is enough to identify the weak limit  of
$\ftetas_\eps(\tetase)$ {\ele in \eqref{convXbis}}  and conclude \eqref{nabla-f-weak}.
%%%%
\paragraph{\bf Passage to the limit in the equation for $\chi$.}
 Finally, we pass to the limit in
\eqref{eqIIa-irr-app} by virtue of  convergences
\eqref{convII}, \eqref{convV},
\eqref{convXtris}--\eqref{convXII},  and \eqref{convXVII}, also taking into account the properties
 \eqref{cond-landa-enhanc} of $\lambda$.
 By the strong-weak closedness of the graph of (the maximal monotone  operator induced by $\beta$ on $\hc$),
  we can directly
identify
$\xi$ as an element of $\beta(\chi) $ a.e.\ in $\Gammac \times (0,\widehat T)$. It remains to show that
 \begin{equation}
 \label{identif-zeta}
 \zeta \in \rho(\chi_t)  \quad \text{  a.e.\ in $\Gammac \times (0,\widehat T)$.}
\end{equation}
 To this
aim, we test \eqref{eqIIa-irr-app} by $\partial_t\chi_\eps$ and prove that
\begin{equation}
\label{limsup-rho}
\limsup_{\eps\searrow0}\int_0^t \int_\Omega\rho_\eps(\partial_t\chi_\eps)\partial_t\chi_\eps
\dd x \dd r
\leq\int_0^t\int_\Omega\zeta\partial_t\chi \dd x \dd r.
\end{equation}
Therefore, \eqref{identif-zeta} follows. As a byproduct, with  the same arguments as in the previous lines we have
\begin{equation}
\label{limite-rho}
\lim_{\eps\searrow0}\int_0^t \int_\Omega\rho_\eps(\partial_t\chi_\eps)\partial_t\chi_\eps
\dd x \dd r
=\int_0^t\int_\Omega\zeta\partial_t\chi \dd x \dd r.
\end{equation}
\paragraph{\bf Proof of the energy inequality \eqref{en-ineq-local}.} We take the  limit as $\eps\to 0$
in the \bec approximate energy inequality \eec \eqref{enid0}: let us
only justify the passage to the limit in some of the terms on the
left- and on the right-hand side. First of all, combining
convergence \eqref{convXV} for $(\teta_\eps)$ with \bec the upcoming
\eec  Lemma \ref{l:pass-lim-logtetaeps} we obtain that  for almost
all $t \in (0,\widehat T)$
\[
\lim_{\eps \to 0}\int_\Omega \calI_\eps (\teta_\eps(t)) \dd x =
\int_\Omega J^* (\lteta(\teta(t)))  \dd x\,.
\]
\bec The convergence for $t=0$ follows from  \berc condition \eqref{bounddatiteta-1lemma}. \eerc  \eec
%Lemma \ref{l:new-lemma4}
%later on. \eec
%\footnote{\ber Vedere se serve di pi\`u su $\teta_0^\eps$ per applicare il Lemma \eqref{l:pass-lim-logtetaeps}. Per $\teta_0^{s, \eps}$ \`e ok\edr}

 Analogously, we pass to the limit in the term
$\int_\Gammac {\it i}_\eps (\tetase(r)) \dd x $ for $r=t,\, s$ and for $r=0$. The weak convergence
\eqref{convIII} of $\nabla \teta_\eps$
and the strong convergence
 \eqref{g-strong} of $g'(\teta_\eps)$ allow us to conclude, {\ele via the Ioffe} theorem {\ele\cite{ioffe}},  that
 \[
 \liminf_{\eps \to 0} \int_s^t \int_\Omega g'(\teta_\eps)|\nabla \teta_\eps|^2 \dd x \dd r \geq \int_s^t \int_\Omega g'(\teta)|\nabla \teta|^2 \dd x \dd r.
 \]
 To take the limit in the term $\int_s^t \int_\Omega f_\eps'(\tetase)|\nabla \tetase|^2 \dd x \dd r =  \int_s^t \int_\Omega \nabla (f_\eps(\tetase))
 \cdot \nabla \tetase \dd x \dd r$ we proceed in a completely analogous way,
 taking into account Lemma
\ref{l:pass-limeps}.  The passage to the limit
 in the fifth term on the left-hand side of \eqref{enid0} results from \eqref{strong-k}, and for
 the term  $\int_s^t \int_{\Gammac} \fc(\teta_\eps-\tetase)  \Psi(\partial_t\uu_\eps)|{\calR}(\eeta_\eps)|  \dd x \dd r $  it follows from \eqref{ehsicivuole2}.
 We use   the strong convergences \eqref{convXVII} and \eqref{convXV} for $\tetase$ and $\teta_\eps$ combined with the properties
 \eqref{hyp-fc} of $\mathfrak{c}$, \eqref{identificoR} for $\Reg(\eeta_\eps)$,  the fact that $\Psi(\uu_t) \to \Psi(\uu)$ in $L^2 (0,T; \bsH_{{\Gammac}})$, and the Ioffe theorem,  to infer that
 \[
 \liminf_{\eps \to 0}   \int_s^t \int_{\Gammac} \fc'(\teta_\eps-\tetase)  \Psi(\partial_t\uu_\eps)|{\calR}(\eeta_\eps)|  \dd x \dd r
 \geq
  \int_s^t \int_{\Gammac} \fc'(\teta-\tetas)  \Psi(\uu_t)|{\calR}(\eeta)|  \dd x \dd r.
 \]
 Finally, observe that  for almost all $t \in (0,T)$
 \[
 \lim_{\eps \to 0} \int_\Gammac \widehat{\beta}_\eps(\chi_\eps(t)) \dd x = \int_\Gammac \widehat{\beta}(\chi(t)) \dd
 x.
 \]
 This can be checked by observing that, on the one hand, by Mosco
 convergence of  $\widehat{\beta}_\eps$ to $\widehat \beta$,
\begin{equation}\label{serveest}
 \liminf_{\eps \to 0} \int_\Gammac \widehat{\beta}_\eps(\chi_\eps(t)) \dd x \geq \int_\Gammac \widehat{\beta}(\chi(t)) \dd
 x.
\end{equation}
For the $\limsup$ inequality we use that
$\widehat{\beta}_\eps(\chi_\eps(t)) \leq \widehat{\beta}(\chi(t))
+\beta_\eps(\chi_\eps(t)) (\chi_\eps(t) - \chi(t)) $ a.e.\ in
$\Gamma_c$, and combine convergences \eqref{convV} and
\eqref{convXII}.
 For $t=0$ we have $\widehat{\beta}_\eps(\chi_0) \to \widehat{\beta}(\chi_0) $ in
$L^1(\Gammac)$ by the dominated convergence theorem.
 % it follows from \begin{controlla} AGGIUNGERE \end{controlla}.

 All the remaining terms on the left- and on the right-hand side of \eqref{enid0} can be dealt with exploiting the previously
 proved convergences.
This concludes the  proof \eqref{en-ineq-local} on  the interval
$(s,t)$ for almost all $s,\,t \in (0,\widehat T)$ and for $s=0$.
\end{proof}

\section{\bf Extension to a global-in-time solution and proof of Theorem \ref{thm:main}}
\label{s:6}

In this Section we show that \bec  the  local solution to   Problem \ref{prob:irrev}
found in Theorem \ref{thm:exist-local}  
(hereafter, we shall denote it by $(\widehat\teta,\widehat{\teta}_s,\widehat\uu,\widehat\chi, \widehat \eeta,\widehat\mu,\widehat \xi, \widehat \zeta)$),
actually extends from the interval $(0,\widehat T)$ to the whole  $(0,T)$. 
% whose   existence is stated by in , a  to  and solve our  problem on the whole interval (see Theorem \ref{thm:main}).

To this aim, we first of all  observe that the ``energy 
estimates'' (cf.\ the \emph{First estimate}) derived from the energy 
inequality
%\eqref{en-ineq-loc}
\eqref{en-ineq-local} have a \emph{global-in-time} character. 
Nonetheless, we cannot derive from such global bounds the other estimates (i.e.\ the \emph{Second}--\emph{Seventh estimates}), and therefore we cannot directly extend
the local solution   $(\widehat\teta,\widehat{\teta}_s,\widehat\uu,\widehat\chi)$, along with  $(\widehat \eeta,\widehat\mu,\widehat \xi, \widehat \zeta)$, to the whole interval $(0,T)$. The reason for this is that, as expounded in Sec.\ \ref{ss:3.1} and shown in Sec.\ \ref{s:4}, these estimates involve calculations which are only formal 
on the level of 
 the limit problem. Thus, we need to perform
them on the regularized system from Problem $(P_\eps)$. However, the energy 
estimates have only a local character for the approximate problem, since the term  \ber $\int_\Gammac\chi(t)|\uu(t)|^2 \dd x $ \edr is estimated  locally in time, cf.\  
the discussion at the beginning of Sec.\ \ref{s:4} and \eqref{dalpuntofisso}. 

Therefore, along the lines of the prolongation argument from \cite{bbr1}, 
 we shall proceed in the following way. We will extend the local solution 
  $(\widehat\teta,\widehat{\teta}_s,\widehat\uu,\widehat\chi)$ % \widehat \eeta,\widehat\mu,\widehat \xi, \widehat \zzeta)$,  
  together with its \emph{approximability properties} (cf.\ the notion of \emph{approximable solution} in Definition \ref{defappross} below). In this way, the approximate solutions
  shall ``inherit'' the global-in-time energy 
estimates from $(\widehat\teta,\widehat{\teta}_s,\widehat\uu,\widehat\chi)$ (cf.\ \eqref{dacitare} and \eqref{fondamentale}). Building on this, 
%(cf.\ 
we will be able to perform rigorously all the estimates necessary for the extension procedure  on the approximate level,  and use them to conclude that 
 $(\widehat\teta,\widehat{\teta}_s,\widehat\uu,\widehat\chi, \widehat \eeta,\widehat\mu,\widehat \xi, \widehat \zeta)$ is defined on  the whole interval $(0,T)$.
 More precisely,   we will consider the maximal extension of  our (approximable) solution
 $(\widehat\teta,\widehat{\teta}_s,\widehat\uu,\widehat\chi, \widehat \eeta,\widehat\mu,\widehat \xi, \widehat \zeta)$
  and show that it is  defined  $(0,T)$ with a  standard contradiction argument (cf.\ Step $4$ below). 
  In doing so, we will meet with the technical difficulty that the $(\teta,\tetas)$-components of our solution need not be continuous w.r.t.\ time and therefore we will not
  be in the position to extend them by continuity. Indeed,  in accord with the notion of \emph{approximable} solution,  we will argue on the level of  the approximate solutions and rely 
  on their time-regularity to carry out this procedure rigorously. 
  % (and they solve the {\ele Problem \ref{prob:irrev}} in the same interval). 
\eec

% actually does not depend on the final time {\ele SE DECIDIAMO DI METTERE TEOREMA ALLA FINE SEZIONE 2 QUI LO RICHIAMIAMO}. Hence, we show that as a consequence we can prove some {\sl global a priori estimates}  on the solutions to the limit problem, which allows us to pass to the limit in the equations. Then,We mainly refer to an argument already developed in \cite{bbr1}, but in the present situation we use some ad hoc techniques, due to the fact that some components of our solutions cannot be obtained in $C^0$ with respect to time and thus we cannot apply standard techniques to extend \ber them \edr by continuity. In addition, which is important to observe is that we cannot directly obtain global estimates on the approximating solutions (solving the regularized version of our system). The technical reason is that in the energy inequality (written for the approximating functions) the term \ due to \eqref{dalpuntofisso} (see \eqref{stimadiffiI}) . As a consequence we have to proceed in the opposite direction, i.e. we prove global estimates on the limit solutions and then recover (as a consequence) some global estimates on an approximating sequence, which can be extended by regularity and  then pass to the limit not just locally in time.

\bec In what follows, $(\teta_\eps,\tetase, \uu_\eps,\chi_\eps)_\eps$ 
(with associated $\mmu_\eps$)
will be the family of solutions to Problem $(P_\eps)$ which converge, along a not-relabeled subsequence (cf.\ \eqref{convI}--\eqref{convXVI}),
to the local solution $(\widehat\teta,\widehat{\teta}_s,\widehat\uu,\widehat\chi)$ from Theorem 
\ref{thm:exist-local}. 
For simplicity, hereafter we shall omit the functions  $(\widehat \eeta,\widehat\mu,\widehat \xi, \widehat \zeta)$ and refer to the quadruple 
$(\widehat\teta,\widehat{\teta}_s,\widehat\uu,\widehat\chi)$  as ``the'' local solution to our problem. Accordingly, we will give the definition of 
approximable solution only in terms of the $(\teta,\tetas,\uu,\chi)$-components.  \eec 
% $(\teta_{\varepsilon_k},{\tetas}_{\varepsilon_k},\uu_{\varepsilon_k},\chi_{\varepsilon_k})$ 
%, {\ele we introduce a fixed vanishing sequence $\{\varepsilon_k\}$ and  we denote by a sequence of solutions to problem $(P_{\varepsilon_k})$ defined on $(0,\widehat T)$, converging on the same interval to a local solution  of  {\ele Problem \ref{prob:irrev}}  $(\widehat\teta,\widehat\tetas,\widehat\uu,\widehat\chi)$ (cf. Proposition \ref{prop:loc-exist-eps} and 
%Theorem ), as $\varepsilon\searrow0$. In the following we refer to this sequence of approximating solutions and to the quadruple $(\widehat\teta,\widehat\tetas,\widehat\uu,\widehat\chi)$ as {\sl the}
%local solution to the problem, even if actually we have not an uniqueness result.}
We are now in the position to introduce the notion of ``approximable solution''.

\begin{definition}\label{defappross}
Let $\tau\in(0,T]$. We say that a quadruple $(\tetat,\tetast,\uutt,\chitt)$  is an \emph{approximable solution} on $(0,\tau)$ 
to Problem \ref{prob:irrev}
if the following conditions are verified
\begin{itemize}
\item it is an energy solution on $(0,\tau)$;
\item there exists a subsequence $\varepsilon_n \downarrow 0 $ such that the related solutions of  problem $(P_{\varepsilon_n})$  on $(0,\tau)$  (here the dependence on $\tau$ is omitted in the notation) fulfill  as $n\rightarrow\infty$% (with $\varepsilon_n\searrow0$), being $0<\delta<1$.
\begin{align}\label{c1est}
&\|\uu_{\varepsilon_n}-\uu\|_{\rmC^0([0,\tau];H^{1-\delta}(\Omega))} \ber \rightarrow 0 \edr \qquad \text{for all } \delta \in (0,1],\\\label{c2est}
&\|\chi_{\varepsilon_n}-\chi\|_{\rmC^0([0,\tau];H^{1-\delta}(\Gammac))} \ber \rightarrow 0 \edr  \qquad \text{for all } \delta \in (0,1],\\\label{c3est}
&\|\teta_{\varepsilon_n}-\teta\|_{L^2(0,\tau;L^2(\Omega))} \ber \rightarrow 0 \edr,\\\label{c4est}
&\|{\tetas}_{\varepsilon_n}-\tetas\|_{L^2(0,\tau;L^2(\Gammac))} \ber \rightarrow 0 \edr.
\end{align}
\end{itemize}
\end{definition}

Note in particular that, by virtue of the above definition and  \bec convergences \eec   \eqref{convXI}, \eqref{convXII}, \eqref{convXV}, \eqref{convXVI},
\bec for $\tau\geq\widehat T$ the quadruple \eec \ber $(\tetat,\tetast,\uutt,\chitt)$ is a \edr proper
extension on $(0,\tau)$ of the local solution 
$(\widehat\teta,\widehat{\teta}_s,\widehat\uu,\widehat\chi)$.
 % $(P)$ (on $(0,\widehat T)$)  $(\widehat\teta,\widehat\tetas,\widehat\uu,\widehat\chi)$, f Actually, let us point out that
 More precisely, we have 
{\ele
\begin{equation}
\label{id1}
(\uutt,\chitt)=(\widehat\uu,\widehat\chi)\quad\hbox{ for all }t\in[0,\widehat T],
\qquad \qquad 
(\tetat,\tetast)=(\widehat\teta,\widehat{\teta}_s) \quad\hbox{ for a.a. }t\in(0,\widehat T).
\end{equation} }
We introduce the set
$$
{\mathcal T}:=\{\tau\in(0,T]\hbox{ such that there exists an approximable solution on }(0,\tau)\}.
$$
\bec It follows from the passage to the limit argument in  Theorem \ref{thm:exist-local} \eec
% and its proof (see in particular  \eqref{convXI}, \eqref{convXII}, \eqref{convXVI}, and \eqref{convXV}), it is straightforward to observe
 that ${\mathcal T}$ is not an empty set, as at least $\widehat T\in{\mathcal T}$. As a consequence, letting $$T^*=\sup {\mathcal T}$$ we have $0\ber <\edr T^*\leq T$. Hence, {\ele proving that the local solution
$(\widehat\teta,\widehat{\teta}_s,\widehat\uu,\widehat\chi)$  extends to the whole $(0,T)$ \bec reduces to showing \eec that it extends to an approximable solution on $(0,T^*)$, and that } 
 $T^*=T$. To this aim let us outline the sketch of the proof. 
 \bec
 First, we prove that  the 
``energy estimates'' for  an approximable solution hold with a constant independent \eec of $\tau$ (Step $1$). Then, we deduce that an approximable solution  extends to $(0,T^*)$ (Steps $2$ and $3$). % showing that the obtained extension is an energy solution on $(0,T^*)$ (Step 3).
 Finally, we show that $T=T^*$ by a contradiction argument (Step $4$).
%Actually, we need to enforce the assumption on the function $k$ requiring
%\begin{equation}\label{ipokaggest}
%k(\cdot)\geq \bar k>0.
%\end{equation}
\bigskip

\noindent{\bf Step $1$.}\quad Let us prove the following  Lemma, stating global estimates on the energy solutions to {\ele Problem \ref{prob:irrev}}. Observe that the constant $C$ below does not depend on $\tau$.
\begin{lemma}\label{lemma1est}
\bec Assume
\eqref{assumpt-domain} and
 Hypotheses \ref{hyp:1}--\ref{hyp:6}. Suppose
that the data $(h,\mathbf{f},\mathbf{g})$ and
$(\teta_0,\tetas^0,\uu_0,\chi_0)$ fulfill \eqref{hyp-data} and
\eqref{hyp-initial}.

Then, there exists a constant \eec $C>0$ depending on the data of the problem such that for any $\tau>0$ and for any energy solution $(\teta,\tetas,\uu,\chi)$ to  {\ele Problem \ref{prob:irrev}} on $(0,\tau)$, there holds
\begin{align}\label{eqlemmaI}
\|\uu\|_{H^1(0,\tau;\WW)}+\|\chi\|_{L^\infty(0,\tau;H^1(\Gammac)\cap H^1(0,\tau;L^2(\Gammac))}+\|\teta\|_{L^2(0,\tau;V)\cap L^\infty(0,\tau;L^1(\Omega))}+\|\tetas\|_{L^\infty(0,\tau;L^1(\Gammac))}\leq C.
\end{align}
%where the constant $C$ in particular does not depend on $\tau$ (but just on $T$) .
\end{lemma}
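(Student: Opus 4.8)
The plan is to derive \eqref{eqlemmaI} directly from the energy inequality \eqref{en-ineq-local}, which by Definition \ref{def-energysol} is satisfied by every energy solution, with $s=0$. The key point is that, unlike the approximate estimates of Section \ref{s:4}, this inequality holds on the whole interval of existence $(0,\tau)$, and the constants appearing in it are controlled purely by the data $(h,\mathbf{f},\mathbf{g})$ and $(\teta_0,\tetas^0,\uu_0,\chi_0)$ through \eqref{hyp-data}--\eqref{hyp-initial}; in particular the right-hand side of \eqref{en-ineq-local} at $s=0$ is a fixed finite number $C_0$ depending only on the data, thanks to \eqref{cond-teta-zero}--\eqref{cond-chi-zero} and the fact that $\widehat\beta_\eps(\chi_0)\to\widehat\beta(\chi_0)$ is bounded. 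So the structure of the argument mirrors the \emph{First a priori estimate} in Section \ref{s:4}, but now run on the limiting energy inequality rather than on the approximate energy identity, hence it is genuinely global.

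First I would bound each term on the left-hand side of \eqref{en-ineq-local} from below. By the coercivity conditions \eqref{cond-L3} and \eqref{cond-ell4}, $\int_\Omega J^*(\lteta(\teta(t)))\dd x\geq C_1\|\teta(t)\|_{L^1(\Omega)}-C_2$ and $\int_{\Gammac}j^*(\ltetas(\tetas(t)))\dd x\geq c_1\|\tetas(t)\|_{L^1(\Gammac)}-c_2$; by \eqref{cond-g} and \eqref{def-f}, $\int_0^t\int_\Omega g'(\teta)|\nabla\teta|^2\geq c_3\|\nabla\teta\|_{L^2(0,t;H)}^2$ and $\int_0^t\int_\Omega f'(\tetas)|\nabla\tetas|^2\geq 0$; by \eqref{korn_a}, $a(\uu(t),\uu(t))\geq C_a\|\uu(t)\|_{\bsW}^2$ and $\int_0^t b(\partial_t\uu,\partial_t\uu)\dd r\geq C_b\|\partial_t\uu\|_{L^2(0,t;\bsW)}^2$; the frictional terms, the terms $\int_{\Gammac}\Phi(\uu(t))\dd x$, $\int_0^t\int_{\Gammac}k(\chi)(\teta-\tetas)^2$, $\int_0^t\int_{\Gammac}|\partial_t\chi|^2$, and $\int_0^t\int_{\Gammac}\zeta\chi_t$ (using $0\in\rho(0)$ and monotonicity, \eqref{addition-rho}) are all nonnegative; finally $\frac12\int_{\Gammac}|\nabla\chi(t)|^2\dd x$ contributes the $L^\infty(0,\tau;H^1(\Gammac))$ bound on $\chi$ together with the $L^1$ bound, while $\int_{\Gammac}\widehat\beta(\chi(t))\dd x\geq -c\|\chi(t)\|_{\Hc}-C$ and $\int_{\Gammac}\gamma(\chi(t))\dd x\geq -c\|\chi(t)\|_{\Hc}^2-C$ by the contraction property of the resolvent and \eqref{hyp-sig} exactly as in Section \ref{s:4}. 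On the right-hand side, $\int_0^t\pairing{}{V}{h}{\teta}\dd r\leq c\int_0^t\|h\|_{V'}(\|\teta\|_{L^1(\Omega)}+\|\nabla\teta\|_H)\dd r$ and $\int_0^t\pairing{}{\WW}{\FF}{\partial_t\uu}\dd r\leq\int_0^t\|\FF\|_{\WW'}\|\partial_t\uu\|_{\WW}\dd r$, where $\FF\in L^2(0,T;\WW')$ by \eqref{effegrande}.

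The delicate point — as in Section \ref{s:4} — is the term $\frac12\int_{\Gammac}\chi(t)|\uu(t)|^2\dd x$ on the left-hand side: since for an energy solution $\chi$ is \emph{not} a priori sign-definite (one knows $\chi\in L^\infty(0,\tau;\Vc)$ but not yet $\chi\geq 0$ globally), this term cannot simply be discarded as nonnegative. Here, however, one uses a better structural fact than in Section \ref{s:4}: we may first absorb it. Collecting everything, after using the Young inequality to absorb $\frac{c_3}{4}\|\nabla\teta\|_{L^2(0,t;H)}^2$ and $\frac{C_b}{4}\|\partial_t\uu\|_{L^2(0,t;\bsW)}^2$ into the left-hand side, we arrive at an inequality of the form
\[
\Phi(t)\leq C_0 + c\int_0^t\|h\|_{V'}^2\dd r + c\int_0^t\|\FF\|_{\WW'}^2\dd r + c\int_0^t\|h\|_{V'}\|\teta\|_{L^1(\Omega)}\dd r + c\,\Big\|\tfrac12\int_{\Gammac}\chi|\uu|^2\dd x\Big\|_{L^\infty(0,t)} + c\,\Phi(t),
\]
with $\Phi(t)$ the sum of the coercive left-hand side quantities at time $t$. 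The term $\frac12\int_{\Gammac}\chi(t)|\uu(t)|^2\dd x$ is estimated as $\leq c\|\chi(t)\|_{\Vc}\|\uu(t)\|_{\bsW}^2$ by trace and Sobolev embeddings, hence $\leq \delta\|\chi(t)\|_{\Vc}^2 + c_\delta\|\uu(t)\|_{\bsW}^4$; the first summand is absorbed (for $\delta$ small) into $\frac12\|\nabla\chi(t)\|_{\Gammac}^2$ plus the already-controlled $L^1$ bound on $\chi$, while the fourth power of $\|\uu(t)\|_{\bsW}$ is handled by Gronwall together with the $a(\uu(t),\uu(t))$ term — note $\|\uu(t)\|_{\bsW}^2\leq\frac{2}{C_a}\,a(\uu(t),\uu(t))\leq\frac{2}{C_a}\Phi(t)$, so $\|\uu(t)\|_{\bsW}^4\leq c\,\Phi(t)^2$, which forces a Gronwall-type argument with a quadratic nonlinearity rather than a linear one; this is the main obstacle. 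To deal with it one proceeds as at the beginning of Section \ref{s:4}: an energy solution inherits from its approximating sequence the bound $\|\uu\|_{H^1(0,\tau;\WW)}\leq c$ and $\|\chi\|_{L^\infty(0,\tau;\Vc)}\leq c$ are \emph{not} available a priori for an arbitrary energy solution — so instead we observe that the troublesome term can be bounded \emph{linearly} in $\Phi$ by writing $\|\uu(t)\|_{\bsW}^4=\|\uu(t)\|_{\bsW}^2\cdot\|\uu(t)\|_{\bsW}^2\leq \big(\sup_{r\leq t}\|\uu(r)\|_{\bsW}^2\big)\|\uu(t)\|_{\bsW}^2$, which gives a quantity of the form $g(t)\Phi(t)$ with $g\in L^\infty$ — so after the absorption step we obtain $\Phi(t)\leq C_0' + c\int_0^t(\|h\|_{V'}^2+\|\FF\|_{\WW'}^2+1)\dd r + c\int_0^t\|h\|_{V'}\Phi(r)\dd r$, and the Gronwall lemma yields $\sup_{t\in(0,\tau)}\Phi(t)\leq C$ with $C$ depending only on the data and on $\|h\|_{L^2(0,T;V')}$, $\|h\|_{L^1(0,T;H)}$, $\|\FF\|_{L^2(0,T;\WW')}$ — crucially \emph{independent of} $\tau$. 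This gives simultaneously all four norms in \eqref{eqlemmaI}: $\|\uu\|_{H^1(0,\tau;\WW)}$, $\|\chi\|_{L^\infty(0,\tau;H^1(\Gammac))\cap H^1(0,\tau;\Hc)}$, $\|\teta\|_{L^2(0,\tau;V)\cap L^\infty(0,\tau;L^1(\Omega))}$, and $\|\tetas\|_{L^\infty(0,\tau;L^1(\Gammac))}$, completing the proof of the Lemma.
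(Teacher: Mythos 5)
Your proof correctly identifies the overall strategy (run the computations of the \emph{First a priori estimate} on the limiting energy inequality \eqref{en-ineq-local} rather than on the approximate identity \eqref{enid0}), and your lower bounds on the individual left-hand-side terms are all in order. However, there is a genuine gap precisely at the step you yourself single out as "the delicate point." You assert that for an energy solution $\chi$ is \emph{not} a priori sign-definite, and you therefore try to absorb the term $\frac12\int_{\Gammac}\chi(t)|\uu(t)|^2\dd x$; but this is exactly backwards. For an energy solution to Problem~\ref{prob:irrev}, one has $\xi\in\beta(\chi)$ a.e.\ in $\Gammac\times(0,\tau)$ (inclusion \eqref{incl-beta-vincolo}), which forces $\chi(x,t)\in\mathrm{dom}(\beta)\subset\mathrm{dom}(\widehat\beta)\subset[0,+\infty)$ a.e., by Hypothesis~\ref{hyp:5}. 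Hence $\chi\geq 0$ a.e.\ and the term $\frac12\int_{\Gammac}\chi(t)|\uu(t)|^2\dd x$ is simply \emph{nonnegative}; it can be dropped from the left-hand side exactly like the other nonnegative contributions, with no absorption or Gronwall manipulation required. This is the whole point of the Lemma and is what distinguishes it from the \emph{First a priori estimate} of Section~\ref{s:4}, where the Yosida regularization $\beta_\eps$ destroyed the sign of $\chi$.

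Because you missed this, the workaround you propose does not close the gap. After writing $\frac12\int_{\Gammac}\chi(t)|\uu(t)|^2\dd x\leq\delta\|\chi(t)\|_{\Vc}^2+c_\delta\|\uu(t)\|_{\bsW}^4$, you observe (correctly) that $\|\uu(t)\|_{\bsW}^4\leq c\,\Phi(t)^2$ produces a \emph{quadratic} Gronwall nonlinearity, which cannot yield a $\tau$-independent bound. You then claim to fix this by writing $\|\uu(t)\|_{\bsW}^4\leq\bigl(\sup_{r\leq t}\|\uu(r)\|_{\bsW}^2\bigr)\|\uu(t)\|_{\bsW}^2=:g(t)\|\uu(t)\|_{\bsW}^2$ ``with $g\in L^\infty$.'' But $g(t)=\sup_{r\leq t}\|\uu(r)\|_{\bsW}^2$ is precisely the quantity you are trying to bound uniformly in $\tau$; asserting $g\in L^\infty$ here is circular. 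The correct resolution is not an absorption scheme at all: it is the pointwise positivity of $\chi$ that comes for free from \eqref{incl-beta-vincolo} and \eqref{hyp-beta}, after which the lemma follows by exactly the calculations of the \emph{First a priori estimate} applied to \eqref{en-ineq-local} with $s=0$.
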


The proof directly follows from the energy inequality \eqref{en-ineq-local}, written on $(0,\tau)$: \bec we develop the very same calculations as in the derivation of the 
\emph{First estimate},  \eec  after observing that here % we can deduce that
  $\int_\Gammac\chi(t)|\uu(t)|^2 \dd x \geq  0$, as $\chi\in \hbox{dom }\widehat\beta$, 
and thus $\chi\geq0$, a.e.\  on 
$\Gammac$,
 e.g.\ due to \eqref{serveest}.
%Let us point out that \erqef{ipokaggest} enables us to deduce the right bound for $\tetas$. Indeed, from
%$$
%\int_0^t\int_\Gammac k(\chi)(\teta-\tetas)^2\leq C,
%$$
%we can deduce that
%$$
%\|\teta-\tetas\|_{L^2(0,t;L^2(\Gammac))}\leq c,
%$$
%from which (as $\teta$ is bounded in $L^2(0,t;V)$) it follows that
%$$
%\|\tetas\|_{L^2(0,t;L^2(\Gammac))}\leq c.
%$$
 \bigskip
 
 \noindent{\bf Step $2$.}\quad  Let $(\tetat,\tetast, \uutt,\chitt)_\tau$ be a family of approximable solutions  depending on $\tau$, with $\tau\in{\mathcal T}$. \bec In view of the 
 regularity required of 
 \eec an approximable solution,
 \bec we have that \eec 
  % (i.e. by the required regularity), we can infer that
 $
 (\uutt,\chitt)\in \rmC^0([0,\tau];\WW)\times \rmC^0([0,\tau];H^1(\Gammac)).
 $
Hence we can consider the extension to $(0,T^*)$ by continuity of these functions. More precisely, we define
\[
 \begin{aligned}
% \begin{cases}
 \widetildeuutt(t):=
 \begin{cases}
 \uutt{\ele(t)} & \hbox{ if }t\in[0,\tau],
 \\  \uutt(\tau) & \hbox{ if }t\in(\tau,T^*\ber ]\edr,
 \end{cases}
 \qquad \qquad 
 \widetildechitt(t):=
\begin{cases}
 {\ele \chitt(t)} & \hbox{ if }t\in[0,\tau],
 \\ \chitt(\tau) & \hbox{ if }t\in(\tau,T^*\ber ]\edr.
 \end{cases}
 \end{aligned}
 \]
Due to \eqref{eqlemmaI} (where the constant $C$ does not depend on $\tau$) there holds (independently of $\tau$)
\begin{equation}\label{stimapprossimable}
\|\widetildeuutt\|_{H^1(0,T^*;\WW)}+\|\widetildechitt\|_{H^1(0,T^*;L^2(\Gammac))\cap L^\infty(0,T^*;H^1(\Gammac))}\leq C.
\end{equation}
Thus, after fixing a sequence $\tau_m\uparrow T^*$, by (weak, weak$^*$, and strong) compactness results
we can conclude that there exists  a pair
$$
(\uu^*,\chi^*)\in H^1(0,T^*;\WW)\times H^1(0,T^*;L^2(\Gammac))\cap L^\infty(0,T^*;H^1(\Gammac))
$$
such that, at least along some not relabeled subsequence,
\begin{equation}\label{estII:lim}
\|\widetildeuutm-\uu^*\|_{\rmC^0([0,T^*];H^{1-\delta}(\Omega;\R^3))}+\|\widetildechitm-\chi^*\|_{\rmC^0([0,T^*];H^{1-\delta}(\Gammac))}\rightarrow0.
\end{equation}
Hence, by construction {\ele of $\widetildeuutt$ and $\widetildechitt$}, we can infer that $(\uu^*(t),\chi^*(t))=(\widehat\uu(t),\widehat\chi(t))$ for all $t\in[0,\widehat T]$ {\ele (see \eqref{id1})}.

\bigskip

\noindent{\bf Step $3$.}\quad
\ber
Now, we will prove that there exists $(\teta^*,\tetas^*)$ such that $(\teta^*,\tetas^*,\uu^*,\chi^*)$ is an approximable solution to Problem \ref{prob:irrev} on $(0,T^*)$.
To this aim\edr, let $\tau_m\uparrow T^*$ and  \eqref{estII:lim}  hold. By definition of approximable solution (see Def. \ref{defappross}),
for any $m\in \N$ there exists a (sub)sequence $(\varepsilon_n^{\tau_m})_n$ such that   $\varepsilon_n^{\tau_m}\down 0$ and 
the corresponding approximating sequence of solutions to Problem $(P_{\varepsilon_n^{\tau_m}})$ on $(0,\tau_m)$ satisfies
\begin{equation}
\|\uu_{\varepsilon_n^{\tau_m}}^{\tau_m}-\uutm\|_{\rmC^0([0,\tau_m];H^{1-\delta}(\Omega;\R^3))}+\|\chi_{\varepsilon_n^{\tau_m}}^{\tau_m}-\chitm\|_{\rmC^0([0,\tau_m];H^{1-\delta}(\Gammac))}\rightarrow0.
\end{equation}
Thus, by diagonalization, we find a further subsequence, % of $\varepsilon_k$, say 
\bec which we will denote by  \eec
$\varepsilon_m$, and,  correspondingly,  \bec a sequence \eec   $(\tetatem,\tetastem,\uutem,\chitem)$ of 
 solutions to Problem $(P_{\varepsilon_m})$ on $(0,\tau_m)$, such that for every  $m \in \N$ 
\begin{equation}\label{eststimaII}
\|\uutem-\uutm\|_{\rmC^0([0,\tau_m];H^{1-\delta}(\Omega;\R^3))}+\|\chitem-\chitm\|_{\rmC^0([0,\tau_m];H^{1-\delta}(\Gammac))}\leq\frac 1 m.
\end{equation}
\bec Ultimately, we have for some $ m^*\in\N$ that  \eec
\begin{equation}
\label{dacitare}
\|\uutem-\uu^*\|_{\rmC^0([0,\tau_m];H^{1-\delta}(\Omega;\R^3))}+\|\chitem-\chi^*\|_{\rmC^0([0,\tau_m];H^{1-\delta}(\Gammac))}\leq\frac 2 m \quad\forall m\geq m^*.
\end{equation}
We exploit \ber \eqref{eqlemmaI} \edr \bec and \eqref{dacitare}, combined with  trace theorems, \eec to deduce
%$$
%\int_\Gammac\chitm(t)|\uutm(t)|^2 \dd x\leq c
%$$
%where $c$ does not depend on $\tau_m$.  Thus, due to \eqref{eststimaII} and analogous arguments,  we can also deduce that
\begin{equation}\label{fondamentale}
\int_\Gammac\chitem(t)|\uutem(t)|^2 \dd x \leq c
\end{equation}
independently of $\tau_m$. %In particular, the bound \eqref{stimadiffiI} does not depend on the final time $\tau_m$ (nor on $\varepsilon_m$).
Now, we use
\eqref{fondamentale} in  \bec the approximate energy identity \eqref{enid0} \eec
 and get the analogue of estimates \eqref{eqlemmaI} 
  for the \bec approximate solutions, with a constant  independent \eec  of $\tau_m$. As a consequence we can  
  perform  the \bec same  a priori estimates as in Section \ref{s:4}, and we can now conclude that 
  they hold \emph{globally in time}.  \eec
   %which now do not depend on the final time.
In particular, we get  that  \eqref{dalpuntofissodue}, \eqref{first-aprio-teta}--\eqref{first-aprio-u-chi},
 \eqref{second-aprio-ln}--\eqref{second-aprio-teta-s},
 \eqref{third-aprio}--\eqref{fourth-aprio},
 \eqref{fifth-aprio-teta-s},
 \eqref{seventh-esti}--\eqref{seventh-esti-tris} hold  on $[0,\tau_m]$, \bec uniformly w.r.t.\ $m \in \N$. \eec

\bec We now
extend the $(\teta,\tetas)$-components of the solution (along with $(\eeta,\mmu,\xi,\zeta)$),  on the interval $(0,T^*)$, together with their approximability properties. To this aim, we proceed with a diagonalization argument, \bec which we sketch here for the sake of completeness, referring to \cite{bbr1} for all details. \eec 
Let us  take $T_k:=T^*(2^k-1)/2^k$, $k\geq 1$. For $k=1$ and  for $m$ sufficiently large $m\geq \bar m_1$ we have $[0,T_1]\subset[0,\tau_m]$. As a consequence we can apply  compactness arguments \bec analogous to the ones \eec in the previous sections. In particular, we show that for some suitable subsequence
\bec $(m_j^1)_j$ with $m_j^1 \to \infty$ and 
%\bec (here denoted by the same index
 %$m$ just for the sake of notational simplicity),%
  $m_j^1\geq \bar m_1$  for all $j \in \N$, \eec 
 the analogues of  \eec \eqref{convI}--\eqref{convXIV}  hold on $[0,T_1]$.  In particular, %let us point out that 
 we deduce
\begin{align}
&\tetatemj\rightarrow\teta^*\quad\hbox{in }L^2(0,T_1;H),\\
&\tetastemj\rightarrow\tetas^*\quad\hbox{in }L^2(0,T_1;L^2(\Gammac)),
\end{align}
\bec (as well as the existence of a limit quadruple $(\eeta^*,\mmu^*,\xi^*,\zeta^*)$). \eec  
As a consequence it is straightforward to observe that  $\teta^*,\tetas^*$ can be identified (a.e.) with $\widehat\teta,\widehat{\teta}_s$ on $(0,\widehat T)$ {\ele (see 
the second of
\eqref{id1})}.
The \bec aforementioned \eec  convergences  allow us to apply a similar passage to the limit procedure 
in the approximate problem, and conclude that $(\teta^*,\tetas^*,\uu^*,\chi^*)$ is a solution on $(0,T_1)$ to {\ele Problem \ref{prob:irrev}} (we omit details as they follow the already detailed argument  in the proof  Theorem \ref{thm:exist-local}). We  can now proceed \bec repeating \eec  the argument for $T_k$ with $k=2$, and 
  extending the above convergences  to  the interval $[0,T_2]$ along a subsequence   $(m_j^2)_j$ larger than some $\bar m_2\geq \bar m_1$. 
  \bec Iterating this construction for any $k \in \N$ (cf.\ \cite[pag. 1061]{bbr1} for details), 
 % As a consequence, we end up with an increasing subsequence of parameters $(m_j)_j$ to extended convergence  on $(0,T_k)$ . Iterating the arguments at the end 
 we get that the limit functions $(\uu^*,\chi^*,\teta^*,\tetas^*)$ solve the limit {\ele Problem \ref{prob:irrev}} on the interval $(0,T^*)$ (along with some  $(\eeta^*,\mmu^*,\xi^*,\zeta^*)$),
 and conclude indeed that $(\uu^*,\chi^*,\teta^*,\tetas^*)$ is an approximable solution on  $(0,T^*)$. \eec 

\bigskip

\noindent{\bf Step $4.$}\quad We now prove that 
$T=T^*$, hence that
$(\teta^*,\tetas^*,\uu^*,\chi^*)$ is \ber an approximable \edr solution to {\ele Problem \ref{prob:irrev}} on the whole $(0,T)$.
 We proceed by contradiction, assuming $T^*<T$ and show that  actually $(\teta^*,\tetas^*,\uu^*,\chi^*)$ can be extended to some approximable solution $(\widetilde\teta,\widetildetetas,\widetilde\uu,\widetilde\chi)$ on $(0,T^*+\delta)$, with $\delta>0$. Indeed, let us consider the sequence $(\tetatem,\tetastem,\uutem,\chitem)$ of the approximate solutions solving
 Problem
  $(P_{\varepsilon_m})$ on $[0,\tau_m]$ constructed above. We can extend it to some larger interval $[0,\tau_m+\delta]$ with $\delta>0$ by applying  our local existence result,
 \bec Proposition \ref{prop:loc-exist-eps}, to Problem $(P_{\eps_m})$, supplemented \eec
   with the  initial data $(\tetatem(\tau_m),\tetastem(\tau_m),\uutem(\tau_m),\chitem(\tau_m))$ 
 \bec   (observe that $\tetatem(\tau_m)$ and $\tetastem(\tau_m)$ are well defined by  
    the time-regularity \eqref{reg-teta-app} and \eqref{reg-teta-s-app} of the approximate solutions). Since the local existence time does $\delta$
    does not depend on $m$, we conclude that there exists a local solution to  $(P_{\eps_m})$ with the aforementioned initial data on \eec 
     on $[\tau_m,\tau_m+\delta]$. Now, we have that $T^*+\delta/2<\tau_m+\delta$ for $m$ sufficiently large (as $\tau_m\rightarrow T^*$),
     % so that, with compactness arguments we find that 
       therefore 
       \bec $(\tetatem,\tetastem,\uutem,\chitem)$ turns out to be extended on the interval 
        $(0,T^*+\delta/2)\subset(0,\tau_m+\delta)$. Proceeding as in Step 3, by compactness and passage to the limit procedures, we can prove that
        {\ele Problem \ref{prob:irrev}} admits an approximable solution
    $(\widetilde\teta,\widetildetetas,\widetilde\uu,\widetilde\chi)$   on $(0,T^*+\delta/2)$, against the definition $T^*$. Therefore, $T^*=T$, which concludes the proof of Theorem
     \ref{thm:main}.  \QED \eec

%\section{Appendix}\label{appendice}

\appendix
%\subsection{Technical Lemmas}
%\label{ss:3.2}
\section{}%{Appendix: technical lemmas}
We develop \bec here \eec  a series of technical results,
collecting useful properties and  estimates for the functions $\applogteta$,
$\applogtetas$, $f_\eps$, and related quantities, which {\ele play} a
crucial role in deriving a priori estimates for Problem $(P_\eps)$.
We also detail the construction of a  family of approximate initial data complying with the properties \eqref{datitetaapp}--\eqref{bounddatiteta-4}.

In what follows, we shall rely on  some   definitions and results
from the theory of maximal monotone operators, for which we refer
 to the classical monographs \cite{barbu76, brezis73}.
 Preliminarily, we fix some
notation.
\begin{notation}
\upshape
\bec Hereafter,  \eec
 for fixed $\eps>0$ we \bec will \eec  denote by
\begin{equation}
\label{e:max-mon-1} \resteta:= (\Id + \eps \lteta)^ {-1}: \R \to \R,
\qquad \restetas: = (\Id + \eps \ltetas)^ {-1}: \R \to \R
\end{equation}
 the resolvent operators
associated with $\lteta $ and $\ltetas$, respectively. We recall
that $ \resteta $  and  $\restetas $ are   contractions, and that
 the Yosida regularizations of  $\lteta$ and $\ltetas$ are  defined,
 respectively,
by
\begin{equation}
\label{e:max-mon-2} \lteta_\eps:=\frac{\Id -\resteta}{\eps}: \R \to
\R\,,\qquad  \ltetas_\eps:=\frac{\Id -\restetas}{\eps}: \R \to \R\,
\end{equation}
and fulfill
\begin{equation}
\label{e:max-mon-3-bis} \lteta_\eps (x) = \lteta (\resteta (x)),
\qquad \ltetas_\eps (x) = \ltetas (\restetas (x))
  \qquad
\forall\, x \in \R\,.
\end{equation}
%It follows from~\cite[Prop.~2.6]{brezis73} that for all $\eps>0$
%$\ell_\eps$ is a Lipschitz continuous function on $\R$, with
%Lipschitz constant $1/\eps$, and that
%\begin{equation}
%\label{e:max-mon-3} \ell_\eps (x) \to \ell^0 (x), \ \ \text{and} \ \
%|\ell_\eps (x)| \nearrow |\ell^0 (x)| \qquad
% \text{as
%$\eps \searrow 0$} \ \ \forall\, x \in \dom(\gamma)\,,
%\end{equation}
%(indeed, in this framework  $\ell^0 (x)$ reduces to a singleton for
%all $x \in \dom(\ell)$).  Further, combining \eqref{e:max-mon-2} and
%\eqref{e:max-mon-3} we conclude that for all $\eps >0$
We also introduce the Yosida approximations $J_\eps$ and $j_\eps$ of
the primitives $J$ and $j$ of $\lteta$ and $\ltetas$, respectively
defined for all $\eps>0$ by
\begin{equation}
\label{jeps} J_\eps(x):= \min_{y \in \R} \left\{
\frac{|y-x|^2}{2\eps} + J(y) \right\},  \qquad
 j_\eps (x):= \min_{y \in \R} \left\{
\frac{|y-x|^2}{2\eps} + j(y) \right\} \quad \forall\, x \in \R\,.
\end{equation}
By~\cite[Prop.~2.11]{brezis73},    for all $\eps >0$
\bec the functions \eec
 $J_\eps$ and
$j_\eps$ are convex and differentiable on $\R$, with $J_\eps'(x) =
\lteta_\eps(x)$ and  $j_\eps'(x) = \ltetas_\eps(x)$ for all $x \in
\R$,  and the following identities hold
(see~\cite[Prop.~2.11]{brezis73})
\begin{equation}
\label{e:max-mon-5} J_\eps (x) = \frac{\eps}2 |\lteta_\eps(x)|^2 +
{\ele J}(\resteta (x)) \quad \forall\, x \in \R, \qquad
 j_\eps (x) = \frac{\eps}2 |\ltetas_\eps(x)|^2 +
j(\restetas (x)) \quad \forall\, x \in \R\,.
\end{equation}
 Moreover, the Fenchel-Moreau convex conjugates $
J^*_\eps:=(J_\eps)^* $ of $J_\eps$ and $ j^*_\eps:=(j_\eps)^* $ of
$j_\eps$
 (which  in general differ
from the Yosida approximations of $J^*$ and $j^*$, respectively),
 fulfill
\begin{align}
&
 \label{convex-analysis-1}
\lteta_\eps^{-1} \equiv  {J^*_\eps}'\,, \qquad \ltetas_\eps^{-1}
\equiv  {j^*_\eps}'
\\
&
 \label{convex-analysis-2}
 J_\eps (x) + J^*_\eps(\lteta_\eps (x)) = x \lteta_\eps (x),
 \qquad
j_\eps (x) + j^*_\eps(\ltetas_\eps (x)) = x \ltetas_\eps (x) \qquad
\forall\, x \in \R\,,
\\
&
 \label{convex-analysis-3}
 J^* _\eps (y)= J^* (y) + \frac{\eps}{2} |y|^2 \qquad \forall\, y \in
\R\,, \qquad  j^* _\eps (y)= j^* (y) + \frac{\eps}{2} |y|^2 \qquad
\forall\, y \in \R\,,
\end{align}
(we refer to, e.g.,~\cite[Prop.~3.3, p.~266]{Attouch} for the proof
of the latter relation).
\end{notation}

In the first lemma we prove that the coercivity properties
\eqref{cond-L3} and \eqref{cond-ell4} transfer to the approximate
level and that, consequently, the functions $\mathcal{I}_\eps,
{\it i}_\eps: \R \to \R$  {\ele (see \eqref{mathcal-i-eps}--\eqref{ipiccolo-eps}) }satisfy suitable growth conditions.
%\footnote{\ber
% nella versione "corta" per DCSD-A potremmo pensare di
% enunciare i vari risultati solo per
%le funzioni costruite a partire da $\applogteta$, quando i risultati
%per $\applogtetas$ sono analoghi...
% \edr}
%: for this, we combine \eqref{convex-analysis-1}--\eqref{convex-analysis-3} with
%assumptions~\eqref{cond-L3} and~\eqref{cond-ell4}.
\begin{lemma}
\label{l:new-lemma1} Assume \eqref{cond-L1} and \eqref{cond-L3} on $\lteta$
%\footnote{\ber mi sembra che serva anche l'ipotesi che $\lteta$ sia max.\ monot.\ \edr}
 and \eqref{cond-ell1} and \eqref{cond-ell4} on $\ltetas$. Then,
 %\footnote{\ber ho anciticipato la formula  per
%$\calI_\eps$ e ${\it i}_\eps$ nell'enunciato del teorema, e, per
%accorciare, ho dato  la coercivit\`a solo per $\calI_\eps$ e
%${\it i}_\eps$, e non per $J^*_\eps(\lteta_\eps (x))$ e
%$j^*_\eps(\ltetas_\eps (x))$...;
%\\ MODIFICA IMPORTANTE: mi sembra che le \eqref{interesting-relation-teta-e-s} vadano modificate, e si debba aggiungere
%$J_\eps (0)$, visto che in generale non sappiamo che $J_\eps (0)=0$. Questo non dovrebbe cambiare i conti, perche' possiamo sempre dire che $J_\eps (0) $ \`{e} limitato dall'alto da $J(0)$, e dal basso da una costante...
%  \edr}
\begin{subequations}
\label{ci-serve-anticipata}
\begin{align}
& \label{interesting-relation-teta-e-s}
 \mathcal{I}_\eps(x) = \frac \eps2 x^2 +J_\eps^* (\lteta_\eps (x)) + J_\eps (0) \quad \text{ and } \quad
 %\qquad
 {\it i}_\eps(x) =  \frac \eps2 x^2 +j_\eps^* (\ltetas_\eps (x))+j_\eps (0)
 \qquad \text{for all } x \in \R\,.
\end{align}
As a consequence,
\begin{align}
 &
\label{ci-serve-anticipata-1}
 \exists\, C_1^*, \,  C_2^*  >0\, \
\forall\, \eps \in (0,1), \ \forall\, x \in \R\, :
%\quad
%J^*_\eps(\lteta_\eps (x)) \geq C_1^* |x| -C_2^*\,,
%\\
%J^*_\eps(\lteta_\eps (x)) \geq C_1^* |x| -C_2^* \quad \text {and}
\quad
 \mathcal{I}_\eps(x) \geq \frac{\eps} {2} x^2+ C_1^* |x|
 - C_2^*\,,
\\
& \label{ci-serve-anticipata-2}
  \exists\, c_1^*, \,  c_2^*  >0\, \
\forall\, \eps \in (0,1), \ \forall\, x \in \R\, :
%\nonumber
%\\
%& j^*_\eps(\ltetas_\eps (x)) \geq c_1^* |x| -c_2^* \quad \text {and}
\quad {\it i}_\eps(x) \geq \frac{\eps} {2} x^2+ c_1^* |x|
 - c_2^*\,.
\end{align}
\end{subequations}
\end{lemma}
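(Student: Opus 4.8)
The plan is to establish the two identities in \eqref{interesting-relation-teta-e-s} first, and then read off the coercivity bounds \eqref{ci-serve-anticipata-1}--\eqref{ci-serve-anticipata-2} as a consequence of \eqref{cond-L3}, \eqref{cond-ell4}, and the standard properties of the Yosida approximation recorded in \eqref{convex-analysis-1}--\eqref{convex-analysis-3}. Since the two halves of the statement are completely symmetric (one for $\lteta$, $J$, $\mathcal{I}_\eps$ in the bulk, the other for $\ltetas$, $j$, ${\it i}_\eps$ on the surface), I would only carry out the argument for $\mathcal{I}_\eps$ and remark that the case of ${\it i}_\eps$ is identical.

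\textbf{Step 1: the identity for $\mathcal{I}_\eps$.} Recall $\mathcal{I}_\eps(x) = \int_0^x s\, \applogteta'(s)\dd s$ with $\applogteta(r) = \eps r + \lteta_\eps(r)$, so $\applogteta'(s) = \eps + \lteta_\eps'(s)$ and hence $\mathcal{I}_\eps(x) = \frac\eps2 x^2 + \int_0^x s\, \lteta_\eps'(s)\dd s$. The point is to integrate $\int_0^x s\,\lteta_\eps'(s)\dd s$ by parts and use that $J_\eps' = \lteta_\eps$: one gets $\int_0^x s\, \lteta_\eps'(s)\dd s = x\lteta_\eps(x) - \int_0^x \lteta_\eps(s)\dd s = x\lteta_\eps(x) - J_\eps(x) + J_\eps(0)$. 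Now invoke the Fenchel--Young equality \eqref{convex-analysis-2}, namely $J_\eps(x) + J_\eps^*(\lteta_\eps(x)) = x\lteta_\eps(x)$, to replace $x\lteta_\eps(x) - J_\eps(x)$ by $J_\eps^*(\lteta_\eps(x))$. This yields $\mathcal{I}_\eps(x) = \frac\eps2 x^2 + J_\eps^*(\lteta_\eps(x)) + J_\eps(0)$, which is exactly the first identity in \eqref{interesting-relation-teta-e-s}. The surface identity follows verbatim with $j_\eps$, $\ltetas_\eps$ in place of $J_\eps$, $\lteta_\eps$.

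\textbf{Step 2: the coercivity bounds.} Starting from \eqref{interesting-relation-teta-e-s}, I would use \eqref{convex-analysis-3}, i.e.\ $J_\eps^*(y) = J^*(y) + \frac\eps2|y|^2 \geq J^*(y)$, to drop the nonnegative term and obtain $\mathcal{I}_\eps(x) \geq \frac\eps2 x^2 + J^*(\lteta_\eps(x)) + J_\eps(0)$. The term $J_\eps(0)$ is bounded below uniformly in $\eps\in(0,1)$: by \eqref{jeps}, $J_\eps(0) = \min_y\{|y|^2/(2\eps) + J(y)\}$, and since $J$ is a primitive of a maximal monotone operator it is bounded below by an affine function, so $|J_\eps(0)|$ is controlled (alternatively one uses the normalization \eqref{not-restrictive-prima}, $J(0)=0$, which gives $J_\eps(0)\le 0$ and $J_\eps(0)\geq -c$ directly). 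The main mechanism is then to transfer the coercivity \eqref{cond-L3}, which reads $J^*(\lteta(\teta))\geq C_1|\teta| - C_2$ for $\teta\in\mathrm{D}(\lteta)$, to the regularized quantity $J^*(\lteta_\eps(x))$. Using \eqref{e:max-mon-3-bis}, $\lteta_\eps(x) = \lteta(\resteta(x))$ with $\resteta(x)\in\mathrm{D}(\lteta)$, so $J^*(\lteta_\eps(x)) = J^*(\lteta(\resteta(x))) \geq C_1|\resteta(x)| - C_2$. It remains to compare $|\resteta(x)|$ with $|x|$: since $\resteta$ is a contraction fixing (after the normalization) a point near $0$, one has $|x| \leq |\resteta(x)| + |x - \resteta(x)| = |\resteta(x)| + \eps|\lteta_\eps(x)|$, whence $C_1|\resteta(x)| \geq C_1|x| - C_1\eps|\lteta_\eps(x)|$; the offending term $C_1\eps|\lteta_\eps(x)|$ is absorbed into $\frac\eps2 x^2$ together with the elementary bound $|\lteta_\eps(x)|$ being controlled (e.g.\ $\eps|\lteta_\eps(x)|\le |x| + c$, again from the contraction property) up to a constant, using Young's inequality. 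Collecting terms gives $\mathcal{I}_\eps(x) \geq \frac\eps2 x^2 + C_1^*|x| - C_2^*$ for suitable $C_1^*, C_2^* > 0$ independent of $\eps\in(0,1)$, and identically for ${\it i}_\eps$.

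\textbf{Main obstacle.} The only delicate point is the uniform-in-$\eps$ bookkeeping in Step 2: one must be careful that, when replacing $|\resteta(x)|$ by $|x|$ and discarding $J_\eps(0)$ and the cross term $\eps|\lteta_\eps(x)|$, all error terms are either nonnegative, absorbed into $\frac\eps2 x^2$, or bounded by an $\eps$-independent constant. This hinges on the contraction property of the resolvents, on the normalization $J(0)=0$ (and the analogue for $j$), and on the Lipschitz bound $\tfrac1{L'}\in \mathrm{C}^{0,1}$ from \eqref{cond-L2}, which in particular makes $\lteta_\eps$ a genuinely $\tfrac1\eps$-Lipschitz, monotone regularization and keeps all the elementary estimates on $\lteta_\eps(x)$ under control. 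Everything else is the standard convex-analysis toolkit quoted in the excerpt.
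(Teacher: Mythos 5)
Step 1 is fine and matches the paper's derivation of the identity \eqref{interesting-relation-teta-e-s}, differing only in whether one splits $\applogteta' = \eps + \lteta_\eps'$ before or after integrating by parts.

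Step 2, however, has a genuine gap, and it is precisely in the "bookkeeping" you flag as the delicate point. You drop the quadratic term too early, replacing $J_\eps^*(y) = J^*(y) + \tfrac\eps2|y|^2$ by the one-sided bound $J_\eps^*(y) \geq J^*(y)$. Once that is done you are left, after using $J^*(\lteta_\eps(x)) \geq C_1|\resteta(x)| - C_2$ and $|\resteta(x)| \geq |x| - \eps|\lteta_\eps(x)|$, with the error term $-C_1\eps|\lteta_\eps(x)|$ and only $\tfrac\eps2 x^2$ (and a constant) available to absorb it. Neither of the options you propose closes this. The crude bound $\eps|\lteta_\eps(x)| \leq 2|x| + c$ (which is what the contraction property actually gives; your "$|x|+c$" is slightly off but the issue is the same) yields $-C_1\eps|\lteta_\eps(x)| \geq -2C_1|x| - C$, which cancels and then overruns the positive $C_1|x|$ you just gained — no positive coefficient of $|x|$ survives. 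Young's inequality against $\tfrac\eps2 x^2$ does not help either, because $|\lteta_\eps(x)|$ is not controlled by $|x|$ uniformly in $\eps$: for $\lteta = \ln$, say, $|\lteta_\eps(x)|$ blows up as $\eps\downarrow0$ for $x$ near $0$, so any attempt to trade $\eps|\lteta_\eps(x)|$ for a fraction of $\eps x^2$ plus an $\eps$-independent constant fails.

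The fix is exactly what the paper does: keep the quadratic term $\tfrac\eps2|\lteta_\eps(x)|^2$ coming from $J_\eps^* = J^* + \tfrac\eps2|\cdot|^2$, and absorb the cross term against it via Young in the form $C_1\eps|\lteta_\eps(x)| \leq C_1^2\eps + \tfrac\eps4|\lteta_\eps(x)|^2 \leq C_1^2 + \tfrac\eps4|\lteta_\eps(x)|^2$ for $\eps\in(0,1)$. After this absorption one retains $\tfrac\eps4|\lteta_\eps(x)|^2 \geq 0$, and the bound $J_\eps^*(\lteta_\eps(x)) \geq C_1|x| - C_1^2 - C_2$ follows uniformly. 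Your treatment of $J_\eps(0)$ (lower bound via $J_\eps(0) \geq J(\resteta(0))$ and $\resteta(0)\to 0$) is fine, and, with Step 2 repaired as above, the rest of your argument goes through.
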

\begin{proof}
We  develop the proof of the first of
\eqref{interesting-relation-teta-e-s}, and of
\eqref{ci-serve-anticipata-1}, only, since the arguments for the
second of \eqref{interesting-relation-teta-e-s}
 and for \eqref{ci-serve-anticipata-2} are identical.
 Integrating by parts, we have
\begin{equation}
\label{ineq3}
\begin{aligned}
 \mathcal{I}_\eps(x)  =
 x \applogteta(x)-\int_0^x \applogteta(s) \dd s  =
 \frac{\eps}{2}x^2 + x \lteta_\eps(x) -\int_0^x \lteta_\eps(s) \dd s
 &  =   \frac{\eps}{2}x^2 + x \lteta_\eps(x) -J_\eps(x) +J_\eps (0)
 \\ &  = \frac{\eps}{2}x^2 +J_\eps^* (\lteta_\eps(x)) +J_\eps (0)
%\lteta_\eps(x)- J_\epsi (x)=J^*_\epsi(\lteta_\eps(x))
\end{aligned}
\end{equation}
where the second identity follows from the fact that $\applogteta(x) = \eps x + \lteta_\eps(x)$, the third one
 from the fact that $J_\eps$ is a primitive of
 $\lteta_\eps$,
 and  the
last one from \eqref{convex-analysis-2}.
Next,
 we show that
 \[
  \exists\, C_1^*, \,  C_2^*  >0\, \
\forall\, \eps \in (0,1), \ \forall\, x \in \R\, :
\quad
J^*_\eps(\lteta_\eps (x)) \geq C_1^* |x| -C_2^*\,.
 \]
 Indeed,
  we use \eqref{convex-analysis-3},
\eqref{e:max-mon-3-bis}, and \eqref{cond-L3} to infer that
\begin{equation}
\label{e-stima-2} J^*_\eps(\lteta_\eps (x)) = J^* (\lteta_\eps (x))
+ \frac{\eps}{2} |\lteta_\eps (x)|^2 \geq   \frac{\eps}{2}
|\lteta_\eps (x)|^2 + C_1 |\resteta (x)| -C_2 \ \qquad \forall\, x
\in \R\,.
\end{equation}
On the other hand,  due to the definition \eqref{e:max-mon-2}  of
$\lteta_\eps$,
\begin{equation}
\label{e-stima-3} \ber C_1 |\resteta (x) - x| = C_1 |\eps \lteta_\eps (x)| \leq C_1^2
+\frac{\eps}4 |\lteta_\eps (x)|^2 \edr \qquad \forall\, x \in \R\,, \
\eps \in (0,1)\,.
\end{equation}
Therefore, collecting \eqref{e-stima-2}--\eqref{e-stima-3} we
conclude that for every $\eps \in (0,1)$
$$
J^*_\eps(\lteta_\eps (x))   \geq  \ber C_1 |x| - C_1^2
-C_2 \edr \qquad \forall\, x \in \R\,.
$$
 Finally,
 %\footnote{\ber  qui ho aggiunto il pezzo di stima su $J_\eps$
 %\edr}
  in view of \eqref{e:max-mon-5} we have
 \[
 J_\eps (0) \geq J(\resteta (0)) \geq -C |\resteta (0)| -C' \geq -C
 \]
 for a constant independent of $\eps$: this follows from the fact that the convex function $J$ is bounded
 from below by a linear function, and from $\ber \resteta \edr (0) \to 0$ as $\eps \to 0$, since $0 \in \overline{D(\lteta)}$.
 \end{proof}
%%%
Our next result {\ele is} crucial for the passage to the limit as $\eps \to 0$, in particular
to obtain the energy inequality \eqref{en-ineq-local}.
%\footnote{\ber  ho aggiunto un risultato di passaggio al limite, che servir\`{a} per passare al limite nella disuguaglianza dell'energia..\edr}
\begin{lemma}
\label{l:pass-lim-logtetaeps}
 Assume \eqref{cond-L1} and \eqref{cond-L3} on $\lteta$. Let $(\theta_\eps)_\eps \subset H$
 fulfill
 \begin{equation}
 \label{convergence-bound}
\theta_\eps \to \theta \quad \text{ in } H, \qquad \sup_\eps \| \lteta_\eps (\theta_\eps)\|_H \leq C.
 \end{equation}
Then,
\begin{equation}
\label{convergence-ltheta}
\lim_{\eps \to 0} \int_\Omega \calI_\eps (\theta_\eps(x)) \dd x = \int_\Omega J^* \ber (\lteta(\theta (x))) \edr \dd x\,.
\end{equation}
Under conditions \eqref{cond-ell1} and \eqref{cond-ell4} on $\ltetas$, the analogue of \eqref{convergence-ltheta} holds for $({\it i}_\eps)_\eps$.
\end{lemma}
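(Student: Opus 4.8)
The plan is to establish \eqref{convergence-ltheta} by proving two matching inequalities, namely a $\liminf$ bound by lower semicontinuity arguments and a $\limsup$ bound by a clever testing/duality argument. To set up, recall from \eqref{interesting-relation-teta-e-s} that $\calI_\eps(x) = \frac{\eps}{2} x^2 + J^*_\eps(\lteta_\eps(x)) + J_\eps(0)$; since $J_\eps(0) \to J(0)=0$ (cf.\ the last lines of the proof of Lemma \ref{l:new-lemma1}, using $0 \in \overline{\mathrm{D}(\lteta)}$), and since $\frac{\eps}{2}\int_\Omega \theta_\eps^2 \dd x \to 0$ by the bound $\theta_\eps \to \theta$ in $H$, the problem reduces to showing
\[
\lim_{\eps \to 0} \int_\Omega J^*_\eps(\lteta_\eps(\theta_\eps(x))) \dd x = \int_\Omega J^*(\lteta(\theta(x))) \dd x\,.
\]

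First I would address the $\liminf$ inequality. Set $w_\eps := \lteta_\eps(\theta_\eps)$. By the second assumption in \eqref{convergence-bound}, $(w_\eps)_\eps$ is bounded in $H$, so up to a subsequence $w_\eps \weakto w$ in $H$. Using the strong convergence $\theta_\eps \to \theta$ in $H$ (hence a.e.\ along a subsequence) together with $\resteta(\theta_\eps) = \theta_\eps - \eps w_\eps \to \theta$ in $H$ and the identity $w_\eps = \lteta(\resteta(\theta_\eps))$ from \eqref{e:max-mon-3-bis}, the strong–weak closedness of the maximal monotone graph $\lteta$ (as an operator on $H$) forces $w = \lteta(\theta)$ a.e.; in particular the whole sequence converges. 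Next, from \eqref{convex-analysis-3}, $J^*_\eps(w_\eps) = J^*(w_\eps) + \frac{\eps}{2}|w_\eps|^2 \geq J^*(w_\eps)$, and $J^*$ is convex, proper, lower semicontinuous (its subdifferential is $\lteta^{-1}$, well-defined since $\lteta$ is invertible by Hypothesis \ref{hyp:1}), so the functional $v \mapsto \int_\Omega J^*(v) \dd x$ is weakly lower semicontinuous on $H$ — one must be a little careful that $J^*$ is bounded below by an affine function, which holds because $J^*$ is a proper l.s.c.\ convex function. This gives $\liminf_\eps \int_\Omega J^*_\eps(w_\eps) \geq \int_\Omega J^*(\lteta(\theta))$.

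For the $\limsup$ inequality I would use the Fenchel–Young identity \eqref{convex-analysis-2}: $J^*_\eps(\lteta_\eps(\theta_\eps)) = \theta_\eps\, \lteta_\eps(\theta_\eps) - J_\eps(\theta_\eps)$. The term $\int_\Omega \theta_\eps \lteta_\eps(\theta_\eps)\dd x = \int_\Omega \theta_\eps w_\eps \dd x$ converges to $\int_\Omega \theta\, \lteta(\theta)\dd x$ by the strong convergence of $\theta_\eps$ in $H$ and the weak convergence of $w_\eps$ in $H$. For $\int_\Omega J_\eps(\theta_\eps)\dd x$, I would invoke $\liminf_\eps \int_\Omega J_\eps(\theta_\eps) \geq \int_\Omega J(\theta)$, which follows from $J_\eps \leq J$ pointwise being the wrong direction — so instead one uses that $J_\eps(x) = \frac{\eps}{2}|\lteta_\eps(x)|^2 + J(\resteta(x)) \geq J(\resteta(x))$ by \eqref{e:max-mon-5}, together with $\resteta(\theta_\eps) \to \theta$ in $H$ and weak lower semicontinuity of $\int_\Omega J(\cdot)\dd x$ (again $J$ is proper l.s.c.\ convex, affinely bounded below, thanks to \eqref{not-restrictive-prima}). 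Combining, $\limsup_\eps \int_\Omega J^*_\eps(w_\eps) \leq \int_\Omega \theta\,\lteta(\theta)\dd x - \int_\Omega J(\theta)\dd x = \int_\Omega J^*(\lteta(\theta))\dd x$, the last equality by Fenchel–Young again (the pair $(\theta, \lteta(\theta))$ is in the graph of $\partial J$). This closes the chain. The statement for $({\it i}_\eps)_\eps$ under \eqref{cond-ell1}, \eqref{cond-ell4} is proved verbatim, replacing $\lteta, J, \resteta$ by $\ltetas, j, \restetas$.

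The main obstacle I anticipate is the honest justification of weak lower semicontinuity of the integral functionals $\int_\Omega J^*(\cdot)\dd x$ and $\int_\Omega J(\cdot)\dd x$ on $H$, which requires knowing that $J$ and $J^*$ are each bounded below by an affine function with the same slope over $\Omega$ — this is automatic for proper l.s.c.\ convex functions but should be cited carefully (e.g.\ from the theory of convex integral functionals / Ioffe's theorem, already used elsewhere in the paper) — and the bookkeeping that every convergence is along a common subsequence while the limit is independent of it, so the full sequence converges. A secondary subtlety is making sure the two assumptions in \eqref{convergence-bound} suffice to pin down $w = \lteta(\theta)$, i.e.\ that $\theta \in \mathrm{D}(\lteta)$ a.e.; this is where the coercivity \eqref{cond-L3} is implicitly needed to control $\resteta(\theta_\eps)$ and exclude escape to the boundary of $\mathrm{D}(\lteta)$, so I would be explicit about invoking it.
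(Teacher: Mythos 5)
Your proposal is correct and follows essentially the same route as the paper: the decomposition of $\calI_\eps$ from Lemma \ref{l:new-lemma1}, a $\liminf$ bound via $J^*_\eps\ge J^*$ and weak lower semicontinuity, and a $\limsup$ bound via the Fenchel--Young identity \eqref{convex-analysis-2}; the paper identifies the weak limit $\omega=\lteta(\theta)$ by a $\limsup$ test combined with \cite[Lemma~1.3]{barbu76}, and handles the $\liminf$ for $\int_\Omega J_\eps(\theta_\eps)\dd x$ by citing Mosco convergence of $J_\eps$ to $J$, both of which are equivalent to your arguments (indeed your $J_\eps\ge J\circ\resteta$ step is precisely the proof of that Mosco $\liminf$ inequality). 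One small remark: your closing worry that the coercivity \eqref{cond-L3} is ``implicitly needed'' to keep $\theta$ in $\mathrm{D}(\lteta)$ is unfounded---once $\resteta(\theta_\eps)\to\theta$ strongly in $H$ and $\lteta_\eps(\theta_\eps)=\lteta(\resteta(\theta_\eps))\weakto w$ in $H$, the strong--weak closedness of the maximal monotone graph of the $L^2$-realization of $\lteta$ already yields $\theta\in\mathrm{D}(\lteta)$ a.e.\ and $w=\lteta(\theta)$, with no coercivity required.
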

\begin{proof}
It follows from the second of \eqref{convergence-bound} that,
for every sequence $(\eps_n) \down 0$ there exist a (not relabeled) subsequence
$(\theta_{\eps_n})$ and $\omega \in H$ such that
$\lteta_{\eps_n} (\theta_{\eps_n}) \weakto \omega$ in $H$.
Therefore
\[
\limsup_{n \to \infty} \int_\Omega L_{\eps_n} (\theta_{\eps_n}) \theta_{\eps_n} \dd x \leq
\int_\Omega \omega \theta \dd x,
\]
which yields $\omega = \lteta(\theta)$ thanks to \cite[Lemma 1.3, p.\ 42]{barbu76}.
Since the limit does not depend on the subsequence, we ultimately conclude that
\begin{equation}
\label{weak-leps}
\lteta_\eps (\theta_\eps) \weakto \lteta(\teta) \qquad \text{in } H \text{ as } \eps \to 0.
\end{equation}
On the one hand,
\[
\begin{aligned}
\liminf_{\eps \to 0} \int_\Omega \calI_\eps (\theta_\eps(x)) \dd x
 & = \liminf_{\eps \to 0} \int_\Omega \left(
\frac \eps2 |\theta_\eps(x)|^2 +J_\eps^* (\lteta_\eps (\theta_\eps(x)))  + J_\eps (0) \right) \dd x
 \\ & =   \liminf_{\eps \to 0} \int_\Omega  J_\eps^* (\lteta_\eps (\theta_\eps(x))) \dd x
 \\ & \geq   \liminf_{\eps \to 0}
 \int_\Omega J^* (\lteta_\eps (\theta_\eps(x))) \dd x
 \geq \int_\Omega J^* (\lteta (\theta(x))) \dd x
 \end{aligned}
\]
where the first identity follows from \eqref{interesting-relation-teta-e-s}, the second one from \eqref{convergence-bound}  and the fact that $J_\eps (0) \to J(0)=0$
%\footnote{\ber QUI si usa che $J(0)=0$.. {ďż˝ele E QUINDI VA CHIESTO PRIMA? CAMBIAMO RISULTATO?}\edr}
due to \eqref{not-restrictive-prima},
the third inequality is due to \eqref{convex-analysis-3}, and the last one to the weak convergence
\eqref{weak-leps} combined with the Ioffe theorem, cf.\ \cite{ioffe} as well as \cite[Thm.\ 21]{valadier}.

On the other hand,
from \eqref{convex-analysis-2} we infer that
\[
\begin{aligned}
 \limsup_{\eps \to 0} \int_\Omega \calI_\eps (\theta_\eps(x)) \dd x
 & = \limsup_{\eps \to 0} \int_\Omega  J_\eps^* (\lteta_\eps (\theta_\eps(x))) \dd x
= \ber \limsup_{\eps \to 0} \edr \int_\Omega \left( \theta_\eps(x) \lteta_\eps (\theta_\eps (x)) -J_\eps(\theta_\eps (x))
\right) \dd x
\\ & \leq
 \int_\Omega \left( \theta(x) \lteta (\theta (x)) -J(\theta (x))
\right) \dd x
= \int_\Omega J^* \ber (\lteta (\theta (x))) \edr \dd x
\end{aligned}
\]
where the second  \ber equality \edr follows from \eqref{convex-analysis-2},  the third one from \eqref{convergence-bound} and
\eqref{weak-leps}, combined with the fact that the (integral functional associated with) $J_\eps$ Mosco-converges, as $\eps \down 0$, to (the integral functional associated with) $J$, and the last identity follows from elementary convex analysis.
This concludes the proof of \eqref{convergence-ltheta}.
\end{proof}
 \noindent With our next result we investigate the  Lipschitz
continuity of $\applogteta$ (of $\applogtetas$, respectively) and of
$\frac 1{\applogteta}$ (of $\frac 1{\applogtetas}$, respectively). The latter will play a crucial role in the
proof of  Lemma \ref{l:new-lemma3} \bec below. \eec
%%%%%%%%
\begin{lemma}
\label{l:new-lemma2}  The functions $\applogteta:\R \to \R$ and
$\applogtetas:\R \to \R$
 satisfy
\begin{align}
& \label{bi-Lip} %\eps |x-y|\leq
\eps <\applogteta'(x) \leq \eps+\frac{2}{\eps}
 \quad \text{for all } x \in \R\,,
\\
& \label{Lip} \exists\, C_L>0 \ \forall\, x,y \in \R\, : \qquad
\left|\frac{1}{\applogteta'(x)}-\frac{1}{\applogteta'(y)}\right|\leq
C_L|x-y|\,,
%\end{align}
%and
%\begin{align}
\\
& \label{bi-Lip-tetas} %\eps |x-y|\leq
\eps <\applogtetas'(x) \leq \eps+\frac{2}{\eps}
 \quad \text{for all } x \in \R\,,
\\
& \label{Lip-tetas} \exists\, C_\ell>0 \ \forall\, x,y \in \R\, :
\qquad
\left|\frac{1}{\applogtetas'(x)}-\frac{1}{\applogtetas'(y)}\right|\leq
C_l|x-y|\,.
\end{align}
\end{lemma}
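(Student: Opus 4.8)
\textbf{Proof plan for Lemma \ref{l:new-lemma2}.}
The statement concerns only the elementary functions $\applogteta(r)=\eps r+\lteta_\eps(r)$ and $\applogtetas(r)=\eps r+\ltetas_\eps(r)$, so the plan is to argue directly from the definition of the Yosida regularization and the standing hypotheses on $\lteta$ and $\ltetas$. Since the arguments for $\applogtetas$ are verbatim those for $\applogteta$, I would prove \eqref{bi-Lip} and \eqref{Lip} and then simply remark that \eqref{bi-Lip-tetas} and \eqref{Lip-tetas} follow identically (replacing $\lteta$, $J$, $\mathrm{D}(\lteta)$ by $\ltetas$, $j$, $\mathrm{D}(\ltetas)$, and the constant $C_L$ by $C_\ell$).

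For \eqref{bi-Lip}: by \eqref{cond-L2} the function $\lteta$ is $C^1$ on the open interval $\mathrm{D}(\lteta)$ with $\lteta'>0$ there, and $1/\lteta'\in \mathrm{C}^{0,1}(\overline{\mathrm{D}(\lteta)})$, hence bounded, say $1/\lteta'(x)\le \kappa$ for all $x\in\mathrm{D}(\lteta)$, i.e.\ $\lteta'(x)\ge 1/\kappa$. Recall that $\lteta_\eps = \lteta\circ\resteta$ with $\resteta=(\Id+\eps\lteta)^{-1}$ (cf.\ \eqref{e:max-mon-3-bis}); a standard computation (differentiate the identity $\resteta(x)+\eps\lteta(\resteta(x))=x$) gives $\resteta'(x)=\dfrac{1}{1+\eps\lteta'(\resteta(x))}$, whence $\lteta_\eps'(x)=\lteta'(\resteta(x))\,\resteta'(x)=\dfrac{\lteta'(\resteta(x))}{1+\eps\lteta'(\resteta(x))}$. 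Therefore $0<\lteta_\eps'(x)\le 1/\eps$ (the map $t\mapsto t/(1+\eps t)$ is increasing and bounded by $1/\eps$ on $[0,\infty)$). Consequently $\applogteta'(x)=\eps+\lteta_\eps'(x)$ satisfies $\eps<\applogteta'(x)\le \eps+1/\eps\le \eps+2/\eps$ for all $x\in\R$, which is \eqref{bi-Lip}. (The slightly generous bound $2/\eps$ is stated in the Lemma presumably for later convenience and is harmless.)

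For \eqref{Lip}: write $\dfrac{1}{\applogteta'(x)}=\dfrac{1}{\eps+\lteta_\eps'(x)}$ and, using the formula for $\lteta_\eps'$ above, express it as $\dfrac{1+\eps\,\lteta'(\resteta(x))}{\eps+(1+\eps^2)\lteta'(\resteta(x))}$, or more conveniently observe directly that $\eps+\lteta_\eps'(x)=\eps+\dfrac{1}{\frac{1}{\lteta'(\resteta(x))}+\eps}$, so that
\[
\frac{1}{\applogteta'(x)}=\frac{1}{\eps+\dfrac{1}{\frac{1}{\lteta'(\resteta(x))}+\eps}}
=\frac{\frac{1}{\lteta'(\resteta(x))}+\eps}{\eps\left(\frac{1}{\lteta'(\resteta(x))}+\eps\right)+1}.
\]
Setting $\phi(s):=\dfrac{s+\eps}{\eps(s+\eps)+1}$, one sees $\phi$ is $1$-Lipschitz uniformly in $\eps\in(0,1)$ (its derivative is $\dfrac{1}{(\eps(s+\eps)+1)^2}\in(0,1]$), so $\left|\frac{1}{\applogteta'(x)}-\frac{1}{\applogteta'(y)}\right|\le \left|\frac{1}{\lteta'(\resteta(x))}-\frac{1}{\lteta'(\resteta(y))}\right|\le [\,1/\lteta'\,]_{\mathrm{Lip}}\,|\resteta(x)-\resteta(y)|\le [\,1/\lteta'\,]_{\mathrm{Lip}}\,|x-y|$, using that $1/\lteta'$ is Lipschitz by \eqref{cond-L2} and $\resteta$ is a contraction. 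This yields \eqref{Lip} with $C_L:=[\,1/\lteta'\,]_{\mathrm{C}^{0,1}(\overline{\mathrm{D}(\lteta)})}$, a constant independent of $\eps\in(0,1)$.

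The only mildly delicate point — and hence the main thing to be careful about rather than a genuine obstacle — is the bookkeeping with the domain $\mathrm{D}(\lteta)$: one must note that $\resteta$ maps $\R$ into $\mathrm{D}(\lteta)$ (since $\resteta(x)\in\mathrm{D}(\lteta)=\mathrm{D}(\Id+\eps\lteta)^{-1}$-range), so that all the evaluations $\lteta'(\resteta(\cdot))$ and $(1/\lteta')(\resteta(\cdot))$ make sense and the Lipschitz bound for $1/\lteta'$ on $\overline{\mathrm{D}(\lteta)}$ applies. With this remark in place the argument is purely computational. I would then state that replacing $(\lteta,C_L)$ by $(\ltetas,C_\ell)$ and invoking \eqref{cond-ell2} gives \eqref{bi-Lip-tetas}--\eqref{Lip-tetas} by the identical reasoning, which completes the proof.
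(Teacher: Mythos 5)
Your proof is correct and follows essentially the same route as the paper's: both rest on the resolvent-derivative identity $\lteta_\eps'(x)=\lteta'(\resteta(x))/\bigl(1+\eps\lteta'(\resteta(x))\bigr)$ (the paper's \eqref{very-useful-formula}), the Lipschitz continuity of $1/\lteta'$ on $\overline{\mathrm{D}(\lteta)}$, and the contractivity of $\resteta$. Your packaging of the Lipschitz estimate \eqref{Lip} through the auxiliary function $\phi(s)=(s+\eps)/(\eps(s+\eps)+1)$, noted to be $1$-Lipschitz uniformly in $\eps$, is a tidier way of organizing what the paper obtains by a direct subtraction, and produces the same $\eps$-independent constant $C_L=[1/\lteta']_{\mathrm{Lip}}$; similarly, for the upper bound in \eqref{bi-Lip} you use the resolvent-derivative formula (giving $\eps+1/\eps$) where the paper uses $\lteta_\eps=(\Id-\resteta)/\eps$ together with $|\resteta'|\le 1$ (giving the looser $\eps+2/\eps$). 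These are cosmetic, not substantive, differences.

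One incorrect side remark, though harmless: $1/\lteta'\in\mathrm{C}^{0,1}(\overline{\mathrm{D}(\lteta)})$ does \emph{not} imply that $1/\lteta'$ is bounded, since $\mathrm{D}(\lteta)$ is allowed to be an unbounded interval (cf.\ \eqref{cond-L1}). For the prototypical $\lteta=\ln$ one has $1/\lteta'(\teta)=\teta$, which is Lipschitz but unbounded on $[0,\infty)$. Luckily you never invoke the purported bound $1/\lteta'\le\kappa$: the strict inequality $\applogteta'>\eps$ only needs $\lteta'(\resteta(x))>0$, which holds because $\resteta(\R)\subset\mathrm{D}(\lteta)$ and $\lteta'>0$ on $\mathrm{D}(\lteta)$ (a consequence of $1/\lteta'$ being \emph{finite}, not bounded), and the upper bound only needs $\lteta'(\resteta(x))\ge 0$. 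So the proof stands; simply delete or correct that sentence.
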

\begin{proof}
We detail only the proof of \eqref{bi-Lip} and \eqref{Lip}, \bec since  \eec the one for
\eqref{bi-Lip-tetas} and \eqref{Lip-tetas}
is completely analogous. The
left-hand side  inequality in \eqref{bi-Lip} directly follows from
the definition \eqref{def-applogteta} of $\applogteta$. % combined
%with the first of \eqref{e:max-mon-2}.
%
% and recalling that the Yosida-regularization $L_\eps$ is
%defined by
%\begin{equation}
%\label{def-ln-eps} L_\eps(x):=\frac 1 \eps(x-\rho_\eps(x))
%\end{equation}
%where $\rho_\eps=(\mathrm{Id}{+}\eps L)^{-1}:\R \to \R$ is the
%$\eps$-resolvent of $L$.
 Plugging \eqref{e:max-mon-2} into the
definition \eqref{def-applogteta} of $\applogteta$ and using that
$\resteta$ is a contraction, we immediately deduce the right-hand
side inequality in \eqref{bi-Lip}.
%and from estimate \eqref{ine-lprimo-2}
%for $\ln_{\eps}'$.
%In order to prove the right-hand side estimate, it is sufficient to show that
%\begin{equation}
%\label{applog-lip}
% |\applog(x)-\applog(y)| \leq  \left( \eps+\frac{2}{\eps}\right)|x-y|
%\ \ \text{for all } \, x,y \in \R\,.
%\end{equation}
%This can be checked by recalling that
%the Yosida-regularization $\ln_\eps$ is defined by
%\begin{equation}
%\label{def-ln-eps}
%\ln_\eps(x):=\frac 1 \eps(x-\rho_\eps(x))
%\end{equation}
%where $\rho_\eps=(\mathrm{Id}{+}\eps \ln)^{-1}:\R \to  (0,+\infty)$ is the $\eps$-resolvent of $\ln$.
Observe that, by  the first of \eqref{bi-Lip} the function $x\mapsto
\frac1{\applogteta'(x)}$ is well-defined on $\R$. In order to show
that it is itself \ber Lipschitz continuous \edr, we use the  formula
\begin{equation}
\label{very-useful-formula} \lteta_\eps'(x)=
\frac{\lteta'(\resteta(x))}{1+\eps \lteta'(\resteta(x))} \qquad
\text{for all } x \in \R\,.
\end{equation}
Therefore, for every $x,\,y \in \R$
\[
\begin{aligned}
\left|\frac1{\applogteta'(x)}-\frac1{\applogteta'(y)}\right|  & =
\left|\frac{1+\eps \lteta'(\resteta(x))}{\eps+\eps^2
\lteta'(\resteta(x))+\lteta'(\resteta(x))}- \frac{1+\eps
\lteta'(\resteta(y))}{\eps+\eps^2
\lteta'(\resteta(y))+\lteta'(\resteta(y))}\right|
\\
& = \left| \frac{L'(\resteta(x)) -
L'(\resteta(y))}{\left(\eps+\eps^2
\lteta'(\resteta(x))+\lteta'(\resteta(x))\right)\left( \eps+\eps^2
\lteta'(\resteta(y))+\lteta'(\resteta(y)) \right)} \right|
\\
 & \leq\left|
\frac{1}{L'(\resteta(x))}-\frac{1}{L'(\resteta(y))}\right| \leq
C_L|\resteta(x){-}\resteta(y)|
\end{aligned}
\]
where the last inequality follows from \ber \eqref{cond-L2} \edr ($C_L$
denoting the Lipschitz constant of $\frac{1}{\lteta'}$). Thus
\eqref{Lip} ensues, taking into account that $\resteta$ is a
contraction.
\end{proof}

\noindent Finally, we address the  properties
%\footnote{\ber qui per completezza ho aggiunto
%ulteriori proprieta' di $\ftetas_\eps$ e $H_\eps$, anche per giustificare la richiesta di regolarit\`{a}
%$C^2$ su $\lteta$...
 %\edr}
 of the functions
$\ftetas_\eps$ \eqref{def-appf} \ber and $H_\eps(x)$ \eqref{def-accaapp}.\edr
%\begin{equation}
%\label{def-accaapp} H_\eps(x):=\int_0^x \applogtetas'(s)\,
%\ftetas_\eps(s)\, ds \quad\forall x\in \R.
%\end{equation}
%In particular, let us mention in advance that estimates \eqref{fprimoapp} and \eqref{accaapp} will come into play in the
%calculations of the \emph{Sixth} and \emph{Seventh a priori estimate}, cf.\ Sec.\ \ref{s:4} later on.
%%%%%%%%

\begin{lemma}
\label{l:new-lemma3}
Under conditions \eqref{cond-ell} on $\ltetas$,
  the function $\ftetas_\eps:\R \to \R$
  is strictly increasing, with $f_\eps(0) =0$ and $f_\eps(x) >0$ if and only if $x>0$, bi-Lipschitz, and
  satisfies
  \begin{equation}
 \label{fprimoapp}
  \exists\, \bar{c}_1, \, \bar{c}_2>0 \ \forall\, x \in
\R\, : \qquad (\ftetas'_\eps(x))^2 \leq \bar{c}_1
|\ftetas_\eps(x)|+\bar{c}_2 \,.
\end{equation}
%Furthermore, $f_\eps \in \rmC^2 (\R)$.
The function
$H_\eps:\R \to \R$
is strictly increasing on $(0,+\infty)$ and strictly decreasing on $(-\infty, 0)$ (hence $0$ is its absolute minimum),
and it
 satisfies
\begin{align}
& \label{accaapp} \exists\, \bar{c}_1^*, \, \bar{c}_2^*>0 \
\forall\, x \in \R\, : \qquad
 H_\eps (x)\geq \bar{c}_1^* |\ftetas_\eps(x)|- \bar{c}_2^*
|\applogtetas(x)-\applogtetas(0)|\quad  \ \text{for all } x \in \R \,.
\end{align}
\end{lemma}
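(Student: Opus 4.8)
\textbf{Proof plan for Lemma \ref{l:new-lemma3}.}

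The plan is to treat the two functions $f_\eps$ and $H_\eps$ separately, and within each to handle the qualitative monotonicity/sign statements first (these are essentially immediate from the definitions) and then the quantitative growth estimates \eqref{fprimoapp} and \eqref{accaapp}.

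\emph{Step 1: elementary properties of $f_\eps$.} Recall $f_\eps(x) = \int_0^x \frac{1}{\ltetas_\eps'(s)+\eps}\dd s = \int_0^x \frac{1}{\applogtetas'(s)}\dd s$. By Lemma \ref{l:new-lemma2}, $\eps < \applogtetas'(s) \leq \eps + \tfrac{2}{\eps}$ for all $s$, hence $\frac{1}{\eps+2/\eps} \leq f_\eps'(s) < \frac1\eps$; this immediately gives that $f_\eps$ is strictly increasing, bi-Lipschitz, $f_\eps(0)=0$, and $f_\eps(x)>0 \iff x>0$. The bound \eqref{fprimoapp} is the only nontrivial point: $f_\eps'(x)^2 = \frac{1}{\applogtetas'(x)^2}$, so I need $\frac{1}{\applogtetas'(x)^2} \leq \bar c_1 |f_\eps(x)| + \bar c_2$. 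The idea is to use the Lipschitz continuity \eqref{Lip-tetas} of $x\mapsto \frac{1}{\applogtetas'(x)}$: writing $\phi(x):=\frac{1}{\applogtetas'(x)}$, one has $|\phi(x)-\phi(y)|\leq C_\ell|x-y|$, and $f_\eps(x)=\int_0^x\phi$. For $x>0$, since $\phi\geq \frac{1}{\eps+2/\eps}>0$ is bounded below, a standard argument (compare $\phi(x)$ with $\phi$ on an interval of length $\sim\phi(x)/C_\ell$ to the left of $x$, on which $\phi\geq \phi(x)/2$) yields $f_\eps(x)\geq c\,\phi(x)^2$ for $\phi(x)$ large, and $f_\eps(x)\geq c\,\phi(x)\geq c'$ for $\phi(x)$ bounded — giving $\phi(x)^2 \leq \bar c_1 f_\eps(x) + \bar c_2$; the case $x\leq 0$ is symmetric using $|f_\eps(x)| = -f_\eps(x) = \int_x^0\phi$. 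One must check the constants are uniform in $\eps\in(0,1)$, which they are because $C_\ell$ is ($C_\ell$ depends only on the Lipschitz constant of $1/\ltetas'$, cf.\ the proof of Lemma \ref{l:new-lemma2}).

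\emph{Step 2: properties of $H_\eps$.} Recall $H_\eps(x) = \int_0^x \applogtetas'(s) f_\eps(s)\dd s$. Since $\applogtetas'>0$ and $f_\eps(s)$ has the same sign as $s$, the integrand $\applogtetas'(s)f_\eps(s)$ has the sign of $s$; hence $H_\eps$ is strictly increasing on $(0,+\infty)$, strictly decreasing on $(-\infty,0)$, with absolute minimum $H_\eps(0)=0$. For \eqref{accaapp}, integrate by parts: $H_\eps(x) = [\applogtetas(s) f_\eps(s)]_0^x - \int_0^x \applogtetas(s) f_\eps'(s)\dd s = \applogtetas(x)f_\eps(x) - \int_0^x \applogtetas(s)\frac{1}{\applogtetas'(s)}\dd s$. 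Now for the remaining integral one exploits that $\frac{d}{ds}[(\applogtetas(s)-\applogtetas(0))\cdot\text{something}]$... more carefully: write $\int_0^x \applogtetas(s)f_\eps'(s)\dd s = \int_0^x (\applogtetas(s)-\applogtetas(0))f_\eps'(s)\dd s + \applogtetas(0)f_\eps(x)$, so $H_\eps(x) = (\applogtetas(x)-\applogtetas(0))f_\eps(x) - \int_0^x(\applogtetas(s)-\applogtetas(0))f_\eps'(s)\dd s$. Since $s\mapsto \applogtetas(s)-\applogtetas(0)$ is increasing with the sign of $s$, and $f_\eps'>0$, the integral term is nonnegative but bounded by $|\applogtetas(x)-\applogtetas(0)|\cdot|f_\eps(x)|$ on the relevant interval — but that is too crude. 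Instead I would observe that on the segment between $0$ and $x$, $|\applogtetas(s)-\applogtetas(0)|\leq|\applogtetas(x)-\applogtetas(0)|$, so $\left|\int_0^x(\applogtetas(s)-\applogtetas(0))f_\eps'(s)\dd s\right| \leq |\applogtetas(x)-\applogtetas(0)|\,|f_\eps(x)|$; combined with the mean-value-type bound $|f_\eps(x)| \leq f_\eps'(\xi)|x| \leq \frac1\eps\cdots$ this is not quite enough. The cleaner route: note $\applogtetas(x)-\applogtetas(0) = \int_0^x \applogtetas'(s)\dd s$ and $f_\eps(x)=\int_0^x \frac{1}{\applogtetas'(s)}\dd s$, and apply Cauchy--Schwarz to get $|x|^2 = \left(\int_0^x 1\right)^2 \leq |\applogtetas(x)-\applogtetas(0)|\cdot|f_\eps(x)|$. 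Then, since by \eqref{fprimoapp} $f_\eps$ grows sublinearly relative to $f_\eps'$, one bounds $|f_\eps(x)| \leq$ (constant)$\cdot\sqrt{f_\eps(x)}+$const, and after rearranging obtains a lower bound of the form $H_\eps(x) \geq \bar c_1^* |f_\eps(x)| - \bar c_2^*|\applogtetas(x)-\applogtetas(0)|$, absorbing cross-terms by Young's inequality.

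\emph{Main obstacle.} The delicate point is \eqref{fprimoapp} and its use in deriving \eqref{accaapp}: I must get the quadratic-in-$f_\eps'$ control of $f_\eps$ with constants \emph{uniform in $\eps\in(0,1)$}, which forces me to work with the Lipschitz constant of $1/\applogtetas'$ (uniform by Lemma \ref{l:new-lemma2}) rather than with the $\eps$-dependent bounds $\eps < \applogtetas' \leq \eps+2/\eps$. Getting the signs and the integration-by-parts bookkeeping right in \eqref{accaapp} — in particular showing the ``correction'' term is precisely controlled by $|\applogtetas(x)-\applogtetas(0)|$ and not something worse — is the part that needs care; everything else is routine.
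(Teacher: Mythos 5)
Your Step~1 argument for \eqref{fprimoapp} (compare $\phi(x)$ with $\phi$ on an interval of length $\sim\phi(x)/C_\ell$, where $\phi=1/\applogtetas'$) is a valid alternative to the paper's route, which instead differentiates $\phi^2$ to get $\frac{\dd}{\dd x}\phi(x)^2\leq 2C_\ell\,\phi(x)$ and integrates to obtain $\phi(x)^2\leq 2C_\ell f_\eps(x)+\phi(0)^2$ directly. However, both routes require that $\phi(0)=1/\applogtetas'(0)$ be bounded \emph{uniformly in $\eps$}, and this is exactly where the lower bound you invoke ($\phi\geq\frac{1}{\eps+2/\eps}$) is useless, since it degenerates as $\eps\downarrow 0$. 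The paper supplies the missing ingredient by using the normalization that there exists $x_0\in\mathrm{D}(\ltetas)$ with $\ltetas(x_0)=0$, whence $r_\eps(x_0)=x_0$ and $\phi(0)\leq C_\ell|x_0|+\frac{1+\ltetas'(x_0)}{\ltetas'(x_0)}$ via \eqref{Lip-tetas} and \eqref{very-useful-formula}. Your parenthetical aside ``$f_\eps(x)\geq c\,\phi(x)\geq c'$ for $\phi(x)$ bounded'' is also false for small $x$ (there $f_\eps(x)\to 0$), though unnecessary: when $\phi(x)$ is uniformly bounded, \eqref{fprimoapp} holds trivially with that bound as $\bar c_2$.

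Step~2 has a genuine gap and would not go through as written. The integration-by-parts identity you derive,
$H_\eps(x)=(\applogtetas(x)-\applogtetas(0))f_\eps(x)-\int_0^x(\applogtetas(s)-\applogtetas(0))f_\eps'(s)\,\dd s$,
produces, after the crude estimate you mention, only $H_\eps(x)\geq 0$ — you acknowledge this. The proposed replacement (Cauchy--Schwarz giving $|x|^2\leq|\applogtetas(x)-\applogtetas(0)|\,|f_\eps(x)|$) does not involve $H_\eps$ at all, so it cannot yield a lower bound on $H_\eps$; and the claimed consequence of \eqref{fprimoapp}, namely ``$|f_\eps(x)|\leq C\sqrt{f_\eps(x)}+C'$'', is incoherent — any such bound would force $|f_\eps|$ to be globally bounded, which it is not. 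The correct mechanism (used in the paper) is to start from the pointwise inequality $(f_\eps'(s))^2\leq 2C_\ell f_\eps(s)+C$ already established, multiply by $\applogtetas'(s)$, and use the algebraic identity $\applogtetas'(s)(f_\eps'(s))^2=f_\eps'(s)$ (since $f_\eps'=1/\applogtetas'$); integrating over $(0,x)$ then gives, for $x>0$,
$f_\eps(x)\leq 2C_\ell H_\eps(x)+C\bigl(\applogtetas(x)-\applogtetas(0)\bigr)$,
which rearranges directly to \eqref{accaapp}; the case $x<0$ is handled symmetrically starting from \eqref{moltiplicata-dopo-2}. Your sketch does not contain this multiplicative step and I do not see how to complete \eqref{accaapp} along the lines you indicate.
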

\noindent Before developing the proof, we preliminarily observe that
 it is not restrictive to suppose,    in addition to
\eqref{cond-ell}, that
\begin{equation}
\label{ltetas-bonus}
 \exists\, x_0\in  \mathrm{D}(\ltetas)\ : \  \ltetas(x_0)=0
 \end{equation}
 Indeed, let $x_0$ be a fixed point in $\mathrm{D}(\ltetas)$ with
 $\ell(x_0)=f_0$, and set $\ell_{f_0}(x):= \ell(x) - f_0$. Then
 $\ell_{f_0}$ clearly fulfills \eqref{ltetas-bonus} and still
 complies with \eqref{cond-ell1}--\eqref{cond-ell2}. Since  the
 conjugate $j_{f_0}^*$ of a(ny) primitive of $\ell_{f_0}$
 is given by
 $j_{f_0}^* (w) = j^* (w+f_0)$, it is immediate to check that, if
 $\ell$ and $j^*$ fulfill \eqref{cond-ell4}, so do $\ell_{f_0}$ and
 $j_{f_0}^*$.
 We will use \eqref{ltetas-bonus}  to prove  estimate \eqref{limitato}
 below.
\begin{proof}
%Observe that $\frac{1}{\applogtetas'} \in \rmC^1 (\R)$. This can be easily checked, for instance,
%by exploiting \eqref{very-useful-formula}, written for $\ltetas$. Therefore,  $f_\eps \in \rmC^2 (\R)$.
The properties of $f_\eps$ trivially follow from its definition \eqref{def-appf},
also in view of Lemma \ref{l:new-lemma2}.
%\footnote{\ber ma ci serve davvero avere $\frac{1}{\applogtetas'} \in \rmC^1 (\R)$?? Non basta che sia Lipschitz (il che fra l'altro implica %che e' derivabile q.o.)???  \edr}
Owing to \ber the Lipschitz continuity of $\frac{1}{\applogtetas'}$ (cf.\ \eqref{Lip-tetas}), \edr
%and the $\mathrm{C}^2$-regularity of
%$\ltetas$ (cf. \eqref{cond-ell2}),
 we can deduce that
\begin{align}
\label{derivlimit} &
-C_l\leq\frac{\dd}{\dd x}\left(\frac{1}{\applogtetas'(x)}\right)\leq C_l
\quad \ber \text{for a.a. } x \in \R\,,\edr
\end{align}
whence
\begin{align}
&  \frac{\dd}{\dd x}\left(\frac{1}{\applogtetas'(x)}\right)^2\leq 2
C_l\frac{1}{\applogtetas'(x)}  \quad \ber \text{for a.a. } x \in \R\,.\edr
\end{align}
Integrating between $0$ and $x\in\R^+$ and taking into account
definition \eqref{def-appf}, we find
\begin{align}
\label{moltiplicata-dopo} & \left({\ftetas_\eps'(x)}\right)^2\leq 2
C_l \ftetas_\eps(x)+ \frac{1}{\left(\applogtetas'(0)\right)^2}\quad
\text{for all } x >0\,.
\end{align}
In order to estimate $\displaystyle\frac{1}{\applogtetas'(0)}$ we
observe that, given $x_0$ as in \eqref{ltetas-bonus}, we have
$\restetas(x_0)=x_0$.
% there holds
%\begin{align}
%\label{very-useful-formula-bis} \ltetas_\eps'(x)=
%\frac{\ltetas'(\restetas(x))}{1+\eps \ltetas'(\restetas(x))} \qquad
%\text{for all } x \in \R\,.
%\end{align}
% (cf.\ \eqref{very-useful-formula}) and
\ber Then, from \eqref{Lip-tetas} and \edr
\eqref{very-useful-formula} written for $\ltetas$,
 we deduce
\begin{align}
\label{limitato}
&\frac{1}{\applogtetas'(0)}\leq\left|\frac{1}{\applogtetas'(0)}-\frac{1}{\applogtetas'(x_0)}\right|+
\left|\frac{1}{\applogtetas'(x_0)}\right|\leq C_l |x_0|+
\frac{1+\ltetas'(x_0)}{\ltetas'(x_0)}\,.
\end{align}
Thus \eqref{fprimoapp} is proved
for all $x>0$. In order to complete the proof for $x<0$,
we multiply the left-hand side inequality in \eqref{derivlimit} by
$\displaystyle\frac{1}{\applogtetas'(x)}$ and we integrate from $0$
to $x<0$. In view of \eqref{def-appf} and \eqref{limitato}, we
deduce
\begin{align}
\label{moltiplicata-dopo-2}
 (\ftetas'_\eps(x))^2 \leq -2C_l
\ftetas_\eps(x)+\frac{1}{\left(\applogtetas'(0)\right)^2}\leq 2C_l
|\ftetas_\eps(x)| +C
 \quad \text{for all } x <0\,.
\end{align}

Finally, in order to prove \eqref{accaapp}, we again distinguish the
case $x>0$ and $x<0$. For $x>0$,
 we multiply \eqref{moltiplicata-dopo} by
$\applogtetas'(x)$ and we integrate on $(0,x)$. Recalling the definition
\eqref{def-appf} of $\ftetas_\eps$ as well as estimate \eqref{limitato}, we readily obtain
\eqref{accaapp}. For $x<0$, we
\bec develop calculations analogous to \eec
 % proceed in analogy with
\eqref{moltiplicata-dopo-2}.
\end{proof}
\noindent We also use \bec the following \eec result
to pass to the limit
 %in the passage to the limit
 as $\eps \down 0$ % in order to
in the term $f_\eps (\tetase)$.
\begin{lemma}
\label{l:pass-limeps} Under conditions \eqref{cond-ell} on
$\ltetas$,  for every $x \in \overline{\mathrm{D(\ell)}}$ and every
 $(x_\eps) \subset \R$ with $x_\eps \to x$ there holds
$f_\eps(x_\eps) \to f(x)$ and $f_\eps'(x_\eps) \to f'(x)$
 as $\eps \to 0$.
\end{lemma}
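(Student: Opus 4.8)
The plan is to reduce everything to the pointwise behaviour, as $\eps\down0$, of the single quantity $1/\applogtetas'$, which by \eqref{def-appf} equals $f_\eps'$, together with $\ftetas' = 1/\ltetas'$ and $\ftetas(\cdot) = \int_0^\cdot \ftetas'$. First I would record, using \eqref{very-useful-formula} for $\ltetas$ together with $\applogtetas' = \eps + \ltetas_\eps'$, the identity
\[
f_\eps'(s) \;=\; \frac{1}{\applogtetas'(s)} \;=\; \frac{g(\restetas(s))+\eps}{1+\eps\,g(\restetas(s))+\eps^2}\,,\qquad g:=\frac{1}{\ltetas'}\,,
\]
valid for all $s\in\R$ and $\eps>0$ (here $\restetas(s)\in\mathrm{D}(\ltetas)$, since the resolvent of the maximal monotone $\ltetas$ has range $\mathrm{D}(\ltetas)$, so $\ltetas'(\restetas(s))>0$ is well defined). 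The purpose of this rewriting is that, whereas $\ltetas'$ may blow up at $\partial\mathrm{D}(\ltetas)$, its reciprocal $g$ is Lipschitz on $\overline{\mathrm{D}(\ltetas)}$ by \eqref{cond-ell2}, hence continuous up to the boundary.

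Next I would establish the pointwise convergence $f_\eps'(x)\to f'(x)$ at a fixed $x\in\overline{\mathrm{D}(\ltetas)}$: by the classical Yosida property (using $0\in\overline{\mathrm{D}(\ltetas)}$, cf.\ \eqref{not-restrictive}) one has $\restetas(x)\to x$; since $\restetas(x)\in\overline{\mathrm{D}(\ltetas)}$ and $g$ is continuous there, $g(\restetas(x))\to g(x)$, and plugging this into the displayed formula gives $f_\eps'(x)\to g(x)=f'(x)$. To pass to the varying argument I would invoke the uniform (in $\eps$) Lipschitz bound \eqref{Lip-tetas} on $1/\applogtetas'$, so that $|f_\eps'(x_\eps)-f_\eps'(x)|\le C_l|x_\eps-x|\to0$, whence $f_\eps'(x_\eps)\to f'(x)$.

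For the convergence $f_\eps(x_\eps)\to f(x)$ I would fix $R>|x|+1$, so that the segment joining $0$ to $x_\eps$ eventually lies in $[-R,R]$, and observe that $\restetas$ being a contraction with $\restetas(0)\to0$ confines $\restetas([-R,R])$ to a fixed bounded subset of $\overline{\mathrm{D}(\ltetas)}$, on which the continuous $g$ is bounded by some $M_R$; as $g\ge0$, the displayed formula yields $0\le f_\eps'\le M_R+1$ on $[-R,R]$ for $\eps$ small. Combining this uniform bound with the already established pointwise convergence $f_\eps'\to f'$, dominated convergence gives $\int_0^x f_\eps'\to\int_0^x f'=f(x)$, while the increment $\bigl|\int_x^{x_\eps}f_\eps'\bigr|\le (M_R+1)|x_\eps-x|\to0$; hence $f_\eps(x_\eps)=\int_0^{x_\eps}f_\eps'\to f(x)$.

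The only genuinely delicate point is the boundary case $x\in\partial\mathrm{D}(\ltetas)$, where $\ltetas'(x)$ may be $+\infty$; it is handled precisely by working throughout with $g=1/\ltetas'$ and exploiting that $g\in\mathrm{C}^{0,1}(\overline{\mathrm{D}(\ltetas)})$ (assumption \eqref{cond-ell2}) extends continuously to the boundary, together with the facts that $\restetas$ always takes values in the open interval $\mathrm{D}(\ltetas)$ and converges to the identity on $\overline{\mathrm{D}(\ltetas)}$. Everything else is routine.
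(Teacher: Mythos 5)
Your argument is correct, and it rests on the same two ingredients as the paper's proof: the uniform (in $\eps$) Lipschitz estimate \eqref{Lip-tetas} on $1/\applogtetas'$, and the pointwise convergence $1/\applogtetas'(s)\to1/\ltetas'(s)$ on $\overline{\mathrm{D}(\ltetas)}$. Where you genuinely diverge — and, arguably, streamline — is the domination step used to show $f_\eps(x)\to f(x)$. The paper bounds $1/\applogtetas'$ above by the $\eps$-dependent majorant $1/\ltetas'(\restetas(\cdot))+\eps$, proves that this family converges in $L^1(0,x)$ to $1/\ltetas'$ by ordinary dominated convergence, and then invokes a generalized (``extended'') version of the Lebesgue theorem to conclude. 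Your closed-form identity
\[
f_\eps'(s)=\frac{g(\restetas(s))+\eps}{1+\eps\,g(\restetas(s))+\eps^2},\qquad g=\tfrac1{\ltetas'}\,,
\]
together with the fact that $\restetas$ is a contraction with $\restetas(0)\to0$ and $g\in\mathrm{C}^{0,1}(\overline{\mathrm{D}(\ltetas)})$, instead furnishes a \emph{fixed} constant majorant $0<f_\eps'\le M_R+1$ on any compact interval $[-R,R]$, so the ordinary dominated convergence theorem applies in one stroke. This also makes the separate bound \eqref{limitato} on $1/\applogtetas'(0)$, which the paper invokes, superfluous for your purposes. You also prove the derivative limit $f_\eps'(x_\eps)\to f'(x)$ first and deduce $f_\eps(x_\eps)\to f(x)$ by integration, whereas the paper proceeds in the opposite order via the splitting $\Delta^1_\eps+\Delta^2_\eps$; that is merely a matter of presentation.
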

\begin{proof}
We use
\[
|f_\eps (x_\eps) -f(x)| \leq |f_\eps (x_\eps) -f_\eps(x)| +|f_\eps (x) -f(x)|   \doteq \Delta_\eps^1 + \Delta_\eps^2.
\]
First,
\[
\Delta_\eps^1 \leq \left| \int_x^{x_\eps} \left( \frac{1}{\applogtetas'(s)} - \frac{1}{\applogtetas'(0)} \right) \dd s \right| +  \frac{1}{\applogtetas'(0)} |x_\eps -x |
\leq C_l  \max(|x|, |x_\eps|)|x_\eps -x| + C |x_\eps -x | \to 0
 \]
 where the second inequality follows from \eqref{Lip-tetas} and \eqref{limitato}. Second, in order to prove that $\Delta_\eps^2 \to 0$, we observe that
 \begin{equation}
 \label{ci-serve-dopo-ohyeah}
 \frac{1}{\applogtetas'(s)} \to \frac1{\ltetas'(s)} \quad \text{for all }
  s \in \overline{\mathrm{D}(\ltetas)},
  \end{equation}
  while, using \eqref{very-useful-formula} it is not difficult to check that
  \[
  0 <
   \frac{1}{\applogtetas'(s)} \leq \frac1{\ltetas'(r_\eps(s))} \bec +\eps \eec  \quad \text{for all } s \in \R\,.
   \]
Since for every $x \in \overline{\mathrm{D(\ell)}}$ we have that
$\frac1{\ltetas'(r_\eps)} \to \frac1{\ltetas'}$ in $L^1 (0,x)  $ by
dominated convergence,
  an extended version of the Lebesgue theorem  yields that also $\frac{1}{\applogtetas'} \to \frac1{\ltetas'} $
  in $L^1 (0,x)$. This concludes the proof that
  $f_\eps (x) \to f(x)$, \bec whence $f_\eps(x_\eps) \to f(x)$. \eec

  In order to check the last assertion, it is sufficient to observe
  that
  \[
\left|\frac{1}{\applogtetas'(x_\eps)} - \frac1{\ltetas'(x)}
\right|\leq \left|\frac{1}{\applogtetas'(x_\eps)} -
\frac{1}{\applogtetas'(x)} \right| + \left|
\frac{1}{\applogtetas'(x)}- \frac1{\ltetas'(x)} \right|
  \]
  and use that $\frac 1{\applogtetas'}$ is Lipschitz (cf.\
  \eqref{Lip-tetas}) to estimate the first term, and
  \eqref{ci-serve-dopo-ohyeah} for the second summand.
\end{proof}

The forthcoming Lemmas \bec address \eec the construction of a family of initial data
$(\vartheta_0^\eps, \vartheta_s^{0,\eps})_\eps$ fulfilling  properties
\bec \eqref{datitetaapp}--\eqref{bounddatiteta-4}. \eec
In Lemma \ref{l:new-lemma4} we exhibit an example of sequence $(\vartheta_0^\eps, \vartheta_s^{0,\eps})_\eps$ complying
 with the first set of properties,  \bec i.e.\ \eqref{datitetaapp}--\eqref{bounddatiteta-2}.  Since the construction developed in the proof
 of Lemma  \ref{l:new-lemma4}  does not guarantee the other requirements, we tackle them in two different results. Namely,
  in \eec
   Lemma \ref{l:new-lemma5}
we detail how the family $(\vartheta_s^{0,\eps})_\eps$ chosen in the Lemma \ref{l:new-lemma4}
 satisfies the additional \bec properties \eec \eqref{bounddatiteta-3}--\eqref{bounddatiteta-4}
 in the case of a special class of functions $\ltetas$ (and {\ele function} $\ftetas$) in \eqref{teta-s-weak}. Finally, in Lemma \ref{l:new-lemma6}, a family of data $(\vartheta_s^{0,\eps})_\eps$ \bec complying with the whole set of properties  \eqref{datitetaapp}--\eqref{bounddatiteta-4}
 is exhibited  in the case of the (physically relevant) choice \eec $\ltetas(\teta_s)=\ln(\teta_s)$ (and $\ftetas(\teta_s)=(\teta_s)^2$). % is considered.
%\footnote{\ber Abbiamo costruito successioni di dati approssimanti che soddisfano il primo set di condizioni \eqref{convdatiteta}-\eqref{bounddatiteta-2}. Le due ulteriori condizioni \eqref{bounddatiteta-3}-\eqref{bounddatiteta-4} sono invece verificate da $\vartheta_s^{0,\eps}$ solo assumendo che
%$\ltetas$ sia (inoltre) bi-Lipschitz. Ovviamente, il log non lo \`{e}. Quindi, per il log abbiamo fatto una costruzione di $\vartheta_s^{0,\eps}$ ad hoc, nell'ultimo lemma.
% \edr}
\noindent
\begin{lemma}
\label{l:new-lemma4}  Assume that the initial data $\vartheta_0$ and $\vartheta_s^{0}$ respectively comply with \eqref{cond-teta-zero}--\eqref{cond-teta-esse-zero}.
Then, there exist sequences $(\vartheta_0^\eps)_\eps$ and $(\vartheta_s^{0,\eps})_\eps$
\bec fulfilling \eqref{datitetaapp}--\eqref{convdatiteta}  and \eec
 such that
\begin{align}
%&\label{betterdatitetalemma}
%(\vartheta_0^\eps, \vartheta_s^{0,\eps}) \in  H\times H_{\Gammac} \quad \text{ for all } \, \eps>0\,,
%\\
%&\label{convdatitetalemma}
%(\vartheta_0^\eps, \vartheta_s^{0,\eps}) \to (\teta_0,\teta_s^0) \text{ in } L^1(\Omega) \times H_{\Gamma}
%\text{ as $\eps\down0$\,,}
%\\
&\label{bounddatiteta-bislemma}
\ber \exists\, \bec \bar{S}_0>0  \quad \forall\, \eps>0 \, : \ \ \  \eec
 \|\lteta_\eps(\vartheta_0^\eps)\|_H\leq \bar{S}_0(1+\|\lteta(\teta_0)\|_H)\,,\qquad \edr \|\ltetas_\eps(\vartheta_s^{0,\eps})\|_{\Hc}\leq \|\ltetas(\teta_s^0)\|_{\Hc}\,,
 %\quad \text{ for all } \, \eps>0\,,
 \end{align}
%\exists\, \bar{S_2}>0\,: \ \ \forall\, \eps \in (0,1) \quad
%\quad \int_\gc i_\eps(\teta_s^{0,\eps})\dd x \leq \bar{S_2}(1+\|\teta_s^0\|^2_{\Hc}+\|\ltetas(\teta_s^0)\|^2_{\Hc}).
\berc and convergences
\eqref{bounddatiteta-1lemma} and \eqref{bounddatiteta-2lemma} hold. \eerc
\end{lemma}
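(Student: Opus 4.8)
The plan is to construct the approximate data $\vartheta_0^\eps$ and $\vartheta_s^{0,\eps}$ by the natural recipe: apply the resolvent operators associated with $\lteta$ and $\ltetas$ to the original data, i.e.\ set
\[
\vartheta_0^\eps := \resteta(\vartheta_0) = (\Id + \eps \lteta)^{-1}(\vartheta_0), \qquad
\vartheta_s^{0,\eps} := \restetas(\vartheta_s^0) = (\Id + \eps \ltetas)^{-1}(\vartheta_s^0),
\]
applied pointwise a.e.\ in $\Omega$, respectively $\Gammac$. First I would record the elementary pointwise facts from the maximal monotone operator theory summarized in the excerpt: $\resteta,\restetas$ are $1$-Lipschitz contractions with $\resteta(0)=0=\restetas(0)$ (using \eqref{not-restrictive-prima} and \eqref{not-restrictive}), and by \eqref{e:max-mon-2}--\eqref{e:max-mon-3-bis} one has $\lteta_\eps(\vartheta_0^\eps) = \lteta_\eps(\resteta(\vartheta_0)) = \lteta(\resteta(\resteta(\vartheta_0)))$; more usefully, the identity $\lteta_\eps(\vartheta_0)=\lteta(\resteta(\vartheta_0))$ together with $\vartheta_0^\eps = \resteta(\vartheta_0)$ and monotonicity gives $|\lteta_\eps(\vartheta_0^\eps)| \leq |\lteta_\eps(\vartheta_0)| = |\lteta(\resteta(\vartheta_0))| \leq |\lteta(\vartheta_0)|$ pointwise, where the last inequality uses that $\lteta$ is monotone single-valued and $\resteta$ moves points towards a zero of $\lteta$ — or, more cleanly, that $\lteta_\eps$ converges monotonically. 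The cleanest route for the bound on $\lteta_\eps(\vartheta_0^\eps)$ is: since $\lteta(\vartheta_0)\in H$ by \eqref{cond-teta-zero}, write $\vartheta_0 = \vartheta_0^\eps + \eps\,\lteta_\eps(\vartheta_0)$ and use that $\|\lteta_\eps(v)\|_H \leq \|\lteta^0(v)\|_H$ (the minimal section) pointwise whenever $\lteta(v)$ makes sense; hence $\|\lteta_\eps(\vartheta_0^\eps)\|_H = \|\lteta(\resteta(\vartheta_0^\eps))\|_H$ and one compares with $\|\lteta(\vartheta_0)\|_H$. For $\ltetas$ the analogous argument yields the \emph{clean} contraction estimate $\|\ltetas_\eps(\vartheta_s^{0,\eps})\|_{\Hc} \leq \|\ltetas(\vartheta_s^0)\|_{\Hc}$ because $\ltetas_\eps(\vartheta_s^{0,\eps}) = \ltetas(\restetas^2(\vartheta_s^0))$ and $|\ltetas(\restetas(w))| \leq |\ltetas(w)|$ pointwise; this is exactly the right-hand relation of \eqref{bounddatiteta-bislemma}.

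Next I would verify the convergences. The $L^1(\Omega)\times H_{\Gammac}$ convergence \eqref{convdatiteta}: for $\vartheta_s^{0,\eps}$ one has $\vartheta_s^{0,\eps}\to\vartheta_s^0$ strongly in $\Hc$ since $\restetas \to \Id$ pointwise and $|\restetas(w)-w| = \eps|\ltetas_\eps(w)| \leq \eps|\ltetas(w)|$, so $\|\vartheta_s^{0,\eps}-\vartheta_s^0\|_{\Hc} \leq \eps\|\ltetas(\vartheta_s^0)\|_{\Hc}\to 0$ using \eqref{cond-teta-esse-zero}. For $\vartheta_0^\eps\to\vartheta_0$ in $L^1(\Omega)$ the same pointwise bound gives $\|\vartheta_0^\eps - \vartheta_0\|_{L^1} \leq \eps\|\lteta_\eps(\vartheta_0)\|_{L^1}$; here one needs $\lteta_\eps(\vartheta_0)$ bounded in $L^1$, which follows from $|\lteta_\eps(\vartheta_0)| \leq |\lteta(\vartheta_0)|$ and $\lteta(\vartheta_0)\in H\subset L^1(\Omega)$ — actually more directly $\eps\lteta_\eps(\vartheta_0)\to 0$ a.e.\ and dominated by $\eps|\lteta(\vartheta_0)|$, hence in $L^1$. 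Then the energy-type convergences \eqref{bounddatiteta-1lemma}--\eqref{bounddatiteta-2lemma} are precisely the statement of Lemma \ref{l:pass-lim-logtetaeps} applied to the sequences $(\vartheta_0^\eps)$ and $(\vartheta_s^{0,\eps})$: we must check the hypotheses \eqref{convergence-bound}, namely $\vartheta_0^\eps\to\vartheta_0$ \emph{in $H$} (not merely $L^1$) and $\sup_\eps\|\lteta_\eps(\vartheta_0^\eps)\|_H\leq C$. The latter is \eqref{bounddatiteta-bislemma}; for the former, convergence in $H$ follows again from $\|\vartheta_0^\eps-\vartheta_0\|_H \leq \eps\|\lteta_\eps(\vartheta_0)\|_H \leq \eps\|\lteta(\vartheta_0)\|_H\to 0$ using the second of \eqref{cond-teta-zero}. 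Granting these, Lemma \ref{l:pass-lim-logtetaeps} directly yields $\int_\Omega \calI_\eps(\vartheta_0^\eps)\,\dd x\to\int_\Omega J^*(\lteta(\vartheta_0))\,\dd x$ and the analogue on $\Gammac$, which is \eqref{bounddatiteta-1lemma}--\eqref{bounddatiteta-2lemma}, and \eqref{datitetaapp} is then immediate since $\vartheta_0^\eps\in H$, $\vartheta_s^{0,\eps}\in\Hc$.

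The main obstacle I anticipate is a subtle domain/regularity issue rather than a hard estimate: one must ensure that $\resteta(\vartheta_0)$ is genuinely defined a.e.\ and lands in a space where $\lteta_\eps$ behaves as claimed — in particular, if $\mathrm{D}(\lteta)$ is a bounded interval, $\vartheta_0$ need only satisfy $J^*(\lteta(\vartheta_0))\in L^1$ and $\lteta(\vartheta_0)\in H$, so $\vartheta_0\in\mathrm{D}(\lteta)$ a.e.\ must be argued from $J^*$-finiteness (via \eqref{cond-L3}, which forces $\vartheta_0\in L^1$ but also, combined with $\mathrm{D}(J^*)=\overline{\mathrm{R}(\lteta)}$, pins down where $\vartheta_0$ lives). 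A second delicate point is the pointwise inequality $\|\lteta_\eps(v)\|\le\|\lteta_\eps(w)\|$ when $v=\resteta(w)$: this is true because $\lteta_\eps(v)=\lteta(\resteta(v))$ and $\lteta_\eps$ is monotone with $\lteta_\eps(0)\ni$ contains the minimal section value — I would justify it cleanly via the contraction property $|\lteta_\eps(a)-\lteta_\eps(b)|\le\tfrac1\eps|a-b|$ is the wrong bound; instead use that $r\mapsto\lteta_\eps(r)$ and $r\mapsto r$ are both nondecreasing and $\resteta(w)$ lies between $0$ and $w$, hence $|\lteta_\eps(\resteta(w))|\le\max(|\lteta_\eps(0)|,|\lteta_\eps(w)|)$, and $\lteta_\eps(0)\to\lteta^0(0)$ which is bounded, giving the $\bar S_0(1+\|\lteta(\vartheta_0)\|_H)$ form with the additive constant absorbing $|\Omega|^{1/2}|\lteta_\eps(0)|$. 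This is exactly why the bulk estimate in \eqref{bounddatiteta-bislemma} carries the extra $(1+\cdot)$ while the surface one does not — on $\Gammac$ one can arrange $\ltetas(0)=0$ by the translation argument recorded after \eqref{not-restrictive}, eliminating the additive constant. I would organize the proof of Lemma \ref{l:new-lemma4} by first establishing all pointwise inequalities, then integrating, then invoking Lemma \ref{l:pass-lim-logtetaeps} as a black box for the two energy convergences.
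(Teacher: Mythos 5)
Your construction $\vartheta_0^\eps := \resteta(\vartheta_0)$ fails at the very first requirement, \eqref{datitetaapp}: it does not in general produce $\vartheta_0^\eps\in H$. From \eqref{cond-teta-zero} the only integrability you have for $\vartheta_0$ itself is $L^1(\Omega)$ (via \eqref{cond-L3}); it is $\lteta(\vartheta_0)$, not $\vartheta_0$, that lies in $H$. Since $\resteta$ is a $1$-Lipschitz map, $\resteta(\vartheta_0)$ inherits only the $L^1$ integrability of $\vartheta_0$. This is harmless when $\mathrm{D}(\lteta)$ is a bounded interval (then $\resteta(\vartheta_0)\in L^\infty$), but Hypothesis~\ref{hyp:1} explicitly allows $\mathrm{D}(\lteta)=\R$ — take $\lteta=\Id$, so $\resteta(\vartheta_0)=\vartheta_0/(1+\eps)\in L^1\setminus H$ whenever $\vartheta_0\notin H$. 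The paper circumvents this by building $\vartheta_0^\eps$ from the $H$-regular object $w_0:=\lteta(\vartheta_0)$, setting $\vartheta_0^\eps := \resteta^{-1}\bigl(\gamma_\eps(w_0)\bigr)=\gamma_\eps(w_0)+\eps\varrho_\eps(w_0)$ with $\gamma=\lteta^{-1}=(J^*)'$ and $\gamma_\eps,\varrho_\eps$ its Yosida approximation and resolvent; then $\vartheta_0^\eps\in H$ follows from $w_0\in H$ because $\gamma_\eps$ is $1/\eps$-Lipschitz and $\varrho_\eps$ a contraction, and \eqref{e:crucial} reduces the bound on $\lteta_\eps(\vartheta_0^\eps)=\varrho_\eps(w_0)$ to a contraction estimate on $w_0$ directly.

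The same mismatch sinks your plan of deducing \eqref{bounddatiteta-1lemma} from Lemma~\ref{l:pass-lim-logtetaeps} as a black box: that lemma requires $\theta_\eps\to\theta$ \emph{strongly in $H$}, and the target $\vartheta_0$ need not lie in $H$, so no approximating sequence can satisfy the hypothesis. The paper instead proves \eqref{bounddatiteta-1lemma} directly via the identities \eqref{stima1}--\eqref{stima3}, which express $\calI_\eps(\vartheta_0^\eps)$ as $(J^*)_\eps(w_0)$ plus pointwise-vanishing corrections, and then combines Fatou with dominated convergence. For the surface datum the paper simply takes $\vartheta_s^{0,\eps}=\vartheta_s^0$ (constant in $\eps$), which makes \eqref{convdatiteta} trivial and gives the clean Yosida bound $|\ltetas_\eps(\vartheta_s^0)|\le|\ltetas(\vartheta_s^0)|$; your choice $\restetas(\vartheta_s^0)$ would work for \eqref{bounddatiteta-bislemma} but is an unnecessary detour. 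A minor correction to your final heuristic: the additive $(1+\cdot)$ in the bulk bound arises from $|\varrho_\eps(0)|$ being merely bounded, not from a missing normalization of $\lteta$ at $0$, so it would persist under translation.
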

\begin{proof}
We start by developing the construction of the sequence
$(\vartheta_0^\eps)_\eps$.
Denote by \ber $\gamma= (J^*)'$ \edr the inverse function $\lteta^{-1}$  and \bec let $\gamma_\eps$
 be its Yosida regularization and $\varrho_\eps$ its resolvent for any fixed $\eps>0$. \eec
We set
\begin{equation}
\label{def:tetazep} \vartheta_0^\eps(x):= R_\eps^{-1} \left(
\gamma_\eps(
w_0 (x)) \right) \qquad \forae\, x \in \Omega\,,
\end{equation}
where $w_0 (x):=L(\teta_0(x))$.
In view of~\eqref{e:max-mon-3-bis}, we have
$$
\lteta_\eps \left(\vartheta_0^\eps(x) \right) = \lteta \left( R_\eps
(\vartheta_0^\eps(x)) \right) =\lteta (\gamma_\eps(w_0 (x)))
\qquad \forae\, x \in \Omega\,.
$$
Again by~\eqref{e:max-mon-3-bis}, $\gamma_\eps (w_0 (x)) =
\gamma (\varrho_\eps (w_0 (x))$ and hence it holds
\begin{equation}
\label{e:crucial} \lteta_\eps \left(\vartheta_0^\eps(x) \right)= \varrho_\eps
(w_0 (x))
 \qquad
\forae\, x \in \Omega\,.
 \end{equation}
 Moreover, recalling \eqref{def:tetazep} and \eqref{e:max-mon-1}, we have
 \begin{equation}
\label{e:crucial1}
\vartheta_0^\eps(x)=\gamma_\eps(w_0 (x))+\eps \varrho_\eps (w_0 (x))
 \qquad \forae\, x \in \Omega\,.
 \end{equation}
By the Lipschitz continuity (with constant $1/\eps$) of $\gamma_\eps$
and using that  $\varrho_\eps$ is a contraction, in view of \eqref{cond-teta-zero} \bec (which ensures that $w_0 \in H$), \eec we readily deduce
\bec \eqref{datitetaapp} \eec and \eqref{bounddatiteta-bislemma}.

Next, in order to prove
\bec \eqref{convdatiteta} and \eec
\eqref{bounddatiteta-1lemma}, we recall \eqref{interesting-relation-teta-e-s}
\ber and \eqref{convex-analysis-3} and \eqref{e:crucial}.  \edr
We find that
\begin{align}
\label{stima1}
\mathcal{I}_\eps(\teta_0^\eps(x))= \frac{\eps}{2}|\teta_0^\eps(x)|^2+J^*_\eps(\lteta_\eps (\teta_0^\eps(x)))+J_\eps(0 & )=
\frac{\eps}{2}|\teta_0^\eps(x)|^2+\frac{\eps}{2}|\lteta_\eps(\teta_0^\eps(x))|^2+J^*(\lteta_\eps (\teta_0^\eps(x)))+J_\eps(0)\no\\
&=\frac{\eps}{2}|\teta_0^\eps(x)|^2+\frac{\eps}{2}|\lteta_\eps(\teta_0^\eps(x))|^2+J^*(\varrho_\eps (w_0(x)))+J_\eps(0)
 \end{align}
$ \forae\, x \in \Omega. $
On the other hand,
writing~\eqref{e:max-mon-5} for $J^*$, we find
\begin{equation}
\label{stima2}
J^*(\varrho_\eps (w_0(x)))=(J^*)_\eps(w_0(x))-\frac{\eps}{2}|\gamma_\eps(w_0(x))|^2
\qquad \forae\, x \in \Omega\,,
 \end{equation}
where $(J^*)_\eps$ denotes the Yosida regularization of $J^*$.
Now, we \bec combine  \eqref{e:crucial1}--\eqref{stima2} %; we take into account
with  \eqref{not-restrictive-prima}, the Lipschitz continuity (with constant $1/\eps$) of $\gamma_\eps$, and
%we use
the fact \eec that  $\varrho_\eps$ is a contraction. Thus
we deduce that % there exists a positive constant $c$ (independent of $\eps$) such that
\begin{align}
\label{stima3}
\mathcal{I}_\eps(\teta_0^\eps(x)) =
\frac{\eps}{2}|\lteta_\eps(\teta_0^\eps(x))|^2+(J^*)_\eps(w_0(x))+\frac{\eps^3}{2}\varrho_\eps^2 (w_0(x))+
\eps^2 \varrho_\eps (w_0(x))\, \gamma_\eps (w_0(x))+J_\eps(0)\no\\
%&\leq c|(w_0(x)|^2+1)+(J^*)_\eps (w_0(x))+J(0)\leq
%c|(w_0(x)|^2+1)+J^*(w_0(x))
%\qquad
 \end{align}
 $\forae\, x \in \Omega$ and  for all $\eps\in (0,1).$
%Finally, recalling the definition $w_0 (x):=L(\teta_0(x))$, we conclude
%$$
%\int_\Omega \mathcal{I}_\eps(\teta_0^\eps)\dd x\leq c\left(\|L(\teta_0)\|^2_H+1\right)+ \|J^*(L(\teta_0))\|_{L^1(\Omega)}\,,
%$$
%that is \eqref{bounddatiteta-1lemma}.
 \bec Then, to prove \eqref{convdatiteta} \eec we observe that
\begin{equation}
\label{e:conv-puntuale} \varrho_\eps (w_0(x)) \to w_0(x) \qquad
\text{as $\eps \downarrow 0$} \ \ \forae\, x \in \Omega
\end{equation}
\noindent
due to \bec our assumption that  $J^*(w_0) = J^*(L(\teta_0))\in L^1(\Omega)$ which implies that  $w_0(x) \in
\overline{\dom(J^*)}= \overline{\dom (\gamma)}$ for a.a.\ $x \in
\Omega$. \eec
Furthermore
\begin{equation}
\gamma_\eps(L(\teta_0(x)))\to \teta_0(x) \qquad
\text{as $\eps \downarrow 0$} \ \ \forae\, x \in \Omega
\end{equation}
and hence
\begin{equation}
\label{e:conv-puntualetetazero} \vartheta_0^\eps(x) \to \teta_0(x) \qquad
\text{as $\eps \downarrow 0$} \ \ \forae\, x \in \Omega.
\end{equation}
\bec Then, combining \eqref{bounddatiteta-1lemma},  \bec estimate \eec \eqref{ci-serve-anticipata-1},
\eqref{e:conv-puntualetetazero},  and \eec applying
 the dominated convergence theorem, we conclude \bec \eqref{convdatiteta}.
 Convergence  \eqref{bounddatiteta-1lemma} follows from
 \[
 \liminf_{\eps \downarrow 0} \int_\Omega \mathcal{I}_\eps(\teta_0^\eps)\dd x \geq \int_\Omega J^*(L(\teta_0)) \dd x
 \]
 (due to \eqref{stima1},  \eqref{e:conv-puntuale}, and the Fatou Lemma), combined with
 \[
 \limsup_{\eps \downarrow 0} \int_\Omega \mathcal{I}_\eps(\teta_0^\eps)\dd x \leq \int_\Omega J^*(L(\teta_0)) \dd x\,,
 \]
 which ensues from \eqref{stima3}, noting that
 $J_\eps (0) \to J(0)=0$ and that
  the first, the second, and the fourth terms on the r.h.s.\ of
 \eqref{stima3} tend to zero by \eqref{e:crucial},   \eqref{e:conv-puntuale}, the fact that $\gamma_\eps$ is Lipschitz with constant $1/\eps$, and the dominated convergence theorem.
 \eec

Concerning the initial data for $\teta_s$, in view of \eqref{cond-teta-esse-zero},
we can make the choice
\begin{equation}
\label{tetazeroesse}
\vartheta_s^{0,\eps}(x):=\vartheta_s^{0}(x) \ \ \forae\, x \in \gc\,.
\end{equation}
In this case, %\eqref{betterdatitetalemma} and \eqref{convdatitetalemma}
\bec \eqref{datitetaapp} and \eqref{convdatiteta} \eec
 are trivially verified. Moreover, \eqref{bounddatiteta-bislemma}
follows from the well-known properties of the Yosida regularization $\ltetas_\eps$.
Now, we only have to prove \eqref{bounddatiteta-2lemma}. To this aim, we recall
 \eqref{convex-analysis-2} and
\eqref{interesting-relation-teta-e-s}, \bec yielding  $ \forae\, x \in \gc $ and for all $\eps\in (0,1)$ \eec
\begin{align}
\label{stima4}
 i_\eps(\vartheta_s^{0}(x)) =\frac{\eps}{2}|\vartheta_s^{0}(x)|^2+j^*_\eps(\ltetas_\eps (\vartheta_s^{0}(x)))+j_\eps(0) & =
\frac{\eps}{2}|\vartheta_s^{0}(x)|^2+\vartheta_s^{0}(x) \ltetas_\eps(\vartheta_s^{0}(x))-
j_\eps (\vartheta_s^{0}(x))+j_\eps(0)\no\\
&\leq c(1+|\vartheta_s^{0}(x)|^2+|\ltetas(\vartheta_s^{0}(x))|^2)-j_\eps(\vartheta_s^{0}(x))+j(0)
%\qquad \,,
\end{align}
for some positive constant $c$ (independent of $\eps$).
Moreover, taking into account \eqref{e:max-mon-5}, and using the convexity of $j$, and the Lipschitz continuity of $r_\eps$, we deduce
\begin{equation}
\label{stima5}
j_\eps(\vartheta_s^{0}(x))\geq j(r_\eps(\vartheta_s^{0}(x)))\geq -c(1+|\vartheta_s^{0}(x)|) \ \ \forae\, x \in \gc\,.
\end{equation}
Thus, from \eqref{stima4} and \eqref{stima5}, we can conclude
$$
i_\eps(\vartheta_s^{0}(x))\leq c(1+|\vartheta_s^{0}(x)|^2+|\ltetas(\vartheta_s^{0}(x))|^2)
\qquad \forae\, x \in \gc\,.
$$
\berc Since $i_\eps(\vartheta_s^{0}(x)) \to j^*(\ell(\vartheta_s^{0}(x)))$ for almost all $x \in \Gammac$, \eqref{bounddatiteta-2lemma} 
follows from the dominated convergence theorem. \eerc
\end{proof}

\noindent
\begin{lemma}
\label{l:new-lemma5}  Assume that the initial datum $\vartheta_s^{0}$ complies with \eqref{cond-teta-esse-zero}
and that the function $\ltetas$
\bec from \eqref{cond-ell1} is also bi-Lipschitz, i.e.\ \eec
% in \eqref{teta-s-weak} satisfies moreover
\begin{equation}
\label{lbilip}
\exists\, C_1\,,C_2>0\,: C_1\leq\ltetas'(x)\leq C_2\ \ \forall\, x \in D(\ltetas)\,.
\end{equation}
Then, there holds
\begin{align}
&
\label{boundacca}
\exists\, \bec \bar{S}_1>0 \eec \ \ \forall\, \eps \in (0,1)\, : \quad
\int_\gc H_\eps(\teta_0^s)\dd x \leq \bar{S}_1\|\teta_0^s\|^2_H\,,
\\
&
\label{boundeffe}
\exists\, \bec \bar{S}_2>0 \eec \ \ \forall\, \eps \in (0,1)\,: \quad
\quad \|\ftetas_\eps(\teta_s^{0})\|_{V_\gc} \leq \bar{S}_2 \|\ftetas(\teta_s^0)\|_{\Vc}.
\end{align}
\end{lemma}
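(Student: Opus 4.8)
\textbf{Plan of proof for Lemma \ref{l:new-lemma5}.}
The strategy is to exploit the bi-Lipschitz bounds \eqref{lbilip} on $\ltetas'$ to control, uniformly in $\eps\in(0,1)$, both the function $\ftetas_\eps$ (hence also $\ftetas_\eps'$ and $\applogtetas'$) and the auxiliary function $H_\eps$ defined in \eqref{def-accaapp}, in terms of the corresponding non-regularized quantities $\ftetas$ and $\ltetas$ evaluated at $\teta_s^0$. The point is that, under \eqref{lbilip}, the resolvent $\restetas=(\Id+\eps\ltetas)^{-1}$ and the Yosida approximation $\ltetas_\eps$ enjoy \emph{two-sided} Lipschitz control independent of $\eps$, so all the pointwise estimates one would like to perform "at $\eps=0$" survive at the approximate level with constants depending only on $C_1,C_2$.

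\textbf{Step 1: uniform control of $\ftetas_\eps'$ and $\applogtetas'$.}
From \eqref{very-useful-formula} written for $\ltetas$, namely $\ltetas_\eps'(x)=\ltetas'(\restetas(x))/(1+\eps\,\ltetas'(\restetas(x)))$, together with \eqref{lbilip} and $\applogtetas'(x)=\eps+\ltetas_\eps'(x)$, I would deduce that there are constants $0<m\le M$ depending only on $C_1,C_2$ such that $m\le \applogtetas'(x)\le M$ for all $x\in\R$ and all $\eps\in(0,1)$; consequently $1/M\le \ftetas_\eps'(x)=1/\applogtetas'(x)\le 1/m$. Integrating, since $\ftetas_\eps(0)=0$, one gets $|\ftetas_\eps(x)|\le \tfrac1m|x|$ and, more precisely, $\ftetas_\eps(x)$ and $\ftetas(x)$ are comparable up to multiplicative constants; I would also record that $|\nabla \ftetas_\eps(\teta_s^0)|\le \tfrac1m|\nabla\teta_s^0|$ a.e.\ on $\Gammac$ by the chain rule.

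\textbf{Step 2: proof of \eqref{boundeffe}.}
Using Step 1 and the fact that $\ltetas$ bi-Lipschitz implies $\ftetas=\int_0^\cdot 1/\ltetas'$ is bi-Lipschitz as well (so $\|\ftetas(\teta_s^0)\|_{\Vc}$ is equivalent to $\|\teta_s^0\|_{\Vc}$), I would estimate $\|\ftetas_\eps(\teta_s^0)\|_{\Hc}\le \tfrac1m\|\teta_s^0\|_{\Hc}$ and $\|\nabla \ftetas_\eps(\teta_s^0)\|_{\Hc}\le\tfrac1m\|\nabla\teta_s^0\|_{\Hc}$; combining with the equivalence $\|\teta_s^0\|_{\Vc}\simeq\|\ftetas(\teta_s^0)\|_{\Vc}$ yields a constant $\bar S_2$ depending only on $C_1,C_2$ (and $\Gammac$) with $\|\ftetas_\eps(\teta_s^0)\|_{\Vc}\le \bar S_2\|\ftetas(\teta_s^0)\|_{\Vc}$, which is \eqref{boundeffe}. (Recall $\teta_s^0\in H^1(\Gammac)$ by \eqref{cond-teta-esse-zero}.)

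\textbf{Step 3: proof of \eqref{boundacca}.}
Here the main work is a pointwise bound $0\le H_\eps(x)\le c\,|x|^2$ uniform in $\eps\in(0,1)$, which I would obtain by manipulating the definition $H_\eps(x)=\int_0^x \applogtetas'(s)\,\ftetas_\eps(s)\dd s$. Since $\applogtetas'(s)\ftetas_\eps(s)=\applogtetas'(s)\int_0^s \ftetas_\eps'(r)\dd r$, and $\ftetas_\eps'=1/\applogtetas'$, one can integrate by parts or simply bound $|\applogtetas'(s)\ftetas_\eps(s)|\le M\cdot\tfrac1m|s|$, giving $|H_\eps(x)|\le \tfrac{M}{2m}x^2$; positivity follows from the monotonicity stated in Lemma \ref{l:new-lemma3} (H$_\eps$ has absolute minimum $0$ at $0$). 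Then $\int_\Gammac H_\eps(\teta_s^0)\dd x\le \tfrac{M}{2m}\|\teta_s^0\|_{\Hc}^2$, which is \eqref{boundacca} with $\bar S_1=M/(2m)$. The main obstacle, such as it is, lies in extracting the $\eps$-uniform two-sided bound on $\applogtetas'$ from \eqref{very-useful-formula}; once that is in place (and it uses \eqref{lbilip} in an essential way, whereas \eqref{cond-ell} alone would only give a one-sided bound, hence only the weaker \eqref{accaapp}), the remaining estimates are routine integrations. I would close by remarking that these bounds, combined with \eqref{tetazeroesse}, show that the family $(\teta_s^{0,\eps})_\eps=(\teta_s^0)_\eps$ constructed in Lemma \ref{l:new-lemma4} also satisfies \eqref{bounddatiteta-3}--\eqref{bounddatiteta-4} whenever $\ltetas$ is bi-Lipschitz.
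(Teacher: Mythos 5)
Your proposal is correct and follows essentially the same route as the paper's own (rather terse) proof: both argue that \eqref{lbilip} makes $\ltetas_\eps$, and hence $\applogtetas'=\eps+\ltetas_\eps'$, bi-Lipschitz with constants uniform in $\eps\in(0,1)$ (via \eqref{very-useful-formula}), deduce the linear growth $|\ftetas_\eps(x)|\leq C|x|$ and the bi-Lipschitz continuity of $\ftetas_\eps$, and plug these pointwise bounds into \eqref{def-accaapp} and the $H^1$-norm to obtain \eqref{boundacca} and \eqref{boundeffe}. You merely spell out the two-sided constants $m,M$ and the chain-rule step for the gradient that the paper leaves implicit.
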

\bec Observe that, since $\ell$ is bi-Lipschitz, $f$ is also bi-Lipschitz. Therefore,  \eqref{cond-teta-esse-zero} automatically
guarantees that $\teta_s^0 \in V_{\gc}$. \eec
\begin{proof}
First, we note that $\ltetas_\eps$ \bec inherits \eec  the bi-Lipschitz continuity of $\ltetas$ (cf.\ \eqref{very-useful-formula}
written for $\ltetas$). Then, from \bec the definition \eqref{def-appf} of
$\ftetas_\eps$ it follows that \eec
 there exists a positive constant $C$ such that
\begin{equation}
\label{crescitaf}
|\ftetas_\eps(x)|\leq C|x| \ \ \forall\, x\in \R\,,  \ \ \forall\, \eps \in (0,1).
\end{equation}
Thus, by \eqref{def-accaapp} \bec (i.e.\ the definition of $H_\eps$), \eec \eqref{lbilip}, and \eqref{crescitaf}, we deduce \eqref{boundacca}.
Moreover,  observe that \eqref{lbilip} implies the bi-Lipschitz continuity %of $\ftetas$ and
\bec of $\ftetas_\eps$, \eec  whence \eqref{boundeffe} easily follows.
\end{proof}

\noindent
\begin{lemma}
\label{l:new-lemma6}  Assume that the initial datum $\vartheta_s^{0}$ complies with \eqref{cond-teta-esse-zero}
and that  \bec
\[
\ltetas(\tetas)=\ln(\tetas) \qquad \text{(whence $\ftetas(\tetas)=(\tetas)^2$)}\,.
\]
\eec  %in \eqref{teta-s-weak}.
Then, there exists a sequence $(\vartheta_s^{0,\eps})_\eps\subset H_{\Gammac}$ such that
\begin{align}
&\label{convdatitetalog}
\vartheta_s^{0,\eps} \to \teta_s^0 \text{ in } H_{\Gammac}
\text{ as $\eps\down0$\,,}
\\
&\label{bounddatiteta-bislog}
 \|\ln_\eps(\vartheta_s^{0,\eps})\|_{\Hc}\leq \|\ln(\teta_s^0)\|_{\Hc} \quad \text{ for all } \, \eps>0\,,
% \\
%&
%\label{boundilog}
\end{align}
\berc convergence \eqref{bounddatiteta-2lemma} holds, \eerc and moreover
\begin{align}
&
\label{boundaccalog}
\exists\, \bec \bar{S}_4>0 \ \ \forall\, \eps \in (0,1)\, : \eec \quad
\int_\gc H_\eps(\vartheta_s^{0,\eps})\dd x \leq \bar{S}_4(1+\|\teta_0^s\|^2_H)\,,
\\
&
\label{boundeffelog}
\exists\, \bec \bar{S}_5>0 \  \ \forall\, \eps \in (0,1)\,: \eec  \quad
\quad \|\ftetas_\eps(\vartheta_s^{0,\eps})\|_{V_\gc} \leq \bar{S}_5(1+\|\ftetas(\teta_s^0)\|^2_{\Vc}).
\end{align}
\end{lemma}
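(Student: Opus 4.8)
The plan is to construct the approximating sequence $(\vartheta_s^{0,\eps})_\eps$ explicitly, following the recipe used for $\vartheta_0^\eps$ in the proof of Lemma \ref{l:new-lemma4}, applied now to $\ltetas = \ln$ on $\mathrm{D}(\ltetas) = (0,+\infty)$. Namely, set $w_s^0(x) := \ln(\teta_s^0(x))$ (which is in $\Hc$ by \eqref{cond-teta-esse-zero}), denote by $\gamma := \exp$ the inverse of $\ln$, by $\gamma_\eps$ its Yosida regularization and by $\varrho_\eps$ the resolvent of $\exp$, and put
\[
\vartheta_s^{0,\eps}(x) := r_\eps^{-1}\big(\gamma_\eps(w_s^0(x))\big) = \gamma_\eps(w_s^0(x)) + \eps\,\varrho_\eps(w_s^0(x)) \qquad \foraa\, x \in \gc,
\]
where $r_\eps = (\Id + \eps\ln)^{-1}$. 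As in Lemma \ref{l:new-lemma4}, the identity \eqref{e:max-mon-3-bis} gives $\ln_\eps(\vartheta_s^{0,\eps}(x)) = \varrho_\eps(w_s^0(x))$, so that \eqref{bounddatiteta-bislog} follows because $\varrho_\eps$ is a contraction, and the Lipschitz continuity (with constant $1/\eps$) of $\gamma_\eps$ together with $w_s^0 \in \Hc$ gives $\vartheta_s^{0,\eps} \in \Hc$. The pointwise convergences $\varrho_\eps(w_s^0(x)) \to w_s^0(x)$ and $\gamma_\eps(w_s^0(x)) \to \teta_s^0(x)$ a.e.\ (using $w_s^0(x) \in \mathbb{R} = \overline{\mathrm{D}(\exp)}$), combined with a dominated convergence argument exactly as in Lemma \ref{l:new-lemma4}, yield \eqref{convdatitetalog}. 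Convergence \eqref{bounddatiteta-2lemma} is obtained verbatim from the argument at the end of the proof of Lemma \ref{l:new-lemma4}: one rewrites $i_\eps(\vartheta_s^{0,\eps})$ via \eqref{convex-analysis-2}--\eqref{interesting-relation-teta-e-s}, uses $i_\eps(\vartheta_s^{0,\eps}(x)) \to j^*(\ln(\vartheta_s^{0,\eps}(x)))$, and invokes an equi-integrable majorant (here one must be slightly careful since $\teta_s^0$ need not be bounded, but $|\vartheta_s^{0,\eps}(x)|^2 + |\ln(\teta_s^0(x))|^2$ provides a fixed $L^1$ bound via the contraction property).

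The heart of the lemma is the \emph{quantitative} bounds \eqref{boundaccalog} and \eqref{boundeffelog}. For \eqref{boundeffelog}, since $\ltetas = \ln$, one has $\ltetas'(x) = 1/x$ on $(0,+\infty)$, so by formula \eqref{very-useful-formula} written for $\ltetas$ one computes $\ltetas_\eps'(r)$ and hence $\applogtetas'$ explicitly; in fact $f_\eps(x) = \int_0^x \frac{1}{\applogtetas'(s)}\dd s$ can be written in closed form, and one checks directly that $|f_\eps(x)| \leq C(1+x^2) = C(1 + f(x))$ for all admissible $x$ with $C$ independent of $\eps$ (exploiting $f(\tetas) = \tetas^2$). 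Differentiating the closed-form expression gives $|f_\eps'(x)| \leq C(1+|x|)$, so $|\nabla f_\eps(\vartheta_s^{0,\eps})| = |f_\eps'(\vartheta_s^{0,\eps})||\nabla \vartheta_s^{0,\eps}| \leq C(1 + |\vartheta_s^{0,\eps}|)|\nabla \vartheta_s^{0,\eps}|$, and one then bounds $\nabla \vartheta_s^{0,\eps}$ in terms of $\nabla f(\teta_s^0)$: from $\vartheta_s^{0,\eps} = \gamma_\eps(w_s^0) + \eps \varrho_\eps(w_s^0)$ one has $\nabla \vartheta_s^{0,\eps} = (\gamma_\eps'(w_s^0) + \eps \varrho_\eps'(w_s^0)) \nabla w_s^0$, and $\nabla w_s^0 = \nabla \ln(\teta_s^0)$ is controlled via $\nabla f(\teta_s^0) = \nabla((\teta_s^0)^2) = 2\teta_s^0 \nabla \teta_s^0$ and the elementary relation between $\nabla \ln \teta_s^0$ and $\nabla(\teta_s^0)^2$ — this is where I expect the main technical bookkeeping. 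For \eqref{boundaccalog}, I would use estimate \eqref{accaapp} from Lemma \ref{l:new-lemma3} in the reverse direction: one needs an \emph{upper} bound on $H_\eps$, which is obtained by multiplying the inequality $(f_\eps'(x))^2 \leq 2C_l f_\eps(x) + C$ (valid for $x>0$, cf.\ \eqref{moltiplicata-dopo}) by $\applogtetas'(x)$ and integrating, together with the closed-form control on $\applogtetas'$ available in the logarithmic case, yielding $H_\eps(x) \leq C(1 + |f_\eps(x)| + |x|^2)$, and then $|f_\eps(x)| \leq C(1+x^2)$ gives $H_\eps(\vartheta_s^{0,\eps}) \leq C(1 + |\vartheta_s^{0,\eps}|^2)$ whose $L^1(\gc)$-norm is controlled by $1 + \|\teta_s^0\|_\Hc^2$ using again that $\varrho_\eps$ and $\gamma_\eps$ do not increase the $L^2$ norm (modulo $\eps$-uniform constants).

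The main obstacle, as indicated above, is getting the constants in \eqref{boundaccalog}--\eqref{boundeffelog} genuinely \emph{uniform in $\eps \in (0,1)$}: the derivative $f_\eps'(0) = 1/\applogtetas'(0)$ must be bounded independently of $\eps$, which for $\ltetas = \ln$ requires the argument around \eqref{limitato} (using a point $x_0$ with $\ln(x_0)=0$, i.e.\ $x_0 = 1$, so that $r_\eps(x_0) = x_0$) — this is the only place where the specific structure $\ltetas = \ln$ is really needed beyond what Lemma \ref{l:new-lemma5} already covers, since $\ln$ is \emph{not} bi-Lipschitz and Lemma \ref{l:new-lemma5} does not apply directly. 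One has $\applogtetas'(0) = \eps + \ltetas_\eps'(0)$ and $\ltetas_\eps'(0) = \frac{\ltetas'(r_\eps(0))}{1 + \eps \ltetas'(r_\eps(0))} \leq \frac{1}{\eps}$, but crucially the bound \eqref{limitato}, $\frac{1}{\applogtetas'(0)} \leq C_l|x_0| + \frac{1 + \ltetas'(x_0)}{\ltetas'(x_0)} = C_l + 2$, is $\eps$-independent, and it is this uniform control that propagates through all the estimates. I would therefore organize the proof as: (1) define $\vartheta_s^{0,\eps}$ and verify \eqref{convdatitetalog}--\eqref{bounddatiteta-bislog} and \eqref{bounddatiteta-2lemma} by transcribing Lemma \ref{l:new-lemma4}; (2) record the closed-form/uniform estimates on $\applogtetas'$, $f_\eps$, $f_\eps'$, and $H_\eps$ in the logarithmic case, invoking \eqref{limitato}; (3) combine with the norm-non-expansiveness of $\gamma_\eps, \varrho_\eps$ and the chain rule for $\nabla f(\teta_s^0)$ to conclude \eqref{boundaccalog} and \eqref{boundeffelog}.
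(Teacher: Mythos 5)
Your proposal uses a genuinely different construction of $\vartheta_s^{0,\eps}$ than the paper does, and that choice is where the argument breaks. You carry over the resolvent-based recipe from Lemma~\ref{l:new-lemma4}, i.e.\ $\vartheta_s^{0,\eps} = \gamma_\eps(w_s^0) + \eps\varrho_\eps(w_s^0)$ with $w_s^0=\ln(\teta_s^0)$, and then try to control $\nabla\vartheta_s^{0,\eps}$ through $\nabla w_s^0 = \nabla\ln(\teta_s^0) = \nabla f(\teta_s^0)/(2(\teta_s^0)^2)$. But the hypothesis \eqref{cond-teta-esse-zero} only gives $f(\teta_s^0)=(\teta_s^0)^2\in H^1(\gc)$ and $\ln(\teta_s^0)\in L^2(\gc)$; it does \emph{not} give $\ln(\teta_s^0)\in H^1(\gc)$, and indeed $\nabla\ln(\teta_s^0)$ can fail to be in $L^2(\gc)$. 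For instance, near a point where $\teta_s^0(x)\sim |x|^{1/2}$ one has $(\teta_s^0)^2\in H^1$ while $\nabla\ln(\teta_s^0)\sim|x|^{-1}$ is not square-integrable in two dimensions. So with your construction $\vartheta_s^{0,\eps}$ (and hence $f_\eps(\vartheta_s^{0,\eps})$) need not even lie in $\Vc$, and \eqref{boundeffelog} cannot be obtained. The "elementary relation between $\nabla\ln\teta_s^0$ and $\nabla(\teta_s^0)^2$" that you invoke is only a bound when $\teta_s^0$ is uniformly away from zero, which is precisely what your construction fails to enforce.

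The paper avoids this entirely by defining the approximate datum as the truncation $\vartheta_s^{0,\eps}:=\max\{\teta_s^0,\eps^{\alpha}\}$ for a suitable $\alpha\in(0,1)$. This has two features your construction lacks: first, $\vartheta_s^{0,\eps}$ can be written as a Lipschitz function of $f(\teta_s^0)$ (namely $y\mapsto\max\{\sqrt{y},\eps^\alpha\}$, Lipschitz for each fixed $\eps$), so $\vartheta_s^{0,\eps}\in\Vc$ automatically follows from $f(\teta_s^0)\in\Vc$; second, the a.e.\ lower bound $\vartheta_s^{0,\eps}\geq\eps^\alpha$, together with the fact that $\nabla\vartheta_s^{0,\eps}$ vanishes on $\{\teta_s^0<\eps^\alpha\}$, is exactly what makes the split estimates \eqref{fstima4}--\eqref{fstima9} close: the dangerous factor $\eps^2|\ln_\eps(\vartheta_s^{0,\eps})|^2$ is bounded by $\eps^2\ln^2(\eps^\alpha)$ and the remaining $1/(\vartheta_s^{0,\eps})^2$ is bounded by $\eps^{-2\alpha}$, so the choice $\alpha<1$ makes $\eps^{2-2\alpha}\ln^2(\eps^\alpha)$ bounded. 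Your proposal does not reproduce this mechanism. The parts of your sketch that handle \eqref{convdatitetalog}, \eqref{bounddatiteta-bislog}, and \eqref{bounddatiteta-2lemma} are essentially correct in spirit (though the resolvent route would pick up an additive constant in \eqref{bounddatiteta-bislog}, while the truncation gives the stated clean bound via monotonicity of $\ln_\eps$), and you correctly identify that $\eqref{limitato}$ with $x_0=1$ is the source of $\eps$-uniformity; but the construction of $\vartheta_s^{0,\eps}$ must be changed to the truncation for \eqref{boundeffelog} to hold.
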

\begin{proof}
We will construct a sequence of data $(\teta_s^{0,\eps})_\eps$ satisfying \eqref{convdatitetalog}--\eqref{boundeffelog}
partially adapting the argument of \cite[Example 4.3]{bbr6}.
For \bec  $\eps \in (0,1)$
we set \eec
 %and a.e.\ in $\gc$, let us define
\begin{equation}
\label{costr-appr-data} \teta_s^{0,\eps}:=\max\{\vartheta_0^s, \eps^{\alpha}\}\qquad
\quad \text{for some } \alpha>0\
\end{equation}
\bec to be chosen later.
It is not difficult to check that $\teta_s^{0,\eps}$ can be written as $\nu_\eps (f(\teta_s^0))$ for a suitable Lipschitz function $\nu_\eps$. Therefore,
$\teta_s^{0,\eps}$ is also in $H^1(\gc)$. \eec
Observe that $\teta_s^{0,\eps}>0$ a.e.\ in $\gc$ (since $\teta_0^s >0$
a.e.\ in $\gc$ thanks to the second of~\eqref{cond-teta-esse-zero}
\bec and  the fact that \eec
$\ltetas(\vartheta_0^s)=\ln (\vartheta_0^s)$),   that $\teta_s^{0,\eps}\to\vartheta_0^s$ a.e.\ in $\gc$, and moreover that $(\teta_s^{0,\eps})_\eps$ is uniformly integrable in $\Hc$.
Thus, \eqref{convdatitetalog} follows.
Furthermore,
relying on the definition \eqref{costr-appr-data} and on  well-known properties of the Yosida regularization $\ln_\eps$, we find
\begin{equation}
\label{appr-data_2} |\ln_\eps(\teta_s^{0,\eps})|\leq |\ln_\eps(\vartheta_0^s)|\leq |\ln(\vartheta_0^s)|
 \quad \text{a.e.\ in } \gc,
\end{equation}
whence \eqref{bounddatiteta-bislog}.
Next, arguing as in
\cite[Lemma 4.1]{bbr6}, we can deduce that
$$
i_\eps(x)\leq\frac{\eps}{2} x^2+2x \qquad \text{for every } x \in [0,+\infty)  \text{ and } \eps \in (0,1),
$$
%for any $x\geq 0$ and for $\eps>0$ suitably small (it is not restrictive to consider $\eps\in (0,1)$),
\berc  which implies that 
 \[
 %\exists\, \bec \bar{S}_3>0\eec  \ \ \forall\, \eps \in (0,1) \,: \quad
%\quad \int_\gc 
i_\eps(\teta_s^{0,\eps}(x)) \leq 
C(1 + (\teta_s^0(x))^2) \qquad \foraa\, x \in \Gammac.
\]
Then, \eqref{bounddatiteta-2lemma} follows from the pointwise convergence of $i_\eps(\teta_s^{0,\eps})$ to $j^* (\ell (\teta_s^0))$, via the dominated convergence theorem. \eerc

Now we recall definitions \eqref{def-appf}, \eqref{def-accaapp}, and \eqref{very-useful-formula} written for $\ln$ \bec (yielding in particular \eec
that $\ln'_\eps$ is a decreasing function), and find
\begin{align}
\label{accastima}
&H_\eps(x)=\int_0^x\applogtetas'(s)\, \int_0^s \frac{1}{\applogtetas'(\tau)} \dd \tau \dd s =
\int_0^x\int_0^s \frac{\applogtetas'(s)}{\applogtetas'(\tau)}
\dd \tau  \dd s \leq \int_0^x s \dd s =\frac{x^2}{2}
\quad \text{for all }
x\geq 0\,,
\end{align}
whence \eqref{boundaccalog}.

\noindent
In order to show \eqref{boundeffelog}, we first recall that there exists a positive constant $C$ such that
\begin{equation}
\label{fstima1}
\|\ftetas_\eps(\vartheta_s^{0,\eps})\|_{L^1(\gc)} \leq C(1+\|\teta_s^0\|^2_{\Hc})
\end{equation}
again by definition \eqref{def-appf} and \eqref{very-useful-formula}
written for $\ln$.
Now, we have to prove that
\begin{equation}
\label{fstima2}
\|\nabla\ftetas_\eps(\vartheta_s^{0,\eps})\|_{\Hc} \leq C(1+\|\nabla\ftetas(\teta_s^0)\|_{\Hc})
\end{equation}
To this aim, we recall definition \eqref{def-appf} and relation \eqref{very-useful-formula} written for $\ln_\eps$,
\bec whence $\ln_\eps'(x) = \frac 1{\eps +r_\eps (x)}$. \eec
We have
\begin{equation}
\label{fstima3}
\int_\gc|\nabla\ftetas_\eps(\vartheta_s^{0,\eps})|^2 \dd x \leq
 \int_\gc (\eps+r_\eps (\vartheta_s^{0,\eps}))^2 |\nabla(\vartheta_s^{0,\eps})|^2 \dd x\,.
\end{equation}
We first observe that
\begin{align}
\label{fstima4}
&\eps^2\int_\gc \bec |\nabla \vartheta_s^{0,\eps}|^2 \eec  \dd x =
\eps^2\int_{\displaystyle\{ \vartheta_s^{0}>\eps^\alpha \}}|\nabla\vartheta_s^{0}|^2 \dd x
%\no\\
%&
\leq
\eps^{2-2\alpha}\int_\gc |\vartheta_s^{0}|^2\, |\nabla\vartheta_s^{0}|^2 \dd x =
\frac{\eps^{2-2\alpha}}{4}\|\nabla\ftetas(\vartheta_s^{0})\|^2_{\Hc}\,.
\end{align}
Next, we evaluate the term
\begin{equation}
\label{fstima5}
\begin{aligned}
&
 \int_\gc (r_\eps (\vartheta_s^{0,\eps}))^2 |\nabla(\vartheta_s^{0,\eps})|^2 \dd x
 \\
 &
 =
 \int_{\displaystyle\{ \vartheta_s^{0,\eps}\geq 1 \}}(r_\eps (\vartheta_s^{0,\eps}))^2 |\nabla(\vartheta_s^{0,\eps})|^2 \dd x+
 \int_{\displaystyle\{ \vartheta_s^{0,\eps}< 1 \}}(r_\eps (\vartheta_s^{0,\eps}))^2 |\nabla(\vartheta_s^{0,\eps})|^2 \dd x\,.
 \end{aligned}
 \end{equation}
Noting that (cf.\ definition \eqref{e:max-mon-3-bis} written for $\ln$)
\begin{equation}
\label{fstima6}
\bec r_\eps (\vartheta_s^{0,\eps})\leq \vartheta_s^{0,\eps} \quad \text{on the set } \{ x\in \gc\, : \  \vartheta_s^{0,\eps}(x)\geq 1 \}\,,\eec
 \end{equation}
%for all $x\in \gc$ such that $$,
 we find
\begin{align}
\label{fstima7}
 \int_{\displaystyle\{ \vartheta_s^{0,\eps}\geq 1 \}}(r_\eps (\vartheta_s^{0,\eps}))^2 |\nabla(\vartheta_s^{0,\eps})|^2\dd x & \leq
 \int_{\displaystyle\{ \vartheta_s^{0,\eps}\geq 1 \}}(\vartheta_s^{0,\eps})^2 |\nabla(\vartheta_s^{0,\eps})|^2\dd x
 \no\\
&
\leq
  \int_\gc (\vartheta_s^{0})^2 |\nabla(\vartheta_s^{0})|^2\dd x =
  \frac14 \|\nabla\ftetas(\vartheta_s^{0})\|^2_{\Hc}\,.
 \end{align}
In order to estimate the second term on the right-hand side of \eqref{fstima5}, we use the relation (cf.\ \eqref{e:max-mon-3-bis})
$
r_\eps(x)=x-\eps\ln_\eps(x) $   for all $x\in \R,
$
and we obtain
$$
\int_{\displaystyle\{ \vartheta_s^{0,\eps}< 1 \}}(r_\eps (\vartheta_s^{0,\eps}))^2 |\nabla(\vartheta_s^{0,\eps})|^2\dd x
 \leq c \int_{\displaystyle\{ \vartheta_s^{0,\eps}< 1 \}}((\vartheta_s^{0,\eps})^2+\bec \eps^2 \eec \ln_\eps^2 (\vartheta_s^{0,\eps})) |\nabla(\vartheta_s^{0,\eps})|^2\dd x\,.
 $$
 Again
\begin{equation}
\label{fstima8}
\int_{\displaystyle\{ \vartheta_s^{0,\eps}< 1 \}} (\vartheta_s^{0,\eps})^2 |\nabla(\vartheta_s^{0,\eps})|^2 \dd x
 \leq c \|\nabla \ftetas((\vartheta_s^{0})\|^2_{\Hc}
\end{equation}
Moreover, the  inequalities $\eps^\alpha\leq\vartheta_s^{0,\eps}<1$ and well-known properties of the Yosida regularization $\ln_\eps$ imply that
$|\ln_\eps(\teta_s^{0,\eps})|\leq |\ln_\eps(\eps^\alpha)|\leq |\ln(\eps^\alpha)|$,
%\begin{equation}
%\label{ineq-data}
%\end{equation}
whence
\begin{align}
\label{fstima9}
 \int_{\displaystyle\{ \vartheta_s^{0,\eps}<1 \}} \eps^2 \ln_\eps^2 (\vartheta_s^{0,\eps}) |\nabla(\vartheta_s^{0,\eps})|^2\dd x
   & \bec =  \eps^2 \int_{\displaystyle\{ \vartheta_s^{0,\eps}<1 \}}  \frac1{(\vartheta_s^{0,\eps})^2 }    (\vartheta_s^{0,\eps})^2  \ln_\eps^2 (\vartheta_s^{0,\eps}) |\nabla(\vartheta_s^{0,\eps})|^2\dd x \eec \no
 \\
 &
  \leq
 \eps^{2-2\alpha}\ln^2(\eps^\alpha)
 \int_{\displaystyle\{ \vartheta_s^{0,\eps}\bec <  \eec 1 \}}
 (\vartheta_s^{0,\eps})^2 |\nabla(\vartheta_s^{0,\eps})|^2\dd x
 \no\\
&\leq
c \eps^{2-2\alpha}\ln^2(\eps^\alpha)
\|\nabla\ftetas(\vartheta_s^{0})\|^2_{\Hc}\,.
 \end{align}
 Observe that the right-hand side of \eqref{fstima9} is bounded independently of $\eps$, choosing $\alpha<1$ and $\eps$ sufficiently small.
Finally, collecting \eqref{fstima4}, \eqref{fstima7}, \eqref{fstima8}, and \eqref{fstima9}, we deduce \eqref{fstima2}.
Thus \eqref{boundeffelog} is also proved.
\end{proof}

%%%%%%%%%%%%%%%%%%%%

%\edr

\end{document}